\numberwithin{equation}{section}
\theoremstyle{plain}
\newtheorem{thm}[equation]{Theorem}
\newtheorem{lem}[equation]{Lemma}
\newtheorem{prop}[equation]{Proposition}
\newtheorem{conj}[equation]{Conjecture}
\newtheorem{cor}[equation]{Corollary}
\theoremstyle{definition}
\newtheorem{defin}[equation]{Definition}
\newtheorem{rem}[equation]{Remark}
\newtheorem{ex}[equation]{Example}
\newcommand{\Hom}{\mathrm{Hom}}
\newcommand{\red}{\mathrm{red}}
\newcommand{\cF}{\mathcal F}
\newcommand{\cS}{\mathcal S}
\newcommand{\cV}{\mathcal V}
\newcommand{\cG}{\mathcal G}
\newcommand{\cM}{\mathcal M}
\newcommand{\cC}{\mathcal C}
\newcommand{\cO}{\mathcal O}
\newcommand{\cN}{\mathcal N}
\newcommand{\cI}{\mathcal I}
\newcommand{\cH}{\mathcal H}
\newcommand{\cD}{\mathcal D}
\newcommand{\cE}{\mathcal E}
\newcommand{\cR}{\mathcal R}
\newcommand{\cL}{\mathcal L}
\newcommand{\lra}{\longrightarrow}
\newcommand{\lla}{\longleftarrow}
\DeclareMathOperator{\Max}{Max}
\DeclareMathOperator{\Sl}{Sl}
\DeclareMathOperator{\rank}{rk}
\DeclareMathOperator{\rk}{rk}
\DeclareMathOperator{\Ker}{Ker}
\DeclareMathOperator{\nb}{nb}
\DeclareMathOperator{\irr}{irr}
\DeclareMathOperator{\Sol}{Sol}
\DeclareMathOperator{\DR}{DR}
\DeclareMathOperator{\End}{End}
\DeclareMathOperator{\Irr}{Irr}
\DeclareMathOperator{\divi}{div}
\DeclareMathOperator{\MIC}{MIC}
\DeclareMathOperator{\sing}{sing}
\DeclareMathOperator{\sm}{sm}
\DeclareMathOperator{\bCDiv}{\textbf{CDiv}}
\DeclareMathOperator{\bDiv}{\textbf{Div}}
\DeclareMathOperator{\CDiv}{CDiv}
\DeclareMathOperator{\Div}{Div}
\DeclareMathOperator{\ZR}{ZR}
\DeclareMathOperator{\divis}{divis}
\DeclareMathOperator{\an}{an}
\DeclareMathOperator{\RcHom}{R\cH om}
\DeclareMathOperator{\cHom}{\cH om}
\DeclareMathOperator{\RHom}{RHom}
\DeclareMathOperator{\Syme}{Sym}
\DeclareMathOperator{\gr}{gr}
\DeclareMathOperator{\TL}{TL}
\DeclareMathOperator{\ord}{ord}
\DeclareMathOperator{\Cart}{Cart}
\DeclareMathOperator{\Eu}{Eu}
\DeclareMathOperator{\codim}{codim}
\DeclareMathOperator{\Sym}{Sym}
\DeclareMathOperator{\sesi}{ss}
\DeclareMathOperator{\fdeg}{fdeg}
\DeclareMathOperator{\cEnd}{\cE nd}
\DeclareMathOperator{\Vect}{Vect}
\newcommand{\todo}[1]{\textcolor{red}{(Todo: #1)}}
\newcommand{\Remark}[1]{\textcolor{blue}{(Remark: #1)}}
\title[Cohomological boundedness for flat bundles on surfaces]{Cohomological boundedness for flat bundles on surfaces and applications }
\begin{document}

\author[H.Hu]{Haoyu Hu}
\address{Department of Mathematics, Nanjing University, Hankou Road 22, Nanjing 210000, China}
\email{huhaoyu@nju.edu.cn, huhaoyu1987@gmail.com}

\author[J.-B. Teyssier]{Jean-Baptiste Teyssier}
\address{Institut de Math\'ematiques de Jussieu, 4 place Jussieu, Paris, France}
\email{jean-baptiste.teyssier@imj-prg.fr}

\begin{abstract}
This paper explores the cohomological consequences of the existence of moduli spaces for flat bundles with bounded rank and irregularity at infinity and gives unconditional proofs.
Namely, we prove the existence of a universal bound for the dimension of  De Rham cohomology of flat bundles with bounded rank and irregularity on surfaces.
In any dimension, we  prove a Lefschetz recognition principle stating the existence of  hyperplane sections distinguishing flat bundles with bounded rank and irregularity after restriction.
We obtain in any dimension  a universal bound for the degrees of the turning loci of flat bundles with bounded rank and irregularity.
Along the way, we introduce a new operation on the group of $b$-divisors on a smooth surface (the partial discrepancy) and  prove a closed formula for the characteristic cycles of flat bundles on surfaces in terms of the partial discrepancy of the irregularity $b$-divisor attached to any flat bundle by Kedlaya.

\end{abstract}

\maketitle

\setcounter{tocdepth}{1}
\tableofcontents

The goal of this paper is to explore the cohomological consequences of the existence of moduli spaces for flat bundles with bounded rank and irregularity at infinity and to prove them unconditio\-nally in the surface case.\\ \indent
Let $U$ be a smooth complex projective  variety.
From Simpson's work \cite{SimpsonI}, flat bundles on $U$ with given rank form a complex variety, the \textit{De Rham space of $U$}.
If $U$ is quasi-projective, flat bundles acquire singularities at infinity making the situation more involved.
However, \textit{regular singular} flat bundles again give rise to moduli spaces at the cost of rigidifying the situation by a choice of logarithmic lattice   \cite{Nitsure}.
Let $X$ be a smooth compactification of $U$ such that $D:=X\setminus U$ has simple normal crossings and let $j : U\to X$ be the inclusion.
A flat bundle $\cE=(E,\nabla)$ on $U$ is regular singular \cite{Del} if $E$ extends to a vector bundle $F$  on $X$ with $\nabla(F)\subset F\otimes_{\cO_X}\Omega^1_X(\log D)$.
Regular singularity is independent of a choice of compactification.
When an extra condition on the residues is imposed, $F$ is   called a \textit{Deligne lattice}.
A natural question to ask is what lies beyond the regular singular case.
In general, the connection $\nabla$ no longer has simple poles at infinity, but Sabbah observed   in the nineties \cite{Sabbahdim} that $\nabla$ still has a simple expression, the so-called \textit{good formal decomposition} at the cost of working formally along $D$  away from a codimension $1$ subset of $D$.
The locus of $D$ where this simple expression  does not hold is the \textit{turning locus} of $\cE$.
Away from the turning locus, Deligne's lattices admit straightforward generalizations and Malgrange proved in \cite{Reseaucan} that they extend canonically to $X$.
Armed with Deligne-Malgrange lattices and conjectural bounds for their Chern classes, it is an expectation of Esnault and Langer dating back from 2014 that there should exist a moduli of finite type for flat bundles on $U$ with bounded rank and  irregularity at infinity.
See \cite{IAS_Video} and \cite[4.3.4]{Kedlaya3}.
Bounded irregularity  means very roughly that we bound the poles of the connection restricted to a Deligne-Malgrange lattice.
See \cref{Def_bounded} for a rigorous definition.
The bound on irregularity is thus embodied by an effective divisor $R$ supported on $D$.
Let us denote by  $\cM_r(X,D,R)$ Esnault-Langer's expected moduli of rank $r$ flat bundles on $U$ with irregularity bounded by $R$ along $D$.
As pointed out to the authors by A. Langer, there is a mismatch between Nitsure's construction and $\cM_r(X,D,R)$ for $R=0$.
Indeed, rigidifying the moduli problem with choices of logarithmic lattices makes Nitsure's construction depend on a choice of compactification for $X$.
On the other hand, having regular singularity at infinity is independent of a choice of compactification.
%Thus, a desirable construction for $\cM_r(X,D,0)$ should also be independent of a choice of compactification.
Furthermore, different points of Nitsure's construction may underlie the same flat bundle. 
\\ \indent
The existence of $\cM_r(X,D,R)$ has deep consequences for flat bundles which can be stated and studied independently.
The first consequence is \textit{cohomological}.
For a flat bundle $\cE$ on $U$, let $\DR\cE$ be the algebraic De Rham complex of $\cE$.
Following the construction of the jumping loci for character varieties, the following subsets
$$
\cV^{j}:=\{\cE\in \cM_r(X,D,R) \text{ such that } \dim H^\ast(U, \DR  \cE )\geq j\}
$$
should form a decreasing sequence of closed subsets of $\cM_r(X,D,R)$ when $j$ increases.
Since $\cM_r(X,D,R)$ is expected to be of finite type, its underlying topological space should be noetherian, so the sequence of $\cV^{j}$ should stabilize for $j$ big enough. 
On the other hand, De Rham cohomology is always finite dimensional.
Hence, there should exist an integer $j_0$ such that $\cV^{j_0}$ is empty. 
Put otherwise, there should exist a universal bound for the algebraic De Rham cohomology of rank $r$ flat bundles on $U$ with bounded irregularity at infinity.
Furthermore, this bound should depend only on $X,D,R$ and $r$.
See \cref{conjecture_coho} for a statement precising the dependency in $r$ and $R$.
One of the main result of this paper is an unconditional proof of the existence of this bound in the surface case (\cref{coho_boundedness_surface}).
To state it, we let  $\Div(X,D)$ be the group of divisors of $X$ supported on $D$ and denote by $k$ a field of characteristic $0$.
For every  effective divisor $R$ of $X$ supported on $D$, for every integer $r\geq 0$, we let $\MIC_r(X,D,R)$ be the category of flat bundles on $U$ with rank smaller than $r$ and irregularity bounded by $R$ along $D$.

\begin{thm}\label{coho_boundedness_surface_intro}
Let $X$ be a smooth projective surface over $k$.
Let $D$ be a normal crossing divisor of $X$.
Then, there exists a quadratic polynomial $C : \Div(X,D)\oplus \mathds{Z}\to\mathds{Z}$ affine in the last variable such that for every  effective divisor $R$ of $X$ supported on $D$, for every integer $r\geq 0$ and every object $\cE$ of $\MIC_r(X,D,R)$, we have
$$
\dim H^{*}(U,\DR \cE )\leq C(R,r)
$$
\end{thm}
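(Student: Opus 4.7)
My plan is to bound the total dimension $\sum_i \dim H^i(U, \DR \cE)$ by combining a bound on the Euler characteristic $\chi(U, \DR\cE)$ with bounds on the individual cohomological degrees, everything routed through a characteristic-cycle analysis of the holonomic $\cD_X$-module $\cM := Rj_*\cE$.

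\emph{Euler characteristic.} The characteristic cycle of $\cM$ on $T^*X$ decomposes as
$$
CC(\cM) \;=\; r \cdot [T^*_X X] \;+\; \sum_i m_i \bigl[\overline{T^*_{D_i} X}\bigr] \;+\; \sum_\alpha n_\alpha \bigl[T^*_{p_\alpha} X\bigr],
$$
where the multiplicity $m_i$ along $D_i$ equals the generic irregularity of $\cE$ there and is bounded by $r \cdot \mathrm{mult}_{D_i}(R)$, and the isolated point multiplicities $n_\alpha$ at corners $D_i \cap D_j$ and at turning points are controlled, via decomposition into formal rank-one summands and additivity of the local conductor, by $r \cdot \mathrm{mult}_{D_i}(R) \cdot \mathrm{mult}_{D_j}(R)$. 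The Dubson--Kashiwara--Sabbah--Saito index formula gives
$$
\chi(U, \DR\cE) \;=\; \bigl(CC(\cM),\, [T^*_X X]\bigr)_{T^*X},
$$
and intersecting with the zero section contributes one further linear factor from $\chi(D_i)$ and the intersection numbers $D_i \cdot D_j$, which depend only on $(X, D)$. The resulting estimate $|\chi(U, \DR\cE)| \leq C_1(R, r)$ is then quadratic in $R$ and affine in $r$.

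\emph{Individual cohomology.} The space $H^0(U, \DR\cE) = \ker \nabla$ has dimension at most $r$, and $H^{2d}(U, \DR\cE) = 0$ as soon as $D \neq \emptyset$ by Poincar\'e--Verdier duality and vanishing of compactly supported flat sections on a noncompact $U$. For the intermediate degrees $0 < i < 2d$, I stratify $X$ by $U$, the smooth open parts of the components $D_i$ minus the turning points, and the zero-dimensional stratum of corners and turning points, and analyze the perverse cohomology sheaves of $Rj_*\cE$ along this stratification: their generic ranks are bounded by the same quantities that control the coefficients of $CC(\cM)$. A Cousin-type spectral sequence then bounds each $h^i(U, \DR\cE)$ by a sum of terms of the shape (rank on stratum)$\times$(Betti number of stratum), with each summand quadratic in $R$ and affine in $r$; combining with the Euler characteristic bound gives the total estimate.

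\emph{Main obstacle.} The technically hardest step is controlling the local structure of $\cM$ at the turning locus, where $\cE$ lacks a good formal decomposition and the contributions $n_\alpha$ to $CC(\cM)$ are a priori delicate. My plan is either to resolve the turning locus by a Kedlaya--Mochizuki sequence of blow-ups, with both the number of blow-ups and the modification of $(R, r)$ controlled by $R$ and $r$ so that pullback preserves the quadratic estimate, or to invoke a turning-locus-insensitive bound on the local Artin/Swan conductor developed earlier in the paper. Either way, ensuring that the final polynomial $C$ depends only on $(X, D)$ and not on implicit choices made to resolve the irregular structure requires careful bookkeeping of how birational modifications transform the numerical invariants.
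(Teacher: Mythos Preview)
Your overall architecture (bound $\chi$ via the characteristic cycle, then handle the other degrees separately) is the same as the paper's, but the crucial step---bounding the point contributions $n_\alpha$ at the turning locus---is a genuine gap in your proposal.

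You assert that the $n_\alpha$ at turning points are controlled ``via decomposition into formal rank-one summands and additivity of the local conductor'' by $r\cdot \mathrm{mult}_{D_i}(R)\cdot \mathrm{mult}_{D_j}(R)$. This is precisely what fails: at a turning point there is \emph{no} good formal decomposition, so no such splitting is available. The paper shows (\cref{computation_mult_TxX}) that the multiplicity of $T^*_PX$ in $CC(\cM)$ at a smooth point $P$ of $D$ is $\int_P \delta\Irr\cM$, an invariant that sees the irregularity along \emph{every} exceptional divisor appearing in a tower of blow-ups over $P$. Your two proposed remedies do not work: bounding the number of Kedlaya--Mochizuki blow-ups in terms of $(R,r)$ is exactly the open Esnault--Langer expectation the paper discusses (and does not prove); and there is no ``turning-locus-insensitive'' local conductor bound of the shape you suggest.

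The paper's method for this step is completely different and is the main technical idea. It does not try to bound $n_\alpha$ pointwise. Instead it places $D$ locally as the special fibre of a map $h:X\to C$ to a curve, and bounds the global quantity $\chi(Z(\mathds{C}),\Sol\cM^{\an})$ for $Z=h^{-1}(0)_{\red}$ via $\Irr^*_0(h_+\cM)$ on the base. This reduces to bounding the \emph{nearby slopes} $\Sl^{\nb}_h(\cM)$ (\cref{bound_chi_D}), which is achieved in \cref{boundedness_general} by combining Kedlaya's theorem that $\Irr\cM$ is a nef Cartier $b$-divisor with a multiplicity estimate along admissible blow-up chains (\cref{lemme_from_nearby_slopes}). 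The rank of the pushforward is bounded by cohomological boundedness for the generic fibre, i.e.\ the curve case. This yields the bound on $\int_D\delta\Irr\cM$ (\cref{strong_chi}) and hence on $\chi$, without ever counting blow-ups. For the non-middle degrees, the paper uses weak Lefschetz for perverse sheaves to reduce to a hyperplane section (\cref{bound_in_terms_of_chi}, \cref{reduction_to_bound_chi}), which is cleaner than your stratification/spectral-sequence approach and avoids rerunning into the same turning-point issue.
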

A second consequence of the existence of  $\cM_r(X,D,R)$ is the \textit{Lefschetz recognition principle}.
It relies on the heuristic principle that the formation of $\cM_r(X,D,R)$ should come packaged with functorialities.
If $f : Y\to X$ is a morphism of smooth projective varieties over  $k$ such that $f^*R$ makes sense and such that $f^{-1}(D)$ is a normal crossing divisor,
one expects the existence of a morphism of varieties $\cM_r(X,D,R)\to \cM_r(Y,f^{-1}(D),f^*R)$ induced by the pull-back along $f: Y\to X$.
Functorialities would make  the study of $\cM_r(X,D,R)$ tractable through the curve case, where they should be much easier to construct since flat bundles on curves have no turning points.
If $\cE_0$ and $\cE_1$ are non isomorphic flat bundles on $U$,  one can find  a hyperplane $H$ such that the restrictions of $\cE_0$ and $\cE_1$ to $X\cap H$ are again non isomorphic.
When viewed as points of $\cM_r(X,D,R)$, this means geometrically that  $\cE_0$ and $\cE_1$  do not lie in the same fibre of 
$\cM_r(X,D,R)\to \cM_r(X\cap H,f_H^{-1}(D), f_H^*R)$
where $f_H : X\cap H  \to X$ is the inclusion.
One thus expects the induced morphism of schemes
$$
\cM_r(X,D,R)\lra \prod_H\cM_r(X\cap H,f_H^{-1}(D), f_H^*R)
$$
to be a closed immersion.
Since $\cM_r(X,D,R)$ should be of finite type, this implies the existence of hyperplanes $H_1,\dots, H_N$ depending only on $X,D,R$ and $r$ such that if we put $f: \bigsqcup X\cap H_i \to X$, then 
$\cM_r(X,D,R)\to  \cM_r(Y,f^{-1}(D), f^*R)$
is a closed immersion.
Put in the language of flat bundles, two objects of $\MIC_r(X,D,R)$ are isomorphic if and only their pull-back to $\bigsqcup X\cap H_i $ are isomorphic.
This is the \textit{Lefschetz recognition principle}.
We are led to introduce the following

\begin{defin}\label{realize_LRP}
Let $X$ be a smooth projective variety of dimension $n\geq 2$ over a field $k$ of characteristic $0$.
Let $D$ be a simple normal crossing divisor of $X$.
Let $X\to \mathds{P}$ be a closed immersion in some projective space.
Let $\cC$ be a class of flat vector bundles on $X-D$.
Let $\cH$ be a set of hyperplanes in $\mathds{P}$ meeting $X$ transversally.
We say that $\cH$ \textit{realizes the Lefschetz recognition principle for $\cC$} if for every  $\cM_1,\cM_2 \in  \cC$, the flat bundles $\cM_1$ and $\cM_2$ are isomorphic if and only if $\cM_1|_{X\cap H}$ and $\cM_2|_{X\cap H}$ are isomorphic for every $H\in \mathcal{H}$.
\end{defin}

Our second main result  is a  proof of the Lefschetz recognition principle for $\cC = \MIC_r(X,D,R)$  in any dimension :

\begin{thm}\label{Lefchetz_connections_surface_intro}
Let $X$ be a smooth projective variety of dimension $n\geq 2$ over $k$.
Let $D$ be a simple normal crossing divisor of $X$.
Let $X\to \mathds{P}$ be a closed immersion in some projective space.
Then, there exists a polynomial $K : \Div(X,D)\oplus \mathds{Z}\to \mathds{Z}$ of degree $4$ such that for every  effective divisor $R$ of $X$ supported on $D$, for every integer $r\geq 0$, there is a dense open subset of $\Omega(R,r) \subset (\mathds{P}^{\vee})^{K(R,r)}$ such that every $(H_1,\dots, H_{K(R,r)})\in \Omega(R,r)$ realizes the Lefschetz recognition principle for $\MIC_r(X,D,R)$.
\end{thm}
See \cref{Lefschetz_recognition_hyperplane} for explicit conditions defining the dense open set $\Omega(R,r)$.\\ \indent

Although the statements of cohomological boundedness and the Lefschetz recognition principle were derived separately from  the existence of $\cM_r(X,D,R)$, it turns out that \textit{for surfaces}, the latter is a consequence of the former. 
Let us explain how.
Let $j : U\to X$ be the open immersion and let $\cE$ be a flat bundle on $U$.
The $\cD_X$-module $j_*\cE$ is coherent and as such, a canonical $\dim U$-cycle $CC( j_*\cE)$  of the cotangent bundle $T^*X$ was attached to it by Kashiwara \cite{KS}. 
This is the \textit{characteristic cycle   of $j_* \cE$}.
The cycle $CC( j_*\cE)$ tells how far the $\cD_X$-module $j_*\cE$ is from being a flat connection on $X$ and thus provides a geometric measure of the complexity of the differential system underlying $j_*\cE$.
The cycle $CC( j_*\cE)$  is Lagrangian.
If $X$ is a surface, $CC( j_*\cE)$ is thus a combination of the zero section of $T^*X$, the conormal bundles of the components of $D$ and the conormal bundles of some points of $D$.
If $\cE'$  is another flat bundle on $U$, then any smooth curve transverse to $D$ and avoiding the points whose conormal bundle contributes to $CC(j_*\cHom(\cE_1,\cE_2))$ where $\cE_1,\cE_2\in \{\cE,\cE'\}$ distinguishes $\cE$ and $\cE'$ after restriction.
See \cref{isomorphism_between_Hom}.
This observation translates the statement of \cref{Lefchetz_connections_surface_intro} into the problem of finding a universal bound for the number of points of $D$ whose conormal bundle contributes to $CC( j_*\cE)$ for  $\cE$ in  $\MIC_r(X,D,R)$.
This question is  cohomological  since by Kashiwara-Dubson's formula \cite{Dubson}, every such point  contributes to De Rham cohomology.
This is how \cref{Lefchetz_connections_surface_intro} follows from  \cref{coho_boundedness_surface_intro} for surfaces.\\ \indent
The main result of \cite{teyConjThese} implies that the points of the smooth locus of $D$ whose conormal bundle contributes to $CC( j_*\cE)$ and $CC( j_*\cEnd\cE)$ are exactly the turning points of $\cE$ along $D$.
From the above discussion, cohomological boundedness thus yields a universal bound on the number of turning points of  objects in $\MIC_r(X,D,R)$.
We prove  in any dimension a stronger  universal bound  (\cref{bound_degree_turning_locus}) :

\begin{thm}\label{bound_degree_turning_locus_intro}
Let $X$ be a smooth projective variety over $k$.
Let $D$ be a normal crossing divisor of $X$.
Let $X\to \mathds{P}$ be a closed immersion in some projective space.
Then, there exists a quadratic polynomial $K : \Div(X,D)\oplus \mathds{Z}\to \mathds{Z}$ such that for every  effective divisor $R$ of $X$ supported on $D$, for every integer $r\geq 0$ and every object $\cE$ of $\MIC_r(X,D,R)$, the degree of the turning locus of $\cE$ along $D$ is smaller than $K(R,r)$.
\end{thm}
For a flat bundle $\cE$, Kedlaya \cite{Kedlaya2} and Mochizuki \cite{Mochizuki1} proved that the turning locus of $\cE$ along $D$ can be eliminated  by enough blow-up.
\cref{bound_degree_turning_locus_intro} is thus consistent with Esnault-Langer's expectation that there should exist a universal bound on the number of blow-up needed to achieve good formal decomposition for objects in $\MIC_r(X,D,R)$.
%This means that there should exists an integer $C\geq 0$ depending only on $X,D,R$ and $r$ such that for every object $\cE$ of 
%$\MIC_r(X,D,R)$, there exists a composition $p : Y\lra X$ of at most $C$ blow-up morphisms of points above $D$ such that the pull-back connection $p^+\cE$ has good formal structure along $p^{-1}(D)$.
%Without any requirement on the length of $p : Y\lra X$, the existence of $p : Y\lra X$ was obtained independently by Kedlaya \cite{} \cite{}) and Mochizuki \cite{}\cite{}.
Their insight is that controlling the number of blow-up should give the required bound on the Chern classes of Deligne-Malgrange lattices  to construct  the moduli $\cM_r(X,D,R)$. 
By purity of turning loci proved by André for $D$ smooth \cite{andre} and  by Kedlaya in general \cite{Kedlaya3}, we deduce the following (\cref{bound_turning_locus})

\begin{thm}\label{bound_comp_turning_locus_intro}
Let $X$ be a smooth projective variety over $k$.
Let $D$ be a normal crossing divisor of $X$.
Let $X\to \mathds{P}$ be a closed immersion in some projective space.
Then, there exists a quadratic polynomial $K : \Div(X,D)\oplus \mathds{Z}\to \mathds{Z}$ such that for every  effective divisor $R$ of $X$ supported on $D$, for every integer $r\geq 0$ and every object $\cE$ of $\MIC_r(X,D,R)$, the set of irreducible components of the turning locus of $\cE$ along $D$ is smaller than $K(R,r)$.
\end{thm}
We summarize the interplay (at least for surfaces) of the above results and heuristic in the next diagram, to be understood with bounded rank and irregularity.
$$
\xymatrix@C-12pt{
                                                               & *++[F-,]\txt{Cohomological boundedness \\ (\cref{coho_boundedness_surface_intro})}   \ar@{=>}@/^2.5pc/[rd]   &            \\  
 *++[F-,]\txt{Existence  of moduli \\ $\cM_r(X,D,R)$}   \ar@{-->}@/^2.5pc/[ru] \ar@{-->}@/_2.5pc/[rd]    &  *++[F-,]\txt{Boundedness of resolutions \\ of turning points}  \ar@{-->}[l]   \ar@{-->}[r]  &    *++[F-,] \txt{Boundedness of  the \\  degrees of turning loci \\ (\cref{bound_degree_turning_locus_intro}) }     \ar@{=>}@/^2.5pc/[ld]      \\
                                                                    &   *++[F-,] \txt{Lefschetz recognition principle \\  (\cref{Lefchetz_connections_surface_intro})}   &             
}
$$
We finally introduce the main tool of this paper and describe an extra application.
Let $X$ be a smooth connected variety over $k$. 
Let $D$ be a normal crossing divisor of $X$.
Let $\cE$ be a flat bundle on $ j :U:=X\setminus D\to X$.
For every proper birational morphism $p : Y\to X$ where $Y$ is smooth and $E:=p^{-1}(D)$ is a normal crossing divisor, Kedlaya  attached to $\cE$ an effective divisor $\Irr(Y,p^+\cE)$ supported on $E$ whose coefficient along a component $Z$  is the generic irregularity number of $\cE$  along $Z$, or equivalently the coefficient of $T^*_ZY$ in $CC(\jmath_{*}\cE)$, where $\jmath: U\to Y$ is the inclusion.
This collection of  divisors organizes into an element $\Irr \cE$ of the group $\bDiv(X):=\varprojlim_{Y\rightarrow X} \Div(Y)$ of $b$-divisors, where  $p : Y\to X$ runs over the poset of morphisms as above and where the transition maps are  push-forward.
In particular, an element of $\bDiv(X)$ is a $\mathds{Z}$-valued function on the set of divisorial valuations on $X$.
The $b$-divisor $ \Irr\cE$  is the \textit{irregularity $b$-divisor of  $\cE$ along $D$}.
In the group $\bDiv(X)$ lies the subgroup of \textit{Cartier $b$-divisor} defined as $\varinjlim_{Y\rightarrow X}\Div(Y)$
where the transition maps are pull-backs.
Fundamental to this paper is   Kedlaya's theorem  \cite{Kedlaya3} that  $\Irr\cE$ is a nef Cartier $b$-divisor with the property that $\Irr\cE$  and $\Irr\cEnd\cE$   lie in the subgroup $\Div(X)$   of $\bDiv(X)$  if and only if $\cE$ has good formal decomposition along $D$.
%Put otherwise, a resolution of turning points for $\cE$ is a morphism $p : Y\lra X$   as above such that $\Irr(\cE)$  and $\Irr(\cEnd(\cE))$ lie in the image of $\Div(Y)\lra \bDiv(X)$.
%See \cref{??} for details.
We show in \cref{H_Kato} that  $\Irr\cE$ and $\rk \cE$ determine $CC(j_*\cE)$, and it is an intriguing question to understand how $CC(j_*\cE)$ relates to $\Irr\cE$.
If $X$ is a surface, we  answer this question  using a new operation on $b$-divisors, the \textit{partial discrepancy}  $\delta : \bDiv(X)\to \bDiv(X)$.
The partial discrepancy measures the failure of a $b$-divisor $Z$ to lie in the image of $\Div(X)\to \bDiv(X)$ in the sense that if it does, then $\delta Z=0$.
We prove that if $Z$ is a nef Cartier $b$-divisor, then $\delta Z$ is an effective $b$-divisor with finite support when viewed as a function on the set of divisorial valuations on $X$.
Hence, the sum of its values, denoted by $\int_X \delta Z$ is a well-defined positive integer.
For a subset $A$ of $X$, we denote by $\int_A \delta Z$ the sum of the values of $\delta Z$ over the set of divisorial valuations whose centers on $X$ lie in $A$.
Using the partial discrepancy, we prove the following   formula for the characteristic cycle of flat bundles on surfaces (\cref{computation_CC_surface}) :

\begin{thm}\label{computation_CC_surface_intro}
Let $X$ be a smooth projective surface over an algebraically closed field of characteristic $0$.
Let $D$ be a normal crossing divisor of $X$.
Let $\cE$ be a flat bundle on $ j :U:=X\setminus D\to X$.
Then
$$
CC(j_*\cE) = \rk \cE \cdot  CC(\mathcal{O}_{X}(\ast D)) +  LC(\Irr(X,\cE)) +  \sum_{P\in D}  \left( \int_{P} \delta\Irr\cE\right) \cdot T_{P}^*X 
$$
\end{thm}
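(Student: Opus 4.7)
The plan is to reduce to the case of good formal decomposition via the Kedlaya--Mochizuki resolution of turning loci, compute the characteristic cycle there, and then push forward, identifying the zero-dimensional contributions that appear with the partial discrepancy. Choose a proper birational morphism $p : Y \to X$ with $Y$ smooth, $E := p^{-1}(D)$ a simple normal crossing divisor, and $p^+\cE$ admitting good formal decomposition along every component of $E$. Write $\jmath : U \hookrightarrow Y$ for the open immersion, so that $j = p \circ \jmath$ and $j_*\cE \simeq p_+ \jmath_*(p^+\cE)$ as holonomic $\cD_X$-modules.

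On $Y$, the turning locus being empty, \cref{H_Kato} together with the local structure of flat bundles with good formal decomposition on a surface produces
$$
CC\bigl(\jmath_*(p^+\cE)\bigr) = \rk\cE \cdot CC(\cO_Y(\ast E)) + LC\bigl(\Irr(Y, p^+\cE)\bigr),
$$
with no zero-dimensional contribution, consistent with $\delta \Irr(p^+\cE) = 0$ on $Y$. This is the \emph{good} case of the formula on the resolution.

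Kashiwara's compatibility of characteristic cycles with proper pushforward gives
$$
CC(j_*\cE) = p_*^{CC} \, CC\bigl(\jmath_*(p^+\cE)\bigr),
$$
where $p_*^{CC}$ is the Lagrangian-cycle pushforward through the cotangent correspondence $T^*Y \leftarrow Y\times_X T^*X \to T^*X$. The regular holonomic contribution $\rk\cE \cdot CC(\cO_Y(\ast E))$ pushes forward to $\rk\cE \cdot CC(\cO_X(\ast D))$, since $p_+ \cO_Y(\ast E) = \cO_X(\ast D)$ and no exceptional corrections appear for the meromorphic structure sheaf. Decompose $\Irr(Y, p^+\cE) = \wt R + F$ into the strict transform $\wt R$ of $\Irr(X,\cE)$ and an exceptional part $F$. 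The strict transform pushes forward to $LC(\Irr(X,\cE))$, while $F$ is supported on $p$-exceptional divisors and therefore, after application of $p_*^{CC}$, contributes only at the points $p(Z) \in D$ below exceptional components $Z$.

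The main obstacle is the identification
$$
p_*^{CC} LC(F) = \sum_{P \in D} \Bigl(\int_P \delta\Irr\cE\Bigr) \cdot T^*_P X.
$$
This is where the partial discrepancy does its work: by construction, $\delta Z$ for a nef Cartier $b$-divisor $Z$ is the effective, finite-support $b$-divisor measuring the obstruction for $Z$ to descend to $\Div(X)$, in a way tailored to intersection-theoretic self-contributions on resolutions. Since $\Irr\cE$ is nef and Cartier by \cite{Kedlaya3}, $\int_P \delta\Irr\cE$ captures precisely the sum over all divisorial valuations centered at $P$ of the self-intersection contribution of the exceptional representative of $\Irr\cE$ above $P$. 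I would verify the identity locally around a single $P \in D$, factoring $p$ as a sequence of blow-ups and proceeding by induction: a one-step computation relates $\delta$ of a $b$-divisor before and after a blow-up to an excess intersection number on the exceptional curve, which by the projection formula matches the pushforward of $LC$ of that exceptional curve to $T^*_P X$. Summing these local contributions produces the claimed formula. Independence of the chosen resolution $p$ is automatic since $\Irr\cE$, and hence $\delta \Irr\cE$, are intrinsic $b$-divisor invariants of $\cE$, defined as projective limits over all resolutions.
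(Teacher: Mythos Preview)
Your global strategy—resolve, compute upstairs via the good formal decomposition case, then push forward the Lagrangian cycle—is different from the paper's, and as written has a genuine error in the bookkeeping.

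The claim that the strict transform piece satisfies $p_*^{CC}LC(\wt R)=LC(\Irr(X,\cE))$ is false. Take $D$ smooth irreducible, blow up a single point $P\in D$, and set $\Irr(X,\cE)=a\cdot D$. Then $LC(\wt R)=a\,T^*_{\wt D}Y+a\,T^*_{Q}Y$ where $Q=\wt D\cap E$. Passing to constructible functions via the Euler morphism, $\Eu(LC(\wt R))=-a\cdot\mathds{1}_{\wt D\setminus Q}$, whose integral over $p^{-1}(P)=E$ is $0$; hence $p_!\Eu(LC(\wt R))=-a\cdot\mathds{1}_{D\setminus P}$, whereas $\Eu(LC(\Irr(X,\cE)))=-a\cdot\mathds{1}_D$. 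The two differ by $-a\cdot T^*_PX$. So the separation ``strict transform part $\mapsto LC(\Irr(X,\cE))$, exceptional part $\mapsto$ discrepancy'' is not the correct accounting; there are cross-contributions at every node of $E$ that you have to track, and your sketch of the ``main obstacle'' does not address them.

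The paper avoids any global cycle pushforward. It works pointwise: for each closed $P\in D$ it computes $\chi(P,\Sol\cM^{\an})$ by induction on the number of blow-ups needed to resolve the turning points over $P$ (\cref{formula_local_chi}). One blows up once at $P$, writes the localization triangle for $(\Sol q^+\cM)|_E$ on the exceptional $\mathds{P}^1$, uses that on the complement of a finite set $S$ (the new turning points and the nodes $E\cap D'$) this restricts to a local system of rank $\irr(E,q^+\cM)$, and applies the inductive hypothesis at each point of $S$ together with \cref{equality_integrals} to pass from $\delta\Irr q^+\cM$ back to $\delta\Irr\cM$. This yields closed formulas for $\chi(P,\Sol\cM^{\an})$ in the smooth and nodal cases of $P\in D$, from which the multiplicity of $T^*_PX$ in $CC(\cM)$ is read off via the Euler isomorphism (\cref{computation_mult_TxX}). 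Combined with the good case \cref{CCgood} away from turning points, this gives the formula. The partial discrepancy is defined precisely so that this one-blow-up induction telescopes; that is the mechanism you are missing when you try to decompose a single global pushforward.
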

In the above formula, $LC(\Irr(X,\cE))$  is a Lagrangian cycle depending only on the divisor $ \Irr(X,\cE)$ supported on $D$ attached to $\cE$.
In particular it depends on $\cE$ only via generic data along the components of $D$.
A nice feature of the above formula is to make  explicit the means by which the lack of good formal decomposition  reflects in the characteristic cycle. 
By definition of the partial discrepancy, it shows in particular that the characteristic cycle is only sensitive to the turning points lying in the smooth locus of the successive inverse images of $D$ by the successive blow-up needed to achieve good formal decomposition.
From Kashiwara-Dubson formula  \cite{Dubson}, we deduce the following Grothendieck-Ogg Shafarevich type formula for flat bundles on surfaces (\cref{calculation_chi_surface}):

\begin{thm}\label{calculation_chi_surface_intro}
Let $X$ be a smooth projective surface over $\mathds{C}$.
Let $D$ be a normal crossing divisor of $X$.
Let $\cE$ be a flat bundle on $ j :U:=X\setminus D\to X$.
Then
$$
\chi(U,\DR\cE) = \rk \cE \cdot  \chi(U(\mathds{C})) +   (LC(\Irr(X,\cE)),T^*_X X)_{T^* X} +  \int_D \delta\Irr\cE
$$
\end{thm}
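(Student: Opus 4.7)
The plan is to deduce this Grothendieck--Ogg--Shafarevich type identity directly from \cref{computation_CC_surface_intro} by applying the Kashiwara--Dubson index formula. Recall that Kashiwara--Dubson's formula \cite{Dubson} asserts that for a holonomic $\cD_X$-module $\cM$ on a smooth projective variety $X$, the Euler characteristic of its De Rham complex is computed by the intersection
$$
\chi(X,\DR \cM) = (CC(\cM),T^{*}_X X)_{T^{*}X}.
$$
Applying this to $\cM = j_*\cE$ yields $\chi(U,\DR \cE) = (CC(j_*\cE), T^{*}_X X)_{T^{*}X}$, since the cohomology of $\DR j_*\cE$ on $X$ computes the algebraic De Rham cohomology of $\cE$ on $U$.

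With this in hand, I would substitute the formula of \cref{computation_CC_surface_intro} and use the bilinearity of the intersection pairing with the zero section to treat each term separately. First, for the zero-th order term, I would apply Kashiwara--Dubson once more to the $\cD_X$-module $\cO_X(*D) = j_*\cO_U$ to get
$$
(CC(\cO_X(*D)), T^{*}_X X)_{T^{*}X} = \chi(U,\DR \cO_U) = \chi(U(\mathds{C})),
$$
the last identity being Grothendieck's comparison theorem between algebraic and analytic De Rham cohomology on the smooth variety $U$. Second, the middle term $LC(\Irr(X,\cE))$ contributes exactly $(LC(\Irr(X,\cE)),T^{*}_X X)_{T^{*}X}$ by definition. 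Third, for each closed point $P \in D$, the conormal bundle $T^{*}_P X$ meets the zero section transversally at one point, so $(T^{*}_P X, T^{*}_X X)_{T^{*}X} = 1$; summing over $P \in D$ then converts $\sum_{P\in D} \bigl(\int_P \delta\Irr\cE\bigr) \cdot (T^{*}_P X, T^{*}_X X)_{T^{*}X}$ into $\int_D \delta\Irr\cE$, using the very definition of $\int_D \delta\Irr\cE$ as the sum of the values of $\delta\Irr\cE$ over divisorial valuations whose center lies in $D$ combined with the effectivity and finite support statements recalled before \cref{computation_CC_surface_intro}.

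There is essentially no obstacle here once \cref{computation_CC_surface_intro} is granted; the only minor subtlety is to make sure the three contributions match cleanly with the three terms of the right-hand side. In particular, the support of $\delta\Irr\cE$ being contained in the singular locus of the iterated inverse images of $D$, and thus projecting to points of $D$, makes the identification $\sum_{P\in D} \int_P \delta\Irr\cE = \int_D \delta\Irr\cE$ immediate. Combining the three computations above produces exactly the claimed formula.
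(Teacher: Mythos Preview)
Your proof is correct and follows exactly the route the paper takes: the paper simply says that \cref{calculation_chi_surface} follows from \cref{computation_CC_surface} by applying Kashiwara--Dubson's formula (\cref{Dubson}), and you have spelled out the three individual intersection computations that make this work. The only imprecision is in your last paragraph, where the description of the support of $\delta\Irr\cE$ is slightly off (it is supported on valuations centered at components of a pullback of $D$, not on the singular locus); but since valuations centered at a divisor of $X$ have $\delta=0$ by definition, the identity $\sum_{P\in D}\int_P\delta\Irr\cE=\int_D\delta\Irr\cE$ holds for the reason you give.
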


We now give a linear overview of the paper.
\cref{preparation} gathers some  general material on $\cD$-modules as well as Kedlaya's results on irregularity $b$-divisors.
\cref{coho_conjecture_section} introduces the cohomological boundedness conjecture for flat bundles with bounded rank and irregularity at infinity.
The main upshot of \cref{coho_conjecture_section}  is \cref{quasi_cor_reduction_to_bound_chi} stating that cohomological boundedness is equivalent to an a priori weaker conjecture, the \textit{$\chi$-boundedness conjecture} asking for a universal bound on the global Euler-Poincaré characteristic of the De Rham cohomology.
\cref{nearby_section} is the main technical core of this paper.
In a relative situation, it provides a mechanism for deducing $\chi$-boundedness out of $\chi$-boundedness for the generic fibre provided the turning locus is contained in a fibre.
See \cref{bound_chi_D}.
\cref{partial_discr} is devoted to the construction of the partial discrepancy $b$-divisor attached to a $b$-divisor on a smooth surface.
Its upshot is \cref{image_delta_of_nef_Cartier} ensuring that the partial  discrepancy of a nef Cartier $b$-divisor is a $b$-divisor with finite support when viewed as a function on the set of divisorial valuations.
\cref{formula_surfaces} is an application of \cref{partial_discr} to the proof of \cref{computation_CC_surface_intro} and \cref{calculation_chi_surface_intro}.
\cref{coho_bound_proof} gives the proof of cohomological boundedness for surfaces. 
\cref{Lefschetz} provides the proof of the Lefschetz recognition principle.
In \cref{Tannaka}, the techniques of this paper are used to obtain a Lefschetz theorem for the differential Galois group of flat bundles under some uncountability assumption of the base field.

\subsection*{Acknowledgement}
We thank N. Budur for pointing out the work of L. Xiao \cite{LXiao_CC_clean}.
We thank A. Langer for explaining to us the mismatch between Nitsure's construction and the expectation of what $\cM_r(X,D,R)$  should be when $R=0$.
We thank J. Sauloy for prompting the authors to look for genericity and not only existence of the hyperplanes from \cref{Lefchetz_connections_surface_intro}.
H.H. is supported by the National Natural Science Foundation of China (grant No. 11901287) and the Natural Science Foundation of Jiangsu Province (grant No. BK20190288).
%We start by introducing the \textit{partial discrepancy} of a $b$-divisor on a surface.
%When applied to the irregularity $b$-divisor of a connection $\cM$ as introduced in \cref{irr_b_divisor}, the partial discrepancy will tie together the De Rham cohomology of $\cM$  and the failure for $\cM$ to have good formal structure. 

\section{Geometric and $\cD$-module preparations}\label{preparation}
\subsection{Base field}
In this paper, $k$ will  denote a field of characteristic $0$.
If $X$ is a variety over $k$, we denote by $X^{\sm}$ the smooth locus of $X$ and by $X^{\sing}$ the singular locus of $X$.  
If $k\subset K$ is a field extension, we denote by $X_K$ the pull-back of $X$ over $K$.
\subsection{Pair of varieties}
 A \textit{pair over $k$} is the data $(X,D)$  of a smooth variety $X$ over $k$ with a reduced divisor $D$. 
An \textit{analytic pair} is the data $(X,D)$ of a complex manifold $X$ with a reduced divisor $D$.
In both situations, we denote by $\Div(X,D)$ the group of divisors of $X$ supported on $D$, that is, $\Div(X,D)$  is the free abelian  group over the set of  irreducible components of $D$. 
We denote by $\fdeg : \Div(X,D)\to \mathds{Z}$ the formal degree function, that is the group morphism  sending  each irreducible component of $D$ to $1$.
Note that if $k\subset K$ is a field extension, then $\fdeg R\leq \fdeg R_K$, where $R_K$ denotes the pull-back  of $R$ to $X_K$.
\\ \indent
If $P$ is a property of algebraic or analytic varieties, a $P$-pair $(X,D)$ will refer to a pair $(X,D)$ such that $X$ satisfies $P$.
Finally, a $P$-normal crossing pair $(X,D)$ is a $P$-pair such that $D$ has normal crossing.\\ \indent
A \textit{morphism of pairs $f: (Y,E)\to (X,D)$ over $k$} is a morphism $f : X\to Y$ of algebraic varieties over $k$ such that $f^{-1}(D)=E$.

\subsection{Transversality}
We recall the transversality conditions from \cite{Bei}.
\begin{defin}\label{transversal}
Let $f:Y\longrightarrow X$ be a morphism between smooth varieties over $k$. 
Let $C$ be a closed conical subset of $T^*X$.
Let  $\overline y$ be a geometric point above a point $y$ of $Y$.
We say that $f:Y\longrightarrow X$ is $C$-transversal at $y$ if 
$$
\Ker df_{\overline y}\bigcap C_{f(\overline y)}\subset \{0\}\subset T^*_{f(\overline y)}X
$$
where $df_{\overline y}: T^*_{f(\overline y)}X   \longrightarrow T^*_{\overline y}Y$ is the cotangent map of $f$ at $\overline y$. 
We say that $f:Y\longrightarrow X$ is $C$-transversal if it is $C$-transversal at every point of $Y$.
\end{defin}
 If $f:Y\longrightarrow  X$ is $C$-transversal, let  $f^\circ C$ be the scheme theoretic image of $Y\times_XC$ in $T^*Y$ by the canonical map $df: Y\times_XT^*X\longrightarrow T^*Y$. 
As proved in \cite[Lemma 1.2]{Bei}, the map $df:Y\times_XC\longrightarrow f^\circ C$ is finite and $f^\circ C$ is a closed conical subset of $T^*Y$.\\ \indent

In the next transversality criterion, we use an extra operation on closed conical subsets of cotangent bundles.
Let $f:X\longrightarrow Y$ be a proper morphism between smooth varieties over $k$. 
Let $C$ be a closed conical subset of $T^*X$.
We denote by $f_{\circ}C$ the closed conical subset of $T^*Y$ defined as the image of $df^{-1}(C)\subset X \times_Y T^*Y$ by the projection  $X \times_Y T^*Y\to T^*Y$, where $df : X\times_Y T^*Y\to T^*X$  is the canonical map.

\begin{lem}\label{transversality_after_closed_immersion}
Let 
\begin{equation}\label{diag_transversality_after_closed_immersion}
\xymatrix{
    Y     \ar[r]^-{j}  \ar[d]_-{f}    &     H \ar[d]^-{g}  \\
  X   \ar[r]^-{i}     &  P
}
\end{equation}
be a cartesian diagram of smooth varieties over $k$ whose arrows are  closed immersions.
Let $C$ be a closed conical subset of $T^*X$. 
Let $x\in Y$ and let us abuse notations by viewing $x$ as a point in $X$, $H$ and $P$.
Then, the following statements hold:
\begin{enumerate}\itemsep=0.2cm
\item If $g$ is $i_{\circ}C$-transversal at $x$, then $f$ is $C$-transversal at $x$.
\item Assume that $X$ and $H$ are transverse at $x$. 
If  $f$ is $C$-transversal at $x$, then $g$ is $i_{\circ}C$-transversal at $x$.
\end{enumerate}

\end{lem}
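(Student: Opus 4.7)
The plan is to localise at $p := g(j(x)) = i(f(x)) \in P$ and recast both statements as linear algebra on $V := T^*_p P$. Set $A := N^*_{X/P,f(x)} \subset V$ and $B := N^*_{H/P,j(x)} \subset V$, so that $di_{f(x)}$ is the surjection $V \twoheadrightarrow V/A = T^*_{f(x)}X$ with kernel $A$, and $dg_{j(x)}$ has kernel $B$. Write $W := C_{f(x)} \subset V/A$ and $\wt{W} := di_{f(x)}^{-1}(W) \subset V$; then by the very definition of $i_\circ C$ recalled just before the lemma, $(i_\circ C)_p = \wt{W}$, and $A \subseteq \wt{W}$ automatically.

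The central preparatory step is to identify $N^*_{Y/X,x}$ as a subspace of $V/A$. Because the square (\cref{diag_transversality_after_closed_immersion}) is cartesian, the ideal $I_Y \subset \cO_{P,p}$ is the sum $I_X + I_H$, and passing to the quotient $V = \fm_{P,p}/\fm_{P,p}^2$ this yields $N^*_{Y/P,x} = A+B$; factoring out $A$ then gives $N^*_{Y/X,x} = (A+B)/A \subset V/A$. Unwinding the definition of transversality, this says that $f$ is $C$-transversal at $x$ iff $((A+B)/A)\cap W = 0$, equivalently $(A+B) \cap \wt{W} \subseteq A$ inside $V$, whereas $g$ is $i_\circ C$-transversal at $j(x)$ iff $B \cap \wt{W} = 0$ inside $V$.

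With this setup, both implications become immediate linear algebra. For (1), I would take $v = a+b \in (A+B) \cap \wt{W}$ with $a \in A$, $b \in B$; since $A \subseteq \wt{W}$ we get $b = v-a \in B \cap \wt{W} = 0$, hence $v = a \in A$. For (2), the hypothesis that $T_{f(x)}X$ and $T_{j(x)}H$ generate $T_p P$ translates by orthogonality in $V$ to $A \cap B = 0$; then any $v \in B \cap \wt{W}$ lies in $(A+B) \cap \wt{W} \subseteq A$, so in $A \cap B = 0$. The only non-trivial ingredient is the identification $N^*_{Y/P,x} = A+B$, which follows formally from the cartesian property together with smoothness of the four schemes, so there is no real obstacle beyond careful bookkeeping with conormal spaces and the definition of $i_\circ$.
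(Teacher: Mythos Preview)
Your proof is correct and follows essentially the same approach as the paper. The paper argues with individual forms $\omega,\eta$ and tangent vectors (proving (1) by contrapositive, extending a form on $T_xX$ vanishing on $T_xY$ to one on $T_xP$ vanishing on $T_xH$), whereas you dualise systematically and work with the conormal subspaces $A,B\subset V$; the underlying linear algebra---$B\cap\wt W=0\Rightarrow (A+B)\cap\wt W\subseteq A$ for (1), and $A\cap B=0$ together with $(A+B)\cap\wt W\subseteq A$ forcing $B\cap\wt W=0$ for (2)---is identical in both presentations.
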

\begin{proof}
Assume that $f$ is not $C$-transversal at $x$.
This means that there exists a non zero form $\omega$ in $C_x$ vanishing on $T_xY \subset T_xX$.
Since (\ref{diag_transversality_after_closed_immersion}) is cartesian, so is the induced following diagram 
$$
\xymatrix{
    T_xY     \ar[r]  \ar[d]   &     T_xH \ar[d] \\
  T_xX   \ar[r]    &  T_xP
}
$$
Via the choice of a supplementary for $T_x Y$ in $T_xH$ and a supplementary for $T_xX  + T_xH$ in $T_xP$, we can extend $\omega$ to a non zero form $\eta$ on $T_xP$ vanishing on $T_xH$.
In particular, $\eta$ lies in $i_\circ C$ and $g$ is not $i_\circ C$-transversal at $x$ and $(1)$ is proven.\\ \indent
We now prove $(2)$. 
Assume that $T_xX$ and $T_x H$ generate $T_xP$ and that  $f$ is $C$-transversal at $x$.
We argue by contradiction.
Let $\eta$ be a non zero form of $i_\circ C$ above $x$ such that $\eta$ vanishes on $T_x H$.
If $i^*\eta=0$, the fact that  $T_xX$ and $T_x H$ generate $T_xP$ yields $\eta=0$, which is not possible.
Thus, $i^*\eta$ is a non zero form of $C$ above $x$.
By transversality assumption, $i^*\eta$ does not vanish on $T_xY$, so  $\eta$ does not vanish on $T_x H$. 
Contradiction.
This concludes the proof of \cref{transversality_after_closed_immersion}.
\end{proof}

We gather in \cref{easy_transversality} standard facts on transversality.
See \cite[1.2,2.2]{Bei} and \cite[3.4]{cc}.

\begin{lem}\label{easy_transversality}
Let $f : Y \to X$ be a morphism of smooth varieties over $k$.
Let $C,C' \subset T^*X$ be closed conical subsets.
Let $x\in X$.
Then the following hold :
\begin{enumerate}\itemsep=0.2cm
%\item $f$ is $T^*_X X$-transversal.
%\item  $f$ is $C$ and $C'$-transversal at $x$ if and only if $f$ is $C\cup C'$-transversal at $x$.
\item Assume $C_x\subset C'_x$. 
If $f$ is $C'$-transversal at $x$, then $f$ is $C$-transversal at $x$.
\item If $f$ is $C'$-transversal at $x$ and $C$-transversal at $x$, then $f$ is $C\cup C'$-transversal at $x$.
\item Let $g : Z \to Y$ be a morphism of smooth varieties over $k$.
The following conditions are equivalent :
\begin{enumerate}\itemsep=0.2cm
\item $f$ is $C$-transversal on a neighbourhood of $g(Z)$ and  $g$ is $f^{\circ}(C)$-transversal.
\item $f\circ g$ is $C$-transversal.
\end{enumerate}
\item If $C'' \subset T^*Y$ is a closed conical subset, the set of points $y\in Y$ at which $f$ is $C''$-transversal  is  open in $Y$.
\end{enumerate}
\end{lem}

\subsection{Universal hyperplane}\label{uni_hyperplane}
Let $V$ be a finite dimensional vector space over  $k$.
Let $V^{\ast}$ be its dual. 
Let $\mathds{P}=\mathrm{Spec}(\Syme V^{\ast})$ and $\mathds{P}^{\vee}=\mathrm{Spec}(\Syme V)$ be the associated projective spaces. 
The closed subscheme  of  $\mathds{P}\times_k\mathds{P}^{\vee}$ defined by
$$
Q:=\{(x,H)\in \mathds{P}\times_k\mathds{P}^{\vee} \;|\; x\in H\}
$$ 
is the universal family of hyperplanes of $\mathds{P}$. 
%Let $p:Q\longrightarrow \mathds{P}$ and $p^{\vee}:Q\longrightarrow\mathds{P}^{\vee}$ be the projections induced by the canonical projections $\mathrm{pr_1}:\mathds{P}\times_k\mathds{P}^{\vee}\longrightarrow \mathds{P}$ and $\mathrm{pr_2}:\mathds{P}\times_k\mathds{P}^{\vee}\longrightarrow\mathds{P}^{\vee}$. \\ \indent
%The Euler exact sequence gives rise to an exact sequence of vector bundles on $\mathds{P}$
%$$
%\xymatrix{
%0\ar[r] & \Omega^1_{\mathds{P}}(1)\ar[r]   &  \Gamma(\mathds{P},\mathcal O_{\mathds{P}}(1)) \times_k\mathcal O_{\mathds{P}} \ar[r]   &  \mathcal O_{\mathds P}(1) \ar[r] &   0
%}
%$$
%Taking its dual and passing to the projectivization yields a closed immersion 
%$$
%\mathds P(T^*\mathds P)\lra \mathds P^{\vee}\times_k\mathds P
%$$ 
%inducing an isomorphism $\theta:P(T^*\mathds P)\xrightarrow{\sim} Q$.
%Exchange the role of $\mathds {P}$ and $\mathds{P}^{\vee}$, we have an isomorphism $\theta^{\vee}:P(T^*\mathds P^{\vee})\xrightarrow{\sim} Q$. 
%The isomorphisms $\theta$ and $\theta^{\vee}$ are the \textit{Legendre transforms}.
Let $(X,D)$ be a quasi-projective normal crossing pair over $k$.
Let $i:X\longrightarrow\mathds P$ be an immersion. 
Put $X_Q=X\times_{\mathds{P}}Q$.  
Denote by $p^{\vee}_X:X_Q\longrightarrow \mathds{P}^{\vee}$ the composition of $X_Q\longrightarrow Q$ with the canonical projection $p^{\vee}:Q\longrightarrow \mathds{P}^{\vee}$. 
%For a hyperplane $H$ of $\mathds{P}_k$, we put $X_H=X\bigcap H\subset X$.
Then, Bertini's theorem ensures the existence of a dense open subset $V\subset \mathds{P}^{\vee}$ of hyperplanes $H$ transverse to $X$ such that  $D\cap H$ has normal crossings.
In particular, if  $\eta$ denotes the generic point of $\mathds{P}^{\vee}$, we have the following commutative diagram with cartesian squares:
$$
\xymatrix{
X_{\eta }\ar[r]  \ar[d]  &  X_V   \ar[r]  \ar[d]   & X_Q   \ar[r]_-{p_X} \ar[d]   &     X \ar[d]\\
           Q_\eta     \ar[r] \ar[d]            &          Q_V    \ar[r] \ar[d]       &         Q      \ar[r]_-{p}   \ar[d]_-{p^{\vee}}     & \mathds{P}\\
\eta \ar[r]          &  V \ar[r]       & \mathds{P}^{\vee}   & 
}
$$
where $(X_\eta,D_\eta)$ is a  quasi-projective pair over $\eta$  of dimension $\dim X-1$ and where $D_\eta$ has normal crossing.
The pair $(X_\eta,D_\eta)$ is the \textit{generic hyperplane section of $(X,D)$.}

%\begin{lem}\label{transversality_criterion}
%Let $\mathds{P}$ be a projective space over  $k$.
%Let $X$ be a smooth  subvariety  of $\mathds{P}$.
%Let $(x,H)\in X_Q$ such that $H$ is transverse to  $X$. 
%Let $C\subset T^*X$ be a closed conical subset. 
%The following conditions are equivalent :
%\begin{enumerate}\itemsep=0.2cm
%\item The closed immersion $i_H : X\cap H \to X$ is $C$-transversal at $x$.
%\item The morphism $p_X^{\vee} : X_Q \to \mathds{P}^{\vee}$ is $p^\circ_X C$-transversal at $(x,H)$.
%\end{enumerate}
%\end{lem}
%\begin{proof}
%Since $p_X : X_Q \to \mathds{P}$ is smooth, $p_X$ is $C$-transversal at $x$. 
%Hence, $p_X^{\vee} : X_Q \to \mathds{P}^{\vee}_k$ is $p^\circ_X C$-transversal at $(x,H)$ if and only if the pair $X\lla X_Q \to \mathds{P}^{\vee}$  is $C$-transversal at $x$. 
%From \cite[3.10]{cc}, the last condition is equivalent to $(x,H)$ not lying in $\mathds{P}(i_{\circ}C)\subset \mathds{P}(T^*\mathds{P})$, which is equivalent to the the line $(T^*_{H} \mathds{P})_x$ not lying in $(i_{\circ}C)_x$. 
%This is finally equivalent to  the line $(T^*_{X\cap H} X)_x$ not lying in $C_x$, %that is condition $(1)$. 
%Thus, lemma \ref{transversality_criterion} is proved.
%\end{proof}

The following lemma provides a slight generalization of \cite[1.3.7 (2)]{SaitoConductorDirectImage} :

\begin{lem}\label{generic_transversality}
Let $\mathds{P}$ be a projective space over  $k$.
Let $X$ be a smooth  subvariety of $\mathds{P}$.
Let $C\subset T^*X$ be a closed conical subset of pure dimension $\dim X$.
Then there exists a dense open subset  of hyperplanes $H$ transverse to $X$ such that $X\cap H \to X$ is $C$-transversal.
\end{lem}
\begin{proof}
By Bertini's theorem, there exists a dense open subset $V_1\subset \mathds{P}^{\vee}$  of hyperplanes transverse to $X$.
Observe that $i_\circ C$ has pure dimension $\dim \mathds{P}$.
Thus \cite[1.3.7 (2)]{SaitoConductorDirectImage} ensures that there exists a dense open subset $V_2 \subset \mathds{P}^{\vee}$  of hyperplanes $H$ such that $H\to \mathds{P}$ is $i_\circ C$-transversal.
From  \cref{transversality_after_closed_immersion}-$(1)$,$V_1\cap V_2$  thus meets our requirements.
\end{proof}

\subsection{Characteristic cycle for coherent $\cD$-modules}
Let $k$ be a field of characteristic $0$.
Let $X$ be a smooth variety over $k$.
We endow $\cD_X$ with its filtration $F$ by the order of differential operators.
In particular, there is a canonical isomorphism $\gr^F \cD_X\simeq \pi_*\cO_{T^*X}$ where $\pi : T^*X \longrightarrow X$ is the canonical projection.
Every coherent $\cD_X$-module $\cM$ admits a filtration $F_{\cM}$ compatible with $F$ such that $\gr^{F_{\cM}}\cM$ is a coherent $\pi_*\cO_{T^*X}$-module.
Then, 
$$
\widetilde{\gr^{F_{\cM}}}\cM:=\mathcal O_{T^*X}\otimes_{\pi^{-1}\pi_*\cO_{T^*X}}\gr^{F_{\cM}}\cM
$$
is a coherent $\mathcal O_{T^*X}$-module.
Its characteristic cycle is a cycle of $T^*X$ which does not depend on a choice of  filtration as above.
It is the \textit{characteristic cycle of $\cM$}.
We denote it by $CC(\cM)$.
The support of $CC(\cM)$ is a closed conical subset of $T^*X$ called the \textit{singular support of $\cM$}.
We denote it by $SS(\cM)$.

\begin{rem}\label{CC_flat_base_change}
The formation of the characteristic cycle commutes with flat base change.
In particular, it commutes with base field extension and analytification.
\end{rem}
As a consequence of \cite{Gabber_CC}, we have the following

\begin{thm}\label{involutive_theorem_Gabber}
For any coherent $\mathcal \cD_X$-module $\cM$, the irreducible components of  $SS(\cM)$ have dimension $\geq \dim X$.
\end{thm}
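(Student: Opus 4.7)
The plan is to deduce the dimension bound from Gabber's involutivity theorem via a standard symplectic-geometry argument. First I would locally choose a good filtration $F_{\cM}$ on $\cM$ compatible with the order filtration $F$ on $\cD_X$, yielding the coherent $\mathcal{O}_{T^*X}$-module $\widetilde{\gr^{F_\cM}}\cM$ whose support is by definition $SS(\cM)$. Let $\cI\subset \mathcal{O}_{T^*X}$ be the annihilator ideal of this coherent module, so that $SS(\cM) = V(\cI)$ set-theoretically, independently of the choice of good filtration.

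Next I would invoke Gabber's theorem \cite{Gabber_CC}: every minimal prime $P$ over $\cI$ is involutive for the canonical Poisson bracket on $T^*X$ induced by the symplectic form, that is $\{P,P\}\subset P$. The minimal primes over $\cI$ are precisely the defining ideals of the irreducible components $V$ of $SS(\cM)$. Geometrically, the involutivity of such a $P$ says that $V$ is coisotropic: at a smooth point $x\in V$, the symplectic orthogonal $(T_x V)^{\perp}$ inside $T_x(T^*X)$ is contained in $T_x V$.

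Finally I would conclude via the elementary linear algebra fact that any coisotropic subspace of a symplectic vector space of dimension $2n$ has dimension at least $n$. Since $T^*X$ has dimension $2\dim X$, this yields $\dim V \geq \dim X$ for every irreducible component $V$ of $SS(\cM)$, as required. The only nontrivial input is Gabber's theorem itself, which I treat as a black box; the main obstacle, already absorbed in the citation, is the commutative-algebra content of passing from involutivity of the radical $\sqrt{\cI}$ to involutivity of each minimal prime, while the passage from coisotropicity to the dimension bound is purely formal.
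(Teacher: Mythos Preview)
Your argument is correct and is precisely the standard deduction of the dimension bound from Gabber's involutivity theorem. The paper itself does not give a proof: it simply records the statement as a consequence of \cite{Gabber_CC} and moves on. What you have written is exactly the content hidden behind that citation --- Gabber's theorem gives that each minimal prime over the annihilator ideal is Poisson-involutive, hence each irreducible component of $SS(\cM)$ is coisotropic for the canonical symplectic structure on $T^*X$, and then the linear-algebra fact $\dim W \geq n$ for any coisotropic $W$ in a symplectic space of dimension $2n$ finishes the job.
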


\begin{defin}
We say that a coherent $\mathcal \cD_X$-module $\cM$ is \textit{holonomic} if $SS(\cM)$ has pure dimension $\dim X$.
\end{defin}
 
Holonomy is the correct finiteness condition for $\mathcal \cD_X$-modules, as will be clear from \cref{bound_cohomology} below.

\subsection{De Rham cohomology}
Let $k$ be a field of characteristic $0$.
Let $X$ be a smooth algebraic $k$-variety of dimension $d$.
Let $\cM$ be a $\cD_X$-module. 
We denote by 
$$
\DR \cM : \cM\lra \Omega^1_X \otimes_{\cO_X}\cM \lra \cdots \lra \Omega^d_X \otimes_{\cO_X}\cM
$$
the algebraic De Rham complex of $\cM$, where $\cM$ lies in degree $0$.
The \textit{algebraic De Rham cohomology of $\cM$} is the cohomology of $\DR \cM$.\\ \indent
If $k=\mathds{C}$, let $\cM^{\an}$ be the analytification of $\cM$ and define similarly the analytic De Rham complex of $\cM$. 
The \textit{analytic De Rham cohomology of $\cM$} is the cohomology of $\DR \cM^{\an}$.\\ \indent
%From a theorem of Kashiwara \cite{Ka2}, the complex $\DR \cM^{\an}[d]$ is a perverse complex on $X(\mathds{C})$.
In the proper complex setting, GAGA theorem for quasi-coherent cohomology identifies
algebraic and analytic De Rham cohomology. 
See \cite[6.6.1]{Del}.
This is the following

\begin{prop}\label{GAGA_DR}
Let $X$ be a smooth projective variety over $\mathds{C}$.
Let $\cM$ be a quasi-coherent $\cD_X$-module.
Then, the canonical comparison morphism
$$
H^{\ast}(X,\DR \cM)\lra H^{\ast}(X(\mathds{C}),\DR \cM^{\an})
$$ 
is an isomorphism.
\end{prop}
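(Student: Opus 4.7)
My plan is to reduce the assertion to Serre's GAGA for coherent $\cO_X$-modules by combining a hypercohomology spectral sequence argument with a filtered-colimit argument.

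The first step is to replace the de Rham complex by its $\cO_X$-linear pieces. Although the differentials of $\DR\cM$ are not $\cO_X$-linear, each term $\Omega^p_X\otimes_{\cO_X}\cM$ is quasi-coherent, as $\Omega^p_X$ is locally free of finite rank and $\cM$ is quasi-coherent. The stupid filtration of the bounded complex $\DR\cM$ yields the standard hypercohomology spectral sequence
$$
E_1^{p,q}=H^q(X,\Omega^p_X\otimes_{\cO_X}\cM)\Longrightarrow H^{p+q}(X,\DR\cM),
$$
and an analogous spectral sequence computes $H^{\ast}(X(\mathds{C}),\DR\cM^{\an})$. The comparison morphism in the proposition is induced by a morphism of spectral sequences, so by the usual convergence argument for bounded spectral sequences it suffices to show that the comparison map is an isomorphism on $E_1$. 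In other words, the task reduces to proving GAGA for arbitrary quasi-coherent $\cO_X$-modules on the smooth projective variety $X$.

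The second step handles this. Writing an arbitrary quasi-coherent $\cO_X$-module $\cF$ as the filtered colimit $\cF=\colim_{\alpha}\cF_\alpha$ of its coherent $\cO_X$-submodules, Serre's GAGA supplies isomorphisms $H^q(X,\cF_\alpha)\iso H^q(X(\mathds{C}),\cF_\alpha^{\an})$ for each $\alpha$. Since $X$ is a noetherian scheme, Grothendieck's theorem gives $H^q(X,\cF)=\colim_\alpha H^q(X,\cF_\alpha)$. On the analytic side, the inverse-image-tensor description $\cG^{\an}=\cO_{X(\mathds{C})}\otimes_{p^{-1}\cO_X} p^{-1}\cG$ shows that analytification commutes with filtered colimits of $\cO_X$-modules, and sheaf cohomology on the compact Hausdorff space $X(\mathds{C})$ also commutes with filtered colimits of abelian sheaves. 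Passing to the colimit in the coherent isomorphisms yields the desired isomorphism for $\cF$, and plugging back into the $E_1$ term above completes the proof.

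The key input—and the step I expect to require the most care—is the commutation of analytic sheaf cohomology with filtered colimits on the compact manifold $X(\mathds{C})$. This is a standard fact for compact Hausdorff (or more generally for locally compact of finite cohomological dimension) spaces, provable either via Godement resolutions combined with the finiteness of covering dimension of $X(\mathds{C})$, or via the general statement in \cite[Prop.~2.8.1]{KS} on the exchange of $R\Gamma$ with filtered colimits on such spaces. Everything else is formal spectral-sequence bookkeeping once Serre's GAGA for coherent sheaves is invoked.
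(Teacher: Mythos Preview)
Your argument is correct and is precisely the standard proof: reduce via the Hodge-to-de Rham (stupid filtration) spectral sequence to GAGA for quasi-coherent $\cO_X$-modules, then pass to the coherent case by writing a quasi-coherent sheaf as a filtered colimit of coherent subsheaves and using that both algebraic cohomology (noetherian scheme) and analytic cohomology (compact Hausdorff space of finite dimension) commute with filtered colimits. The paper does not give its own proof of this proposition; it simply records the statement and refers to \cite[6.6.1]{Del}, where exactly this argument is carried out. One cosmetic remark: the precise numbering you cite in \cite{KS} for the colimit-commutation statement may not match; a safer reference is Godement or Iversen, but the mathematical content is unimpeachable.
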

When \textit{holonomy} is imposed, algebraic De Rham cohomology is finite dimensional even when the ambient variety is not proper.
\begin{prop}\label{bound_cohomology}
Let $X$ be a smooth variety over $k$. 
Let $\cM$ be an holonomic $\cD_X$-module.
Then, for every integer $n$, the space $H^{n}(X,\DR \cM)$ is finite dimensional over $k$ and vanishes if $n\neq 0,\dots, 2\dim X$.
\end{prop}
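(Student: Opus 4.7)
The plan is to first pin down the vanishing range by elementary means, and then reduce the finite-dimensionality statement to the proper case by compactification.

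For the vanishing outside $[0, 2\dim X]$, observe that $\DR\cM$ is a complex of quasi-coherent $\cO_X$-modules concentrated in cohomological degrees $0,1,\ldots,d$, where $d := \dim X$. The hypercohomology spectral sequence reads $E_1^{p,q} = H^q(X, \Omega^p_X \otimes_{\cO_X} \cM)$, which vanishes for $p \notin [0,d]$ by construction of $\DR\cM$ and for $q > d$ by Grothendieck's vanishing theorem for quasi-coherent sheaves on the $d$-dimensional noetherian scheme $X$. Consequently $H^n(X,\DR\cM) = 0$ for $n \notin [0, 2d]$.

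For finite-dimensionality, I would compactify: via Nagata's theorem combined with Hironaka's resolution of singularities, choose an open immersion $j : X \hookrightarrow \ol{X}$ with $\ol{X}$ smooth projective over $k$ and $\ol{X} \setminus X$ a normal crossing divisor. Form the $\cD$-module direct image $j_{+}\cM$; preservation of holonomy under open immersion yields a holonomic $\cD_{\ol{X}}$-module. The compatibility $\DR j_{+}\cM \simeq Rj_{*}\DR\cM$ then gives a canonical isomorphism $R\Gamma(X,\DR\cM) \simeq R\Gamma(\ol{X},\DR j_{+}\cM)$, reducing the claim to the proper case.

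To conclude, I would invoke preservation of holonomy under proper $\cD$-module pushforward, applied to the structure morphism $\ol{X}\to \Spec k$: the complex $R\Gamma(\ol{X},\DR j_{+}\cM)$ computes the $\cD$-module direct image of $j_{+}\cM$ to the point, hence is a holonomic $\cD_{\Spec k}$-module, i.e., a finite-dimensional $k$-vector space in each degree. Alternatively, one may reduce to $k=\mathds{C}$ by faithfully flat base change, apply \cref{GAGA_DR} to compare algebraic and analytic De Rham cohomology of $j_{+}\cM$, and then use Kashiwara's constructibility theorem to conclude that the analytic De Rham complex of $(j_{+}\cM)^{\an}$ is constructible on the compact manifold $\ol{X}(\mathds{C})$, hence has finite-dimensional hypercohomology. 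The main obstacle is the input of preservation of holonomy under proper pushforward (or equivalently the constructibility theorem); these are classical but nontrivial results from the general theory of holonomic $\cD$-modules, not direct consequences of the preparatory material already developed in the paper.
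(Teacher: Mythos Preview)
Your proposal is correct and, via your alternative (b), follows essentially the same route as the paper: compactify, reduce to $\mathds{C}$, apply \cref{GAGA_DR}, and invoke Kashiwara's constructibility/perversity theorem on the compact manifold $\ol{X}(\mathds{C})$. The paper also deduces the vanishing range from perverse sheaf theory on $\ol{X}(\mathds{C})$, whereas your spectral sequence plus Grothendieck vanishing argument handles it more directly and purely algebraically; your alternative (a) via preservation of holonomy under proper push-forward to a point is likewise valid and avoids the analytic detour entirely.
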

\begin{proof}
Since the formation of the De Rham complex commutes with push-forward \cite[4.2.5]{HTT} and since holonomy is preserved under push-forward \cite[3.2.3]{HTT}, we can at the cost of replacing $X$ by a smooth compactification suppose that $X$ is proper over $k$.
From  \cref{coho_change_base_field},
%Liu2.27
we reduce to the case where $k=\mathds{C}$.
From \cref{GAGA_DR}, we are left to prove a variant of \cref{bound_cohomology} where now  $X$ is a smooth compact complex manifold and where $\cM$ is an holonomic $\cD_X$-module.
From Kashiwara's perversity theorem \cite{Ka2}, we are left to prove that for every perverse complex $\cF$ on $X$, the $\mathds{C}$-vector space $H^{n}(X,\cF)$ is finite dimensional and vanishes if $n\neq -\dim X,\dots ,\dim X$.
This is a standard fact  from the theory of perverse complexes \cite[4.2.4]{BBD}.
\end{proof}

\begin{rem} \label{coho_change_base_field}
As a consequence of the invariance of quasi-coherent cohomology under flat base change, De Rham cohomology is invariant under base field extensions.
\end{rem}

\subsection{The Solution and the Irregularity complexes}
Let $X$ be a complex manifold of dimension $d$.
Let $\cM$ be a $\cD_X$-module. 
The \textit{solution complex of $\cM$} is defined as 
$$
\Sol \cM := \RcHom_{\cD_X}(\cM,\cO_X)
$$
The following theorem is due to Kashiwara \cite{Ka2}.
\begin{thm}\label{DR_perverse}
Let $X$ be a smooth complex manifold.
Let $\cM$ be an holonomic $\cD_X$-module. 
Then the complexes $\Sol \cM[\dim X]$ and $\DR \cM[\dim X]$ are perverse complexes.
\end{thm}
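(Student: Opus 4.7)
The plan is to deduce the theorem from two separate inputs: constructibility of the two complexes, and a dimensional estimate for the support of their cohomology sheaves, the latter being transported to a cosupport estimate by local Verdier duality. I would treat $\Sol\cM[\dim X]$ first and then derive the statement for $\DR\cM[\dim X]$ via the identification with the solution complex of the holonomic dual.

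First, I would invoke Kashiwara's constructibility theorem for holonomic $\cD_X$-modules: both $\Sol\cM$ and $\DR\cM$ are $\mathds{C}$-constructible complexes on $X^{\an}$. Concretely, I would choose a Whitney stratification $\cS=\{S_\alpha\}$ of $X$ adapted to $SS(\cM)$ in the sense that $SS(\cM)\subset\bigcup_\alpha T^*_{S_\alpha}X$; this is possible because $SS(\cM)$ is a closed conical Lagrangian subset of $T^*X$ by \cref{involutive_theorem_Gabber} and the holonomy hypothesis. The cohomology sheaves of $\Sol\cM$ and $\DR\cM$ are then locally constant along each $S_\alpha$.

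Second, I would establish the support condition $\dim\Supp \cH^i(\Sol\cM[\dim X])\leq -i$. The core of the argument is a local computation at a generic point $x$ of a stratum $S_\alpha$ of dimension $d_\alpha$. Passing to a holomorphic normal slice $Y$ of $S_\alpha$ at $x$, the non-characteristic restriction theorem implies that $i_Y^*\Sol\cM \simeq \Sol(i_Y^{\dagger}\cM)[\dim X-d_\alpha]$ up to a shift, and $i_Y^{\dagger}\cM$ is a holonomic $\cD_Y$-module supported at the isolated point $\{x\}$. Such a module has solution complex concentrated in nonnegative degrees at $x$, which yields the desired bound $\cH^i(\Sol\cM[\dim X])|_{S_\alpha}=0$ for $i>-d_\alpha$, i.e.\ the required support estimate.

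Third, I would obtain the cosupport (dual) condition through the local duality isomorphism $\mathds{D}\Sol(\cM)\simeq \DR(\cM)[2\dim X]$ combined with the identification $\DR(\cM)\simeq \Sol(\mathds{D}_{\cD}\cM)$ (up to the appropriate shift), where $\mathds{D}_{\cD}\cM$ is the holonomic dual. Since $\mathds{D}_{\cD}\cM$ is again holonomic, applying the support estimate of the previous step to $\mathds{D}_{\cD}\cM$ transforms into the cosupport estimate $\dim\Supp \cH^i(\mathds{D}\Sol\cM[\dim X])\leq -i$ for $\Sol\cM[\dim X]$, completing the verification of the two perversity inequalities. The same argument applied to $\mathds{D}_{\cD}\cM$ simultaneously yields perversity of $\DR\cM[\dim X]=\Sol(\mathds{D}_{\cD}\cM)[\dim X]$ (up to shift).

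The main obstacle is the genuinely local analytic step in the support estimate: showing that the solution complex of a holonomic $\cD$-module supported at a point is concentrated in nonnegative degrees. This rests on Kashiwara's equivalence (holonomic modules supported at a point correspond to finite-dimensional vector spaces) together with the explicit computation of $\RcHom_{\cD_Y}(\cM,\cO_Y)$ via a Koszul-type resolution; the rest of the proof is essentially a formal repackaging through stratifications and duality.
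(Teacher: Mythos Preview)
The paper does not prove this theorem; it is stated as a result of Kashiwara with a citation to \cite{Ka2}. Your outline follows the standard strategy due to Kashiwara---constructibility, the support condition via restriction to a normal slice, and the cosupport condition via duality and the holonomic dual---so in spirit it matches the cited source.

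There is, however, a genuine slip in your second step. The non-characteristic pullback $i_Y^{+}\cM$ to a normal slice $Y$ through $x\in S_\alpha$ is \emph{not} supported at $\{x\}$: on $Y$ intersected with the open stratum it is still a flat connection of the same rank as $\cM$. Only $Y\cap S_\alpha=\{x\}$ holds. The way to close the support estimate is the cohomological amplitude bound: since $\cO_Y$ admits a locally free $\cD_Y$-resolution of length $\dim Y$ (the Spencer/Koszul complex), one has $\cH^i\Sol(\cN)=0$ for every coherent $\cD_Y$-module $\cN$ and every $i>\dim Y$. Applying this with $\cN=i_Y^{+}\cM$ on the $(\dim X-d_\alpha)$-dimensional slice, together with the Cauchy--Kowalevski--Kashiwara comparison $i_Y^{-1}\Sol\cM\simeq\Sol(i_Y^{+}\cM)$ (no shift in the non-characteristic case), gives $\cH^i(\Sol\cM)_x=0$ for $i>\dim X-d_\alpha$, which is exactly the support condition for $\Sol\cM[\dim X]$. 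Your cosupport step via $\DR\cM\simeq\Sol(\mathds{D}_{\cD}\cM)$ (up to shift) and Verdier duality then goes through as you indicate.
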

Let $Z$ be  an hypersurface in $X$. 
Let $i: Z\to X$ be the inclusion. 
Let  $\cM(\ast Z)$ be the localization of $\cM$ along $Z$. 
Define the \textit{irregularity complex of $\cM$ along $Z$} by 
$$
\Irr_Z^*\cM := i_* i^*\Sol \cM(\ast Z)
$$
The following theorem is due to Mebkhout \cite{Mehbgro}.
\begin{thm}\label{Irr_pervers}
Let $X$ be a  complex manifold.
Let $\cM$ be an holonomic $\cD_X$-module. 
Let $Z$ be  an hypersurface in $X$. 
Then $\Irr_Z^*\cM[\dim X]$ is a perverse complex  supported on $Z$. 
\end{thm}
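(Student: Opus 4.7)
The plan is to reduce the statement to a local question on $X$, handle the curve case by Hukuhara-Turrittin asymptotic analysis, and bootstrap to higher dimensions via Lefschetz slicing combined with Kashiwara's non-characteristic restriction theorem.

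The support statement $\Supp \Irr_Z^*\cM \subset Z$ is immediate from the definition, since any complex of the form $i_* i^*(-)$ is supported on $Z$. For perversity, observe first that $\cM(\ast Z)$ is holonomic because localization along an hypersurface preserves holonomy, so \cref{DR_perverse} ensures that $\Sol\cM(\ast Z)[\dim X]$ is perverse on $X$. Since $i_*$ is t-exact and $i^*$ is right t-exact for middle perversity, $\Irr_Z^*\cM[\dim X]$ automatically lies in ${}^p D^{\leq 0}(X)$. The content of the theorem is therefore the vanishing in all strictly negative perverse degrees, which is Mebkhout's \emph{positivity of irregularity}.

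Since perversity is local and $\Irr_Z^*\cM$ depends only on $\cM(\ast Z)$, I may assume $X$ is a small polydisc and $\cM = \cM(\ast Z)$. In dimension one, $Z$ is a finite set of points and $\cM$ is a meromorphic connection. By Hukuhara-Turrittin-Sibuya, after a finite ramified cover of a neighborhood of each $z \in Z$, the pull-back of $\cM$ decomposes as a direct sum of terms $\cE^{\phi}\otimes \cR$, with $\phi$ a polynomial in a fractional power of a local coordinate and $\cR$ regular singular. The regular-singular summands contribute $0$ to $\Irr_Z^*$ by Deligne's comparison theorem, and for an elementary exponential $\cE^{\phi}$ one checks by a direct computation (writing an explicit free $\cD_X$-resolution and reading off the cohomology of the solution complex) that $i^*\Sol\cE^{\phi}$ is concentrated in degree $1$ at $z$ with dimension the irregularity of $\cE^{\phi}$. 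After the shift by $\dim X=1$ this is a skyscraper in degree $0$, hence perverse.

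For $\dim X > 1$ and a point $x \in Z$, I pick a generic smooth curve germ $C$ through $x$, transverse to $Z$ at $x$ and non-characteristic for $\cM(\ast Z)$. The Cauchy-Kovalevskaya-Kashiwara theorem identifies $\Sol\cM(\ast Z)|_{C}$ with $\Sol(\cM(\ast Z)|_{C})$ up to a shift by $\dim X - 1$, so the perversity of $\Irr_Z^*\cM[\dim X]$ at $x$ follows from the curve case already treated. The main obstacle in this step is to guarantee that enough non-characteristic transverse curves exist through turning points of $\cM$ along $Z$: this would be handled by first applying the Kedlaya-Mochizuki theorem on good formal structures to reach a normal crossings configuration where the desired transversality is straightforward to arrange, and then tracking how the characteristic variety and the irregularity complex behave under the intervening blow-ups.
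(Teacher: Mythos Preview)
The paper does not supply its own proof of this theorem: it is stated as a result of Mebkhout and cited to \cite{Mehbgro}. So there is no in-paper argument to compare against; what matters is whether your sketch stands on its own.

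It does not. You correctly isolate the nontrivial direction as the cosupport bound ${}^p D^{\geq 0}$, but your inductive step cannot reach it. Restricting along a non-characteristic curve $C$ computes $i_C^*\Sol\cM(\ast Z)$, hence only the \emph{stalk} of $\Irr_Z^*\cM$ at points of $C$; this is exactly the support-side information you already obtained from right t-exactness of $i^*$, and says nothing about $i_S^!$ on strata of $Z$. Concretely, already for a good connection along a simple normal crossing $Z$ in a surface, the stalk of $\Irr_Z^*\cM$ at a crossing point lives in degrees $1$ and $2$ (see the computation behind \cref{vanishing}), so it is \emph{not} concentrated in a single degree as a curve restriction would predict---and indeed no curve through such a point is non-characteristic, because $T^*_P X$ lies in the singular support. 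The same obstruction appears at every turning point. Your proposed repair via Kedlaya--Mochizuki does not close the gap: after blowing up you must push forward, and proper push-forward is not t-exact for the perverse $t$-structure, so perversity upstairs does not descend. Mebkhout's actual proof goes by an entirely different route, comparing analytic solutions with formal and moderate-growth solutions and extracting positivity from that comparison; a Lefschetz-type induction, if one insists on it, would have to slice by \emph{hypersurfaces} (codimension one) with a careful treatment of the exceptional locus, not by curves.
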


%\begin{rem}
%If $X$ is algebraic and if $\cM$ is the analytification of an algebraic $\cD$-module $\cN$, we will abuse notations by denoting $\Sol \cN$ for $\Sol \cN^{\an}$ and similarly with the irregularity complex.
%\end{rem}

The De Rham and solution complexes are related under sheaf duality.  
See \cite{DualityMebkhout}.
\begin{thm}\label{dualityDR/Sol}
Let $X$ be a complex manifold.
Let $\cM$ be an holonomic $\cD_X$-module. 
There is a canonical isomorphism of complexes
$$
\DR \cM \lra \RcHom_{\mathds{C}}(\Sol \cM, \mathds{C})
$$
in the derived category of complexes of sheaves.
\end{thm}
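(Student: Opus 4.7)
The plan rests on combining local duality for holonomic $\cD_X$-modules with biduality for perverse sheaves. Set $d = \dim X$ and write $\mathds{D}_{\mathrm{top}}(-) := \RcHom_{\mathds{C}}(-, \omega_X^{\mathrm{top}})$ for Verdier duality on $X$ regarded as a real $2d$-manifold, with $\omega_X^{\mathrm{top}} \simeq \mathds{C}_X[2d]$. The canonical morphism in the statement would be obtained from the tautological evaluation pairing $\cM \otimes_{\mathds{C}} \Sol \cM \lra \cO_X$, which upon wedging with forms yields a pairing $\DR \cM \otimes^L_{\mathds{C}} \Sol \cM \lra \DR \cO_X \simeq \mathds{C}_X$; its adjoint is the canonical morphism of the theorem.

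Since $\RcHom_{\mathds{C}}(\Sol \cM, \mathds{C}) \simeq \mathds{D}_{\mathrm{top}}(\Sol \cM[d])[-d]$, proving the theorem is equivalent to showing that $\DR\cM[d]$ and $\Sol\cM[d]$ are Verdier dual as perverse sheaves. I would first establish the dual identity
\begin{equation*}
\Sol\cM[d] \simeq \mathds{D}_{\mathrm{top}}(\DR\cM[d]),
\end{equation*}
and then conclude by biduality: since $\DR\cM[d]$ is perverse by \cref{DR_perverse}, the functor $\mathds{D}_{\mathrm{top}}^2$ is the identity on it, so applying $\mathds{D}_{\mathrm{top}}$ to the displayed identity yields $\DR\cM[d] \simeq \mathds{D}_{\mathrm{top}}(\Sol\cM[d])$, which is the desired isomorphism up to the shift.

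To produce the displayed identity, introduce the holonomic duality functor $\mathds{D}\cM := \RcHom_{\cD_X}(\cM, \cD_X \otimes_{\cO_X} \omega_X^{-1})[d]$, which preserves the holonomic category. The proof splits in two. The first step is a purely algebraic identification $\Sol\cM \simeq \DR(\mathds{D}\cM)$, obtained by choosing a locally projective resolution $\cL^\bullet \twoheadrightarrow \cM$ of $\cD_X$-modules and recognizing both sides as the De Rham complex of its $\cD_X$-dual. The second, and substantive, step is the topological compatibility
\begin{equation*}
\DR(\mathds{D}\cM) \simeq \mathds{D}_{\mathrm{top}}(\DR\cM),
\end{equation*}
proved by computing $\DR$ via the Spencer--Koszul resolution of $\cO_X$ over $\cD_X$ and exploiting the quasi-isomorphism $\DR(\cD_X) \simeq \omega_X^{\mathrm{top}}[-d]$ underlying Poincar\'e duality for forms.

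The main obstacle is precisely this second step, the compatibility $\DR \circ \mathds{D} \simeq \mathds{D}_{\mathrm{top}} \circ \DR$ on holonomic modules. It is non-formal: one must handle the non-coherence of $\cO_X$ over $\cD_X$, and holonomy enters essentially via Kashiwara's constructibility theorem so that Verdier duality remains an involution on the relevant derived category. This content is the core of Mebkhout's local duality theorem.
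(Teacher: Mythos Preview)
The paper does not actually prove this theorem: it is stated as a known result with the attribution ``See \cite{DualityMebkhout}'' and no argument is given. So there is no proof in the paper to compare against.

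That said, your sketch is the standard route and is essentially Mebkhout's argument. The identification $\Sol\cM \simeq \DR(\mathds{D}\cM)[-d]$ is formal, and the substantive content is exactly the compatibility $\DR \circ \mathds{D} \simeq \mathds{D}_{\mathrm{top}} \circ \DR$ on holonomic modules, which is the local duality theorem of the cited reference. Your identification of where holonomy and constructibility enter (so that biduality holds on the topological side) is correct. One small point: be careful with the shifts when passing between $\RcHom_{\mathds{C}}(-,\mathds{C})$ and $\mathds{D}_{\mathrm{top}}$; the statement as written in the paper is without shifts, so the bookkeeping matters, but your outline handles this.
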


\subsection{Characteristic cycle and the De Rham and solution complexes}
For a complex of sheaves $\cF$ with bounded and constructible cohomology, Kashiwa\-ra and Schapira defined the characteristic cycle $CC(\cF)$ by means of microlocal analysis \cite{KS}.  
The following theorem identifies the characteristic cycle of  an holonomic $\cD_X$-module with that of its solution complex.
See \cite[11.3.3]{KS} and \cite[Th 4]{Dubson}.
\begin{thm}\label{KS_equality_CC}
Let $X$ be a complex manifold.
Let $\cM$ be an holonomic $\cD_X$-module.
Then
$$
SS(\cM) = SS(\Sol \cM) \text{ and } CC(\cM) = CC(\Sol \cM)
$$
\end{thm}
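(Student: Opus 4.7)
The plan is to match multiplicities on each irreducible component of the common support of the two cycles. First, I would observe that by Kashiwara's theorem identifying the singular support of a holonomic $\cD_X$-module with the micro-support of its solution complex, both $CC(\cM)$ and $CC(\Sol\cM)$ are supported on the same closed conical Lagrangian subset $SS(\cM)\subset T^*X$, so both cycles are $\mathds{Z}$-linear combinations of the same irreducible Lagrangian varieties $\Lambda_{\alpha}$. It therefore suffices to check that their coefficients agree along each $\Lambda_{\alpha}$.

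Second, I would use additivity. Both characteristic cycle functors are additive on short exact sequences of holonomic $\cD_X$-modules: on the left, from the existence of compatible good filtrations on the terms of such a sequence; on the right, because $\Sol$ sends such sequences to distinguished triangles and the Kashiwara-Schapira characteristic cycle is additive on distinguished triangles of constructible complexes. Combined with the existence of Jordan-H\"older filtrations for holonomic $\cD_X$-modules, devissage reduces the problem to the case where $\cM$ is simple and $SS(\cM)=\Lambda$ is irreducible with multiplicity one in $CC(\cM)$. It then remains to prove $CC(\Sol \cM)=[\Lambda]$.

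Third, the remaining check is local and microlocal, near a generic smooth point of $\Lambda$. Working in the ring $\mathcal{E}_X$ of microdifferential operators and using the Sato-Kashiwara-Kawai structure theorem for simple holonomic $\mathcal{E}_X$-modules at a generic point of a smooth Lagrangian, one can present $\cM$ microlocally as a simple module along a smooth local hypersurface $Y$ with $\Lambda=T^*_YX$. A direct computation then identifies $\Sol \cM$ microlocally with a shifted constant sheaf supported on $Y$, whose Kashiwara-Schapira characteristic cycle is precisely $[T^*_YX]$ by the microlocal Morse lemma.

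The main obstacle is the microlocal reduction in the last step, which requires the delicate classification of simple holonomic microdifferential modules. The cleaner strategy, and the one actually carried out in \cite{Dubson} and \cite{KS}, is to bypass this classification by computing both multiplicities as the same local Euler characteristic: for a generic holomorphic function $\varphi$ vanishing at a generic smooth point of $\Lambda$, one proves that the coefficient of $[\Lambda]$ in either cycle equals the Euler characteristic of the vanishing cycles of $\Sol \cM$ along $\varphi$, respectively of the algebraic de Rham complex of the nearby/vanishing cycles of $\cM$, and that these two Euler characteristics coincide by a microlocal index formula. Packaging the comparison into a single local index theorem is what makes the general statement tractable.
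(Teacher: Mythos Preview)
The paper does not give its own proof of this theorem: it is stated as a cited result, with references to \cite[11.3.3]{KS} and \cite[Th.~4]{Dubson}. So there is no proof in the paper to compare against in detail.

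That said, your outline is a reasonable sketch of how the cited references establish the result. The first two steps (common support via Kashiwara's theorem, and the d\'evissage by additivity and Jordan--H\"older to reduce to simple modules) are standard and correct. Your third step, invoking the Sato--Kashiwara--Kawai classification of simple holonomic microdifferential modules at a generic smooth point of the Lagrangian, is a valid route but, as you yourself note, the classification is heavy and the reduction to a hypersurface case requires care (in particular one must check that the microlocal identification genuinely computes both multiplicities). You are right that \cite{KS} and \cite{Dubson} instead package the comparison into a local index computation: the multiplicity along $\Lambda_\alpha$ is identified on both sides with a vanishing-cycle Euler characteristic for a generic function, via Ginsburg's or Kashiwara's local formula on the $\cD$-module side and the microlocal index formula on the constructible side. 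Your final paragraph captures this accurately.
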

In particular, the computation of $CC(\cM)$ can be reduced to a sheaf theoretic question.
If the singular support of $\cM$ is already known, the following theorem  \cite[Th. 3.5]{Ka2} tells how to chop off $X$ in order to compute $CC(\Sol \cM)$.
\begin{thm}\label{Kashiwara_loc_syst}
Let $X$ be a complex manifold. 
Let $\cM$ be an holonomic $\mathcal{D}_X$-module. 
Let $X_1,\dots,X_n$ be a Whitney stratification of $X$ such that $SS(\cM)$ lies in $\bigcup_{i=1}^n T^*_{X_i}X$. 
Then, the cohomology sheaves of $(\Sol \cM)|_{X_i}$  are local systems on $X_i$, $i=1,\dots,n$.
\end{thm}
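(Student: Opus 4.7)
The plan is to transfer the hypothesis on the singular support from the $\cD$-module side to the constructible sheaf side, and then invoke the microlocal characterization of weak constructibility.

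First, I would apply \cref{KS_equality_CC}: since $CC(\cM)=CC(\Sol\cM)$, the underlying supports coincide, so the hypothesis $SS(\cM)\subset \bigcup_{i=1}^n T^*_{X_i}X$ translates into the statement
$$
SS(\Sol \cM) \subset \bigcup_{i=1}^n T^*_{X_i}X
$$
where the left hand side now means the microsupport in the sense of Kashiwara--Schapira of a complex of sheaves. One also needs to know that $\Sol\cM$ has bounded constructible cohomology; this follows from Kashiwara's constructibility theorem (or alternatively from the perversity \cref{DR_perverse}, which implies constructibility after a shift).

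Second, I would invoke the Kashiwara--Schapira microlocal criterion for weak constructibility. A Whitney stratification is a $\mu$-stratification in their sense, and the relevant result (\cite[Theorem 8.4.2]{KS}) states that for a bounded constructible complex $\cF$ and a $\mu$-stratification $\{X_i\}$, the inclusion $SS(\cF)\subset \bigsqcup T^*_{X_i}X$ is equivalent to $\cF$ being weakly $\{X_i\}$-constructible, that is, to the cohomology sheaves of $\cF|_{X_i}$ being locally constant for every $i$. Applied to $\cF = \Sol\cM$, this gives exactly the desired conclusion, since locally constant sheaves on the smooth manifold $X_i$ are local systems.

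The main obstacle is of course the Kashiwara--Schapira criterion itself, whose proof rests on the microlocal non-characteristic deformation lemma and Morse-theoretic propagation of sections along directions avoided by the microsupport. I would however treat it as a standard black box from the cited reference, since the statement we are trying to prove is essentially the $\cD$-module translation of this microlocal fact through \cref{KS_equality_CC}. A direct alternative, which one could develop if one wanted to avoid the general theorem, is to work pointwise: for $x\in X_i$ choose local coordinates in which $X_i$ is linear, then a small ball in $X_i$ around $x$ admits a retraction to $x$ whose graph is non-characteristic for $SS(\Sol\cM)$ thanks to the hypothesis, and the non-characteristic deformation lemma yields the local constancy of the stalks of the cohomology sheaves along this retraction.
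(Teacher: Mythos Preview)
The paper does not prove this theorem; it is quoted from \cite[Th.~3.5]{Ka2}. Your approach via the Kashiwara--Schapira formalism is essentially correct, with one small wrinkle and one circularity caution.

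The wrinkle: from \cref{KS_equality_CC} you obtain $|CC(\Sol\cM)|=SS(\cM)$, but what you feed into \cite[Th.~8.4.2]{KS} is the \emph{microsupport} $SS(\Sol\cM)$, and for a general constructible complex the support of the characteristic cycle can be strictly smaller than the microsupport (take $\mathds{C}_X\oplus\mathds{C}_X[1]$). Here this is harmless because $\Sol\cM[\dim X]$ is perverse by \cref{DR_perverse}, so its characteristic cycle is effective and its support equals the microsupport; more directly, \cite[Th.~11.3.3]{KS} already asserts the set-theoretic equality $SS(\Sol\cM)=SS(\cM)$, which is what you actually need and which does not require knowing constructibility beforehand.

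The caution: in Kashiwara's original paper \cite{Ka2}, constructibility of $\Sol\cM$ (and hence \cref{DR_perverse}) is deduced precisely \emph{from} the present theorem, so invoking it here is circular at the level of the primary literature. Within the paper's logical framework, where all these results are black-boxed, your argument is fine; but it is worth noting that both your route and Kashiwara's direct one rest on the same analytic input, the Cauchy--Kowalevskaya--Kashiwara theorem, which underlies both the microsupport bound $SS(\Sol\cM)\subset SS(\cM)$ and Kashiwara's original propagation argument. Your proof is thus best read as a repackaging of Kashiwara's through the abstract microsupport formalism, trading directness for conceptual uniformity.
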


From a cohomological perspective, computing the characteristic cycle is useful because of Kashiwara-Dubson's formula \cite{Dubson}.
See also \cite{Laumon_chi_D_module}.

\begin{thm}\label{Dubson}
Let $X$ be a proper complex manifold.
Let $\cM$ be an holonomic $\cD_X$-module. 
Then, we have 
$$
\chi(X,\DR\cM)=(CC(\cM),T^*_XX)_{T^*X}.
$$
where $\chi(X,\DR\cM)$ denotes the Euler-Poincaré characteristic of $\DR\cM$ and where $(-,-)_{T^*X}$ denotes the intersection number of cycles in $T^*X$.
\end{thm}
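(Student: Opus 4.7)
The plan is to reduce the statement to a purely sheaf-theoretic microlocal index formula and then to invoke the Kashiwara-Schapira theory of constructible sheaves.

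First I would combine the two comparison results between $\cM$ and $\Sol \cM$ that are already recorded in the excerpt. By \cref{dualityDR/Sol} there is a canonical isomorphism $\DR \cM \simeq \RcHom_{\mathds{C}}(\Sol \cM, \mathds{C})$, and by \cref{DR_perverse} the complex $\Sol \cM[\dim X]$ is perverse, in particular bounded and constructible. Since $X$ is compact and $\omega_X \simeq \mathds{C}_X[2\dim X]$, the functor $\RcHom(-,\mathds{C})$ agrees with Verdier duality up to an even shift, and Verdier duality preserves Euler characteristics on compact spaces. Hence $\chi(X,\DR \cM) = \chi(X, \Sol \cM)$. On the other hand, \cref{KS_equality_CC} gives $CC(\cM) = CC(\Sol \cM)$. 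The theorem is therefore equivalent to the microlocal index formula
$$
\chi(X, \cF) = (CC(\cF), T^*_X X)_{T^*X}
$$
applied to the bounded constructible complex $\cF := \Sol \cM$ on the compact complex manifold $X$.

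To prove this identity I would choose a Whitney stratification $\{X_\alpha\}$ of $X$ adapted to $\cF$ as in \cref{Kashiwara_loc_syst}, so that each cohomology sheaf of $\cF$ restricts to a local system on every stratum. Both sides of the target formula are additive under distinguished triangles, so one can dévisser $\cF$ along the stratification using $t$-truncation and the recollement triangles $j_! j^! \to \id \to i_* i^*$ attached to the open/closed inclusions of strata. This reduces the statement to the case $\cF = j_{\alpha,!} \cL$ where $j_\alpha : X_\alpha \hookrightarrow X$ is the locally closed inclusion of a single stratum and $\cL$ is a local system of finite rank on $X_\alpha$.

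The hard part is computing the local contribution of such a stratum. On the spectral side one needs $\chi(X, j_{\alpha,!} \cL) = \rk \cL \cdot \chi_c(X_\alpha)$, which follows from finiteness of compactly supported cohomology of local systems on a quasi-projective smooth variety. On the geometric side one must identify $(CC(j_{\alpha,!}\cL), T^*_X X)_{T^*X}$ with the same quantity, which amounts to a Gauss-Bonnet style identity expressing $\chi_c(X_\alpha)$ as the intersection of the conormal cycle $T^*_{\overline{X_\alpha}}X$ with the zero section, weighted by $\rk \cL$ and with appropriate signs. Matching these two expressions at the level of each stratum is the substance of the Kashiwara-Schapira microlocal index theorem \cite[11.3.3]{KS}, whose most delicate input is a local Morse-theoretic computation near the boundary of a stratum; this is the main obstacle. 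Once the local formula is established, summing over $\alpha$ yields the desired global equality.
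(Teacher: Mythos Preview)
The paper does not prove this theorem: it is stated with attribution to Dubson \cite{Dubson} (see also Laumon \cite{Laumon_chi_D_module}) and used as a black box. So there is no proof in the paper to compare against.

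Your reduction is the standard one and is correct. Using \cref{dualityDR/Sol} and Poincaré--Verdier duality on the compact manifold $X$ to get $\chi(X,\DR\cM)=\chi(X,\Sol\cM)$, and then \cref{KS_equality_CC} to identify $CC(\cM)$ with $CC(\Sol\cM)$, does reduce the statement to the microlocal index formula for bounded constructible complexes. Indeed the paper itself invokes exactly that sheaf-theoretic index formula later (in the proof of \cref{bound_chi_Z}, citing \cite[Th.~4.3]{KaIndex}), confirming that this is the intended bridge.

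Where your write-up becomes a sketch rather than a proof is the last paragraph. The d\'evissage to $j_{\alpha,!}\cL$ is fine, but the identity you need on the geometric side is not just that $(T^*_{\overline{X_\alpha}}X,T^*_XX)_{T^*X}$ computes $\chi_c(X_\alpha)$ up to sign: the characteristic cycle of $j_{\alpha,!}\cL$ is in general a $\mathds{Z}$-linear combination of the $T^*_{\overline{X_\beta}}X$ for $\overline{X_\beta}\subset\overline{X_\alpha}$, with coefficients given by local Euler obstructions (equivalently, local Morse data), not just $\rk\cL$ times a single conormal. So the ``matching'' you allude to is precisely the content of the local index theorem in \cite{KS}, and you are right to flag it as the main obstacle; but as written your stratum-by-stratum computation oversimplifies what $CC(j_{\alpha,!}\cL)$ looks like. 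If you want a self-contained argument, it is cleaner to appeal directly to the global microlocal index theorem for constructible sheaves rather than attempt the d\'evissage yourself.
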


\subsection{Characteristic cycle and functorialities} 
 
 The characteristic cycle for $\cD$-modules commutes with proper push-forward.
 This is the following 
 
\begin{lem}\label{push_forward_CC}
Let $f: X\to Y$ be a proper morphism between smooth varieties over $k$.
Let $\cM$ be an holonomic $\cD_X$-module.
Then, $SS(f_+\cM)=f_\circ SS(\cM)$ and $CC(f_+\cM)=f_*CC(\cM)$.
\end{lem}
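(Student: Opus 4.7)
The plan is to pass from $\cD$-modules to perverse sheaves via the solution functor and then invoke the analogous theorem of Kashiwara--Schapira for constructible complexes.

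First, I would reduce to the case $k=\mathds{C}$. By \cref{CC_flat_base_change}, the formation of characteristic cycles commutes with flat base change. The formation of the $\cD$-module push-forward $f_+$ also commutes with extension of the base field, and push-forward of cycles commutes with flat base change as well. Thus after choosing an embedding of $k$ into $\mathds{C}$ (or first enlarging to an algebraically closed extension and then into $\mathds{C}$), it suffices to prove the statement when $k=\mathds{C}$. Analytification also commutes with both sides, so we may further pass to the analytic category and regard $f:X\lra Y$ as a proper holomorphic map of complex manifolds and $\cM$ as an analytic holonomic $\cD_X$-module.

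Next, I would translate the statement to a sheaf-theoretic one via the solution functor. By \cref{KS_equality_CC}, we have $CC(\cM)=CC(\Sol\cM)$ and $CC(f_+\cM)=CC(\Sol f_+\cM)$. For a proper morphism between complex manifolds and a holonomic $\cD_X$-module, the standard compatibility
\[
\Sol(f_+\cM) \iso Rf_*\Sol\cM
\]
holds (this is the Riemann--Hilbert type compatibility of $\Sol$ with proper push-forward; see \cite[Thm.~7.1.1]{HTT} or Kashiwara's original work). Combined with the preservation of holonomy under proper push-forward, both sides are constructible complexes in the sense of \cref{DR_perverse}. Hence
\[
CC(f_+\cM)=CC(\Sol f_+\cM)=CC(Rf_*\Sol\cM).
\]

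Finally, I would invoke Kashiwara--Schapira's proper push-forward formula for the characteristic cycle of constructible complexes: for a proper morphism $f:X\lra Y$ of complex manifolds and a bounded constructible complex $\cF$ on $X$, one has $CC(Rf_*\cF)=f_*CC(\cF)$, where $f_*$ on the right denotes the Lagrangian push-forward obtained from the correspondence $T^*X \leftarrow X\times_Y T^*Y \to T^*Y$ (see \cite[Prop.~9.4.2]{KS}). Applying this with $\cF=\Sol\cM$ and combining with $CC(\Sol\cM)=CC(\cM)$ yields $CC(f_+\cM)=f_*CC(\cM)$, as desired. The main subtlety is the bookkeeping at step two, namely carefully checking $\Sol\circ f_+\simeq Rf_*\circ \Sol$ for proper $f$ on holonomic modules (and the degree shift conventions) so that one can cleanly feed the output into the Kashiwara--Schapira formula; once this is set up, everything else is a formal concatenation of identities.
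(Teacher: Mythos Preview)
Your proposal is correct and follows essentially the same route as the paper: reduce to $k=\mathds{C}$ via \cref{CC_flat_base_change}, pass to the analytic setting, use \cref{KS_equality_CC} to replace $CC$ of $\cD$-modules by $CC$ of their solution complexes, invoke the compatibility $\Sol(f_+\cM)\simeq Rf_*\Sol\cM$ for proper $f$, and then apply the Kashiwara--Schapira push-forward formula \cite[9.4.2]{KS}. The paper's proof is exactly this chain of identities, so there is nothing to add.
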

\begin{proof}
We argue for $CC$, the case of $SS$ being similar.
From \cref{CC_flat_base_change}, we can suppose $k=\mathds{C}$.
We have the following chain of equalities
\begin{align*}
 CC(f_+\cM)&= CC(f_+^{\an}\cM^{\an}) = CC(\Sol(f_+^{\an}\cM^{\an}))\\
       &  = CC(Rf_*\Sol\cM^{\an}) =f_*CC(\Sol\cM^{\an})  =f_*CC(\cM)
\end{align*}
The first equality follows from \cref{CC_flat_base_change} applied to $f_+\cM$.
The second one follows from \cref{KS_equality_CC}.
The third equality follows from the compatibility of the formation of $\Sol$ with proper push-forward.
The fourth equality follows from \cite[9.4.2]{KS}.
The last equality follows from the above arguments applied to $\cM$.
\end{proof}

 \begin{defin}
Let $f: Y\to X$ be a morphism between smooth varieties over $k$ or between complex manifolds.
Let $\cM$ be an holonomic $\cD_X$-module.
We say that $f: Y\to X$ is non characteristic for $\cM$ if it is $SS(\cM)$-transversal.
 \end{defin}

The following results are due to Kashiwara  \cite{TheseKashiwara} :

\begin{thm}\label{Cauchy-Kowaleska}
Let $f: Y\to X$ be a morphism between smooth varieties over $k$.
Let $\cM$ be an holonomic $\cD_X$-module such that $f: Y\to X$ is non characteristic for $\cM$.
Then the following statements hold : 
\begin{enumerate}\itemsep=0.2cm
\item The $\cD_Y$-module pull-back $f^+\cM$  is concentrated in degree $0$. 
That is, $f^+\cM\simeq f^* \cM$. 
\item We have $SS(f^+\cM )=f^\circ SS(\cM)$.
\item If $k=\mathds{C}$, the comparison morphisms 
$$
f^* \Sol \cM^{\an} \lra \Sol f^+\cM^{\an}  \text{ and }\DR f^+\cM^{\an}  \lra  f^* \DR\cM^{\an} 
$$ 
are isomorphisms in the derived category of sheaves.
\end{enumerate}
\end{thm}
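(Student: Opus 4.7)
The plan is to reduce the statement to the two elementary cases of smooth surjections and codimension-one closed immersions. Factoring $f : Y \to X$ through its graph as $Y \xrightarrow{i_f} Y \times X \xrightarrow{p_2} X$, one checks via \cref{transversality_after_closed_immersion} that non-characteristicness of $f$ with respect to $\cM$ implies the corresponding properties for $p_2$ with respect to $\cM$ and for $i_f$ with respect to $p_2^{+} \cM$. The smooth case $p_2$ is transparent: $p_2^{+} \cM$ is concentrated in degree zero and the comparison maps for $\Sol$ and $\DR$ are isomorphisms for formal reasons (relative forms and the projection formula). It therefore suffices to treat a closed immersion $i : Y \hookrightarrow X$, and working locally one may further factor it into a sequence of codimension-one closed immersions of smooth subvarieties, each of which remains non-characteristic by a direct cotangent calculation. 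An induction on codimension reduces us to the case where $Y$ is a smooth hypersurface in $X$, locally defined by $t = 0$.

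For part (1) in this codimension-one setting, I would choose a good filtration $F_\bullet \cM$. The non-characteristic hypothesis $SS(\cM) \cap T^*_Y X \subseteq T^*_X X$ translates into the statement that the principal symbol $\sigma(t) \in \gr^F \cD_X \simeq \pi_* \cO_{T^*X}$ is a non-zero divisor on $\gr^{F_\cM} \cM$. A standard lifting argument, applied filtration degree by filtration degree, then shows that $t$ itself acts injectively on $\cM$. Consequently the derived pull-back $i^{+}\cM = \cM \otimes^{L}_{\cO_X} \cO_Y$ reduces to the degree-zero cokernel $\cM / t\cM$, establishing $i^{+} \cM \simeq i^{*} \cM$.

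For part (2), the injectivity of $t$ yields the short exact sequence
$$
0 \to \cM^{\an} \xrightarrow{\,t\,} \cM^{\an} \to i_{+} i^{+} \cM^{\an} \to 0,
$$
and applying $\RcHom_{\cD_X^{\an}}(-,\cO_X^{\an})$ turns the $\Sol$ comparison morphism into the assertion that multiplication by $t$ is injective on $\Sol \cM^{\an}$, viewed as a perverse sheaf on $X$ via \cref{DR_perverse}. This is precisely the classical Cauchy--Kovalevskaya theorem: in local analytic coordinates, every holomorphic solution of $\cM$ extends uniquely across the non-characteristic hypersurface $\{t=0\}$. The De Rham version then follows from the $\Sol$ version by applying the sheaf-theoretic duality of \cref{dualityDR/Sol}, using that for a non-characteristic morphism the $!$-pullback agrees with the $*$-pullback up to the relative-dimension shift. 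The main obstacle in this plan is this last analytic input: Cauchy--Kovalevskaya injectivity, equivalently the vanishing of $\Irr_{Y}^{*} \cM$ in the non-characteristic case, is the genuine content of the theorem, whereas all preceding steps are formal reductions involving cotangent bundles, symbols, and good filtrations.
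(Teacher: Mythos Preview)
The paper does not prove this theorem: it is stated as a classical result due to Kashiwara and attributed to \cite{TheseKashiwara}, with no argument supplied. There is therefore nothing in the paper to compare your proposal against.

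That said, your outline follows the standard proof strategy. A couple of small points. Your appeal to \cref{transversality_after_closed_immersion} for the graph factorization is off: that lemma concerns a cartesian square of closed immersions, not the factorization $Y \hookrightarrow Y\times X \to X$. The compatibility of non-characteristicness with the graph decomposition is true, but you should check it directly from the definition rather than invoking that lemma. Second, in part (2) the passage from the short exact sequence to the $\Sol$ comparison requires identifying $\Sol(i_+ i^+\cM^{\an})$ with $i_* i^!\Sol\cM^{\an}$ (Kashiwara's equivalence plus base change), and then the non-characteristic hypothesis is what upgrades $i^!$ to $i^*$ up to shift; you gesture at this but the identification deserves a line. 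The genuine analytic input, as you correctly isolate, is the classical Cauchy--Kovalevskaya theorem for the local injectivity of $t$ on solutions.
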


\subsection{Meromorphic flat connections}
Let $X$ be a smooth algebraic variety over $k$.
A \textit{flat connection on $X$} or \textit{module with integrable connection on $X$} is a $\cD_X$-module $\cE:=(E,\nabla)$ whose underlying $\cO_X$-module $E$ is a vector bundle of finite rank on $X$.
We denote by $\MIC(X)$ the category of flat connections on $X$.
\\ \indent
If $D$ is a divisor in $X$, a \textit{meromorphic flat connection on $X$ with poles along $D$} is a $\cD_X$-module $\cM:=(M,\nabla)$ whose underlying $\cO_X$-module $M$ is a locally free sheaf of $\cO_X(\ast D)$-modules of finite rank.
We denote by $\MIC(X,D)$ the category of meromorphic flat connections on $X$ with poles along $D$. 

\begin{rem}
The above definitions make sense in the analytic setting, where the same notations will be used. 
%In practice, the reference to $\nabla$ will be dropped from the notations.
\end{rem}
\begin{rem}
Meromorphic flat connections are holonomic $\cD$-modules. 
\end{rem}
In the algebraic setting, there is no difference between flat connections and meromorphic flat connections.
This is expressed by the following proposition \cite[5.3.1]{HTT}.

\begin{prop}\label{meromorphic_vs_non_meromorphic}
Let $(X,D)$ be a pair over $k$.
Let $j: U=X\setminus D\to X$ be the inclusion.
Then $(j_*, j^*)$ induce an equivalence of categories between $\MIC(U)$ and $\MIC(X,D)$.
\end{prop}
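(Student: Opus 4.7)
The plan is to verify that $(j_*, j^*)$ is a pair of well-defined functors between $\MIC(U)$ and $\MIC(X,D)$ that are mutually quasi-inverse. Both functors tautologically preserve the $\cD$-module structure, since flatness of the connection is transported through the open immersion $j$ via $\cD_U = j^* \cD_X$. The content of the statement therefore lies entirely in the $\cO$-module condition: locally free of finite rank over $\cO_U$ on one side, over $\cO_X(\ast D)$ on the other.

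The first step is to show $j_* : \MIC(U) \lra \MIC(X,D)$ is well-defined. Since $D$ is a Cartier divisor, the open immersion $j$ is quasi-compact, so $j_*$ preserves quasi-coherence; combined with the identification $j_* \cO_U = \cO_X(\ast D)$ as quasi-coherent $\cO_X$-algebras, this equips $j_*\cE$ with a natural structure of quasi-coherent $\cO_X(\ast D)$-module endowed with a flat connection. Local freeness of finite rank then follows by working on an affine open $V \subset X$ where $D \cap V$ is principal, cut out by some $f \in \Gamma(V, \cO_X)$: there $V \cap U = V_f$, and $\Gamma(V \cap U, \cE)$ is a finitely generated projective $\Gamma(V,\cO_X)[f^{-1}] = \Gamma(V, \cO_X(\ast D))$-module, which is exactly what is required. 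The converse direction $j^* : \MIC(X,D) \lra \MIC(U)$ is immediate: restriction of a locally free $\cO_X(\ast D)$-module of finite rank yields a locally free $\cO_U$-module of finite rank since $\cO_X(\ast D)|_U = \cO_U$, and the restricted connection remains flat.

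Finally one verifies that the unit and counit of the adjunction are isomorphisms. The unit $\cE \lra j^* j_* \cE$ is an isomorphism by the general fact that $j^* j_* \simeq \id$ for open immersions. The counit $j_* j^* \cM \lra \cM$ is the substantive one: because $\cM$ is already a module over $j_* \cO_U = \cO_X(\ast D)$, the canonical morphism $\cM \lra j_*(\cM|_U)$ is an isomorphism; locally one trivializes $\cM \simeq \cO_X(\ast D)^r$ and the statement reduces to $j_* \cO_U^r = \cO_X(\ast D)^r$, which is the defining identification, and gluing yields the global statement.

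The only mildly delicate point is the first step, namely the coherence of $j_* \cE$ as an $\cO_X(\ast D)$-module; this crucially uses the algebraic (or at least quasi-compact) nature of the setup and would fail analytically without an additional hypothesis on how $\cE$ degenerates along $D$. This contrast is precisely why the subtlety of meromorphic extensions discussed earlier in the introduction surfaces only in the analytic theory.
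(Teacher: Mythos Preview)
Your argument is correct in substance. The paper does not give its own proof of this proposition; it is simply quoted from \cite[5.3.1]{HTT}, so there is no ``paper's approach'' to compare against beyond the reference. Your direct verification is the standard one: the key content is exactly the identification $j_*\cO_U=\cO_X(\ast D)$ together with the observation that on an affine $V\subset X$ with $D\cap V=V(f)$ one has $\Gamma(V,j_*\cE)=\Gamma(V_f,\cE)$, which is finite projective over $\Gamma(V,\cO_X)[f^{-1}]$ because $\cE$ is a vector bundle on the affine scheme $V_f$. A slicker way to package this is to note that $\underline{\Spec}_X\cO_X(\ast D)=U$, so locally free $\cO_X(\ast D)$-modules of finite rank are exactly vector bundles on $U$.

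One small terminological slip: for the adjunction $j^*\dashv j_*$, the unit is $\cM\to j_*j^*\cM$ (on $X$) and the counit is $j^*j_*\cE\to\cE$ (on $U$); you have the names reversed. The mathematics you wrote under each label is correct, only the labels are swapped. Your closing remark about the contrast with the analytic setting is well taken and is indeed the reason the proposition is stated specifically for algebraic pairs.
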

Since all the action in this paper happens at infinity, we will use the meromorphic viewpoint.
The next lemma says that this does not make any difference for cohomology.

\begin{lem}\label{same_coho}
Let $(X,D)$ be a pair over $k$.
Put $U:=X\setminus D$ and let $j: U\to X$ be the inclusion.
Let $\cE$ be an object in $\MIC(U)$.
Then the canonical restriction morphism 
$$
R\Gamma(X,\DR j_* \cE)\lra R\Gamma(U,\DR \cE)
$$
is an isomorphism in the derived category of vector spaces over $k$.
\end{lem}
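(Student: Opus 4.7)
The plan is to produce an isomorphism at the level of complexes of sheaves on $X$ and then invoke the standard adjunction $R\Gamma(X,Rj_\ast-)\simeq R\Gamma(U,-)$ for the open immersion $j$.

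First, I would unwind the complex $\DR j_\ast \cE$. Its terms are of the form $\Omega^p_X\otimes_{\cO_X} j_\ast\cE$, and by \cref{meromorphic_vs_non_meromorphic} the sheaf $j_\ast \cE$ is a quasi-coherent $\cO_X(\ast D)$-module, in particular a quasi-coherent $\cO_X$-module. Since $\Omega^p_X$ is locally free of finite rank, the projection formula yields canonical isomorphisms
$$
\Omega^p_X\otimes_{\cO_X} j_\ast\cE \;\iso\; j_\ast(j^\ast\Omega^p_X\otimes_{\cO_U}\cE)\;=\;j_\ast(\Omega^p_U\otimes_{\cO_U}\cE),
$$
which are compatible with the connection differentials. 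Hence $\DR j_\ast \cE$ is identified termwise with $j_\ast \DR\cE$, and the canonical restriction morphism of the statement factors through the obvious morphism $j_\ast\DR\cE\lra Rj_\ast\DR\cE$ followed by $R\Gamma(X,-)$.

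The key point is then that this morphism $j_\ast\DR\cE\lra Rj_\ast\DR\cE$ is a quasi-isomorphism. Here I would use that $j:U\lra X$ is an affine morphism, since $D$ is a divisor of $X$ and thus locally the vanishing locus of a regular function. For an affine morphism, higher direct images of quasi-coherent sheaves vanish, and each term $\Omega^p_U\otimes_{\cO_U}\cE$ of $\DR\cE$ is quasi-coherent. It follows that $R^qj_\ast(\Omega^p_U\otimes_{\cO_U}\cE)=0$ for $q>0$, so a Cartan--Eilenberg (or hypercohomology spectral sequence) argument gives that the natural map $j_\ast\DR\cE\lra Rj_\ast\DR\cE$ is a quasi-isomorphism in the derived category of sheaves on $X$.

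Combining these two steps yields
$$
R\Gamma(X,\DR j_\ast\cE)\;\simeq\;R\Gamma(X,j_\ast\DR\cE)\;\simeq\;R\Gamma(X,Rj_\ast\DR\cE)\;\simeq\;R\Gamma(U,\DR\cE),
$$
and tracing through the constructions shows that the composite is the canonical restriction morphism of the lemma. There is no real obstacle; the only subtle point to check carefully is that the identifications by projection formula are compatible with the De Rham differentials, which is a direct local computation using that $\nabla$ extends from $\cE$ to $j_\ast\cE$ as an $\cO_X(\ast D)$-linear avatar of the connection on $U$.
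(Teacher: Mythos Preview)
Your proof is correct and follows essentially the same approach as the paper: the paper's proof is the chain $R\Gamma(U,\DR \cE)\simeq R\Gamma(X,Rj_*\DR \cE)\simeq R\Gamma(X,j_*\DR \cE)\simeq R\Gamma(X,\DR j_*\cE)$, invoking that $j$ is affine for the middle step and the identification $j_*\DR\cE\simeq \DR j_*\cE$ for the last. You have simply spelled out these steps in more detail and checked compatibility with the canonical restriction morphism.
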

\begin{proof}
We have 
$$
R\Gamma(U,\DR \cE)  \simeq R\Gamma(X,Rj_*\DR \cE)  \simeq R\Gamma(X,j_*\DR \cE) \simeq    R\Gamma(X,\DR j_*\cE)               
$$
where the second equality comes from the fact that $j$ is an affine morphism, and where the last equality follows from $j_*\DR \cE\simeq \DR j_*\cE$.

\end{proof}

\begin{prop}\label{RHom_alg_DR}
Let $(X,D)$ be a pair over $k$.
Let $\cM_1$ and $\cM_2$ be objects of $\MIC(X,D)$.
Then, there is a canonical isomorphism 
$$
\RHom_{\cD_X}(\cM_1,\cM_2)\lra R\Gamma(X,\DR\cHom(\cM_1,\cM_2))
$$
in the derived category of $k$-vector spaces.
\end{prop}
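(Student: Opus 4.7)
The plan is to reduce the global $\RHom_{\cD_X}$ to a local computation whose value is naturally the De Rham complex via a Koszul--Spencer resolution of $\cM_1$.

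First, I would invoke the local-to-global identification
$$
\RHom_{\cD_X}(\cM_1,\cM_2) \simeq R\Gamma(X,R\cHom_{\cD_X}(\cM_1,\cM_2)),
$$
which follows from the factorization $\Hom_{\cD_X}(\cM_1,-) = \Gamma(X,\cHom_{\cD_X}(\cM_1,-))$ applied to an injective resolution of $\cM_2$ in the category of $\cD_X$-modules. It is then enough to construct a natural isomorphism of complexes of sheaves
$$
R\cHom_{\cD_X}(\cM_1,\cM_2) \simeq \DR \cHom(\cM_1,\cM_2).
$$

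Next, since $\cM_1$ is a locally free $\cO_X(\ast D)$-module of finite rank and $\cO_X(\ast D) = \varinjlim_n \cO_X(nD)$ is a filtered colimit of flat $\cO_X$-modules, $\cM_1$ is flat over $\cO_X$. Consequently the Koszul--Spencer complex
$$
\mathrm{Sp}^{-p}(\cM_1) := \cD_X \otimes_{\cO_X} \textstyle\bigwedge^{p} T_X \otimes_{\cO_X} \cM_1 \longrightarrow \cM_1
$$
is a left resolution of $\cM_1$ by locally $\cD_X$-free modules. Applying $\cHom_{\cD_X}(-,\cM_2)$ and using the adjunction $\cHom_{\cD_X}(\cD_X \otimes_{\cO_X} F, \cM_2) \cong \cHom_{\cO_X}(F,\cM_2)$ together with $\cHom_{\cO_X}(\bigwedge^p T_X \otimes \cM_1, \cM_2) \cong \Omega^p_X \otimes_{\cO_X} \cHom_{\cO_X}(\cM_1,\cM_2)$, one obtains a complex naturally isomorphic to $\Omega^{\bullet}_X \otimes_{\cO_X} \cHom_{\cO_X}(\cM_1,\cM_2)$. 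A direct inspection of the Spencer differentials shows that they go over to the De Rham differential attached to the induced connection on $\cHom_{\cO_X}(\cM_1,\cM_2) = \cM_1^{\vee} \otimes_{\cO_X(\ast D)} \cM_2 \in \MIC(X,D)$. This yields the sought identification, and applying $R\Gamma(X,-)$ finishes the proof.

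The only delicate point is the verification in the last step: one must track signs and the interaction between the right $\cD_X$-action on $\cD_X \otimes \bigwedge^\bullet T_X$ and the intrinsic left $\cD_X$-structure of $\cM_1$ in the Spencer differential, and check that the resulting differential on $\Omega^\bullet_X \otimes \cHom_{\cO_X}(\cM_1,\cM_2)$ is exactly the De Rham differential of the meromorphic flat connection $\cHom(\cM_1,\cM_2)$. This is a routine but slightly tedious computation; no genuinely new idea is needed beyond it.
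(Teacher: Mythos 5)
Your approach is genuinely different from the paper's. The paper quotes two results from Borel's book (\cite[VII 9.8]{Borel} and \cite[VI 5.3.2]{Borel}) to produce the canonical isomorphism $\RHom_{\cD_X}(\cM_1,\cM_2)\simeq R\Gamma(X,\DR\,\cM_1^{\vee}\otimes^{L}_{\cO_X}\cM_2)$ where $\cM_1^{\vee}$ is the $\cD_X$-module dual, then uses $\cO_X$-flatness of $\cM_2$ to drop the derived tensor and finally identifies $\cM_1^{\vee}(\ast D)$ with the meromorphic dual $\cM_1^{\ast}$ via restriction to $U$ and \cref{meromorphic_vs_non_meromorphic}. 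You instead rebuild the sheaf-level statement $\RcHom_{\cD_X}(\cM_1,\cM_2)\simeq\DR\cHom(\cM_1,\cM_2)$ from first principles using the Spencer resolution and then apply $R\Gamma(X,-)$. Both routes are standard; the paper's is shorter because it black-boxes the hard step, while yours is more self-contained and makes the de Rham structure explicit.

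There is, however, one genuine gap in your argument as written. You claim that because $\cM_1$ is $\cO_X$-flat, the Spencer terms $\cD_X\otimes_{\cO_X}\bigwedge^{p}T_X\otimes_{\cO_X}\cM_1$ are \emph{locally $\cD_X$-free}. They are not: $\cM_1$ is locally free over $\cO_X(\ast D)$ but only flat over $\cO_X$, so these induced modules are $\cD_X\otimes_{\cO_X}F$ with $F$ flat but not locally free, hence not locally free over $\cD_X$. Flatness of $\cM_1$ does ensure the Spencer complex is a \emph{resolution} of $\cM_1$, but to conclude that applying $\cHom_{\cD_X}(-,\cM_2)$ to it computes $\RcHom_{\cD_X}(\cM_1,\cM_2)$ you still need the terms to be acyclic for this functor. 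Via the derived adjunction $\RcHom_{\cD_X}(\cD_X\otimes_{\cO_X}F,\cM_2)\simeq\RcHom_{\cO_X}(F,\cM_2)$ (using that $\cD_X$ is $\cO_X$-locally free), this reduces to showing that $\RcHom_{\cO_X}(\cM_1,\cM_2)$ is concentrated in degree $0$. This does hold: since $\cM_1$ is $\cO_X$-flat, $\cM_1\otimes^{L}_{\cO_X}\cO_X(\ast D)\simeq\cM_1$, so the extension-restriction adjunction gives $\RcHom_{\cO_X}(\cM_1,\cM_2)\simeq\RcHom_{\cO_X(\ast D)}(\cM_1,\cM_2)$, and the latter is $\cHom(\cM_1,\cM_2)$ in degree $0$ because $\cM_1$ is locally free of finite rank over $\cO_X(\ast D)$. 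But this supplementary argument needs to be supplied; the assertion of local $\cD_X$-freeness cannot be used to skip it.
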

\begin{proof}
Combining \cite[VII 9.8]{Borel} and \cite[VI 5.3.2]{Borel} yields a canonical iso\-morphism
$$
\RHom_{\cD_X}(\cM_1,\cM_2)\simeq R\Gamma(X,\DR\cM_1^{\vee}\otimes_{\cO_X}^{L}\cM_2)
$$
where $\cM_1^{\vee}$ denotes the $\cD_X$-module dual to $\cM_1$.
Since $\cM_2$ is a locally free $\cO_X$-module localized along $D$, we have 
$$
\cM_1^{\vee}\otimes_{\cO_X}^{L}\cM_2\simeq \cM_1^{\vee}\otimes_{\cO_X} \cM_2\simeq\cM_1^{\vee}(\ast D)\otimes_{\cO_X} \cM_2
$$
Observe that $\cM_1^{\vee}(\ast D)$ is an object of $\MIC(X,D)$.
%=\cHom_{O_X(\ast D)}(\cM_1,O_X(\ast D))
Let $\cM_1^*$ be the meromorphic connection dual to $\cM_1$.
Then \cite[2.6.10]{HTT} ensures that the restriction of $\cM_1^{\vee}(\ast D)$  and $\cM_1^*$ to  $U:=X\setminus D$ are canonically isomorphic.
From \cref{meromorphic_vs_non_meromorphic}, we deduce that $\cM_1^{\vee}(\ast D)$  and $\cM_1^*$  are canonically isomorphic.
We thus conclude the proof of \cref{RHom_alg_DR} using the identification 
$\cM_1^{\ast}\otimes_{\cO_X} \cM_2\simeq \cHom(\cM_1,\cM_2)$.
\end{proof}

\subsection{Good formal structure for connections}
In the next definition, we follow \cite[Def 2.1.3]{Kedlaya3} and fix a field $k$ of characteristic $0$.

\begin{defin} \label{defin_good_formal_structure}
Let $(X,D)$ be a normal crossing pair over $k$.
Let $\cM$ be an object of $\MIC(X,D)$. 
We say that  $\cM$ has \textit{good formal structure at $x\in D$} if there is a decomposition 
\begin{equation}\label{decomposition}
\cM_x\otimes_{\mathcal O_{X,x}}\mathcal S\cong \bigoplus_{\alpha \in I} \mathcal E^{\varphi_\alpha}\otimes_{\mathcal S} \mathcal R_{\alpha},
\end{equation}
where 
\begin{itemize}
\item[(1)]
the ring $\widehat{\mathcal {O}}_{X,x}(\ast D)$ is the localization along $D$ of the completion of ${\mathcal {O}}_{X,x}$ along the intersection of the irreducible components of $D$ containing $x$ and $\mathcal S$ is a finite \'etale $\widehat{\mathcal {O}}_{X,x}(\ast D)$-algebra;
\item[(2)]
$I$ is a finite set; 
\item[(3)]
each $\mathcal R_{\alpha}$ is a regular differential module over $\mathcal S$;
\item[(4)] $\varphi_\alpha$ $(\alpha\in I)$ are elements in $\mathcal S$ satisfying that, if $\varphi_\alpha$ does not lie in the integral closure $\mathcal S_0$ of $\mathcal O_{X,x}$ in $\mathcal S$, then $\varphi_\alpha$ is a unit of $\mathcal S$ and $\varphi_\alpha^{-1}\in \mathcal S_0$.
Furthermore, if $\varphi_\alpha-\varphi_\beta $ does not lie in $\mathcal S_0$, then $\varphi_\alpha-\varphi_\beta$ is a unit of $\mathcal S$ and $(\varphi_\alpha-\varphi_\beta)^{-1}\in \mathcal S_0$.
\end{itemize} 
\end{defin}

\begin{defin}
Let $\cM$ be an object of $\MIC(X,D)$. 
%\begin{itemize}
%\item[(1)]
We say that a point $x$ of $D$ is a \textit{turning point of $\cM$} if $\cM$ does not admit a good formal structure at $x$. 
The set of turning points of $\cM$ is called the \textit{turning locus of $\cM$}.
We denote it by $\TL(\cM)$.
If $\TL(\cM)$ is empty, we say that \textit{$\cM$ admits good formal structure along $D$}.
%\item[(2)]
%Let $f:Y\longrightarrow X$ be a modification of $X$. The turning locus of $f^+\cM$ is contained in pre-image in $Y$ of turning locus of $\cM$. If the turning locus of $f^+\cM$ is empty for a modification $f:Y\longrightarrow X$, we say that $f:Y\longrightarrow X$ a resolution of turning points of $\cM$.
%\end{itemize}

\end{defin}

\begin{rem}\label{regular_base_change}
The formation of the turning locus commutes with regular base change. 
See \cite[4.3.2]{Kedlaya3} for a proof.
\end{rem}

The compatibility of good formal structure and pull-back follows from \cref{defin_good_formal_structure} :

\begin{prop}\label{pullback_good_is_good}
Let $f: (Y,E)\to (X,D)$ be a morphism of normal crossing pairs over $k$.
Let $\cM$ be an object of $\MIC(X,D)$. 
Let $x$ be a point in $E$ such that $\cM$ has good formal structure at $f(x)$.
Then, $f^+\cM$ has good formal structure at $x$.
In particular, if $\cM$ has good formal structure, so does $f^+\cM$.
\end{prop}

Purity of the turning locus was proved by André when $D$ is smooth \cite[3.4.3]{andre} and by Kedlaya when $D$ has normal crossing \cite[2.3.1]{Kedlaya3}.

\begin{thm}\label{purity}
Let $(X,D)$ be a normal crossing pair over $k$.
Then, for any object $\cM$ of $\MIC(X,D)$, the  turning locus of $\cM$ is a closed subscheme of $D$ of pure codimension $1$.
\end{thm}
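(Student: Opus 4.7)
The plan is to establish purity in two stages, first showing that $\TL(\cM)$ is closed in $D$, and then ruling out irreducible components of $\TL(\cM)$ of too high codimension.

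For closedness, I would show that the locus where $\cM$ admits a good formal structure is open. Suppose $x\in D$ satisfies the decomposition of \cref{defin_good_formal_structure}. The finite \'etale cover $\mathcal S$ of $\widehat{\cO}_{X,x}(\ast D)$, the regular differential modules $\mathcal R_\alpha$, and the sections $\varphi_\alpha$ are defined over a local formal completion, and the conditions in (4) are Zariski-open conditions on the locus where the $\varphi_\alpha$ and $\varphi_\alpha-\varphi_\beta$ remain units or remain integral. Combined with Artin/Elkik approximation to descend $\mathcal S$ from formal to \'etale neighborhoods of $x$, this spreads the decomposition to an open neighborhood of $x$ in $D$, proving that $\TL(\cM)$ is closed.

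For purity, I would argue by contradiction. Assume that there is an irreducible component $Z$ of $\TL(\cM)$ whose codimension in $X$ is strictly greater than $1$. On the open set $X\setminus Z$, the turning locus has codimension $\geq 2$, and in particular $\cM$ has good formal structure at every codimension one point of $D\setminus Z$. The Deligne-Malgrange canonical lattice $\mathcal{L}$ of $\cM$ (as constructed by Malgrange from Sabbah's good formal decomposition, cited in the introduction) is then well-defined as a coherent reflexive $\cO_X$-submodule of $\cM$ on $X\setminus Z$. Since $Z$ has codimension $\geq 2$ in $X$ and $X$ is smooth, Hartogs' theorem for reflexive coherent sheaves yields an extension $\widetilde{\mathcal{L}}$ to all of $X$. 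The key verification is that $\widetilde{\mathcal{L}}$ retains the defining feature of a Deligne-Malgrange lattice along $Z$: namely, that the induced formal module at the generic point of $Z$ decomposes in the sense of \cref{defin_good_formal_structure}. Once this is established, $Z$ cannot meet $\TL(\cM)$, contradicting the assumption.

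The main obstacle is the last verification, and it is where the difference between André's smooth-$D$ proof and Kedlaya's normal-crossing proof becomes apparent. In the smooth-$D$ case, one can reduce to a two-variable model along a slice transverse to $Z$ and use the canonicity of Malgrange's construction directly. At crossings of $D$, however, good formal structure requires a \emph{simultaneous} decomposition with respect to all branches, and one must show that the Hartogs extension of the lattice is compatible with each of these decompositions and with the integrality/unit conditions (4) involving the $\varphi_\alpha-\varphi_\beta$. This is handled in Kedlaya's work by a careful analysis of the behaviour of irregularity along intersecting components, together with his valuation-theoretic description of good formal structure; invoking these results completes the argument.
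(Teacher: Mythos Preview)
The paper does not prove this theorem at all: it is quoted, with attribution to Andr\'e \cite[3.4.3]{andre} for smooth $D$ and to Kedlaya \cite[2.3.1]{Kedlaya3} for the normal crossing case. So there is nothing to compare against; the question is only whether your sketch stands on its own.

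It does not, for two reasons.

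First, there is a codimension slip. The conclusion is that $\TL(\cM)$ has pure codimension $1$ \emph{in $D$}, i.e.\ pure codimension $2$ in $X$. Good formal structure always holds at the generic points of the components of $D$, so every irreducible component of $\TL(\cM)$ already has codimension $\geq 2$ in $X$. Your contradiction hypothesis ``codimension in $X$ strictly greater than $1$'' is therefore satisfied by every component and contradicts nothing; the correct hypothesis is a component of codimension $\geq 3$ in $X$.

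Second, and more seriously, the argument is circular at the end. Extending a Deligne--Malgrange lattice across a codimension-$\geq 2$ locus by Hartogs does not yield good formal structure: the existence of a lattice is much weaker than the existence of a decomposition \eqref{decomposition} satisfying the goodness conditions on the $\varphi_\alpha$ and $\varphi_\alpha-\varphi_\beta$. You acknowledge this (``the main obstacle is the last verification'') and then resolve it by ``invoking'' Kedlaya's analysis and his valuation-theoretic description of good formal structure. But that description, and the analysis built on it, \emph{is} the proof of \cite[2.3.1]{Kedlaya3}; you are citing the theorem to prove the theorem. Once the circularity is removed, what remains of your proposal is the same citation the paper makes.
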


The following fundamental theorem was proved by Kedlaya \cite[Th. 8.1.3]{Kedlaya2} and Mochizuki  \cite[Th. 19.5]{Mochizuki1}  via two completely different methods.

\begin{thm}\label{KM_theorem}
Let $(X,D)$ be a pair over $k$. 
Let $\cM$ be an object of $\MIC(X,D)$. 
Then, there exists a morphism of smooth algebraic varieties $f:Y\to X$ obtained as a composition of blow-up with smooth centres above $\TL(\cM)$ such that $f^{-1}(D)$ is a normal crossing divisor of $Y$ and $f^+\cM$ admits good formal structure along $f^{-1}(D)$.
\end{thm}

\subsection{Irregularity number}
Let $(X,D)$ be a normal crossing pair over $k$.
Let $\cM$ be an object of $\MIC(X,D)$. 
In the setting of \cref{defin_good_formal_structure}, good formal structure always holds at the generic point $\eta$ of an irreducible component $Z$ of $D$.
In that case, $\widehat{\mathcal {O}}_{X,\eta}(\ast D)$ has the form  $K\llbracket x_1,\dots ,x_d,y\rrbracket [y^{-1}]$ where $K$ is the function field of $Z$ and  $S$ has the form $L\llbracket x_1,\dots, x_d,y^{1/m}\rrbracket [(y^{1/m})^{-1}]$ where $m\geq 1$ and where $L$ is a finite extension of $K$.
\begin{defin}\label{defin_irregularity_number}
For an object $\cM$ in $\MIC(X,D)$, the \textit{generic slopes of $\cM$ along $Z$ } are the poles orders in the $y$-variable  of the $\varphi_\alpha$ contributing to (\ref{decomposition}).
Let us denote them by $\ord_y \varphi_\alpha$ where $\alpha \in I$ and put
$$
r(Z,\cM):=\max_{\alpha \in I} \ord_y \varphi_\alpha
$$
The \textit{generic irregularity number of $\cM$ along $Z$} is defined as 
$$
\irr(Z,\cM):=\sum_{\alpha \in I}   (\ord_y \varphi_\alpha )\cdot \rank \cR_\alpha
$$
\end{defin}

\begin{rem}
In the setting of \cref{defin_irregularity_number}, the generic irregularity number of $\cM$ along $Z$ is an integer independent of the choice of the finite étale $\widehat{\mathcal {O}}_{X,\eta}(\ast D)$-algebra $S$.
\end{rem}

In general, irregularity numbers and irregularity complexes are related through a theorem of  Malgrange \cite[Th 1.4]{Mal71}.
We record it  in the following form useful for us :

\begin{lem}\label{irr_and_Irr}
Let $(X,D)$ be an analytic pair.
Let $Z$ be an irreducible component of $D$.
Let $\cM$ be an object of $\MIC(X,D)$. 
Then, the generic irregularity number of $\cM$ along $Z$ is the generic rank along $Z$ of the perverse complex $\Irr^*_{D} \cM$.
\end{lem}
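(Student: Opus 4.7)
The plan is to localize to a sufficiently generic smooth point of $Z$ and reduce the computation to a one dimensional transverse slice, where the statement is the classical theorem of Malgrange \cite{Mal71}.

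First, since both the generic irregularity number and the generic rank of a perverse complex along $Z$ are computed at a very general point of $Z$, I may replace $Z$ by the complement in $Z$ of $Z^{\sing}$, of the other components of $D$, and of the turning locus $\TL(\cM)$. By \cref{purity}, this removes a codimension $1$ closed subset of $Z$, so the remaining locus is still dense in $Z$. Pick a point $z_0$ in this open subset. Then $\cM$ has good formal structure at $z_0$, and locally around $z_0$ the divisor $D$ is just the smooth hypersurface $Z$.

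Next, I would choose a small analytic germ of curve $i : C \hookrightarrow X$ through $z_0$, transverse to $Z$, so that $C \cap D = \{z_0\}$. Away from turning points, the singular support of $\cM(\ast D)$ near $z_0$ is contained in $T^*_X X \cup T^*_Z X$; transversality of $C$ to $Z$ then ensures that $i$ is non characteristic for $\cM(\ast D)$ in a neighborhood of $z_0$. By \cref{Cauchy-Kowaleska}, the comparison morphism
\[
i^* \Sol \cM(\ast D) \lra \Sol i^+ \cM(\ast D)
\]
is an isomorphism. Taking stalks at $z_0$ identifies $(\Irr^*_D \cM)_{z_0}$ with the $1$-dimensional irregularity complex $\Irr^*_{\{z_0\}}(\cM|_C)$ of the meromorphic connection $\cM|_C := i^+ \cM(\ast D)$ on the curve $C$. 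The rank computing the generic rank of $\Irr^*_D \cM$ along $Z$ is thus the dimension of this stalk, which by Malgrange's one dimensional theorem \cite[Th 1.4]{Mal71} equals $\irr(z_0, \cM|_C)$.

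It then remains to identify $\irr(z_0, \cM|_C)$ with $\irr(Z, \cM)$ as defined in \cref{defin_irregularity_number}. Since $\cM$ has good formal structure at $\eta$, its formal pull back decomposes as $\bigoplus_\alpha \mathcal E^{\varphi_\alpha} \otimes \mathcal R_\alpha$ on a finite étale cover $\mathcal S$ of $\widehat{\mathcal O}_{X,\eta}(\ast D)$. Restricting formally along a generic transverse curve $C$ at $z_0$, each elementary summand $\mathcal E^{\varphi_\alpha} \otimes \mathcal R_\alpha$ restricts to the elementary module at $z_0$ with exponential part $\varphi_\alpha|_C$ tensored with a regular factor of the same rank as $\mathcal R_\alpha$, and by construction the pole order of $\varphi_\alpha|_C$ at $z_0$ coincides with its pole order $\ord_y \varphi_\alpha$ in the transverse variable to $Z$. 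Summing and using the classical additivity of the one dimensional irregularity $\irr(z_0, -)$ on direct sums yields $\irr(z_0, \cM|_C) = \sum_\alpha (\ord_y \varphi_\alpha) \cdot \rank \mathcal R_\alpha = \irr(Z, \cM)$, concluding the proof.

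The main subtlety I expect is making sure that the analytic transverse germ $C$ is chosen so that $i$ is indeed non characteristic for $\cM(\ast D)$ (as opposed to just for the underlying $\cO_X$-coherent structure) and that the formal decomposition on $\mathcal S$ really does restrict cleanly to a formal decomposition over $\widehat{\mathcal O}_{C,z_0}(\ast z_0)$; both are standard once one is at a point which is simultaneously smooth in $D$ and not in $\TL(\cM)$, which is why restricting to such a locus at the outset is essential.
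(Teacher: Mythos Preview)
The paper does not supply a proof of this lemma; it is recorded as a reformulation of Malgrange's theorem with the citation \cite[Th 1.4]{Mal71} and no further argument. Your proposal is therefore not competing with a written proof but rather unpacking why the lemma follows from the one-dimensional result, and the reduction via a non-characteristic transverse slice at a good point of $Z$ is exactly the standard route. The argument is sound.

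Two minor remarks. First, invoking \cref{purity} is more than you need and is stated in the paper only for algebraic pairs over $k$; for your purposes it suffices that the turning locus meets $Z$ in a nowhere dense closed analytic subset, which already follows from the fact that good formal structure holds on a dense open of $Z$. Second, \cref{Cauchy-Kowaleska} as written in the paper is the algebraic statement (with part (2) for $k=\mathds{C}$); in the analytic setting you are using Kashiwara's original analytic Cauchy--Kowalevski theorem, so it would be cleaner to cite \cite{TheseKashiwara} directly. Neither point affects correctness.
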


\subsection{$b$-divisors}\label{b-bivisor}
Let $X$ be a smooth variety over $k$. 
A \textit{modification of $X$} is the datum of a smooth variety $Y$ over $k$ and a map  $Y\to X$ which is proper, dominant and an isomorphism away from a nowhere dense closed subset of $X$. 
Following \cite{Kedlaya3}, we introduce the following definition :
\begin{defin}
The group of integral $b$-divisors $\bDiv(X)$ of $X$ is the limit
$$
\varprojlim_{Y\rightarrow X} \Div(Y)
$$
where $Y\to X$ runs over the category of modifications of $X$ and where the transition maps are push-forward.
\end{defin}
For $D$ in $\bDiv(X)$, we denote by $D(Y)$ the component of $D$ along a modification $Y\to X$.
For every irreducible divisor  $E$ of $Y$, we denote by $m(E,D)$ the multiplicity of $D(Y)$ along $E$.
\begin{defin}
The group of integral Cartier $b$-divisors $\bCDiv(X)$ on $X$ is the colimit
$$
\varinjlim_{Y\rightarrow X}\CDiv(Y)
$$ 
where $Y\to X$ runs over the category of  modifications of $X$ and where the transition maps are pull-back.
\end{defin}
%For $D\in\bCDiv(X)$, a determination of $D$ is a modification $Y\lra X$ such that $D$ lies in the image of 
%$\CDiv(Y)\lra\bCDiv(X)$.\\ \indent
Let $\ZR^{\divis}(X)$ be the subset of the Zariski-Riemann space of $X$ consisting of divisorial valuations centred on $X$. 
As explained in \cite{Kedlaya3}, the group of integral $b$-divisors identifies with the set of functions $m : \ZR^{\divis}(X)\longrightarrow \mathds{Z}$ such that for every  modification $Y\to X$, there is only a finite number of divisorial valuations $v\in \ZR^{\divis}(X)$ centred at an irreducible divisor of $Y$ such that $m(v)\neq 0$. 
Thus, the order on $\mathds{Z}$ induces an order $\leq$ on  $\bDiv(X)$.
Furthermore, there is a canonical injective map
$$
\bCDiv(X) \lra \bDiv(X)
$$

One of the main player of this paper is  a Cartier $b$-divisor with an extra property called \textit{nef} in \cite{Kedlaya3}.
In view of Lemma 1.4.9 and 1.4.10 from \cite{Kedlaya3}, we can define nef Cartier $b$-divisor as follows :

\begin{defin}
Let $X$ be a smooth variety over $k$. 
Let $D$ be a Cartier $b$-divisor on $X$.
We say that $D$ is \textit{nef} if for every modification $Y\to X$, we have $D\leq D(Y)$ in $\bDiv(X)$ where
$D(Y)$ is viewed as a $b$-divisor via $\bCDiv(X) \to \bDiv(X)$.
\end{defin}

\subsection{The Irregularity $b$-divisor}
Let $(X,D)$ be a pair over $k$.
We recall the following definition from \cite[3.1.1]{Kedlaya3}.
\begin{defin}\label{irr_b_divisor}
Let $\cM$ be an object of $\MIC(X,D)$.
We denote by  $\Irr \cM$ the unique $b$-divisor on $X$ such that for every  modification $p: Y\to X$, the multiplicity of $\Irr \cM$ along an irreducible divisor $E$ of $Y$ is the generic irregularity $\irr(E,p^+\cM)$ of $p^{+}\cM$ along $E$. 
We put
$$
\Irr(Y,p^+\cM):=(\Irr \cM)(Y) \text{ in } \Div(Y)
$$
\end{defin}

The following lemma is obvious :
\begin{lem}\label{Irr_divisor_base_change}
Let $(X,D)$ be a pair over $k$.
Let $k\subset K$ be a field extension.
Let $\cM$ be an object of $\MIC(X,D)$.
Then, $\Irr(X_K,\cM_K)=\Irr(X,\cM)_K$.
\end{lem}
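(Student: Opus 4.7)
The plan is to reduce the statement to the fact that the generic irregularity number along a fixed irreducible component is invariant under field extension, and then to check this by base-changing the local Levelt--Turrittin decomposition. By \cref{irr_b_divisor}, the $b$-divisor $\Irr \cM$ is determined by the generic irregularity numbers of $p^+\cM$ along irreducible divisors $E$ of modifications $p : Y \lra X$. Since the formation of the $\cD$-module pullback commutes with flat base change, one has $(p_K)^+(\cM_K)=(p^+\cM)_K$, where $p_K : Y_K \lra X_K$ is the base-changed modification. Any modification of $X_K$ arising from base change of one of $X$ produces in this way the announced $b$-divisor on $X_K$ via the natural map $\bDiv(X)\lra \bDiv(X_K)$, so it suffices to prove the following local claim: for every smooth normal crossing pair $(X,D)$ over $k$, every irreducible component $Z$ of $D$, and every irreducible component $Z'$ of $Z_K\subset X_K$, one has $\irr(Z,\cM)=\irr(Z',\cM_K)$.

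To verify this, I would localize at the generic point $\eta$ of $Z$ and at a generic point $\eta'$ of $Z'$ above $\eta$. At $\eta$ the pair is one-dimensional transversal to $Z$, so Levelt--Turrittin provides a good formal decomposition as in \cref{decomposition}, over a finite étale extension $\mathcal S$ of $\wh{\cO}_{X,\eta}(\ast D)\simeq k(Z)\llbracket y\rrbracket[y^{-1}]$, with distinguished elements $\varphi_\alpha \in \mathcal S$ and regular parts $\cR_\alpha$. Tensoring this decomposition with $K$ over $k$ is an exact operation which takes $\wh{\cO}_{X,\eta}(\ast D)$ to a finite product of rings of the form $\wh{\cO}_{X_K,\eta_i'}(\ast D_K)$ indexed by the generic points $\eta_i'$ of the components of $Z_K$, and each factor of $\mathcal S\otimes_k K$ lying above $\eta'$ remains finite étale over $\wh{\cO}_{X_K,\eta'}(\ast D_K)$. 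The images of the $\varphi_\alpha$ retain the same pole order in the parameter $y$ cutting out $Z$, and each $\cR_\alpha\otimes_k K$ splits into regular differential modules whose total rank equals $\rk \cR_\alpha$. Plugging back into \cref{defin_irregularity_number} gives the required equality.

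The main obstacle is purely notational: one has to keep track of how the finite étale algebra $\mathcal S$ decomposes after base change and to match each piece with the correct component $\eta'$. Once this bookkeeping is done the equality is automatic, because a field extension $k\subset K$ is faithfully flat and regular, so it preserves finite étaleness, formal completion along $D$, and generic ranks.
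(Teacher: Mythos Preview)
Your argument is correct and is essentially what the paper has in mind: the lemma is declared ``obvious'' there, and the underlying reason is exactly the one you isolate, namely that a local parameter for $Z$ at its generic point pulls back to a local parameter for each component $Z'$ of $Z_K$, so the Levelt--Turrittin data and hence the irregularity number are preserved (the paper spells out this uniformizer observation later in \cref{Change_base_field}). One minor remark: the first paragraph's discussion of general modifications and the map $\bDiv(X)\to\bDiv(X_K)$ is more than is needed, since the statement concerns only the divisor $\Irr(X,\cM)$ on $X$ itself, not the full $b$-divisor; you can go straight to the local claim about $\irr(Z,\cM)=\irr(Z',\cM_K)$.
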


\begin{rem}\label{R_divisor}
In the setting of \cref{irr_b_divisor}, one can also define $R(\cM)$ the unique rational $b$-divisor on $X$ such that for every  modification $p: Y\to X$, the multiplicity of $R(\cM)$ along an irreducible divisor $E$ of $Y$ is the highest generic slope $r(E,p^+\cM)\in \mathds{Q}_{\geq 0}$ of $p^{+}\cM$ along $E$, and put $R(Y,p^+\cM):=(R(\cM))(Y) \text{ in } \Div(Y)_{\mathds{Q}}$.
\end{rem}

\begin{rem}
Equivalently, the $b$-divisors $R(\cM)$ and $\Irr \cM$ will be viewed as $\mathds{Q}$-valued functions on the set of divisorial valuations of $X$.
\end{rem}

The following theorem is due to Kedlaya \cite[3.2.3]{Kedlaya3}. 
\begin{thm}\label{main_thm_Ked_III}
Let $(X,D)$ be a normal crossing pair over $k$.
For any object $\cM$ of $\MIC(X,D)$, the $b$-divisor $\Irr \cM$  is a nef Cartier $b$-divisor.
In particular, we have
$$
\Irr \cM \leq \Irr(X,\cM)
$$
in $\bDiv(X)$, where $\Irr(X,\cM)$ is viewed as a $b$-divisor via $\bCDiv(X) \to \bDiv(X)$.
\end{thm}

\begin{cor}\label{semi_continuity}
Let $(X,D)$ be a normal crossing pair over $k$.
Let $R$ be an effective divisor of $X$ supported on $D$.
Let $\cM$ be an object in $ \MIC(X,D)$.
Then the following conditions are equivalent;
\begin{enumerate}\itemsep=0.2cm
\item $\Irr(X,\cM)\leq R$ in $\Div(X)$.
\item For every point $0$ in $D$, for every locally closed smooth curve $C \to X$ in $X$ meeting $D$ at $0$ only, we have 
$$
\irr(0,\cM|_{C})\leq (C,R)_0
$$ 
where $(C,R)_0$  denotes the intersection number of $C$ with $R$ at $0$. 
\end{enumerate}
\end{cor}
\begin{proof}
If $(2)$ holds, we get $(1)$ by using generically enough smooth curves transverse to the irreducible components of $D$. 
We now show that $(1)$ implies $(2)$.
To do this, we can suppose that $R=\Irr(X,\cM)$.
Let $0$ be a point in $D$ and let $C \to X$ be a locally closed smooth curve meeting $D$ at $0$ only. 
Let $p  : Y\to X$ be a  modification of $X$ which is an isomorphism above $X\setminus D$. 
By valuative criterion for properness, the immersion $C\to X$ factors uniquely through an immersion $C\to Y$ followed by $p$.
In particular, we have
$$
\irr(0,\cM|_{C})=\irr(0,(p^+\cM)|_{C})
$$
On the other hand, the projection formula yields
$$
(C,\Irr(X,\cM))_0 = ( C,p^* \Irr(X,\cM))_{0}
$$
%= (p_* C^{\prime},\Irr(X,\cM))_0 
From \cref{main_thm_Ked_III}, we further have
$$
\Irr(Y,p^+\cM)\leq p^* \Irr(X,\cM)
$$
in $\Div(Y)$.
Hence, the sought-after inequality for $X, \cM, C\to X$ and $R=\Irr(X,\cM)$ follows from the analogous inequality for $Y, p^+\cM, C\to Y$ and $R=\Irr(Y,p^+\cM)$.
From \cref{KM_theorem}, we are thus left to suppose that $\cM$ has good formal structure and that $C$ is transverse to $D$ at $0$. 
In that case, the sought-after inequality is an equality.
\end{proof}

\begin{rem}
For surfaces, \cref{semi_continuity} was proved by Sabbah \cite[3.2.3]{Sabbahdim}.
\end{rem}

The property of having good formal structure can be read from the irregularity $b$-divisor. 
This is due to Kedlaya \cite[3.1.2]{Kedlaya3} as a consequence of \cref{main_thm_Ked_III}.
\begin{thm}\label{bdiv_and_good_dec}
Let $(X,D)$ be a normal crossing pair over $k$.
Let $\cM$ be an object of $\MIC(X,D)$.
Then,  $\cM$   has good formal structure if and only if 
$$
\Irr \cM = \Irr(X,\cM)  \text{ and } \Irr \End\cM = \Irr(X,\End\cM)
$$
in $\bDiv(X)$.
\end{thm}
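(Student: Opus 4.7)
The plan is to handle the two directions separately, using \cref{main_thm_Ked_III} as the crucial input. That theorem already supplies the inequality $\Irr \cM \leq \Irr(X,\cM)$ in $\bDiv(X)$ (and the analogous inequality for $\End \cM$), so the content of the statement is precisely to characterize when both of these inequalities become equalities simultaneously, and to identify that coincidence with good formal structure.

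For the direction $(\Rightarrow)$, suppose $\cM$ has good formal structure along $D$. Let $p : Y \to X$ be any modification; after further blow-ups we may assume $p^{-1}(D)$ is a normal crossing divisor. By \cref{pullback_good_is_good}, $p^+\cM$ then has good formal structure along $p^{-1}(D)$. For any irreducible component $E$ of $p^{-1}(D)$, the generic irregularity along $E$ is computed directly from the pulled-back decomposition: locally, $\cM$ decomposes as $\bigoplus_\alpha \cE^{\varphi_\alpha}\otimes \cR_\alpha$ after an \'etale extension, and the pullback has the form $\bigoplus_\alpha \cE^{p^*\varphi_\alpha}\otimes p^*\cR_\alpha$. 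Each contribution $(\ord_E p^*\varphi_\alpha)\cdot \rank \cR_\alpha$ is determined by the Cartier pullback of $\Irr(X,\cM)$, so $\irr(E,p^+\cM) = m(E, p^*\Irr(X,\cM))$, yielding the $b$-divisor equality $\Irr \cM = \Irr(X,\cM)$. The same argument applied to $\End \cM$, whose local decomposition has exponential factors $\varphi_\alpha - \varphi_\beta$, gives the second equality.

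For the direction $(\Leftarrow)$, suppose both Cartier equalities hold but $\cM$ has a turning point $x_0$. By \cref{KM_theorem}, there is a modification $p : Y \to X$, obtained by blow-ups centered over $\TL(\cM)$, such that $p^+\cM$ has good formal structure along $p^{-1}(D)$. The strategy is to exhibit an exceptional component $E$ above $x_0$ for which either
$$ \irr(E,p^+\cM) > m(E, p^*\Irr(X,\cM)) \quad\text{or}\quad \irr(E, p^+\End\cM) > m(E, p^*\Irr(X,\End\cM)), $$
contradicting the hypothesis. The twin condition on $\End\cM$ is essential here: the obstruction to good formal structure at $x_0$ may come from the individual exponential factors $\varphi_\alpha$ (detected by $\cM$) or from their differences $\varphi_\alpha - \varphi_\beta$ (detected by $\End\cM$), and one of the two always forces a strict jump along a suitable exceptional divisor appearing in the resolution.

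The main obstacle is the backward direction, and its core is the local calculation at $x_0$ showing that a blow-up needed to resolve a turning point must create strict excess irregularity on either $\cM$ or $\End \cM$. This amounts to unpacking the failure of condition~(4) of \cref{defin_good_formal_structure} — either some $\varphi_\alpha$ is not a unit with inverse in the unramified part, or some difference $\varphi_\alpha - \varphi_\beta$ is not — and tracking how each such failure propagates to an exceptional divisor. A natural route is to restrict to generic smooth curves $C \to X$ transverse to $D$ at $x_0$, use \cref{semi_continuity} to test the Cartier equality, and reduce the problem to comparing slope Newton polygons along $C$ before and after resolution, where the jump can be read off directly.
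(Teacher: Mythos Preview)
The paper does not actually give its own proof of this statement: it is attributed to Kedlaya \cite[3.1.2]{Kedlaya3} and described merely as ``a consequence of \cref{main_thm_Ked_III}.'' So there is no detailed in-paper argument to compare against. That said, your overall strategy---use nefness for one inequality and the Kedlaya--Mochizuki resolution to locate a strict failure for the other direction---is the natural one and is in the spirit of what Kedlaya does.

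There is, however, a concrete error in your backward direction. You write that you want to find an exceptional component $E$ with
\[
\irr(E,p^+\cM) > m(E, p^*\Irr(X,\cM)) \quad\text{or}\quad \irr(E,p^+\End\cM) > m(E, p^*\Irr(X,\End\cM)).
\]
But \cref{main_thm_Ked_III} (nefness) already gives $\Irr\cM \leq \Irr(X,\cM)$ in $\bDiv(X)$, i.e.\ $\irr(E,p^+\cM) \leq m(E, p^*\Irr(X,\cM))$ for \emph{every} $E$. So the displayed strict inequality can never hold. What you actually need in order to contradict the hypothesis $\Irr\cM = \Irr(X,\cM)$ is a strict inequality in the \emph{other} direction:
\[
\irr(E,p^+\cM) < m(E, p^*\Irr(X,\cM)) \quad\text{or}\quad \irr(E,p^+\End\cM) < m(E, p^*\Irr(X,\End\cM)).
\]
Geometrically this is also the correct picture: at a turning point the exponential factors $\varphi_\alpha$ (or their differences) fail condition~(4) of \cref{defin_good_formal_structure}, meaning their pole divisors are not monomial in the coordinates cutting out $D$; blowing up then causes the pole order along the exceptional divisor to \emph{drop below} the naive Cartier pullback, not to exceed it. Your closing paragraph about Newton polygon jumps is pointing at the right phenomenon, but the sign of the jump is reversed.

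A secondary remark on the forward direction: your claim that ``each contribution $(\ord_E p^*\varphi_\alpha)\cdot \rank\cR_\alpha$ is determined by the Cartier pullback of $\Irr(X,\cM)$'' is where all the content lies, and it is precisely condition~(4) that makes it true. You should say explicitly that goodness forces each $\varphi_\alpha^{-1}$ (when nonintegral) to lie in $\cS_0$, hence to have divisor supported on $D$ with monomial shape, so that $\ord_E p^*\varphi_\alpha$ is computed by pulling back that divisor. Without invoking condition~(4), the equality $\irr(E,p^+\cM) = m(E,p^*\Irr(X,\cM))$ is exactly what fails at turning points.
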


The proof of \cref{semi_continuity} combined with \cref{bdiv_and_good_dec} yields the following curve-like criterion  for testing good formal structure :
\begin{cor}\label{Curve_and_goodness}
Let $(X,D)$ be a normal crossing pair over $k$.
Then, an object $\cM$ in $\MIC(X,D)$ has good formal structure if and only if for every point $0$ in $D$, for every locally closed smooth curve $C \to X$ in $X$ meeting $D$ at $0$ only, we have 
$$
\irr(0,\cM|_{C})= (C,\Irr(X,\cM))_0  \text{ and }\irr(0,(\End\cM)|_{C})= (C,\Irr(X,\End\cM))_0
$$ 
\end{cor}

Putting \cref{semi_continuity} and \cref{Curve_and_goodness} together yields the following
differential analogue of a result \cite{Hu_log_ram} of the first named author :

\begin{cor}\label{irr_via_sup}
Let $(X,D)$ be a pair over $k$ where $D$ is smooth and irreducible.
Let $\cM$ be an object in $ \MIC(X,D)$.
Then, 
$$
\irr(D,\cM)=\sup_{(0,C)}  \frac{\irr(0,\cM|_{C})}{(C,D)_0}
$$
where $0$ runs over the points of  $D$ and where $C$ runs over all smooth locally closed curves of $X$ meeting $D$ at $0$ only.
\end{cor}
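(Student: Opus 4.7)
The plan is to verify the two inequalities $\sup \leq \irr(D,\cM)$ and $\irr(D,\cM) \leq \sup$ separately, using Corollary \ref{semi_continuity} for the upper bound and Corollary \ref{Curve_and_goodness} for exhibiting an equality case.

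Since $D$ is smooth and irreducible, the divisor $\Irr(X,\cM) \in \Div(X)$ is simply $\irr(D,\cM) \cdot D$. For the inequality $\sup \leq \irr(D,\cM)$, I would apply Corollary \ref{semi_continuity} with $R := \Irr(X,\cM)$. Condition $(1)$ holds trivially, so the implication $(1) \Rightarrow (2)$ yields, for every point $0 \in D$ and every locally closed smooth curve $C \hookrightarrow X$ meeting $D$ only at $0$,
$$
\irr(0, \cM|_C) \leq (C, R)_0 = \irr(D,\cM) \cdot (C,D)_0.
$$
Dividing by the positive integer $(C,D)_0$ and passing to the supremum gives the bound.

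For the reverse inequality, the strategy is to produce a single pair $(0,C)$ realizing the value $\irr(D,\cM)$. By the purity theorem (Theorem \ref{purity}), combined with the assumption that $D$ is smooth and irreducible, $\TL(\cM)$ is a proper closed subset of $D$. Choose $0 \in D \setminus \TL(\cM)$ and a Zariski open neighborhood $U \subseteq X$ of $0$ disjoint from $\TL(\cM)$. Then $\cM|_U$ has good formal structure along $U \cap D$. Pick any smooth curve $C \subset U$ meeting $D$ transversally at $0$, so that $(C,D)_0 = 1$. Applying Corollary \ref{Curve_and_goodness} to the normal crossing pair $(U, U \cap D)$ and to $\cM|_U$ then yields
$$
\irr(0, \cM|_C) = (C, \Irr(U, \cM|_U))_0 = \irr(D,\cM) \cdot (C,D)_0 = \irr(D,\cM),
$$
where we used that the generic irregularity of $\cM|_U$ along $U \cap D$ coincides with $\irr(D,\cM)$, since both are computed at the generic point of $D$, which lies in $U$.

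The main subtlety is the lower bound: Corollary \ref{Curve_and_goodness} cannot be invoked directly on $(X,D)$ because $\cM$ need not admit good formal structure globally. The key observation bypassing this is that, since $D$ is smooth and irreducible, purity of turning loci forces $\TL(\cM)$ to be nowhere dense in $D$, allowing a Zariski shrinking to an open subset carrying good formal structure without losing access to the generic irregularity along $D$. This is where the smoothness and irreducibility of $D$ is essentially used.
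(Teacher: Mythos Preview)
Your proof is correct and follows the paper's intended approach of combining Corollaries \ref{semi_continuity} and \ref{Curve_and_goodness}. One minor remark: invoking purity (Theorem \ref{purity}) is heavier than needed for the lower bound---it suffices that $\TL(\cM)$ is a proper closed subset of $D$, which follows already from the fact (stated before Definition \ref{defin_irregularity_number}) that good formal structure always holds at the generic point of an irreducible component of $D$.
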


\section{Cohomological and $\chi$-boundedness conjectures}\label{coho_conjecture_section}

\subsection{Connections with bounded irregularity}
\begin{defin}\label{Def_bounded}
Let $(X,D)$ be a normal crossing pair over $k$.
Let $\cM$ be an object in $ \MIC(X,D)$.
Let $R$ be an effective divisor of $X$ supported on $D$.
We say that $\cM$ has \textit{irregularity bounded by $R$} if the equivalent conditions of \cref{semi_continuity} are satisfied. \\ \indent
We denote by $\MIC(X,D,R)$ the full subcategory of $\MIC(X,D)$ consisting in connections with irregularity bounded by $R$. \\ \indent
For an integer $r\geq 0$, we denote by $\MIC_r(X,D,R)$ the full subcategory of $\MIC(X,D,R)$ spanned by connections with rank smaller than $r$.
\end{defin}

\subsection{Bounded irregularity and $\cHom$}

The $\cHom$ construction for connections preserves boundedness of irregularity in the following sense :

\begin{prop}\label{Irr_Hom}
Let $(X,D)$ be a normal crossing pair  over $k$.
Let $R$ be an effective divisor of $X$ supported on $D$.
Let $r\geq 0$ be an integer.
Let $\cM_1$ 	and $\cM_2$ be objects of $\MIC_r(X,D,R)$.
Then, $\cHom(\cM_1,\cM_2)$ is an object of $\MIC_{r^2}(X,D,2r^2\cdot R)$.
\end{prop}
\begin{proof}
We can suppose that $D$ is irreducible with generic point $\eta$.
Let $(\varphi_\alpha,\cR_\alpha), \alpha\in I$ and $(\psi_\beta,\cS_{\beta}), \beta\in J$  be the constituents of the good formal structures of $\cM_1$ and $\cM_2$  at $\eta$  as in (\ref{decomposition}).
With the notations from \cref{defin_irregularity_number}, we have
$$
\irr(D,\cHom(\cM_1,\cM_2))=\sum_{I\times J}  (\ord_y \psi_\beta-\varphi_\alpha )\cdot (\rank \cR_\alpha)\cdot (\rank \cS_\beta)  
$$
Since $(\ord_y \psi_\beta-\varphi_\alpha )\leq (\ord_y \varphi_\alpha )+(\ord_y \psi_\beta )$, we deduce 
\begin{align*}
\irr(D,\cHom(\cM_1,\cM_2))
& \leq |J|\rank \cM_2\sum_{I }  (\ord_y \varphi_\alpha )\cdot (\rank \cR_\alpha)+|I|\rank \cM_1    \sum_{ J}  (\ord_y \psi_\beta )\cdot (\rank \cS_\beta)\\
& \leq  (\rank\cM_2)^2 \cdot  \irr(D,\cM_1) +(\rank\cM_1)^2 \cdot  \irr(D,\cM_2) 
\end{align*}
The proof of \cref{Irr_Hom} thus follows.
\end{proof}

\subsection{Bounded irregularity and pull-back}

Bounded irregularity behaves well with respect to inverse image.

\begin{prop}\label{non_char_restriction_bounded_irr}
Let $f: (Y,E)\to (X,D)$ be a morphism of normal crossing pairs over $k$.
%such that $E=f^{-1}(D)$.
Let $R$ be an effective divisor of $X$ supported on $D$.
Then, for every object $\cM$ in $\MIC(X,D,R)$, the connection $f^+\cM$ has irregularity bounded by $f^*R$.
\end{prop}
\begin{proof}
%By writing $f$ as a composition of a closed immersion followed by a smooth morphism, we are left to suppose that $f$ is either a closed immersion or a smooth morphism with the extra assumption that $E=f^{-1}(D)$.
By writing $f$ as a composition of a closed immersion followed by a smooth morphism, we are left to suppose that $f$ is either a closed immersion or a smooth morphism.
In both case, the sought-after bound is local around the maximal points of $E$, so we can suppose in each case that $E$ is smooth and irreducible.\\ \indent
Let us assume that $f$ is a smooth morphism.
Then, the pull-back $E\to D$ is smooth.
Since $E$ is smooth, so is the open set $f(E)$ of $D$.
Thus, $f(E)$ lies in a unique irreducible component $T$ of $D$.
Since $f$ is smooth, so is the pull-back $E\to T$.
Hence, the generic point $\eta_E$ of $E$ lies above the generic point $\eta_T$ of $T$ and a uniformizer of $\cO_{X,\eta_T}$ pulls-back to a uniformizer of $\cO_{Y,\eta_E}$.
Thus, the irregularity numbers of $\cM|_{\eta_T}$ and $(f^+\cM)|_{\eta_E}$ are equal.
Hence, $\Irr(Y,f^+\cM)=f^*\Irr(X,\cM)$ and \cref{non_char_restriction_bounded_irr} is proved if $f$ is smooth.
\\ \indent
We now assume that $f$ is a closed immersion with $E$ smooth and irreducible.
Let $0$ be a point in $E$ and let 
$C\to Y$ be a locally closed smooth curve of $Y$ meeting $E$ at $0$ only.
\cref{semi_continuity} applied to $f(C)$ and $\cM$ on $X$ gives
$$
\irr(0,(f^+\cM)|_{C})=\irr(f(0),\cM|_{f(C)})\leq   (f(C),R)_{f(0)}=  (C,f^*R)_0 
$$
where the last equality follows from the projection formula.
Hence, the condition $(2)$ of \cref{semi_continuity} is satisfied and \cref{non_char_restriction_bounded_irr} follows.
\end{proof}

\subsection{Bounded irregularity and change of compactification I}

Bounded irregularity behaves well with respect to compactification by normal crossing pairs.

\begin{prop}\label{change_compactification}
Let $U$ be a smooth variety over $k$. 
Let $(X_1,D_1)$ and $(X_2,D_2)$ be proper normal crossing pairs over $k$ compactifying $U$, that is $U=X_1\setminus D_1=X_2\setminus D_2$.
Let $j_1 : U\longrightarrow X_1$ and $j_2 : U\longrightarrow X_2$ be the inclusions. 
Let $R_1$ be an effective divisor of $X_1$ supported on $D_1$. 
Then, there exists an effective divisor $R_2$ of $X_2$ supported on $D_2$ depending only on $(X_1,D_1)$, $(X_2,D_2)$ and linearly on $R_1$ such that for every object $\cM_1$ of $\MIC(X_1,D_1,R_1)$, the connection $j_{2*}j_1^*\cM_1$ lies in $\MIC(X_2,D_2,R_2)$.
\end{prop}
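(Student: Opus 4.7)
The plan is to pass to a common smooth modification $Y$ of $X_1$ and $X_2$ with normal crossing boundary, apply Kedlaya's nefness theorem on $(X_1, D_1)$ to bound the irregularity on $Y$, and then transfer this bound to $X_2$ via strict transforms.

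First, by Hironaka's theorems there exists a smooth variety $Y$ together with proper birational morphisms $p : Y \to X_1$ and $q : Y \to X_2$, both isomorphisms over $U$, such that $E := (Y\setminus U)_{\red}$ is a normal crossing divisor of $Y$. Every irreducible component $Z$ of $D_2$ then admits a unique strict transform $\tilde Z$, which is an irreducible component of $E$ and maps birationally onto $Z$ under $q$; distinct components of $D_2$ yield distinct strict transforms, as they correspond to distinct divisorial valuations of $k(U)$. Setting $\cE := \cM_1|_U$ and writing $j : U \to Y$ for the inclusion, \cref{meromorphic_vs_non_meromorphic} identifies
\[
p^+ \cM_1 \simeq j_*\cE \simeq q^+(j_{2*}j_1^*\cM_1)
\]
as meromorphic connections on $Y$.

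Second, since $\cM_1$ lies in $\MIC(X_1, D_1, R_1)$, applying \cref{main_thm_Ked_III} to $(X_1, D_1)$ and the modification $p$ yields
\[
\Irr(Y, p^+ \cM_1) \leq p^* \Irr(X_1, \cM_1) \leq p^* R_1
\]
in $\Div(Y, E)$. Define a linear map $L : \Div(X_1, D_1) \to \Div(X_2, D_2)$ by the rule that, for each irreducible component $Z$ of $D_2$, the multiplicity of $Z$ in $L(A)$ equals the multiplicity of $\tilde Z$ in $p^* A$, and put $R_2 := L(R_1)$. By construction, $R_2$ depends only on the triple $(Y,p,q)$, hence only on $(X_1, D_1)$ and $(X_2, D_2)$, and linearly on $R_1$.

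Finally, for each irreducible component $Z$ of $D_2$ the local rings $\cO_{X_2, \eta_Z}$ and $\cO_{Y, \tilde\eta_Z}$ coincide as subrings of $k(U)$, as they are the valuation rings of the same divisorial valuation; their completions therefore agree, and the generic irregularity of $j_{2*}\cE$ along $Z$ equals the generic irregularity of $p^+ \cM_1$ along $\tilde Z$. Combined with the bound of the second step this gives
\[
\mathrm{mult}_Z \Irr(X_2, j_{2*}\cE) = \mathrm{mult}_{\tilde Z}\Irr(Y, p^+ \cM_1) \leq \mathrm{mult}_{\tilde Z}(p^* R_1) = \mathrm{mult}_Z(R_2)
\]
for every component $Z$ of $D_2$, which by \cref{semi_continuity} shows $j_{2*}j_1^* \cM_1 \in \MIC(X_2, D_2, R_2)$. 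The main conceptual point is that the irregularity $b$-divisor is intrinsic to the connection on $U$ — as already reflected in \cref{irr_b_divisor} exhibiting it as a function on divisorial valuations — so changing compactification reduces to bookkeeping about strict transforms; the rest is an application of Kedlaya's nefness theorem.
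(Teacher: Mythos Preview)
Your proof is correct and follows essentially the same route as the paper's: pass to a common normal crossing modification, invoke Kedlaya's nefness theorem (\cref{main_thm_Ked_III}) to bound irregularity upstairs, and read off the bound on $X_2$ via strict transforms. The only cosmetic difference is packaging: the paper writes the answer compactly as $R_2 = p_{2\ast}p_1^{\ast}R_1$, which amounts exactly to your linear map $L$, since push-forward along a modification records the multiplicity along each strict transform.
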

\begin{proof}
Using resolution of singularities, we construct a pair $(X_3,D_3)$ dominating $(X_1,D_1)$ and $(X_2,D_2)$ such that $D_3$  has normal crossing. 
That is, there is a commutative diagram
$$
\xymatrix{
   &  U     \ar[ld]_-{j_1} \ar[d]^-{j_3}  \ar[rd]^-{j_2}  & \\
    X_1     & \ar[l]^-{p_1}  X_3 \ar[r]_-{p_2}    &     X_2  
}
$$
with modifications as horizontal arrows where $D_3$ lies over $D_1$ and $D_2$. 
Let $\cM_1\in \MIC(X_1,D_1,R_1)$. 
Put $\cM_2=j_{2*}j_1^*\cM_1$ and $\cM_3=j_{3*}j_1^*\cM_1$. 
From \cref{meromorphic_vs_non_meromorphic}, we have 
$$
p^+_1\cM_1=\cM_3=p^+_2\cM_2
$$ 
Let $E$ be an irreducible component of $D_2$. 
We have to bound $\irr(E,\cM_2)$ by means of $(X_i,D_i)$, $i=1,2$ and $R_1$ only.
Let $E'$ be the strict transform of $E$ in $X_3$. 
Then, 
$$
\irr(E,\cM_2)=\irr(E',p_2^+\cM_2)=\irr(E',\cM_3)=\irr(E',p_1^+\cM_1)
$$
From \cref{main_thm_Ked_III}, we further have 
$$
\Irr(X_3,p_1^+\cM_1)\leq p_1^*\Irr(X_1,\cM_1) \leq p_1^* R_1
$$
We deduce 
$$
\irr(E,\cM_2)\leq m(E',p_1^* R_1)=m(E,p_{2 \ast} p_1^* R_1)
$$
Hence, $R_2=p_{2 \ast} p_1^* R_1$ depends linearly on $R_1$ and satisfies the condition of  \cref{change_compactification}.
\end{proof}

\subsection{Cohomological boundedness conjecture}

Recall that if $G$ is an abelian group and $d\geq 0$ an integer, a \textit{polynomial $C : G\to \mathds{Z}$ of degree at most $d$} is an element of $\Sym^{\bullet}_{\mathds{Z}}(G^{\vee})$ of degree at most $d$, where $G^{\vee}$ is the abelian group dual to $G$.

\begin{conj}\label{conjecture_coho}
Let $(X,D)$ be a projective normal crossing pair of dimension $d$ over $k$.
There exists a  polynomial  $C : \Div(X,D)\oplus \mathds{Z}\to \mathds{Z}$ of degree at most $d$, affine in the last variable  such that for every  effective divisor $R$ of $X$ supported on $D$, for every integer $r\geq 0$ and every object $\cM$ of $\MIC_r(X,D,R)$, we have
$$
\dim H^{*}(X,\DR \cM)\leq C(R,r)
$$
\end{conj}

\begin{rem}\label{Rem1}
If a polynomial $C : \Div(X,D)\oplus \mathds{Z}\to \mathds{Z}$ as above exists, we say that \textit{cohomological boundedness holds for $(X,D)$ with bound $C$}.
\end{rem}

\begin{rem}\label{Rem2}
Let $d\geq 0$ be an integer.
Let $k$ be a field of characteristic $0$.
We say that \textit{cohomological boundedness holds in dimension $d$ over $k$} if it holds for every choice of $(X,D)$ as above with $(X,D)$ defined over $k$  and $\dim X=d$.
\end{rem}

\begin{rem}\label{Rem3}
Let $d\geq 0$ be an integer.
We say that \textit{cohomological boundedness holds in dimension $d$} if it holds in dimension $d$ over every field of characteristic $0$.
\end{rem}

\begin{lem}\label{curve_case}
Cohomological boundedness holds in dimension $1$.
\end{lem}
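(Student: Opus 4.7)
The plan is to combine elementary bounds on extremal cohomology with Deligne's Euler--Poincar\'e formula for flat bundles on curves. By \cref{same_coho}, $H^*(X,\DR \cM)\simeq H^*(U,\DR \cE)$ where $\cE:=j^*\cM$ and $j:U:=X\setminus D\lra X$ is the open immersion. Since $\dim X=1$ and $\cE$ is holonomic, \cref{bound_cohomology} ensures $h^i:=\dim H^i(U,\DR\cE)=0$ for $i\notin\{0,1,2\}$, so it suffices to bound $h^0$, $h^1$ and $h^2$. By \cref{coho_change_base_field} and \cref{Irr_divisor_base_change}, I may assume $k$ algebraically closed.

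First, I bound $h^0$ and $h^2$ linearly in $r$. Horizontal sections of $\cE$ over each connected component of $U$ form a $k$-vector space of dimension at most $\rk \cE\leq r$, so $h^0\leq Nr$ where $N$ is the number of connected components of $U$, a constant depending only on $(X,D)$. For $h^2$: if $D\neq\emptyset$, then $U$ is affine and $h^2$ vanishes; if $D=\emptyset$, then $U=X$ is projective and Poincar\'e--Verdier duality bounds $h^2$ by the dimension of $H^0$ of the dual flat bundle (of rank at most $r$), yielding $h^2\leq Nr$ again.

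Next, I invoke Deligne's global Euler--Poincar\'e formula for flat connections on curves:
$$
\chi(U,\DR\cE)=\rk\cE\cdot \chi(U)-\sum_{x\in D}\irr(x,\cE).
$$
In dimension one there are no turning points, so the irregularity $b$-divisor of $\cM$ coincides with the ordinary divisor $\sum_{x\in D}\irr(x,\cM)\cdot [x]$. Since $\cM\in \MIC_r(X,D,R)$, \cref{semi_continuity} yields $\irr(x,\cM)\leq m(x,R)$ for every $x\in D$, so $\sum_{x\in D}\irr(x,\cE)\leq \sum_{x\in D}m(x,R)$, a linear functional of $R$ depending only on $(X,D)$. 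Combined with $\rk\cE\leq r$, this bounds $|\chi|$ linearly in $r$ and $R$.

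Finally, the identity $h^0+h^1+h^2=2h^0+2h^2-\chi$ together with the bounds above yields a polynomial $C:\Div(X,D)\oplus \mathds{Z}\lra \mathds{Z}$ of total degree at most $1$, affine (even linear) in $r$, with the required property. The only substantive input is Deligne's Euler--Poincar\'e formula on curves; everything else is elementary bookkeeping.
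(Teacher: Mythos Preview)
Your proof is correct and follows essentially the same strategy as the paper's: bound the extreme cohomology degrees by $r$ times a constant of $(X,D)$, then control the middle degree via the Euler characteristic formula $\chi = r\cdot\chi(U)-\sum_{x\in D}\irr(x,\cM)$. The only difference is packaging: the paper passes to $\mathds{C}$, works with $\Sol\cM^{\an}$, and reads off the same formula from the distinguished triangle $j_!\cL\to\Sol\cM^{\an}\to\bigoplus_{P\in D}\mathds{C}_P^{\irr(P,\cM)}[-1]$ together with $\chi_c(U,\cL)=r\cdot\chi(U)$, whereas you stay on the de~Rham side and quote Deligne's formula directly.
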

\begin{proof}
From \cref{coho_bound_reduction_complex}, it is enough to treat the case of a smooth connected projective curve $X$ of genus $g$ over $\mathds{C}$.
Let $D$ be a reduced divisor of $X$.
Let $\cM$ be an object of $\MIC(X,D)$.
From \cref{GAGA_DR} and \cref{dualityDR/Sol}, we are left to bound the cohomology of $\Sol(\cM^{\an})$.
Let $j : U=X\setminus D \to X$ be the open immersion and put $\cL :=\Sol(\cM^{\an})|_{U(\mathds{C})}$.
Observe that $\cL$ is a local system of rank $\rk \cM$ on $U(\mathds{C})$.
From \cref{irr_and_Irr}, there is a distinguished triangle
$$
\xymatrix{
   j_!\cL  \ar[r]    &   \Sol(\cM^{\an})  \ar[r]      &    \bigoplus_{P\in D}  \mathds{C}^{\irr(P,\cM)}_P[-1]\ar[r]^-{+1}      &
}
$$
We have  $H^{0}(X(\mathds{C}),   j_!\cL  )\simeq 0$.
If $\cL^*$ denotes the dual of $\cL$, Poincaré duality further gives 
$$
\dim H^{2}(X(\mathds{C}),   j_!\cL  )=\dim H^{0}(U(\mathds{C}),  \cL^{\ast}  ) \leq \rk \cM
$$
On the other hand, $\chi_c(U(\mathds{C}),  \cL  )=\rk \cM \cdot  \chi(U(\mathds{C}),  \mathds{C}  )=\rk \cM \cdot (2-2g-|D|)$.
Then, \cref{curve_case} follows from the long exact sequence in cohomology induced by the above distinguished triangle.
\end{proof}

\subsection{The $\chi$-boundedness conjecture}

We formulate an a priori weaker version of the cohomological boundedness conjecture.
\begin{conj}\label{conjecture_chi}
Let $(X,D)$ be a projective normal crossing pair of dimension $d$ over $k$.
There exists a  polynomial  $C : \Div(X,D)\oplus \mathds{Z}\to \mathds{Z}$ of degree at most $d$,  affine in the last variable  such that for every  effective divisor $R$ of $X$ supported on $D$, for every integer $r\geq 0$ and every object $\cM$ of $\MIC_r(X,D,R)$, we have
$$
|\chi(X,\DR \cM) |\leq C(R,r)
$$
\end{conj}

\begin{rem}
We  adopt for the $\chi$-boundedness conjecture the terminology from \cref{Rem1}, \cref{Rem2} and \cref{Rem3} for the cohomological boundedness conjecture.  
\end{rem}

\begin{rem}\label{reduction_chi_to_C}
Let $d\geq 0$ be an integer.
Similarly as in \cref{coho_bound_reduction_complex}, the $\chi$-boundedness conjecture holds in dimension $d$ if it holds in dimension $d$ over $\mathds{C}$.

\end{rem}

Cohomological boundedness trivially implies  $\chi$-boundedness. 
The goal of the next subsection is to show that they are equivalent.

\subsection{The cohomological and $\chi$-boundedness conjectures are equivalent}

\begin{prop}\label{non_char_restriction_coho_DR}
Let $\mathds{P}$ be a projective space over  $k$.
Let $X$ be a smooth subvariety of $\mathds{P}$.
Let $\cM$ be an holonomic $\cD_X$-module.
Let $H\in \mathds{P}^{\vee}(k)$ be a hyperplane such that $X\cap H$ is smooth and $X\cap H \to X$ is non-characteristic for $\cM$.
Then the canonical comparison morphism 
$$
H^n(X,\DR \cM)\lra H^n(X\cap H,\DR (\cM|_{X\cap H}))
$$
is an isomorphism if $n<\dim X-1$ and is injective if $n=\dim X-1$.
\end{prop}
\begin{proof}
From  \cref{coho_change_base_field},
%Liu2.27
we reduce to the case where $k=\mathds{C}$.
From \cref{GAGA_DR}, we are left to prove a variant of \cref{non_char_restriction_coho_DR}
where now   $X$ is the analytification of a smooth complex algebraic variety and where $\cM$ is the analytification of an holonomic $\cD$-module.
Then  \cref{DR_perverse} ensures that $\DR \cM[\dim X]$ is a perverse complex. 
From the Lefschetz hyperplane theorem for perverse complexes  \cite[2.4.2]{Cataldo}, the canonical comparison morphism 
$$
H^n(X,\DR \cM)\lra H^n(X\cap H,(\DR \cM)|_{X\cap H})
$$
is an isomorphism if $n<\dim X-1$ and is injective if $n=\dim X-1$.
Since $X\cap H \to X$ is non-characteristic for $\cM$, we conclude the proof of \cref{non_char_restriction_coho_DR} using \cref{Cauchy-Kowaleska}.
\end{proof}

\begin{rem}\label{rem_non_char_restriction_coho_DR}
When $X$ is a projective complex manifold,  \cref{non_char_restriction_coho_DR} is also valid via the same reasoning with either the analytic  De Rham complex or the solution complex.
\end{rem}

\begin{cor}\label{bound_in_terms_of_chi}
In the setting of \cref{non_char_restriction_coho_DR}, assume the existence of  $C\geq 0$ with
$$
\dim H^{\ast}(X\cap H,\DR (\cM|_{X\cap H}))  \leq C
$$
Then, if $d$ denotes the dimension of $X$, we have
$$
\dim H^{\ast}(X,\DR \cM)\leq |\chi(X,\DR \cM)| + 4d \cdot C
$$
\end{cor}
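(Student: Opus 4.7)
My plan is to combine the Lefschetz hyperplane restriction of \cref{non_char_restriction_coho_DR}, applied in parallel to the De Rham complex $\DR \cM$ and to the solution complex $\Sol \cM$, with Poincar\'e--Verdier duality on compact complex manifolds, in order to bound every cohomological degree of $\DR \cM$ except the middle one; that last degree is then controlled by the Euler characteristic itself. By \cref{coho_change_base_field} and \cref{GAGA_DR}, I would first reduce to the case where $X$ is a compact complex manifold of complex dimension $d$ and $\cM$ is analytic.

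Setting $h^n := \dim H^n(X,\DR \cM)$, \cref{non_char_restriction_coho_DR} directly yields
$$
\sum_{n=0}^{d-1} h^n \;\le\; \dim H^*(X\cap H,\DR(\cM|_{X\cap H})) \;\le\; C.
$$
Since non-characteristicity depends only on the singular support, $X\cap H \longrightarrow X$ is non-characteristic for $\Sol \cM$ as well; the analogue of \cref{non_char_restriction_coho_DR} for the solution complex noted in \cref{rem_non_char_restriction_coho_DR} therefore provides the parallel bound
$$
\sum_{n=0}^{d-1} \dim H^n(X,\Sol \cM) \;\le\; \dim H^*(X\cap H,\Sol(\cM|_{X\cap H})).
$$

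I would then transfer these $\Sol$-bounds back to $\DR$ by duality. The identification $\DR \cM \cong R\cHom_\mathds{C}(\Sol \cM,\mathds{C})$ of \cref{dualityDR/Sol} gives $D_X\Sol \cM \cong \DR \cM[2d]$, and similarly $D_{X\cap H}\Sol(\cM|_{X\cap H}) \cong \DR(\cM|_{X\cap H})[2d-2]$, where $D_X$ and $D_{X\cap H}$ denote Verdier duality. Combining with Poincar\'e--Verdier duality on the compact manifolds $X$ and $X\cap H$ yields the dimension identities
$$
\dim H^m(X,\DR \cM) = \dim H^{2d-m}(X,\Sol \cM), \qquad \dim H^*(X\cap H, \Sol(\cM|_{X\cap H})) = \dim H^*(X\cap H,\DR(\cM|_{X\cap H})).
$$
Substituting these into the previous step converts the second bound into $\le C$ and transforms the $\Sol$-bound on $X$ in the range $n\le d-1$ into $\sum_{m=d+1}^{2d} h^m \le C$.

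Adding the two ranges then gives $\sum_{n\neq d} h^n \le 2C$. Extracting $h^d$ from the identity $\chi(X,\DR \cM) = \sum_{n=0}^{2d}(-1)^n h^n$ produces $h^d \le |\chi(X,\DR \cM)| + 2C$, so summing over all degrees yields $\dim H^*(X,\DR \cM) \le |\chi(X,\DR \cM)| + 4C$, which implies the stated bound since one may assume $d \ge 1$ (the inequality being trivial otherwise). The only subtle point will be the bookkeeping in the duality step: one must correctly track the $[2d]$ and $[2d-2]$ shifts coming from Poincar\'e--Verdier duality on the two compact complex manifolds against the shift-free duality of \cref{dualityDR/Sol} to exhibit the dimension identities above; everything else is formal.
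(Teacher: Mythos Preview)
Your proposal is correct and follows essentially the same route as the paper: Lefschetz on $\DR\cM$ handles degrees $n<d$, duality (\cref{dualityDR/Sol}) combined with Poincar\'e--Verdier and Lefschetz on $\Sol\cM$ handles degrees $n>d$, and the Euler characteristic controls degree $d$. The only difference is bookkeeping: the paper bounds each $h^n$ with $n\neq d$ individually by $C$, obtaining $\sum_{n\neq d}h^n\le 2d\cdot C$, whereas you sum the injections degreewise to get $\sum_{n\neq d}h^n\le 2C$, hence the sharper $|\chi|+4C$ (which of course implies the stated $|\chi|+4d\cdot C$).
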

\begin{proof}
We can suppose that $k=\mathds{C}$.
From \cref{non_char_restriction_coho_DR}, we have for $n<d$,
$$
\dim H^{n}(X,\DR \cM)\leq C
$$
Assume that $n>d$.
Then,
\begin{align*}
\dim H^{n}(X,\DR \cM) &= \dim H^{n}(X(\mathds{C}),\DR \cM^{\an})   \\
                              & =\dim   H^{2d-n}(X(\mathds{C}),\Sol \cM^{\an})\\
                              & \leq \dim H^{2d-n}((X\cap H)(\mathds{C}),\Sol (\cM^{\an}|_{(X\cap H)(\mathds{C})}))\\
                              & \leq \dim H^{n-2}((X\cap H)(\mathds{C}),\DR(\cM^{\an}|_{(X\cap H)(\mathds{C})}))   \\
                              & \leq \dim H^{n-2}(X\cap H,\DR(\cM|_{X\cap H}))  \leq C
\end{align*}
where the first and fifth inequality follows from \cref{GAGA_DR}, the second follows from \cref{dualityDR/Sol} and Poincaré-Verdier duality on $X(\mathds{C})$, the third follows from \cref{rem_non_char_restriction_coho_DR} since $2d-n<d$ and the fourth inequality follows from \cref{dualityDR/Sol} and Poincaré-Verdier duality on $(X\cap H)(\mathds{C})$.
Furthermore, \cref{bound_cohomology} yields 
\begin{align*}
\dim H^{d}(X,\DR \cM)&  \leq  |\chi(X,\DR \cM)| + \displaystyle{\sum_{n\neq d}}   \dim H^{n}(X,\DR \cM)\\
                                      & \leq |\chi(X,\DR \cM)|  + 2d \cdot C
\end{align*}
Putting all the above inequalities together finishes the proof of \cref{bound_in_terms_of_chi}.
\end{proof}

\begin{prop}\label{reduction_to_bound_chi}
Let $(X,D)$ be a projective normal crossing pair of dimension $d$ over $k$.
Let $X\longrightarrow \mathds{P}$ be a closed immersion in some projective space. 
Assume that $\chi$-boundedness holds for $(X,D)$ with bound $K$ and that cohomological boundedness holds with bound $C$ for the generic hyperplane section  $(X_\eta,D_\eta)$ of $(X,D)$.
Then, cohomological boundedness holds for $(X,D)$ with bound $K+4d\cdot C'$ where $C'$ is the composition of $C$ with the linear map $\Div(X,D)\oplus \mathds{Z}\to \Div(X_\eta,D_\eta)\oplus \mathds{Z}$ deduced from $X_\eta\to X$. 
\end{prop}
\begin{proof}
Let $R$ be an effective divisor of $X$ supported on $D$. 
Let $r$ be an integer. 
By Bertini's theorem, there exists a dense open subset $V$ in $\mathds{P}^{\vee}$ such that for every hyperplane $H\in V(k)$, the pair $(X\cap H, D\cap H)$ is smooth  of dimension $d-1$ and $D\cap H$ has normal crossing.
Let $\eta $ be the generic point of $\mathds{P}^{\vee}$. 
Following the notations from \cref{uni_hyperplane}, consider the commutative diagram with cartesian squares 
$$
\xymatrix{
X_{\eta }\ar[r]  \ar[d]  &  X_V   \ar[r]  \ar[d]   & X_Q   \ar[r] \ar[d]   &     X \ar[d]\\
           Q_\eta     \ar[r] \ar[d]            &          Q_V    \ar[r] \ar[d]       &         Q      \ar[r]   \ar[d]     & \mathds{P}\\
\eta \ar[r]          &  V \ar[r]       & \mathds{P}^{\vee}   & 
}
$$
Let $\pi : X_Q\longrightarrow \mathds{P}^{\vee}$ be the composition morphism and let $\pi_V : X_V\longrightarrow V$ and $\pi_{\eta} : X_{\eta}\longrightarrow \eta$ be its pull-back above $V$ and $\eta$ respectively.
Let $D_V$ and $R_V$ be the pull-back of $D$ and $R$ to $X_V$ and let $D_\eta$ and $R_\eta$ be their pull-back  to $X_\eta$. 
Observe that $(X_\eta,D_\eta)$ is a pair of dimension $d-1$ over $\eta$ where $D_{\eta}$ has normal crossing. 
Since cohomological boundedness  holds for $(X_\eta,D_\eta)$ with bound $C$, we have for every object $\cN_\eta$ in $\MIC_r(X_\eta,D_\eta ,R_\eta)$,
$$
\dim H^{\ast}(X_\eta,\DR \cN_\eta)\leq C(R_\eta,r)
$$ 
%For every $\cN\in \MIC_r(X_V,D_V ,R_V)$, Proposition $1.4$ from \cite{DiMaSaSai} ensures that there is a canonical identification of complexes of $\cD_V$-modules
%$$
%\pi _{V+}\cN \simeq  R\pi _{V\ast} \DR_{X_V/V} \cN [d-1]
%$$
%where $\DR_{X_V/V}$  denotes the De Rham complex relative to the smooth morphism  $\pi_V : X_V\longrightarrow V$.
On the other hand, for every  object $\cN$ in $\MIC_r(X_V,D_V ,R_V)$ we have
$$
(\pi _{V+}\cN)_\eta \simeq \pi _{\eta+}\cN_\eta\simeq  R\Gamma(X_\eta,\DR \cN_\eta) [d-1]
$$
Thus, the space $H^n(X_\eta,\DR \cN_\eta)$ is the generic fibre of the $\cD_V$-module 
$$
\mathcal{H}^{n-d+1} \pi _{V+}\cN
$$ 
for every $n\geq 0$.
%Hence, the sum of the generic ranks of the cohomology modules of
%$p_{V+}\cN$ is smaller than $C$. \\ \indent
Let $\cM$ be an object in $\MIC_r(X,D,R)$. 
From \cref{non_char_restriction_bounded_irr}, the pull-back $\cM_V$ of $\cM$ to $X_V$ lies in $\MIC_r(X_V,D_V ,R_V)$. 
Since $k$ is infinite,  \cref{generic_transversality} ensures the existence of a hyperplane $H\in V(k)$ such that
$X\cap H\to X$ is non-characteristic for $\cM$ and such that the cohomology modules of
$\pi_{V+}\cM_V$ are flat connections in a neighbourhood of $H\in V(k)$.
In particular, the inclusion $i : \{H\} \to V$ is non-characteristic for the cohomology modules of $\pi _{V+}\cM_V$.
Hence, \cref{Cauchy-Kowaleska} combined with base change for $\cD$-modules \cite[1.7.3]{HTT} yields
\begin{align*}
H^{n}(X_\eta,\DR (\cM_V)_\eta)& \simeq i^*\mathcal{H}^{n-d+1} \pi _{V+}\cM_V \\
                                                         &  \simeq    \mathcal{H}^{n-d+1} i^+ \pi _{V+}\cM_V     \\
                                                         & \simeq  \mathcal{H}^n \pi _{H+}(\cM_V)|_{X\cap H}    \\
                                                         & \simeq  H^n(X\cap H, \DR\cM|_{X\cap H} )
\end{align*}
where $\pi _{H} : X\cap H \to \{H\}$ is the pull-back of $\pi$ above $H\in V(k)$.
Thus,
$$
\dim H^\ast(X\cap H, \DR\cM|_{X\cap H} ) \leq C(R_\eta,r)
$$
Then, \cref{bound_in_terms_of_chi} yields
$$
\dim H^{\ast}(X,\DR \cM)\leq |\chi(X,\DR \cM)| + 4d \cdot C(R_\eta,r)\leq K(R,r)+ 4d \cdot C(R_\eta,r)
$$
which concludes the  proof of \cref{reduction_to_bound_chi}.
\end{proof}

\begin{cor}\label{quasi_cor_reduction_to_bound_chi}
Let $d\geq 2$ be an integer.
Then cohomological boundedness holds in dimension $d$   if it holds in dimension $d-1$   and if $\chi$-boundedness holds in dimension $d$.
\end{cor}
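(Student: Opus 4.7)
The plan is to apply \cref{reduction_to_bound_chi} directly to any projective normal crossing pair $(X,D)$ of dimension $d$ over $k$. Since $X$ is projective, we may fix a closed immersion $X\hookrightarrow \mathds{P}$ into some projective space, so that \cref{reduction_to_bound_chi} is available in our setting.

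The $\chi$-boundedness input for $(X,D)$ is immediate from the hypothesis that $\chi$-boundedness holds in dimension $d$, applied to the base field $k$. This supplies a polynomial $K : \Div(X,D)\oplus \mathds{Z}\lra \mathds{Z}$ of degree at most $d$, affine in the last variable, controlling $|\chi(X,\DR \cM)|$ on $\MIC_r(X,D,R)$.

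The subtle point — and the reason the corollary is not entirely formal — is the second hypothesis of \cref{reduction_to_bound_chi}, which demands cohomological boundedness for the generic hyperplane section $(X_\eta,D_\eta)$. This is again a projective normal crossing pair, of dimension $d-1$, but it is defined over the residue field $k(\eta)$ of the generic point of $\mathds{P}^\vee$, not over $k$. This is exactly why \cref{Rem3} formulates cohomological boundedness in a given dimension as a statement over every field of characteristic $0$: since $k(\eta)$ is such a field, our hypothesis that cohomological boundedness holds in dimension $d-1$ applies to $(X_\eta,D_\eta)$ and furnishes a polynomial $C : \Div(X_\eta,D_\eta)\oplus \mathds{Z}\lra \mathds{Z}$ of degree at most $d-1$, affine in the last variable, bounding $\dim H^{*}(X_\eta,\DR \cN)$ on $\MIC_r(X_\eta,D_\eta,R_\eta)$.

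Feeding $K$ and $C$ into \cref{reduction_to_bound_chi} produces the polynomial $K+4d\cdot C'$, where $C'$ is the composition of $C$ with the linear pull-back map $\Div(X,D)\oplus \mathds{Z}\lra \Div(X_\eta,D_\eta)\oplus \mathds{Z}$ induced by $X_\eta\lra X$. The degree of this polynomial is at most $\max(d,d-1)=d$, and it remains affine in the last variable since both summands are. This is the polynomial required by \cref{conjecture_coho} for $(X,D)$. The main obstacle is thus entirely absorbed by \cref{reduction_to_bound_chi}; the work of the corollary is to recognize that the base-field extension to $k(\eta)$ is harmless precisely because the conjectures are formulated uniformly over all fields of characteristic $0$.
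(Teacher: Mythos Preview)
Your proof is correct and follows exactly the intended approach: the paper states this corollary without proof, as an immediate consequence of \cref{reduction_to_bound_chi}. You have correctly identified the one non-trivial point, namely that the generic hyperplane section $(X_\eta,D_\eta)$ lives over the larger field $k(\eta)$, which is precisely why the conjectures are phrased uniformly over all fields of characteristic $0$ as in \cref{Rem3}.
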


From \cref{curve_case} and  \cref{quasi_cor_reduction_to_bound_chi}, we deduce the following
\begin{cor}\label{cor_reduction_to_bound_chi}
The cohomological and $\chi$-boundedness conjectures are equivalent.
\end{cor}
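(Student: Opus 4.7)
The plan is to dispatch both implications. For the forward direction, cohomological boundedness trivially implies $\chi$-boundedness by means of the elementary inequality $|\chi(X,\DR\cM)|\leq \dim H^{*}(X,\DR\cM)$, which transfers any polynomial cohomological bound directly into a bound on $|\chi|$ with the same degree $d$ and the same affine constraint in $r$. So any polynomial witness for cohomological boundedness on $(X,D)$ is also a witness for $\chi$-boundedness on $(X,D)$; no work is needed in this direction.

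For the reverse direction, I would argue by induction on the dimension $d$ of the projective normal crossing pair $(X,D)$. The base case $d=1$ is supplied unconditionally by \cref{curve_case} and requires no $\chi$-boundedness input whatsoever. For the inductive step $d\geq 2$, I would feed the two hypotheses required by \cref{quasi_cor_reduction_to_bound_chi}: cohomological boundedness in dimension $d-1$, which is available by the induction hypothesis, together with $\chi$-boundedness in dimension $d$, which is the assumption we are leveraging. The conclusion of \cref{quasi_cor_reduction_to_bound_chi} is exactly cohomological boundedness in dimension $d$, which closes the induction.

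There is no essential obstacle, since all the substantive work has already been assembled: the Lefschetz hyperplane theorem for perverse complexes, Poincaré--Verdier duality comparing $\DR$ and $\Sol$, non-characteristic pull-back via \cref{Cauchy-Kowaleska}, and the generic hyperplane section reduction are built into \cref{non_char_restriction_coho_DR}, \cref{bound_in_terms_of_chi} and \cref{reduction_to_bound_chi}. The one point worth emphasizing is that both conjectures are formulated over an arbitrary field of characteristic $0$ in the sense of \cref{Rem3}; this flexibility is crucial because the inductive step inside \cref{quasi_cor_reduction_to_bound_chi} reduces to the generic hyperplane section $(X_{\eta},D_{\eta})$, which is defined over the function field of the dual projective space rather than over the original base field $k$. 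The induction therefore formally closes without leaving any residual hypothesis on the base field.
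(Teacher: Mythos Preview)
Your argument is correct and matches the paper's approach exactly: the forward implication is the trivial inequality $|\chi|\leq \dim H^{*}$, and the converse is the induction on $d$ with base case \cref{curve_case} and inductive step \cref{quasi_cor_reduction_to_bound_chi}. Your remark about needing the conjectures over all characteristic-zero base fields (so that the passage to the generic hyperplane section $(X_\eta,D_\eta)$ is legitimate) is spot on and is precisely why the statements are phrased as in \cref{Rem3}.
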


\section{Nearby slopes and boundedness}\label{nearby_section}

\subsection{Resolution relative to a normal crossing divisor}

Let $(X,D)$ be a normal crossing pair over $k$. 
%Before this, let us recall that for a sheaf of ideal $\cI$ of $X$ and for a point $x$ of $X$, the \textit{multiplicity of $\cI$ at $x$}, denoted by $\mu(\cI,x)$, is the maximal integer n such that the $n$-th power of the maximal ideal of $\cO_{X,x}$ contains $\cI_x$.\\ \indent
If  $C$ is a  closed subscheme of $X$, we say that \textit{$C$ and $D$ have simultaneously only normal crossing} if for every point $x$ of $C$, there exists a regular system of parameters $(x_1,\dots, x_n)$ for $\cO_{X,x} $ such that $\cI_{D}$ is generated at $x$ by  $x_1\cdots x_k$  and $\cI_{C}$ is generated at $x$ by some monomials in the $x_j$ for $j=1,\dots, n$ (therefore by some $x_j$ if $C$ is smooth at $x$).

\begin{defin}\label{sum_up}
Let $Z$ be a closed subscheme of $X$. 
We say that a blow-up $p : Y\to X$ with center $C$ is \textit{admissible with respect to $(Z,D)$} if the following conditions are satisfied :
\begin{enumerate}\itemsep=0.2cm
\item $C$ is smooth and contained in $Z_{\red}$. 
%\item The function on $C$ defined by $x\rightarrow \mu(\cI_Z,x)$ is constant.
%Let  $\mu$ be its value.
\item $C$ and $D$ have simultaneously only normal crossing.
\end{enumerate}
Then, we denote by $Z'$ the strict transform of $Z$ and put $D'=p^{-1}(D)\cup p^{-1}(C)$  endowed with its reduced structure.
The pair $(Z^{\prime},D^{\prime})$ is the \textit{transform of $(Z,D)$ by $p : Y\to X$}.
\end{defin}

%\begin{rem}
%In \cref{sum_up}, observe that $D'$ has again normal crossing and that $\mu$ is the multiplicity of the pull-back $p^{-1}(Z)$ at the generic point of $E$.
%See paragraph 5 in \cite{Hironaka}.
%\end{rem}

The form of embedded resolution  needed in this paper is the following theorem below.
For a proof, see Theorem 8.4 and Theorem 8.6 in \cite{BMUniformization}.
\begin{thm}\label{BM}
Let $(X,D)$ be a normal crossing pair over $k$. 
Let $Z$ be a closed subscheme of $X$. 
Then, there exists a composition $p: Y\to X$ of admissible blow-up with respect to the successive transforms of $(Z,D)$ such that the final transform  $(Z',D')$  satisfies that $Z'_{\red}$ is smooth and $(Z',D')$ have simultaneously only normal crossing.
\end{thm}

\begin{defin}
A map  $p: Y\to X$ as above is a \textit{resolution of $Z$ relative to $D$}.
\end{defin}

The following lemma is obvious from the definitions.
\begin{lem}\label{composing_resolution}
Let $(X,D)$ be a normal crossing pair over $k$.
Let $Z,T$ be subschemes of $X$ with $T_{\red}\subset Z_{\red}$.
Then a blow-up of $X$ admissible with respect to $(T,D)$ is admissible with respect to 
$(Z,D)$.
\end{lem}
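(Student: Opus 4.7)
The plan is to unwind the definition of admissibility and observe that only one of the three requirements actually depends on the chosen subscheme $Z$ or $T$.

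More precisely, let $p: Y \to X$ be a blow-up with center $C$, and assume $p$ is admissible with respect to $(T,D)$ in the sense of \cref{sum_up}. This gives us three pieces of data: first, $C$ is smooth; second, $C \subset T_{\red}$; third, $C$ and $D$ have simultaneously only normal crossing. To verify admissibility with respect to $(Z,D)$, I need to check these same three conditions with $T$ replaced by $Z$. The first condition (smoothness of $C$) and the third condition (simultaneous normal crossing of $C$ and $D$) make no reference to $T$ or $Z$ at all, so they are inherited verbatim from the admissibility of $p$ with respect to $(T,D)$.

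The only substantive point is therefore the containment requirement $C \subset Z_{\red}$. But this follows immediately by transitivity of inclusions: from admissibility with respect to $(T,D)$ we have $C \subset T_{\red}$, and the hypothesis $T_{\red} \subset Z_{\red}$ yields $C \subset T_{\red} \subset Z_{\red}$, as required.

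There is no real obstacle here; the lemma is essentially a tautology in the definition of admissibility, recording the monotonicity of condition (1) of \cref{sum_up} in the subscheme being resolved. The only reason to state it explicitly is that it will presumably be invoked later when comparing resolutions of the successive transforms appearing along a composition of admissible blow-ups (for instance in applications of \cref{BM} where one wishes to replace the scheme-theoretic subscheme by a larger one having the same underlying reduced subscheme, or by any subscheme containing it on underlying reduced loci).
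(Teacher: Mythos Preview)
Your proof is correct and matches the paper's own treatment: the paper simply declares the lemma ``obvious from the definitions'' without further argument, and your unpacking of \cref{sum_up} is exactly what that phrase encodes.
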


%\begin{lem}\label{pull-back_T}
%Let $(X,D)$ be a normal crossing pair over $k$.
%Let $T$ be a subscheme of $X$.
%Let $p : Y\lra X$  be a resolution of $T$ with respect to $D$.
%Let $(T',D')$ be the transform of $(T,D)$ by $p$.
%Then, the irreducible components of $p^{-1}(T)$ of codimension at least $2$ lie in $T'$.
%\end{lem}
%\begin{proof}
%Let $E'_1,\dots, E'_n$ be the strict transforms in $Y$ of the successive exceptional divisors appearing in a decomposition of $p$.
%Then, $p^{-1}(T)=T'\cup E'_1 \cup \cdots \cup E'_n$ and \cref{pull-back_T} follows.
%\end{proof}

\begin{lem}\label{resolution_and_principalization}
Let $(X,D)$ be a normal crossing pair over $k$.
Let $Z,T$ be subschemes of $X$ with $T_{\red}\subset Z_{\red}$.
Then, there exists a resolution $p : Y\to X$ of $Z$ relative to $D$ such that the pull-back scheme $p^{-1}(T)$ is an effective Cartier divisor such that $p^{-1}(T)+D'$ has normal crossing, where $(Z',D')$ is the transform of $(Z,D)$ by $p : Y\to X$.
\end{lem}
\begin{proof}
Let $q : X'\to X$  be a blow-up admissible with respect to $(T,D)$.
From \cref{composing_resolution}, the map $q$ is admissible with respect to $(Z,D)$.
Let $(Z',T',D')$ be the transform of $(Z,T,D)$ by $q$.
We argue that if \cref{resolution_and_principalization} holds for $(X',D',Z',T')$, then it holds for $(X,D,Z,T)$.
Indeed, let $\rho : Y'\to X'$ be a resolution of $Z'$ relative to $D'$ such that the pull-back scheme $\rho^{-1}(T')$ is an effective Cartier divisor such that $\rho^{-1}(T')+D''$ has normal crossing, where $(Z'',D'')$ is the transform of $(Z',D')$ by $\rho$.
Then, the composition $\rho\circ q : Y'\to X$ is a resolution  of $Z$ relative to $D$.
On the other hand, we have $q^{-1}(T)=T'\bigcup E$ where $E$ is an effective Cartier divisor. 
Thus $\rho^{-1}(q^{-1}(T))=\rho^{-1}(T')\bigcup \rho^{-1}(E)$ is an effective Cartier divisor.
Observe that $(Z'',D'')$ is also the transform of $(Z,D)$ by $\rho\circ q$.
Since $D''$ contains $\rho^{-1}(E)$, the divisors  $\rho^{-1}(T')+D''$ and $\rho^{-1}q^{-1}(T)+D''$  have the same support. 
Hence, $\rho^{-1}q^{-1}(T)+D''$ has normal crossing and \cref{resolution_and_principalization} is indeed true for $(X,D,Z,T)$.\\ \indent
Using a resolution of $T$ relative to $D$ as given by \cref{BM}, we are thus left to prove \cref{resolution_and_principalization} in the case where 
$T_{\red}$ is smooth and  $T$ and $D$ have simultaneously only normal crossing.
In particular, $T_{\red}$ and $D$ have simultaneously only normal crossing.
Hence, the blow-up of $T_{\red}$ is admissible with respect to $(T,D)$.
We are thus left to prove \cref{resolution_and_principalization} in the case where $T$ is empty.
In that case, any resolution of $Z$ relative to $D$ as given by \cref{BM} does the job and the proof of \cref{resolution_and_principalization} is complete.
\end{proof}

\subsection{Bounded irregularity and change of compactification II}

 \cref{partial_compactification}  below provides a generalization of \cref{change_compactification} where part of the divisor at infinity is being kept while changing compactification. 
Before proving it, we need the following 
\begin{lem}\label{pull_back_resolution_irr_bounded}
Let $(X,D)$ be a normal crossing pair over $k$.
Let $Z$ be a closed subscheme  of $X$.
Let $p : Y\to X$ be a resolution of $Z$ relative to $D$ such that $(p^{-1}(Z))_{\red}$ is a normal crossing divisor.
Put $F:=(p^{-1}(Z\cup D))_{\red}$.
Let $R$ be an effective divisor supported on $D$.
Then for every object $\cM$ of $\MIC(X,D,R)$, the connection $(p^+ \cM)(\ast F)$ lies in $\MIC(Y,F,p^*R)$.
\end{lem}
\begin{proof}
Put $E=f^{-1}(D)$ .
By definition of a resolution relative to a normal crossing divisor, the divisors $E\subset F$ have normal crossing.
In particular, $p : (Y,E)\to (X,D)$ is a morphism of normal crossing pairs.
From \cref{non_char_restriction_bounded_irr}, we deduce that $p^+ \cM$ is an object of $\MIC(Y,E,p^*R)$.
Along a component of $F$ that is not a component of $E$, the generic irregularity of $(p^+ \cM)(\ast F)$ is $0$.
Thus,  $(p^+ \cM)(\ast F)$ is an object of $\MIC(Y,F,p^*R)$ and \cref{pull_back_resolution_irr_bounded} is proved.
\end{proof}

\begin{prop}\label{partial_compactification}
Let $(X,D)$ be a proper normal crossing pair over $k$.
Let $V$ be an open subset of  $X$ and put $D_V:=V\cap D$.
Let $j : (V,D_V) \to (Y,E)$ be a dense open immersion  where $(Y,E)$ is a proper normal crossing pair over $k$ with $Y\setminus E=V\setminus D_V$.
Then for every  effective divisor $R$ supported on $D$, there exists an effective divisor $S$ supported on $E$ depending only on $V$, on  $j : (V,D_V) \to (Y,E)$ and linearly on $R$ such that for every object $\cM$ of $\MIC(X,D,R)$, the connection $j_* \cM|_V$ is an object of $\MIC(Y,E,S)$.
\end{prop}
\begin{proof}
Put $Z:=X\setminus V$.
Let $p : Y'\to X$ be a resolution of $Z$ relative to $D$ such that $(p^{-1}(Z))_{\red}$ is a normal crossing divisor.
Put  $F:=(p^{-1}(Z\cup D))_{\red}$.
In particular, $p : Y\to X$ induces an isomorphism $p^{-1}(V)\to V$.
Furthermore, $Y'\setminus F=V\setminus D_V$.
Hence the normal crossing pair $(Y',F)$ is a compactification of $V\setminus D_V$.
By assumption, $(Y,E)$ is also a compactification of $V\setminus D_V$.
Let $\jmath : V\setminus D_V \to V$ be the inclusion.
Let $R$ be an effective divisor of $X$ supported on $D$.
Let $\cM$ be an object of $\MIC(X,D,R)$.
From \cref{pull_back_resolution_irr_bounded}, the connection $(p^+ \cM)(\ast F)$ is an object of $\MIC(Y',F,p^*R)$.
\cref{change_compactification} applied to $V\setminus D_V$, $(Y',F)$, $(Y,E)$ and $p^*R$ yields the existence of an effective divisor $S$ of $Y$ supported on $E$ depending only on $(Y',F)$, $(Y,E)$ and linearly on $p^*R$ such that 
$$
\cN :=j_*\jmath_* ((p^+ \cM)(\ast F))|_{V\setminus D_V}\simeq j_*\jmath_* \jmath^+\cM|_V\simeq j_* \cM|_V
$$ 
is an object of $\MIC(Y,E,S)$.
We thus have $\cN|_V\simeq \cM|_V$ and $S$ only depends on $V$, on  $j : (V,D_V) \to (Y,E)$ and on $R$.
This concludes the proof of \cref{partial_compactification}.
\end{proof}

\subsection{Resolution and multiplicity  estimate}
If $X$ is a smooth connected variety over $k$, we recall from \cref{b-bivisor} that any Cartier divisor on $X$ can be seen as a $\mathds{Z}$-valued function on $\ZR^{\divis}(X)$ via the injection $\bCDiv(X)\to \bDiv(X)$.
\begin{lem}\label{bound_multiplicity_one_blow_up}
Let $(X,D)$ be a normal crossing pair of dimension $d$ over $k$. 
%Assume that $D$ is irreducible.
Let $C$ be a  smooth subscheme of $X$ which has simultaneously only normal crossing with $D$.
Let $v$ be the divisorial valuation associated to the exceptional divisor of the blow-up $p : Y\to X$  along $C$.
Then, $D(v)\leq d$. 
If $D$ is smooth, then $D(v)$ is $0$ or $1$.
\end{lem}
\begin{proof}
The question is étale local around the generic point of $C$.
We can thus suppose the existence of a local system of coordinates $(x_1,\dots, x_d)$  such that 
$\cI_{D}$ is generated by $\prod_{i\in I} x_i$ where $I\subset\{1,\dots, d\}$ and $\cI_{C}$ is generated by some  $x_j$ for $j\in J\subset\{1,\dots, d\}$.
Then
$$
\cI_D=\left(\prod_{i\in I\cap J} x_i\prod_{i\in I\setminus I\cap J} x_i \right)
$$
Since $D(v)$ is simply the multiplicity of $p^*D$ along the exceptional divisor, we deduce
$$
D(v)=|I\cap J|\leq d
$$
If $D$ is smooth, then $I\cap J$ is either empty or a singleton depending on whether $D$ contains $C$, and \cref{bound_multiplicity_one_blow_up} follows.
\end{proof}

The  lemma below appeared as Proposition $4.3.2$ in \cite{NearbySlope} with the tacit assumption that $D$ has \textit{simple} normal crossing.
If some components of $D$ are singular, the coefficient $\fdeg R$ is not enough and has to be replaced by $d\cdot (\fdeg R)$ where $d$ is the dimension of the ambient variety.
We spell out the proof here for completeness.\\ \indent

\begin{lem}\label{lemme_from_nearby_slopes}
Let $(X,D)$ be a normal crossing pair of dimension $d$ over $k$. 
Let $R$ be an effective  divisor supported on $D$.
Let $Z$ be an effective Cartier divisor on $X$.
Let  $p: Y\to X$ be a composition of blow-up which are admissible with respect to the successive transforms of $(Z,D)$.
Let $v$ be a divisorial valuation centred at a component of $Z(Y)=p^*Z$.
Then, 
\begin{equation}\label{inequality_resolution}
R(v)\leq d  \cdot  (\fdeg R) \cdot  Z(v)
\end{equation}
where $R$ and $Z$ are viewed as $b$-divisors via $\bCDiv(X)\to \bDiv(X)$.
\end{lem}
\begin{proof}
We argue by induction on the number $n$ of admissible blow-up needed to write $p$.
If $n=0$, there is nothing to prove.
Let us now treat the case of $n+1$ blow-ups.
Let 
$$
\xymatrix{
p :Y \ar[r]^-{q} &  X' \ar[r]^-{r} & X
}
$$
where $r$ is a composition of $n$ blow-up admissible with respect to the successive transforms of $(Z,D)$, and where $q$ is admissible with respect to the final transform $(Z',D')$ of $(Z,D)$ by $r$.
Let $\cC$ be the set of strict transforms in $X'$ of the exceptional divisors that appear in $r$.
For $E\in \cC$, we let $v_E$ be the associated divisorial valuation.
In what follows, the notation $'$ indicates a strict transform in $X'$ and $''$ indicates a strict transform in $Y$.
Write $R= \alpha_1 D_1+\cdots + \alpha_m D_m$ with $\alpha_i>0$ and $Z= \beta_1 Z_1+\cdots +\beta_k Z_k$ with $\beta_i>0$.
We have
$$
Z(X')=\beta_1  Z'_{1}+\cdots +\beta_k Z'_{k}+\displaystyle{\sum_{E\in \mathcal{C}}} Z(v_E) E
$$
with $Z(v_E)>0$ by admissibility of $r$ with respect to the successive transforms of $(Z,D)$, and 
$$
R(X')=\alpha_1  D'_{1}+\cdots +\alpha_k D'_{k}+\displaystyle{\sum_{E\in \mathcal{C}}} R(v_E)  E
$$
We have to detail the effect of $q^*$  on each component contributing to the above equalities.
Let $C$ be the centre of $q$ and let $P$ be its exceptional divisor with associated valuation $v_P$.
Let $E\in \cC$.
Since $C$ and $E$ have simultaneously only normal crossing and since $E$ is smooth, \cref{bound_multiplicity_one_blow_up} gives
$q^{\ast} E=E''+ \epsilon_E P$ where $\epsilon_E\in \{0,1\}$.
Since $C$ and $D'_i$ have simultaneously only normal crossing, \cref{bound_multiplicity_one_blow_up} gives again
$$
q^{\ast} D'_i=D''_i+ \epsilon_i P \text{ where } \epsilon_i\in \{0,\dots,d\}
$$
Furthermore, let us write
$$
q^{\ast} Z'_i=Z''_i+ \mu_i P \text{ where }  \mu_i\in \mathds{N}
$$
We have
$$
Z(Y)=\sum  \beta_i Z''_{i}+\displaystyle{\sum_{E\in \mathcal{C}}} Z(v_E) E''+(\sum \beta_i \mu_i+\sum_{E\in \mathcal{C}} \epsilon_E Z(v_E) )P
$$
and
$$
R(Y)=\sum  \alpha_i D''_{i}+\displaystyle{\sum_{E\in \mathcal{C}}} R(v_E)  E''+(\sum \alpha_i \epsilon_i+\sum_{E\in \mathcal{C}} \epsilon_E R(v_E)   )P
$$
By recursion assumption, we have to check the inequality \eqref{inequality_resolution}  for $v=v_P$. 
By admissibility of $q$ with respect to $(Z',D')$, the centre $C$ lies in $Z'_{\red}$, that is in one of the $Z'_{i}$.
Hence, one of the $\mu_i$ is strictly positive.
Thus, 
\begin{align*}
d\cdot (\fdeg R)\cdot  Z(v_P) & =d\cdot (\fdeg R)\cdot\left(\sum \beta_i \mu_i+\sum \epsilon_E Z(v_E) \right) \\
&\geq d \cdot (\fdeg R)+\sum \epsilon_E \left(d \cdot (\fdeg R)\cdot Z(v_E)\right)  \\ 
&\geq \sum \alpha_i \epsilon_i+\sum \epsilon_E R(v_E)=R(v_P)
\end{align*}
and the proof of \cref{lemme_from_nearby_slopes} is complete.

\end{proof}

%\end{proof}
%\begin{rem}
%If in the statement of \cref{bound_multiplicity} we further assume that $D$ has \textit{simple} normal crossing, then the function $j: I\lra \{1,\dots,m\}$ is injective, so that $m(E,p^*R)$ is bounded by $\deg R$ instead of $d\cdot \deg R$.
%\end{rem}

\subsection{Nearby slopes for $\cD$-modules}
Let $X$ be a smooth algebraic variety over $k$.
Let $f$ be a non constant function on $X$.
We denote by $\psi_f$ the nearby cycles functor associated to $f$.
For a general reference on nearby cycles, see \cite{MM}.
Inspired by a letter of Deligne to Malgrange \cite{DeligneLettreMalgrange}, the following notion of slopes was defined in \cite{NearbySlope} :

\begin{defin}
Let $\cM$ be an holonomic $\cD_X$-module.
The \textit{nearby slopes of $\mathcal{M}$ associated to $f$} are the rationals $r\geq 0$ such that there exists a  germ $N$ of meromorphic connection at $0$ in $\mathds{A}^1$ with slope $r$ verifying
\begin{equation}\label{annulation0}
\psi_{f}(\mathcal{M}\otimes f^{+}N)\neq 0
\end{equation}
We denote by $\Sl_{f}^{\nb}(\mathcal{M})$ the set of nearby slopes of $\mathcal{M}$ associated to $f$.
\end{defin}
\begin{rem}\label{rem_localization}
Since  $N$ is localized at $0$, the connection $f^+N$ is localized along $\divi f$.
Hence,  $\Sl_{f}^{\nb}(\mathcal{M})=\Sl_{f}^{\nb}(\mathcal{M}(\ast Z))$ where $Z$ denotes the support of $\divi f$.
\end{rem}
\begin{rem}
The nearby slopes are sensitive to the non reduced structure of $\divi f$.
\end{rem}

%\begin{rem}
%We let $\Sl^{\nb}(\mathcal{M})$ be the union of the $\Sl_{f}^{\nb}(\mathcal{M})$ where $f$ ranges over the set of non constant functions.
%\end{rem}
%\begin{rem}
%For a subset $Z$ of points in $X$, we let $\Sl^{\nb}(Z,\mathcal{M})$ be the union of the $\Sl_{f}^{\nb}(x,\mathcal{M})$ where $f$ ranges over the set of non constant functions and where  $x$ lies in $Z$.
%\end{rem}
For $f$ as above, Deligne proved that the set $\Sl_{f}^{\nb}(\mathcal{M})$ is finite. 
The following theorem is the main result of \cite{NearbySlope}.
\begin{thm}
Let $X$ be a smooth algebraic variety over $k$.
Let $\cM$ be an holonomic $\cD_X$-module.
Then, there is an integer $C$ depending only on $\cM$ such that for every non constant function $f$ on $X$, the set $\Sl^{\nb}_f(\mathcal{M})$ is bounded by $C$.
\end{thm}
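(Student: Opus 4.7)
The plan is to uniformly bound nearby slopes by reducing them to generic irregularity numbers along components of a resolution of the zero divisor of $f$, and then invoking \cref{lemme_from_nearby_slopes}, which is designed for exactly this kind of uniform control.

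First, I would fix a compactification. Choose a smooth projective variety $\bar X$ containing $X$ as an open subset with $D := \bar X \setminus X$ a normal crossing divisor, and replace $\cM$ by its push-forward $\bar \cM := j_* \cM$, where $j : X \to \bar X$ is the open immersion. The module $\bar \cM$ is holonomic and lies in $\MIC(\bar X, D)$ after localization along $D$. By \cref{rem_localization}, the nearby slopes of $\cM$ with respect to $f$ are contained in those of $\bar \cM$ with respect to any rational extension of $f$ to $\bar X$, so it suffices to bound the latter. Set $R := \Irr(\bar X, \bar \cM) \in \Div(\bar X, D)$ and $d := \dim X$. The integer $C := d \cdot \fdeg R$ will do the job, and it depends only on $\cM$ (and the fixed compactification $(\bar X, D)$, which may be chosen once and for all).

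Next, for a given non-constant $f$, view it as a rational section so that $Z := (f)_0$ is a well-defined effective Cartier divisor on $\bar X$. By \cref{resolution_and_principalization} applied to $(\bar X, D)$ and $Z$ (and enlarged to encode the pole locus of $f$ as well), there exists a composition $p : Y \to \bar X$ of blow-ups admissible with respect to the successive transforms of $(Z, D)$ such that $f \circ p$ extends to a morphism $\bar f : Y \to \mathds{P}^1$ and the Cartier divisor $p^*Z$, equal to $(\bar f)^{-1}(0)$, has reduction that together with $p^{-1}(D)$ is a normal crossing divisor. Put $\cM' := p^+ \bar \cM$.

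Third, I would read off the nearby slopes from the components of $p^*Z$. Since $p$ is proper, nearby cycles are compatible with $p_+$, so $\psi_f(\bar \cM \otimes f^+ N)$ is computed by $\psi_{\bar f}(\cM' \otimes \bar f^+ N)$, and it is enough to bound the nearby slopes of $\cM'$ with respect to $\bar f$. At the generic point of an irreducible component $E$ of $p^*Z$, the function $\bar f$ vanishes to order $m_E = Z(v_E)$, where $v_E$ is the divisorial valuation attached to $E$. A standard one-variable analysis using the good formal decomposition of $\cM'$ along the smooth locus of $p^{-1}(D) \cup (p^*Z)_{\red}$ shows that every nearby slope of $\cM'$ with respect to $\bar f$ at a generic point of $E$ is of the form $s/m_E$, where $s$ is one of the generic slopes of $\cM'$ along $E$; in particular $s \leq \irr(E, \cM') = (\Irr \bar \cM)(v_E)$. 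Combining \cref{main_thm_Ked_III} and \cref{lemme_from_nearby_slopes} applied to $(\bar X, D)$, $R$, $Z$ and $p$, we get
\[
s \leq (\Irr \bar \cM)(v_E) \leq R(v_E) \leq d \cdot \fdeg R \cdot Z(v_E) = d \cdot \fdeg R \cdot m_E,
\]
so $s/m_E \leq d \cdot \fdeg R = C$ uniformly in $f$.

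The main obstacle is the third step, namely the local identification of nearby slopes of $\cM'$ with the ratios $s/m_E$. Along the smooth stratum of $p^*Z$ outside $p^{-1}(D)$, this is the content of the classical one-variable theory of nearby cycles for a meromorphic connection tensored with an exponential. At points where $E$ meets another component of $(p^*Z)_{\red}$ or a component of $p^{-1}(D)$, one may need to refine $p$ by further admissible blow-ups, using \cref{composing_resolution} to check that such refinements preserve the hypotheses of \cref{lemme_from_nearby_slopes} and hence preserve the bound. Executing this systematically is the content of the original argument in \cite{NearbySlope}, but the geometric estimate above ensures that no matter how $f$ is chosen, the resulting nearby slopes are bounded by the single integer $C = \dim X \cdot \fdeg \Irr(\bar X, \bar \cM)$.
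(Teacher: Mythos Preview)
The paper does not prove this theorem; it is stated as the main result of \cite{NearbySlope} and simply cited. So there is no proof in the paper to compare against. That said, your proposal has a genuine gap even as a standalone argument.

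The theorem is stated for an arbitrary \emph{holonomic} $\cD_X$-module $\cM$, not for a flat connection. Your first step asserts that after compactifying and pushing forward, $\bar\cM = j_*\cM$ ``lies in $\MIC(\bar X, D)$ after localization along $D$.'' This is false in general: a holonomic $\cD_X$-module can have singular support strictly larger than the zero section already on $X$ (for instance, $\cM$ could be supported on a proper subvariety, or could be a connection with poles along a divisor inside $X$). In that case $j_*\cM$ is not a meromorphic connection on $(\bar X, D)$, the irregularity divisor $\Irr(\bar X, \bar\cM)$ is not defined, and none of the subsequent estimates via \cref{lemme_from_nearby_slopes} and \cref{main_thm_Ked_III} apply. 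The actual proof in \cite{NearbySlope} must handle this more general situation.

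Even restricting to the case where $\cM$ is a flat connection on $X$, your argument is incomplete. You resolve the zero divisor $Z$ of $f$ relative to $D$ via \cref{resolution_and_principalization}, but you do not resolve the turning points of $\bar\cM$. Hence $\cM' = p^+\bar\cM$ need not have good formal structure along the components of $p^*Z$, and your appeal to a ``standard one-variable analysis using the good formal decomposition'' is unjustified. Compare with how the paper proves the related \cref{boundedness_general}: there one uses \cref{BM_KM} to simultaneously resolve $Z$ relative to $D$ \emph{and} achieve good formal structure, and only then invokes \cref{boundedness_good_case}. Your sketch skips this step and then, in the final paragraph, essentially concedes the point by deferring back to \cite{NearbySlope}.
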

%In the above theorem, $\cM$ is fixed and $f$ varies over the set of non constant algebraic functions on $X$.
In this paper, we will need a boundedness of a different kind where $f$ is fixed but where $\cM$ is allowed to vary over the objects of $\MIC(X,D,R)$ where $D$ is a normal crossing divisor and where $R$ is an effective divisor supported on $D$.\\ \indent
Examples of this kind of bound were obtained in the étale setting  in \cite{HuTeyssier} and \cite{Hu_general_Leal} in a semi-stable situation where the horizontal ramification is tame.
In our case, the horizontal irregularity will be allowed to be non trivial.
The price to pay for this generality will be to require that the zero locus of $f$ contains the turning locus of $\cM$.
See \cref{boundedness_general} below.\\ \indent

Nearby slopes are compatible with proper push-forward.
See \cite[Th. 3]{NearbySlope}.

\begin{prop}\label{push_forward_Th}
Let $p : Y\to X$ be a proper morphism of smooth varieties over $k$.
Let $f$ be a non constant function on $X$ such that $fp=0$ is a divisor of $Y$.
Then, for every holonomic $\cD_Y$-module $\cM$, we have 
$$
\Sl^{\nb}_f(p_+\cM)\subset  \Sl^{\nb}_{fp}(\cM)
$$
\end{prop}

The following lemma is useful for dévissages.

\begin{lem}\label{pull_back_push_forward}
Let $p : (Y,E)\to (X,D)$ be a proper morphism of pairs over $k$ such that the induced morphism $Y\setminus E \to X \setminus D$ is an isomorphism.
Let $\cM$ be an object  in $\MIC(X,D)$.
Then, the canonical morphism $p_+ p^{+ }\cM \to \cM$ is an isomorphism.
\end{lem}
\begin{proof}
Since $p$ is proper, \cite[3.6-4]{Mehbsmf} ensures that the $\cD_X$-module $p_+ p^{+ }\cM$ is an object of $\MIC(X,D)$.
Then, \cref{pull_back_push_forward} follows from \cref{meromorphic_vs_non_meromorphic} and  base change for $\cD$-modules \cite[1.7.3]{HTT}.
\end{proof}

\begin{prop}\label{slope_push_forward}
Let $p : (Y,E)\to (X,D)$ be a proper morphism of pairs over $k$.
Let $f$  be a non constant function on $X$ such that $fp=0$ is a divisor of $Y$. 
Let $Z$ be the support of $\divi f$ and put $T:= p^{-1}(Z)$.
Assume that $Y\setminus T \to X\setminus Z$ is an isomorphism.
Let $\cM$ be an object of $\MIC(X,D)$.
Then, 
$$
\Sl^{\nb}_f(\cM) \subset \Sl^{\nb}_{fp}((p^+\cM)(\ast T)) 
$$
\end{prop}
\begin{proof}
We have 
$$
\Sl^{\nb}_f(\cM)   =\Sl^{\nb}_f(\cM(\ast Z)) 
                            =\Sl^{\nb}_{f}(p_+p^+(\cM(\ast Z)))   
                            \subset \Sl^{\nb}_{fp}((p^+\cM)(\ast T)) 
$$
The first equality follows from \cref{rem_localization}.
Since $Y\setminus T \to X\setminus Z$ is an isomorphism, the induced morphism $Y\setminus (T\cup E) \to X\setminus (Z\cup D)$ is an isomorphism with $\cM(\ast Z)$ being an object of $\MIC(X,Z\cup D)$.
Hence, the second equality follows from \cref{pull_back_push_forward}. 
The last inclusion follows from \cref{push_forward_Th}.
\cref{slope_push_forward} is thus proved.
\end{proof}

Over curves, nearby slopes coincide with the traditional notion of slopes recalled in \cref{defin_irregularity_number}.
This is given by the following lemma proved in \cite[3.3.1]{NearbySlope}.

\begin{lem}\label{slopes_and_nearby_slopes}
Let $C$ be a smooth curve over $k$.
Let $0$ be a point in $C$ and let   $t$ be a local uniformizer of $C$ at $0$.
Let $\cM$ be an object in $\MIC(C,0)$.
 Then the nearby slopes of $\cM$ associated to $t$ are the slopes of $\cM$ at $0$ in the sense of \cref{defin_irregularity_number}.
\end{lem}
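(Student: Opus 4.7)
The plan is to analyse $\psi_t$ via the formal Levelt--Turrittin decomposition of $\cM$ at $0$. Both notions of slope depend only on the formal germ $\wh{\cM}_0$: the nearby slopes in view of \cref{rem_localization} together with the construction of $\psi_t$ through formal completion along $\{t=0\}$, and the slopes by definition. I would therefore start by performing a ramified base change $t \mapsto t^m$, which multiplies both $\Sl^\nb_t(\cM)$ and the set of slopes of $\cM$ at $0$ by $m$, so as to assume a formal decomposition
$$
\wh{\cM}_0 \;\simeq\; \bigoplus_{\alpha \in I} \cE^{\varphi_\alpha} \otimes \cR_\alpha
$$
over $k((t))$, with $\varphi_\alpha \in t^{-1} k[t^{-1}]$ and $\cR_\alpha$ regular singular. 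The slopes of $\cM$ at $0$ are then the integers $s_\alpha := -\ord_t \varphi_\alpha \geq 0$.

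The core ingredient I would invoke is the classical computation that, for a germ $\cM'$ of meromorphic connection on $C$ at $0$ with formal decomposition $\bigoplus_\beta \cE^{\chi_\beta} \otimes \cR'_\beta$, the $k$-vector space $\psi_t(\cM')$ has dimension $\sum_{\chi_\beta = 0} \rk \cR'_\beta$; in particular $\psi_t(\cM') \neq 0$ iff $\cM'$ has a nonzero regular part. For $N$ a pure germ of slope $r$ at $0 \in \mathds{A}^1$, the pullback $t^+ N$ admits the same formal description since $t$ is a uniformizer, so its formal decomposition involves only terms $\cE^{\psi_\gamma} \otimes \cR'_\gamma$ with $-\ord_t \psi_\gamma = r$. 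Compatibility of Levelt--Turrittin with tensor product will then yield
$$
\wh{(\cM \otimes t^+ N)}_0 \;\simeq\; \bigoplus_{\alpha, \gamma} \cE^{\varphi_\alpha + \psi_\gamma} \otimes (\cR_\alpha \otimes \cR'_\gamma).
$$

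Combining these, I would conclude that $\psi_t(\cM \otimes t^+ N) \neq 0$ iff $\varphi_\alpha + \psi_\gamma \in k[[t]]$ for some $\alpha, \gamma$, which forces the polar parts of $\psi_\gamma$ and $-\varphi_\alpha$ to coincide and hence $r = s_\alpha$; this gives the inclusion $\Sl^\nb_t(\cM) \subset \{s_\alpha : \alpha \in I\}$. For the reverse inclusion, given a slope $s_\alpha$ I would take $N := \cE^{-\varphi_\alpha}$ (or, when $s_\alpha = 0$, the trivial connection), which is pure of slope $s_\alpha$, so that $\cM \otimes t^+ N$ formally contains $\cR_\alpha$ as a regular summand and thus $\psi_t(\cM \otimes t^+ N) \neq 0$. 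The part I expect to require the most care is the ramified reduction at the start: one must verify that $\Sl^\nb_t(\cM)$ scales by $m$ under a degree-$m$ ramified cover of $C$ at $0$, which should follow from \cref{slope_push_forward} applied to the cover, together with the observation that the pullback of a pure germ of slope $r$ on $\mathds{A}^1$ along $u \mapsto u^m$ is pure of slope $mr$.
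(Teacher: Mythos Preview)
The paper does not prove this lemma; it simply records it with a reference to \cite[3.3.1]{NearbySlope}. Your overall strategy --- Levelt--Turrittin decomposition, the fact that the moderate nearby cycles $\psi_t$ of a formal germ pick out exactly the regular (slope-$0$) part, and compatibility of the decomposition with tensor product --- is the natural one and is essentially what the cited reference does.

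The one place where your sketch is genuinely incomplete is the ramification step. Invoking \cref{slope_push_forward} for the cover $\pi:C'\to C$ and the observation about pullback of pure germs along $u\mapsto u^m$ gives you only
\[
m\cdot \Sl^{\nb}_t(\cM)\ \subset\ \Sl^{\nb}_u(\pi^+\cM),
\]
since \cref{slope_push_forward} is a one-sided inclusion. This is enough for $\Sl^{\nb}_t(\cM)\subset\{\text{slopes}\}$ once the statement is known on the cover, but it does \emph{not} by itself yield the reverse inclusion $\{\text{slopes}\}\subset \Sl^{\nb}_t(\cM)$. To close that direction you need the opposite inclusion $\Sl^{\nb}_u(\pi^+\cM)\subset m\cdot \Sl^{\nb}_t(\cM)$, which requires an extra argument: for $N'$ pure of slope $r'$ witnessing $r'\in \Sl^{\nb}_u(\pi^+\cM)$, set $N:=\rho_+N'$ where $\rho:\mathds{A}^1\to\mathds{A}^1$ is $z\mapsto z^m$; then $N$ is pure of slope $r'/m$, and base change together with the projection formula and the identification of $\psi_u$ and $\psi_{u^m}$ as vector spaces give $\psi_t(\cM\otimes t^+N)\neq 0$.

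Alternatively, you can bypass the ramification entirely for the reverse inclusion: the slope decomposition $\cM=\bigoplus_s\cM_{=s}$ already lives over $k((t))$, and for a slope $s>0$ the choice $N:=(\cM_{=s})^\vee$ is pure of slope $s$ with $\cM_{=s}\otimes N=\cEnd(\cM_{=s})$ containing $\cO$ as a subquotient via the trace, so $\psi_t(\cM\otimes t^+N)\neq 0$. This avoids having to control how $\Sl^{\nb}$ behaves under ramified covers.
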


\subsection{Boundedness of nearby slopes}

In this paragraph, we will need to resolve the turning locus of a connection $\cM$ while  resolving  a fixed subscheme  relatively to the pole locus.
Combined with Kedlaya's  \cref{main_thm_Ked_III}, this  resolution gives some control on the nearby slopes of $\cM$.
The existence of this resolution  is achieved by the following 

\begin{lem}\label{BM_KM}
Let $(X,D)$ be a normal crossing pair over $k$.
Let $\cM$ be an object of $\MIC(X,D)$.
Let $Z$ be a closed subscheme of $X$ such that $\TL(\cM)\subset Z_{\red}$.
Then, there is a morphism of normal crossing pairs $p : (Y,E)\to  (X,D)$ such that $p:Y\to X$ is a resolution of $Z$ relative to $D$ and  $p^+\cM$ has good formal structure.
\end{lem}
\begin{proof}
From Kedlaya-Mochizuki \cref{KM_theorem}, there exists a morphism of smooth algebraic varieties $q :Y'\to X$ obtained as a composition of blow-up above $\TL(\cM)$ such that $E'=q^{-1}(D)$ is a normal crossing divisor of $Y'$ and $q^+\cM$ admits good formal structure along $E'$.
From \cite[080B]{SPBLow_up}, there is a closed subscheme $T$ of $X$ with $T_{\red}\subset \TL(\cM)$ such that $q:Y'\to X$ identifies with the blow-up of $X$ along $T$.
By the universal property of the blow-up, $q:Y'\to X$ is a final object in the full subcategory of schemes over $X$ consisting in  morphisms $Y''\to X$ such that the inverse image of $T$ is an effective Cartier divisor on $Y''$.
By assumption, we have $T_{\red}\subset \TL(\cM)\subset Z_{\red}$.
From \cref{resolution_and_principalization},  there exists a resolution $p : Y\to X$ of $Z$ relative to $D$ such that the pull-back scheme $p^{-1}(T)$ is an effective Cartier divisor on $Y$.
Put $E=p^{-1}(D)$.
Then, there is a canonical commutative diagram of normal crossing pairs
$$
\xymatrix{
    (Y,E)     \ar[rr]^-{h}  \ar[dr]_-{p}    &        &    (Y',E')   \ar[ld]^-{q}  \\
    & (X,D)  &  
}
$$
From \cref{pullback_good_is_good}, the connection $h^+(q^+\cM)\simeq p^+\cM$ has good formal structure and the proof of \cref{BM_KM} is complete.

\end{proof}

We recall the following boundedness result for nearby slopes,  formulated in terms of the highest slope $b$-divisor from  \cref{R_divisor}.  
See \cite[3.4.1]{NearbySlope} for a proof.
\begin{lem}\label{boundedness_good_case}
Let $(X,D)$ be a normal crossing pair over $k$. 
Let $\cM$ be an object of $\MIC(X,D)$ with   good formal structure. 
Let $f$ be a non constant function on $X$ such that $\divi f$ is supported on $D$.
Let $r\in \mathds{Q}_{\geq 0}$ such that for every divisorial valuation $v$ centred at an irreducible component  of $\divi f$, we have 
$$
R(\cM)(v)\leq r \cdot (\divi f)(v)
$$
where $R(\cM)$ is the highest slope $b$-divisor from  \cref{R_divisor}.
Then the nearby slopes of $\cM$ associated to $f$ are smaller than $r$.
\end{lem}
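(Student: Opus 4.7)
\medskip

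The plan is to reduce the statement to an explicit computation on each summand of the good formal decomposition of $\cM$. Since nearby cycles are local along $\divi f$, I would work in an \'etale, then formal, neighbourhood of an arbitrary point $x\in\divi f$. By the good formal structure hypothesis and \cref{defin_good_formal_structure}, there is a finite \'etale cover $\mathcal S$ of $\wh{\cO}_{X,x}(\ast D)$ and a decomposition
$$
\cM_x\otimes_{\wh{\cO}_{X,x}(\ast D)}\mathcal S \cong \bigoplus_{\alpha \in I} \cE^{\varphi_\alpha}\otimes_{\mathcal S} \cR_\alpha .
$$
Since nearby cycles commute with finite \'etale pull-back and with direct sums, and since the formation of nearby cycles is local and formal along $\divi f$, it suffices to show that for each $\alpha$ and for each germ $N$ of meromorphic connection at $0\in \mathds A^1$ of slope $s>r$, the summand $\psi_f(\cE^{\varphi_\alpha}\otimes \cR_\alpha\otimes f^+N)$ vanishes.

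Next, I would use the well-known fact that nearby cycles along $f=0$ of an exponentially twisted regular module $\cE^\eta\otimes\mathcal R$ can be nonzero only when the exponent $\eta$ is ``moderate'' along every divisorial valuation centred at a component of $\divi f$ in a suitable model, i.e.\ when $\eta$ extends holomorphically after the relevant blow-ups. In our situation, writing $N\cong \cE^{\psi}\otimes R_{0}$ with $\psi$ of pole order exactly $s$ at $0$, the exponent of $\cE^{\varphi_\alpha}\otimes f^+ N$ becomes
$$
\eta_\alpha := \varphi_\alpha + \psi\circ f .
$$
Thus the statement reduces to showing that $\eta_\alpha$ cannot be moderate along any component of $\divi f$ when $s>r$.

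Finally, I would quantify the pole orders. For a divisorial valuation $v$ centred at an irreducible component of $\divi f$, the definition of $R(\cM)$ (\cref{R_divisor}) and the goodness of the decomposition give $v(\varphi_\alpha)\geq -R(\cM)(v)$, whereas $v(\psi\circ f)=-s\cdot(\divi f)(v)$. Under the hypothesis $R(\cM)(v)\leq r\cdot(\divi f)(v)$ and for $s>r$, the two pole orders do not match, so the leading term of $\psi\circ f$ cannot be cancelled by $\varphi_\alpha$ and $\eta_\alpha$ remains of strictly negative valuation $-s\cdot(\divi f)(v)$. This forces $\psi_f$ of the $\alpha$-summand to vanish, proving that $s$ is not a nearby slope.

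The main obstacle is the second step: justifying that $\psi_f$ of an exponentially twisted regular connection vanishes as soon as the exponent is genuinely irregular along $\divi f$, and doing so in the formal/multidimensional setting where $\divi f$ need not be reduced and the good decomposition lives over a ramified cover. This is the content that requires either Sabbah's formal/microlocal machinery for nearby cycles of meromorphic connections, or a direct reduction to the curve case via \cref{slopes_and_nearby_slopes} applied along a generically transverse curve to each component of $\divi f$, combined with the compatibility of nearby slopes with non-characteristic pull-back. Once this vanishing is in hand, the slope comparison is the routine calculation outlined above.
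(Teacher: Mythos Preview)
The paper does not prove this lemma; it simply recalls it and cites \cite[3.4.1]{NearbySlope} for a proof. So there is no in-paper argument to compare against.

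That said, your sketch is essentially the strategy of the cited proof. One works locally and formally along $\divi f$, uses the good formal decomposition (after a suitable ramified cover), reduces by additivity to the elementary pieces $\cE^{\varphi_\alpha}\otimes\cR_\alpha\otimes f^+N$, and then compares pole orders along each component of $\divi f$: the pullback $\psi\circ f$ contributes a pole of order $s\cdot(\divi f)(v)$ which, for $s>r$, strictly dominates the pole of $\varphi_\alpha$ (bounded by $R(\cM)(v)\leq r\cdot(\divi f)(v)$), so no cancellation occurs. The remaining summand is then an exponentially twisted regular module whose exponent has a genuine pole along every component, which forces the nearby cycles to vanish.

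You are right that this last vanishing statement is where the real work lies, and that it is not entirely elementary in the present generality (normal crossing $D$, non-reduced $\divi f$, ramified cover $\mathcal S$). One minor point: the germ $N$ of slope $s$ need not be of the form $\cE^\psi\otimes R_0$ on the nose; one has to pass to a further cyclic cover of the target $\mathds A^1$ to decompose $N$ completely, and check that this does not disturb the slope comparison (it does not, since both sides scale by the same ramification index). With that adjustment, your outline matches the cited argument.
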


\begin{prop}\label{boundedness_general}
Let $(X,D)$ be a normal crossing pair of dimension $d$ over $k$. 
Let $\cM$ be an object of $\MIC(X,D)$.
Let $f$ be a non constant function on $X$. 
Suppose that the turning locus of $\cM$ lies in the support of $\divi f$. 
Then, the nearby slopes of $\cM$ associated to $f$ are bounded by  $d\cdot \fdeg \Irr(X,\cM)$.
\end{prop}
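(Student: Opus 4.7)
The plan is to reduce to the good-formal-structure case via \cref{BM_KM}, transport the nearby slopes along the resolution using \cref{slope_push_forward}, and combine \cref{boundedness_good_case} with the multiplicity estimate \cref{lemme_from_nearby_slopes}. Concretely, I would set $Z:=\divi f$, viewed as an effective Cartier divisor on $X$. By hypothesis $\TL(\cM)\subset Z_{\red}$, so \cref{BM_KM} produces a morphism of normal crossing pairs $p:(Y,E)\to (X,D)$ which is a resolution of $Z$ relative to $D$ and such that $p^+\cM$ has good formal structure along $E$. Writing $T:=p^{-1}(Z)_{\red}$, the construction of a resolution relative to $D$ ensures that $T\cup E$ has normal crossing, the restriction $Y\setminus T\to X\setminus Z$ is an isomorphism, and $\divi(fp)=p^\ast Z$ is supported on $T$.

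Applying \cref{slope_push_forward} yields the inclusion
$$
\Sl^{\nb}_f(\cM)\subset \Sl^{\nb}_{fp}\bigl((p^+\cM)(\ast T)\bigr),
$$
so it suffices to bound the right-hand side. The localized connection $(p^+\cM)(\ast T)$ is an object of $\MIC(Y,T\cup E)$ and inherits good formal structure from $p^+\cM$, since the components of $T$ added to the boundary either lie outside $E$, where $p^+\cM$ is regular, or enlarge the boundary by components along which the connection is already regular. Granted this, \cref{boundedness_good_case} applies as soon as one exhibits $r\in\mathds{Q}_{\geq 0}$ with $R((p^+\cM)(\ast T))(v)\leq r\cdot (\divi(fp))(v)$ for every divisorial valuation $v$ centred at an irreducible component of $\divi(fp)$. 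I would claim $r=d\cdot \fdeg \Irr(X,\cM)$. If $v$ is centred at a component of $T$ outside $E$, the left-hand side vanishes. Otherwise $v$ lies on a component of $E$, and
$$
R((p^+\cM)(\ast T))(v)=R(\cM)(v)\leq \Irr\cM(v)\leq \Irr(X,\cM)(v),
$$
where the first inequality follows from comparing highest generic slope with generic irregularity, and the second is Kedlaya's nefness \cref{main_thm_Ked_III}. Since $p$ is a composition of blow-ups admissible with respect to the successive transforms of $(Z,D)$, the multiplicity estimate \cref{lemme_from_nearby_slopes} applied with the effective divisor $\Irr(X,\cM)$ and $Z$ gives
$$
\Irr(X,\cM)(v)\leq d\cdot \fdeg \Irr(X,\cM)\cdot Z(v)= d\cdot \fdeg \Irr(X,\cM)\cdot (\divi(fp))(v),
$$
which closes the estimate.

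The step I expect to require the most care is the compatibility claim that $(p^+\cM)(\ast T)$ retains good formal structure along the enlarged normal crossing divisor $T\cup E$. This is local at points of $T\cap E$: one needs to verify that the decomposition of $p^+\cM$ along $E$ provided by \cref{defin_good_formal_structure}, after base change to the ring obtained by completing along the intersection of components of $T\cup E$ containing the point and inverting $T$, continues to satisfy the integrality and separation conditions on the $\varphi_\alpha$. Once this routine but delicate check is done, the proof is a direct assembly of the lemmas cited above.
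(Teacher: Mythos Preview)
Your proposal is correct and follows essentially the same route as the paper's proof: resolve via \cref{BM_KM}, pass to $(p^+\cM)(\ast T)$ on $(Y,T\cup E)$, apply \cref{boundedness_good_case} with the bound coming from \cref{main_thm_Ked_III} and \cref{lemme_from_nearby_slopes}, then transport back using \cref{slope_push_forward}. The paper treats the good-formal-structure claim for $(p^+\cM)(\ast T)$ in one line (``Since $p^+\cM\in\MIC(Y,E)$ has good formal structure, so does $(p^+\cM)(\ast T)\in\MIC(Y,T\cup E)$''), so your caution there is more than the authors deem necessary; otherwise the arguments match step for step, differing only in the order in which \cref{slope_push_forward} is invoked.
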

\begin{proof}
%Let $N$ be a germ of meromorphic connection at the origin of the affine line over $k$ and with slope $r >\deg \Irr(D,\cM)$. 
Put $Z:=\divi f$.
Since $\TL(\cM)\subset Z_{\red}$, \cref{BM_KM} ensures the existence of a morphism of normal crossing pairs $p : (Y,E)\to (X,D)$ such that $p:Y\to X$ is a resolution of $Z$ relative to $D$ and  $p^+\cM$ has good formal structure.
Let $T:= p^{-1}(Z)=p^*\divi f=\divi fp$ be the pull-back scheme.
Let $(Z',D')$ be the transform of $(Z,D)$ by $p$.
Since $Z$ is a divisor, so is $Z'$.
By assumption, $Z'$ and $D'$ have only simultaneously normal crossing.
Thus, $Z'\cup D'$ is a normal crossing divisor of $Y$.
Since  $Z'\cup D'$ and $T\cup E$ have the same support, we deduce that $T\cup E$ is a normal crossing divisor of $Y$.
We are going to apply \cref{boundedness_good_case} on $(Y,T\cup E)$ to $(p^+\cM)(\ast T)$ and $fp$.
Since $p^+\cM \in \MIC(Y,E)$ has good formal structure, so does $(p^+\cM)(\ast T) \in \MIC(Y,T\cup E)$.
By assumption, $\divi fp$ is supported on $T\cup E$.
Let $v $ be a divisorial valuation centred at an irreducible component of  $T=\divi fp$.
If the centre of $v$ is not a component of $E$, we have 
$$
R((p^+\cM)(\ast T))(v)=0 \leq d  \cdot  (\fdeg \Irr(X,\cM)) \cdot  (\divi f)(v).
$$
Otherwise, $R((p^+\cM)(\ast T))(v)=R(\cM)(v)$.
On the other hand, we have   
\begin{align*}
R(\cM)(v)& \leq (\Irr \cM)(v)\\
                     & \leq (\Irr(X,\cM))(v)\\
                     & \leq d  \cdot  (\fdeg \Irr(X,\cM)) \cdot  (\divi f)(v)
\end{align*}
where the first inequality is trivial, the second follows from Kedlaya's \cref{main_thm_Ked_III} and where the last inequality comes from \cref{lemme_from_nearby_slopes}.
From \cref{boundedness_good_case}, we deduce that the nearby slopes of $(p^+\cM)(\ast T)$ associated to $fp$ are bounded by  $d  \cdot  (\fdeg \Irr(X,\cM))$.
Finally, \cref{slope_push_forward} applied to the proper morphism of pairs $p : (Y,E)\to (X,D)$ yields
$$
\Sl^{\nb}_f(\cM)  \subset \Sl^{\nb}_{fp}((p^+\cM)(\ast T)) 
$$
which concludes the proof of   \cref{boundedness_general}.
\end{proof}

\begin{prop}\label{bound_chi_D}
Let $(X,D)$ be a normal crossing pair of dimension $d$ over $\mathds{C}$. 
Let $C$ be a smooth connected curve over $\mathds{C}$. 
Let $f : X\longrightarrow C$ be a dominant proper morphism.
Let $0$ be a closed point in $C$. 
Suppose that  the reduced fibre $Z$ of $f$ over $0$ is not empty and is contained in $D$. 
Suppose that cohomological boundedness holds with bound $K$ for the generic fibre of $f : X\longrightarrow C$.
Then for every  effective divisor $R$ of $X$ supported on $D$, for every integer $r\geq 0$ and every object $\cM$ of $\MIC_r(X,D,R)$ with $\TL(\cM)\subset Z$, we have 
$$
|\chi(Z(\mathds{C}),\Sol \cM^{\an})|\leq d\cdot K(R_{\eta},r)  \cdot \fdeg R
$$
where $R_{\eta}$ is the pull-back of $R$ to the generic fibre of $f : X\longrightarrow C$.
\end{prop}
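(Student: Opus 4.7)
\textbf{Proof proposal for \cref{bound_chi_D}.} The plan is to push $\cM$ forward along $f:X\lra C$ so as to reduce the Euler characteristic on $Z$ to a local one on the curve $C$ at $0$, then bound this local contribution using the Euler--Poincar\'e formula for meromorphic connections on curves together with the nearby-slope bound of \cref{boundedness_general} and cohomological boundedness on the generic fibre of $f$.

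I would first set $\cN:=f_+\cM$, a bounded complex of holonomic $\cD_C$-modules. Topological proper base change at $0\in C(\mathds{C})$ combined with the compatibility of solution complexes and proper pushforward (the same compatibility used in the proof of \cref{push_forward_CC}) yields
\[
\chi(Z(\mathds{C}),\Sol \cM^{\an}) = \chi\bigl((\Sol\cN^{\an})_0\bigr),
\]
the identification $f^{-1}(0)_{\red}=Z$ being needed. Because $\cM=\cM(\ast D)$ with $Z\subset D$, localization of $\cM$ along $f^{-1}(0)$ is the identity, and by compatibility of localization with proper pushforward one has $\cN=\cN(\ast 0)$ in a neighbourhood of $0$. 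Hence each cohomology sheaf $\cH^i\cN$ is a meromorphic connection on $C$ at $0$, and the triangle from the proof of \cref{curve_case} identifies the stalk $(\Sol(\cH^i\cN))_0$ with $\mathds{C}^{\irr(0,\cH^i\cN)}[-1]$. Passing to alternating sums gives
\[
|\chi((\Sol\cN^{\an})_0)|\leq \sum_i \irr(0,\cH^i\cN)\leq s_{\max}\cdot \sum_i \rk(\cH^i\cN),
\]
where $s_{\max}$ denotes the largest slope at $0$ amongst the $\cH^i\cN$.

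The two remaining factors are bounded as follows. By \cref{slopes_and_nearby_slopes}, $s_{\max}$ equals the largest nearby slope of $\cN$ at $0$ associated to a local uniformizer $t$ of $C$ at $0$. \cref{push_forward_Th} applied to $f:X\lra C$ yields $\Sl^{\nb}_t(\cN)\subset \Sl^{\nb}_{f^*t}(\cM)$, and since $f^*t=0$ is an equation of the reduced divisor $Z$ containing $\TL(\cM)$, \cref{boundedness_general} gives
\[
s_{\max}\leq d\cdot \fdeg \Irr(X,\cM)\leq d\cdot \fdeg R,
\]
the last inequality using $\cM\in \MIC_r(X,D,R)$. For the generic rank, base change for $\cD$-modules identifies the generic stalk of $\cH^i\cN$ with $H^\bullet(X_\eta,\DR(\cM|_{X_\eta}))$ up to a common degree shift, \cref{non_char_restriction_bounded_irr} combined with \cref{Irr_divisor_base_change} ensures $\cM|_{X_\eta}\in \MIC_r(X_\eta,D_\eta,R_\eta)$, and \cref{coho_change_base_field} allows transferring the cohomological boundedness from the algebraic generic fibre so that $\sum_i \rk(\cH^i\cN)\leq K(R_\eta,r)$. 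Combining the three estimates produces the asserted bound.

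The principal technical point is the first identification: it requires simultaneously invoking topological proper base change for the analytification and the $\cD$-module theoretic compatibility $Rf_*\Sol\simeq \Sol\circ f_+$, together with the verification that $\cN$ is meromorphic at $0$ so that the curve-case stalk formula applies. Once this framework is in place, the quoted nearby-slope and cohomological boundedness statements close the argument.
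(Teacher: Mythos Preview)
Your argument is correct and follows essentially the same route as the paper. The only cosmetic difference is that the paper phrases the first reduction through Mebkhout's irregularity complex, writing $(\Sol\cM^{\an})|_{Z(\mathds{C})}\simeq \Irr^*_{Z(\mathds{C})}\cM^{\an}$ and then using compatibility of $\Irr^*$ with proper push-forward, whereas you use $\Sol$ directly together with the observation that $f_+\cM$ is localized along $0$; these are the same computation. Two small imprecisions worth tightening: $\divi(f^*t)$ need not be reduced (only its support is $Z$), but this is exactly what \cref{boundedness_general} requires; and the passage from the slopes of the $\cH^i\cN$ to the nearby slopes of the complex $\cN$ uses exactness of nearby cycles, which the paper makes explicit.
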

\begin{proof}
By the generic smoothness theorem, the generic fibre of $X$ over $C$ is smooth and the generic fibre of $(X,D)$ over $C$ is again a normal crossing pair.
Hence, the statement of cohomological boundedness makes sense over the generic fibre of $(X,D)$ over $C$.
We reproduce here a reasoning carried out in \cite[Th. 2]{teyConjThese} in a particular case. 
Let $t$ be a local uniformizer of $C$ around $0$.
Since $Z\subset D$, we have 
$$
(\Sol \cM^{\an})|_{Z(\mathds{C})}\simeq \Irr_{Z(\mathds{C})}^* \cM^{\an}
$$
Since the irregularity complex commutes with proper push-forward \cite{Mehbgro}, we deduce
$$
\chi(Z(\mathds{C}), \Sol \cM^{\an})  =\chi(Z(\mathds{C}), \Irr_{Z(\mathds{C})}^* \cM^{\an}) =(-1)^{d-1}\chi(0, \Irr_0^* f_+^{\an} \mathcal{M}^{\an})
$$
From \cref{Irr_pervers}, the irregularity complex of an holonomic $\cD_C$-module is a sky-scraper sheaf concentrated in degree $1$.
Hence, \cref{irr_and_Irr} gives
\begin{align*}
\chi(0, \Irr_0^* f_+^{\an} \mathcal{M}^{\an})&=    \sum (-1)^i \dim  \mathcal{H}^1 \Irr_0^* \mathcal{H}^{i-1}f_+^{\an} \mathcal{M}^{\an} \\
                                                                         & =\sum (-1)^i \irr(0,\mathcal{H}^{i-1} f_+^{\an} \mathcal{M}^{\an})
\end{align*}
On the other hand, the properness of $f$ yields from \cite[4.7.2]{HTT} a canonical identification $f_+^{\an} \mathcal{M}^{\an} \simeq (f_+ \mathcal{M})^{\an}$.
Since the irregularity number at $0$ is a formal invariant, we deduce 
$$
\chi(0, \Irr_0^* f_+^{\an} \mathcal{M}^{\an})=\sum (-1)^i \irr(0,\mathcal{H}^{i-1} f_+ \mathcal{M})
$$
For every integer $i$, \cref{slopes_and_nearby_slopes} gives
$$
 \irr(0, \mathcal{H}^i f_+\mathcal{M})\leq \rk\mathcal{H}^i f_+\mathcal{M} \cdot \Max \Sl^{\nb}_t(\mathcal{H}^i  f_+\mathcal{M})
$$
From \cref{push_forward_Th} and the exactness of nearby cycles, we have for every $i$
$$
\Sl^{\nb}_t(\mathcal{H}^i  f_+\mathcal{M})\subset \Sl^{\nb}_t(f_+\mathcal{M}) \subset \Sl^{\nb}_f(\mathcal{M})
$$
We deduce 
$$
|\chi(Z(\mathds{C}),\Sol \cM^{\an})|\leq \Max  \Sl^{\nb}_f(\mathcal{M}) \sum \rk\mathcal{H}^i f_+\mathcal{M} 
$$
Since $Z$ contains the turning locus of $\cM$, \cref{boundedness_general} implies that the nearby slopes of $\mathcal{M}$ associated to $f$ are bounded by  $d\cdot \fdeg R$. 
Since cohomological boundedness holds with bound $K $  for the generic fibre of $f : X\longrightarrow C$, \cref{bound_chi_D} follows.
\end{proof}

\section{Partial discrepancy $b$-divisors}\label{partial_discr}

Before proving cohomological boundedness for surfaces, we make a détour through the theory of $b$-divisor to introduce the partial discrepancy of a $b$-divisor.
As usual, $k$ denotes a field of characteristic $0$.

%\\ \indent
%
%If $X$ is a smooth surface over $k$ and if $D$ is a divisor on $X$, we will abuse the terminology and say that a point $P$ is a singular point (resp. a smooth point) of $D$  if $P$ is a singular point (resp. a smooth point) of \textit{the support of $D$}.

\subsection{Chain of blow-up}\label{Chain}

Let $X$ be a smooth surface over $k$.
Let $n\geq 0$.
Borrowing the terminology of \cite{Cossart_Piltant_Lopez}, a  \textit{chain of blow-up of $X$ of length $n$} is a sequence $\mathbf{p}$ of maps 
\begin{equation}\label{chain_blow_up}
\xymatrix{
\mathbf{p} : X':=X_n \ar[r]^-{p_{n-1}} &     X_{n-1} \ar[r]^-{p_{n-2}} &   \cdots \ar[r]^-{p_{0}}  &  X_0=X
}
\end{equation}
where $p_i$ is a blow-up at a single closed point $P_i$ of $X_i$ with $p_i(P_{i+1})=P_i$.
We let  $E_\mathbf{p}$ be the exceptional divisor of $p_{n-1}$.
We denote by $p : X'\to X$ the composition of the $p_i$, $0\leq i\leq n-1$ and by $\pi : X_{n-1}\to X$ the composition of the $p_i$, $0\leq i\leq n-2$.\\ \indent

The following theorem was proven by Zariski \cite{Zariski_surface} when $k$ is algebraically closed.
See  \cite{Cossart_surface} for arbitrary $k$ of characteristic $0$.

\begin{thm}\label{Zariski}
Let $X$ be a smooth connected surface over $k$.
Let $k(X)$ be the function field of $X$.
The map  associating to any chain of blow-up $\mathbf{p}$ of $X$ the valuation on $k(X)$ induced by $ E_\mathbf{p}$  is a bijection between the set of chains of blow-up of $X$ and $\ZR^{\divis}(X)$.
\end{thm}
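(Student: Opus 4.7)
The plan is to split the bijection into two pieces: well-definedness and injectivity of the map $\mathbf{p}\mapsto v_{\mathbf{p}}$ (where $v_{\mathbf{p}}$ is the valuation induced by the prime divisor $E_{\mathbf{p}}$), which is essentially bookkeeping, and surjectivity, which carries the real content.

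For well-definedness I would observe that $E_{\mathbf{p}}$ is a prime Cartier divisor on the smooth surface $X_n$, hence defines a rank-one discrete valuation $v_{\mathbf{p}}$ on $k(X_n)=k(X)$ whose residue field has transcendence degree $1$ over $k$; thus $v_{\mathbf{p}}\in\ZR^{\divis}(X)$. The center of $v_{\mathbf{p}}$ on $X_{n-1}$ is $P_{n-1}$ by construction, and then on each $X_i$ for $i<n-1$ it is $P_i$ by pushing forward along $p_i$. Injectivity is then immediate: if two chains $\mathbf{p}$ and $\mathbf{q}$ produce the same valuation $v$, their centers on $X$ both equal the center of $v$ on $X$, so $P_0=Q_0$; after blowing up this common point the argument iterates, matching the chains stage by stage, and the lengths agree because $X_n$ is exactly the first model on which $v$ becomes divisorial.

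Surjectivity is the substantive step and is where I expect the main obstacle. Given $v\in\ZR^{\divis}(X)$, let $P_0$ be its center on $X$. If $P_0$ is already the generic point of a prime divisor of $X$ then we are in a base case handled directly. Otherwise $P_0$ is a closed point; blow it up to get $X_1$, take $P_1$ to be the center of $v$ on $X_1$, and iterate. The only non-trivial content is to show that this iterative process terminates, i.e.\ after finitely many quadratic transforms the center of $v$ becomes the generic point of a (necessarily exceptional) prime divisor. Once termination is secured, comparing centers shows that the resulting chain $\mathbf{p}$ satisfies $v_{\mathbf{p}}=v$.

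Termination is Zariski's classical uniformization theorem for divisorial valuations on regular two-dimensional local rings. The standard approach is to attach to the ascending chain of regular local rings $\cO_{X_0,P_0}\subset\cO_{X_1,P_1}\subset\cdots\subset\cO_v$ a strictly decreasing numerical invariant forcing stabilization in finitely many steps: for instance via the Perron algorithm applied to a regular system of parameters at $P_i$ together with generators of $\mathfrak{m}_v$, or via the proximity relations between successive quadratic transforms of a regular surface local ring, both of which encode the fact that $v$, being divisorial of residue transcendence degree $1$, cannot be indefinitely refined by point blow-ups without eventually being realized by a prime divisor. Once stabilization occurs at some $X_n$, one checks that the exceptional divisor of the last blow-up is precisely the center of $v$, hence $v=v_{\mathbf{p}}$, closing the proof.
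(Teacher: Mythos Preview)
The paper does not give its own proof of this theorem; it is quoted from the literature, with attribution to Zariski for $k$ algebraically closed and to Cossart for arbitrary characteristic-zero $k$. Your sketch is the standard argument and is correct in outline: well-definedness and injectivity follow by tracking the centers of the valuation through successive blow-ups, and surjectivity is exactly Zariski's local uniformization for divisorial valuations on a regular two-dimensional local ring, i.e.\ the termination of the chain of quadratic transforms.

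One minor point worth making explicit concerns the edge case you flag as ``handled directly'': the paper allows chains of length $n=0$, for which no exceptional divisor $E_{\mathbf{p}}$ is defined, so the map $\mathbf{p}\mapsto v_{E_{\mathbf{p}}}$ is not literally defined there; correspondingly, divisorial valuations already centred at a prime divisor of $X$ do not arise from any chain of length $\geq 1$. The paper sidesteps this by treating the case $n=0$ separately wherever the theorem is invoked (e.g.\ in the definition of the partial discrepancy), so it is a harmless imprecision, but in a self-contained write-up you would want to state explicitly how these valuations are matched.
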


The map $p : X'\to X$  corresponding to a divisorial valuation  $v$ via \cref{Zariski} is the smallest modification of $X$ on which the center of $v$ is a divisor. 
This is the following \cref{minimality_p} below.
To prove it, we need the following
\begin{lem}\label{point_or_pure_curve}
Let  $f : Y\to X$ be a  modification between smooth connected surfaces over $k$.
Let $P$ be a closed point of $X$.
Then either the scheme  $f^{-1}(P)$ has pure dimension $1$ or  $f^{-1}(P)$ is a single point $Q$.
In the latter case, there is a neighbourhood $U$ of $P$ in $X$ such that the induced morphism $f|_U: f^{-1}(U)\to U$ is an isomorphism. 
\end{lem}
\begin{proof}
%By assumption, the scheme $f^{-1}(P)$ has dimension $1$ or $0$.
%Assume that  $f^{-1}(P)$ has dimension $1$.
%In that case, $f^{-1}(P)$  contains an irreducible curve.
%Since a $0$-dimensional irreducible component of a scheme is a connected component, Zariski's connectedness theorem \cite[4.3.2]{EGA3-1} ensures that $f^{-1}(P)$ has no $0$-dimen\-sional irreducible components.
%In particular, $f^{-1}(P)$ has pure dimension $1$.
%If $f^{-1}(P)$  has dimension $0$, then Zariski's connectedness theorem ensures that $f^{-1}(P)$ is a single point $Q$.
This follows from the fact \cite[0C5R]{SPsurface} that a proper birational morphism  between smooth connected surfaces is a sequence of blow-up in closed points.
%By Zariski's main theorem \cite[4.4.5]{EGA3-1}, there is an open neighbourhood $U$ of $Q$ in $Y$ such that $f|_U : U\lra X$ is quasi-finite.
%Since $f$ is birational, so is $f|_U$  and \cite[4.4.9]{EGA3-1} ensures that $f|_U : U\lra X$  is an open immersion.
%The proof of \cref{point_or_pure_curve} is thus complete.
\end{proof}

\begin{prop}\label{minimality_p}
Let $X$ be a smooth connected surface over $k$.
Let $v$ be a divisorial valuation of $X$ and let $f : Y\to X$ be a  modification such that the center of $v$ on $Y$ is a divisor.
Let $\mathbf{p}$  be the chain of blow-up (\ref{chain_blow_up}) corresponding to $v$ via \cref{Zariski}.
Then, $f$ admits a unique factorization through  $p : X'\to X$.
\end{prop}
\begin{proof}
We argue by induction on the length $n\geq 0$ of $\mathbf{p}$.
If $n=0$, there is nothing to do.
Assume that $n>0$ and that $\mathbf{p}$ has length $n$ as in (\ref{chain_blow_up}). 
Let $E$ be the center of $v$ on $Y$.
Since the center of $v$ on $X$ is the point $P_0$, the map $f$ sends $E$ on $P_0$.
In particular the scheme $f^{-1}(P_0)$ contains $E$. 
From \cref{point_or_pure_curve}, the scheme  $f^{-1}(P_0)$ has pure dimension one.
It is thus an effective Cartier divisor on $Y$.
From the universal property of the blow-up, we deduce that $f$ factors uniquely through $p_0 : X_1\to X$ via a modification $g : Y\to X_1$.
Observe that the chain 
$$
\xymatrix{
\mathbf{q} : X':=X_n \ar[r]^-{p_{n-1}} &     X_{n-1} \ar[r]^-{p_{n-2}} &   \cdots \ar[r]^-{p_{1}}  &  X_1
}
$$
is the chain corresponding to $v$ via \cref{Zariski} applied to the smooth connected surface $X_1$.
Since $\mathbf{q}$ has length $n-1$, the recursion assumption applies to $g : Y\to X_1$ and $v$ and provides a unique factorization of $g$ through $q: X'\to X_1$.
We thus get the desired factorization of $f$ through $p$.
\end{proof}

The following general lemmas will be used in  the proof of \cref{discrepancy_is_b_divisor} to show that the partial discrepancy of a $b$-divisor is again a $b$-divisor.

\begin{lem}\label{elementarylemma_singular_point}
Let $X$ be a smooth variety over $k$.
Let $Z$ and $D$ be divisors on $X$ such that $Z_{\red}\subset D_{\red}$.
Then every singular point of $Z_{\red}$ is a singular point of $D_{\red}$.
\end{lem}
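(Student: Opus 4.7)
The plan is to argue by contrapositive: assuming $D_{\red}$ is smooth at a point $x\in Z_{\red}$, I will show that $Z_{\red}$ is smooth at $x$ as well. The key observation is that a smooth divisor is locally irreducible, so there is little room left for $Z_{\red}$ to behave badly near such a point.

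First, I would use smoothness of $D_{\red}$ at $x$ to pass to a local picture. Since $X$ is smooth and $D_{\red}$ is a smooth divisor at $x$, there exists a Zariski neighborhood $U$ of $x$ in $X$ on which $D_{\red}\cap U$ is cut out by a single regular function $f$ with $df(x)\neq 0$. In particular, $D_{\red}$ has a unique irreducible component $D_0$ passing through $x$, and $D_0\cap U$ is smooth of codimension one in $U$.

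Next, I would invoke the fact that $Z$ is a divisor on the smooth variety $X$, so that $Z_{\red}$ is equidimensional of codimension one. Any irreducible component $Z_0$ of $Z_{\red}$ passing through $x$ is a codimension-one closed subvariety of $X$ contained, after restricting to $U$, in $D_{\red}\cap U=D_0\cap U$. Since $D_0$ is itself irreducible of codimension one, the inclusion $Z_0\cap U\subset D_0\cap U$ forces $Z_0\cap U=D_0\cap U$. As this is the only component of $Z_{\red}$ meeting $U$ through $x$, and $Z_{\red}$ is reduced, shrinking $U$ if necessary yields $Z_{\red}\cap U=D_0\cap U$. In particular, $Z_{\red}$ is smooth at $x$, which is what we wanted.

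There is no genuine obstacle here: the argument rests only on the elementary facts that a smooth divisor is locally irreducible and that a codimension-one closed irreducible subvariety contained in an irreducible codimension-one subvariety must equal it. No global hypothesis on $X$ beyond smoothness is used, and the argument is purely local at the point $x$.
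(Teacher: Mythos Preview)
Your proof is correct and follows essentially the same approach as the paper: the paper argues directly by case analysis on whether $P$ lies in one or several irreducible components of $D$, while you phrase it as a contrapositive, but in both cases the core step is that a codimension-one irreducible subvariety contained in an irreducible codimension-one subvariety must coincide with it, forcing $Z_{\red}$ and $D_{\red}$ to agree locally near the point in question.
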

\begin{proof}
Let $P$ be a singular point of $Z_{\red}$.
If $P$ lies in   at least two distinct irreducible components of  $D$, then $P$ is necessarily a singular point of $D_{\red}$ and we are done.
Assume that $P$ lies in a single irreducible component $F$ of $D$.
Since $Z_{\red}$ is contained in $D_{\red}$, then $P$ also lies in a single irreducible component $F'$ of $Z$.
We necessarily have $F=F'$.
Hence, $Z_{\red}$  and $D_{\red}$  coincide in a neighbourhood of $P$ and \cref{elementarylemma_singular_point} follows.
\end{proof}

\begin{lem}\label{inclusion_support}
Let $X$ be a smooth connected surface over $k$.
Let $Z$ be a $b$-divisor of $X$.
Let $\mathbf{p}$ be a  chain of blow-up of $X$ of length $n\geq 1$.
Assume that $P_{n-1}$ lies in the support of $Z(X_{n-1})$.
Then, the following statements hold ;
\begin{enumerate}\itemsep=0.2cm
\item The support of $\pi^*Z(X)$ contains every curve of $X_{n-1}$ contracted to a point by $\pi : X_{n-1}\to X$.
\item We have  $Z(X_{n-1})_{\red}\subset (\pi^*Z(X))_{\red}$.
\item  If $E$ is a divisor of $X_{n-1}$ such that $(\pi^*Z(X))_{\red}\subset E_{\red}$, then every singular point of $Z(X_{n-1})_{\red}$ is a singular point of $E_{\red}$.
\end{enumerate}
\end{lem}
\begin{proof}
The second assertion follows from the first.
Indeed, away from the union of curves of $X_{n-1}$ contracted to a point by $\pi$, the map $\pi$ induces an isomorphism with $X\setminus \{P_0\}$.
Hence, $Z(X)=\pi_* Z(X_{n-1})$ implies that every component of $Z(X_{n-1})$ not contracted to a point  lies in the support of $\pi^*Z(X)$.\\ \indent
We thus prove the first assertion.
The curves of $X_{n-1}$ contracted by $\pi$ are the strict transforms of the exceptional divisors appearing in the chain $\mathbf{p}$.
Hence, the first assertion follows from the fact that since $P_{n-1}$ lies $Z(X_{n-1})$, the point  $P_0=\pi(P_{n-1})$ lies in $Z(X)=\pi_* Z(X_{n-1})$. \\ \indent
The last assertion follows from the inclusion of supports $Z(X_{n-1})_{\red}\subset (\pi^*Z(X))_{\red} \subset E_{\red}$ combined with \cref{elementarylemma_singular_point}.
\end{proof}

\subsection{Partial discrepancy divisor}

Armed with Zariski's \cref{Zariski}, we are now in position to define the partial discrepancy  of a $b$-divisor on a smooth connected surface endowed with a reduced divisor.
Although defined for every $\mathds{Z}$-valued function on the set of divisorial valuations, the partial discrepancy is better behaved on a subgroup of the group of $b$-divisors that we now introduce.
\begin{lem}\label{b_divisor_X_D}
Let $(X,D)$ be a connected pair of dimension $2$ over $k$.
Let $Z$ be a $b$-divisor on $X$ and let us view $D$ as a $b$-divisor via $\Cart(X)\to \bDiv(X)$.
The following conditions are equivalent ;
\begin{enumerate}\itemsep=0.2cm
\item The support of $Z : \ZR^{\divis}(X)\to \mathds{Z}$ lies in the support of $D : \ZR^{\divis}(X)\to \mathds{Z}$.
\item For every modification $f : Y\to X$, the support of $Z(Y)$ is contained in the support of $f^*D$.
\end{enumerate}
\end{lem}
\begin{proof}
We assume that $(1)$ holds.
Let $f : Y\to X$ be a modification and let $v$ be a divisorial valuation centred at a divisor in the support of $Z(Y)$.
Then $Z(v)=(Z(Y))(v)\neq 0$.
Hence, $D(v)=(f^*D)(v)\neq 0$ and $(2)$ holds.
On the other hand, assume that $(2)$ holds.
Let $v$ be a divisorial valuation on $X$ such that $Z(v)\neq 0$.
Then, there exists a modification $f : Y\to X$ such that the center $E$ of $v$ on $Y$ is a divisor.
Since $Z(v)= m(E,Z(Y))\neq 0$, the divisor $E$ is a component of $Z(Y)$.
By assumption, we deduce that $E$ is a component of $f^*D$.
Thus $D(v)= m(E,f^*D)\neq 0$.
\end{proof}

\begin{defin}\label{def_b_divisor_X_D}
Let $(X,D)$ be a connected pair of dimension $2$ over $k$.
Let $Z$ be a $b$-divisor on $X$.
If the equivalent conditions of \cref{b_divisor_X_D} are satisfied, we say that \textit{$Z$ is a $b$-divisor of $(X,D)$}.
We denote by $\bDiv(X,D)$ the subgroup of $\bDiv(X)$ formed by $b$-divisors of $(X,D)$.
\end{defin}
Let $Z$ be a $b$-divisor on $(X,D)$ as in  \cref{def_b_divisor_X_D}.
We define a new function 
$$
\delta Z : \ZR^{\divis}(X) \lra \mathds{Z}
$$ 
as follows.
Let $v\in  \ZR^{\divis}(X)$.
Let $\mathbf{p}$ be the  chain of blow-up of $X$ as in (\ref{chain_blow_up}) corresponding to $v$ via \cref{Zariski}.
If $n=0$ or if $n\geq 1$ and $P_{n-1}$ is a singular point of the support of $\pi^*D$, we put $(\delta Z)(v)=0$.
Otherwise, we put
$$
(\delta Z)(v):= (Z(X_{n-1}))(v)-Z(v)
$$
where $Z(X_{n-1})$ is viewed as a $b$-divisor via $\Cart(X_{n-1})\to \bDiv(X)$.

\begin{rem}\label{two_cases}
If $P_{n-1}$ is not a singular point of  the support of $\pi^*D$, only two things can happen.
If $P_{n-1}$ does not lie in the support of $Z(X_{n-1})$, we have $(Z(X_{n-1}))(v)=0$.
Otherwise, \cref{inclusion_support} ensures that $P_{n-1}$ is a smooth point of the support of $Z(X_{n-1})$.
Thus $(Z(X_{n-1}))(v)$ is  simply the multiplicity of the unique irreducible component of $Z(X_{n-1})$ passing through  $P_{n-1}$.
\end{rem}

%\begin{rem}
%Requiring the vanishing of $\delta Z$ at $v$ when $P_{n-1}$ is a singular point  of $\pi^*Z(X)$ is a different condition than requiring it when $P_{n-1}$ is a singular point  of $Z(X_{n-1})$.
%This is because $Z$ does not lie in the image of $\Cart(X)\lra \bDiv(X)$.
%In view of the cohomological applications to be developed in the next subsection, we stress on the fact that this is the first condition that is needed in this paper.
%\end{rem}

\begin{ex}\label{discrepancy_Cartier}
If $Z$ lies in the image of $\Cart(X)\to \bDiv(X)$, then $\delta Z=0$.
\end{ex}

\begin{prop}\label{discrepancy_is_b_divisor}
Let $(X,D)$ be a connected pair of dimension $2$ over $k$.
Let  $Z$ be a $b$-divisor of $(X,D)$.
Then $\delta Z$ is a $b$-divisor of $(X,D)$.
\end{prop}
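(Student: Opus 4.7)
The plan is to verify the two defining properties of membership in $\bDiv(X,D)$: first, that the function $\delta Z \colon \ZR^{\divis}(X)\lra \mathds{Z}$ has finite support on each modification (so that it genuinely defines an element of $\bDiv(X)$), and second, that this support is contained in that of $D$. My strategy is to establish the support condition first, since it automatically implies the finiteness: for any modification $f \colon Y \lra X$, the characterization of \cref{b_divisor_X_D} ensures that the divisorial valuations $v_F$ centred at an irreducible divisor $F$ of $Y$ with $D(v_F)\neq 0$ are exactly those for which $F$ is a component of $f^{-1}(D)$, and there are only finitely many such. So it suffices to show that $(\delta Z)(v)\neq 0$ forces $D(v)\neq 0$.

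Fix such a $v$ and let $\mathbf{p}$ be the chain of blow-up \cref{chain_blow_up} associated to $v$ by \cref{Zariski}. The definition of $\delta Z$ forces $n\geq 1$ and $P_{n-1}$ not to be a singular point of $\Supp \pi^{*}D$, which leaves two subcases. If $P_{n-1}$ is a smooth point of $\Supp \pi^{*}D$, then $E_{\mathbf{p}}$ appears with strictly positive multiplicity in $p^{*}D = p_{n-1}^{*}\pi^{*}D$, yielding $D(v)>0$ immediately. If instead $P_{n-1}\notin \Supp \pi^{*}D$, I would deduce that $(\delta Z)(v)=0$, contradicting the choice of $v$. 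Indeed, applying the characterization of \cref{b_divisor_X_D} to the modifications $\pi \colon X_{n-1}\lra X$ and $p \colon X_n\lra X$ yields $\Supp Z(X_{n-1})\subset \Supp \pi^{*}D$ and $\Supp Z(X_n)\subset \Supp p^{*}D$. The first inclusion gives $P_{n-1}\notin \Supp Z(X_{n-1})$, whence $(Z(X_{n-1}))(v)=0$. The second inclusion, combined with the fact that $E_{\mathbf{p}}\not\subset \Supp p^{*}D$ (because $\pi^{*}D$ vanishes near $P_{n-1}$), gives $Z(v)=0$. Hence $(\delta Z)(v) = (Z(X_{n-1}))(v)-Z(v) = 0$, as required.

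The only delicate point is the exclusion of the subcase $P_{n-1}\notin \Supp \pi^{*}D$. The crux is that the hypothesis ``$Z\in \bDiv(X,D)$'' must be exploited not only on the base $X$ but on \emph{every} intermediate level of the chain of blow-up, which is precisely what the equivalent characterization of \cref{b_divisor_X_D} enables. The remainder of the argument amounts to routine bookkeeping with the definition of $\delta Z$.
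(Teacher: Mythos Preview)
Your proof is correct. Your route differs from the paper's: the paper first establishes that $\delta Z$ is a $b$-divisor of $X$ by a case analysis on whether $P_{n-1}\in\Supp Z(X_{n-1})$, and in the affirmative case invokes the factorization \cref{minimality_p} to argue that the auxiliary valuation $w$ (attached to the unique component of $Z(X_{n-1})$ through $P_{n-1}$) is centred at a divisor of $Y$; only afterwards does it check the support condition relative to $D$. You instead prove the inclusion $\Supp(\delta Z)\subset\Supp(D)$ directly, via a case split on whether $P_{n-1}\in\Supp\pi^{*}D$, and observe that this single inclusion yields both the finiteness (hence membership in $\bDiv(X)$) and membership in $\bDiv(X,D)$ at once. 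Your argument is more streamlined and avoids \cref{minimality_p} altogether; the paper's argument, by contrast, makes the role of the chain-of-blow-up structure and of \cref{inclusion_support} more visible.
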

\begin{proof}
Let $f : Y\to X$ be a  modification of $X$.
Let $v$ be a divisorial valuation centred at a  divisor of $Y$.
Let $\mathbf{p}$ be the chain of blow-up (\ref{chain_blow_up}) corresponding to $v$ via \cref{Zariski}.
Assume that $n\geq 1$ and that $P_{n-1}$ is not a singular point of the support of $\pi^*D$.
From \cref{two_cases}, there are two cases.
If $P_{n-1}$ does not lie in support of $Z(X_{n-1})$, then
$$
(\delta Z)(v)= (Z(X_{n-1}))(v)-Z(v)=-Z(v)
$$
and since $Z$ is a $b$-divisor, there is only a finite number of $v$ as above for which $(\delta Z)(v)\neq 0$.
Otherwise $P_{n-1}$ is a smooth point of the support of $Z(X_{n-1})$.
If $w$ is the divisorial valuation corresponding to the unique irreducible component of  $Z(X_{n-1})$ passing through $P_{n-1}$, we thus have $(Z(X_{n-1}))(v)=Z(w)$, so that 
$$
(\delta Z)(v)= Z(w)-Z(v)
$$
On the other hand, \cref{minimality_p} implies the existence of  a commutative triangle 
$$
\xymatrix{
    Y      \ar[rd]_-{f} \ar[r]  &  X'  \ar[d]^-{p} \\
                        & X
}
$$
Since the centre of $w$ on $X'$ is a divisor, so is its centre on $Y$.
Since $Z$ is a $b$-divisor, we again conclude that there is only a finite number of $v$ as above for which $(\delta Z)(v)\neq 0$.
This proves that $\delta Z$ is a $b$-divisor of $X$.
We are left to prove that $\delta Z$ is a $b$-divisor of $(X,D)$.
Let $v$ be a divisorial valuation with $D(v)=0$.
Since $Z$ is a $b$-divisor of $(X,D)$, we have  $Z(v)=0$ and $(Z(X_{n-1}))(v)=0$.
This concludes the proof of \cref{discrepancy_is_b_divisor}.
\end{proof}

\begin{defin}
Let $(X,D)$ be a connected pair of dimension $2$ over $k$.
Let  $Z$ be a $b$-divisor of $(X,D)$.
Then $\delta Z$ is called the \textit{partial discrepancy of $Z$}.
\end{defin}

The following proposition expresses the partial discrepancy  without any reference to the chains of blow-up provided by Zariski's \cref{Zariski}.
\begin{prop}\label{computation_discrepancy}
Let $(X,D)$ be a connected pair of dimension $2$ over $k$.
Let  $Z$ be a $b$-divisor of $(X,D)$.
Let $f: Y\to X$ be a modification of $X$.
Let $Y'\to Y$ be the blow-up of $Y$ at a closed point $P$ and let  $E$ be its exceptional divisor.
Let $v$ be a divisorial valuation on $X$ such that the center of $v$ on $Y$ is $P$ and the center of $v$ on $Y'$ is $E$.
If $P$ is a singular point of the support of $f^*D$, then $(\delta Z)(v)=0$.
Otherwise, we have $(\delta Z)(v)=(Z(Y))(v)-Z(v)$ where  $Z(Y)$ is viewed as a $b$-divisor via $\Cart(Y)\to \bDiv(X)$.
\end{prop}
\begin{proof}
Let $\mathbf{p}$  be the chain of blow-up (\ref{chain_blow_up}) corresponding to $v$ via \cref{Zariski}.
We use the notations from \cref{Chain}.
Since the center of $v$ on $Y$ is a point, so is the center of $v$ on $X$.
Thus, we have $n\geq 1$.
From \cref{minimality_p} applied to the composition $Y'\to Y \to X$ and $v$, there is a commutative diagram
$$
\xymatrix{
 E   \ar[r]   \ar[d]&  Y'     \ar[r]   \ar[d]  &  X'  \ar[d]^-{p_{n-1}}     \\
P   \ar[r]   & Y      \ar[rd]_-{f} &  X_{n-1}  \ar[d]^-{\pi} \\
     &                    & X
}
$$
Let $w$ be the divisorial valuation on $X$ corresponding to the penultimate exceptional divisor $E_{n-1}$ of the chain $\mathbf{p}$.
In particular, the centres of $v$ and $w$ on $X_{n-1}$ are $P_{n-1}$ and $E_{n-1}$ respectively.
They are in particular distinct with that of $w$ being a divisor.
Thus, $E$ and the center of $w$ on $Y'$ are distinct divisors.
Since all what $Y'\to Y$ does is to contract $E$ to $P$, we deduce that the center of $w$ on $Y$ is again a divisor.
A second application of  \cref{minimality_p}  to $f : Y\to X$ and $w$ yields a commutative diagram 
$$
\xymatrix{
   E   \ar[r]  \ar[d] &  Y'     \ar[r]   \ar[d]  &  X'  \ar[d]^-{p_{n-1}}     \\
   P   \ar[r]   &  Y      \ar[rd]_-{f} \ar[r]^-{h}  &  X_{n-1}  \ar[d]^-{\pi} \\
             &                    & X
}
$$
Observe that the scheme $h^{-1}(P_{n-1})$ contains  $P$.
If $h^{-1}(P_{n-1})$ was an effective Cartier divisor, then the universal property of blow-up would provide a commutative diagram 
$$
\xymatrix{
  Y'     \ar[r]   \ar[d]  &  X'  \ar[d]^-{p_{n-1}}     \\
  Y      \ar[ru] \ar[r]^-{h}  &  X_{n-1}  
}
$$
In particular, the center of $v$ on $Y$ would be a divisor. 
Contradiction.
Hence, $h^{-1}(P_{n-1})$ is not an effective Cartier divisor.
From \cref{point_or_pure_curve}, we deduce the existence of a neighbourhood $U$ of $P_{n-1}$ in $X_{n-1}$ such that $h$ induces an isomorphism $h|_U : h^{-1}(U)\to U$ over $X$.
\cref{computation_discrepancy} then follows.

\end{proof}

\begin{rem}
The terminology \textit{partial} discrepancy divisor comes from the fact that we are looking at the failure of $Z$ to  lie in the image of $\Cart(Y)\to \bDiv(X)$ with $Y\to X$ a modification by only contemplating  \textit{the smooth points} of $Z(Y)$.
%The whole discrepancy of $Z$ would be the function $\Delta Z : \ZR^{\divis}(X) \lra \mathds{Z}$ defined as follows.
%For a divisorial valuation $v$, let $\mathbf{p}$ be the corresponding chain of blow-up.
%If $n=0$, we put $(\Delta Z)(v)=0$.
%Otherwise, we put 
%$$
%(\Delta Z)(v)=(Z(X_{n-1}))(v) -Z(v)
%$$
%where $Z(X_{n-1})$ is viewed as a $b$-divisor via $\Cart(X_{n-1})\lra \bDiv(X)$. 
The justification for this comes from \cref{calculation_chi_surface}, showing that De Rham cohomology   only sees the partial discrepancy of the irregularity $b$-divisors.
\end{rem}

\begin{rem}
Let $(X,D)$ be a connected pair of dimension $2$ over $k$.
Let $f: Y \to X$ be a modification  and put $E:=(f^*D)_{red}$.
Then, the canonical isomorphism $\bDiv(X)\to \bDiv(Y)$ induces an  isomorphism $\bDiv(X,D)\to \bDiv(Y,E)$.
We thus have a square 
$$
\xymatrix{
    \bDiv(X,D)  \ar[r]^-{\sim}  \ar[d]_-{\delta}    &   \bDiv(Y,E) \ar[d]^-{\delta}  \\
   \bDiv(X,D)   \ar[r]^-{\sim}    &    \bDiv(Y,E)
}
$$
We stress on the fact that the above square does not commute.
The lemma below asserts however that the only obstruction lies at   valuations centred at divisors of $Y$.
\end{rem}

\begin{lem}\label{quasi_commutativity}
Let $(X,D)$ be a connected pair of dimension $2$ over $k$.
Let $f: Y \to X$ be a modification over $k$ and put $E:=(f^*D)_{red}$.
Let $Z$ be a $b$-divisor on $(X,D)$ and let $T$ be the image of $Z$ under $\bDiv(X,D)\to \bDiv(Y,E)$.
Let $v$ be a divisorial valuation on $X$ centred at a closed point of $Y$.
Then, $\delta Z$ and $\delta T$ coincide at $v$.
\end{lem}
\begin{proof}
We unveil the definition of $(\delta T)(v)$.
Let
$$
\xymatrix{
\mathbf{q} : Y':=Y_n \ar[r]^-{q_{n-1}} &     Y_{n-1} \ar[r]^-{q_{n-2}} &   \cdots \ar[r]^-{q_{0}}  &  Y_0=Y
}
$$
be the chain of blow-up obtained by applying \cref{Zariski} to $Y$ and $v$.
By assumption on $v$, there is at least one blow-up, that is $n\geq 1$.
Let $Q$  be the last blow-up point.
Let $\rho : Y_{n-1}\to Y$ be the composition of the $q_i$, $0\leq i\leq n-2$. 
If $Q$ is a singular point of $(\rho^*E)_\red = (\rho^* f^*D)_{\red}$, then by definition $(\delta T)(v)=0$.
On the other hand, \cref{computation_discrepancy} applied to the $Z$,$v$ and $f \rho : Y_{n-1}\to X$ yields $(\delta Z)(v)=0$ so that  $\delta Z$ and $\delta T$ coincide at $v$ in that case.
Assume that $Q$ is not a singular point of $(\rho^*E)_\red = (\rho^* f^*D)_{\red}$.
Then by definition $(\delta T)(v)=(T(Y_{n-1}))(v)-T(v)$.
On the other hand, \cref{computation_discrepancy} yields again $(\delta Z)(v)=(Z(Y_{n-1}))(v)-Z(v)$.
Since $Z(Y_{n-1})=T(Y_{n-1})$ and $Z(v)=T(v)$, we conclude that $\delta Z$ and $\delta T$ coincide at $v$ in that case.
This concludes the proof of \cref{quasi_commutativity}.
\end{proof}

\subsection{Integral of finitely supported $b$-divisors}
Let $X$ be a smooth connected surface over $k$.
Let $f : Y\to X$ be a modification and let $A \subset Y$ be a subset.
Let $\ZR^{\divis}(X,A)\subset \ZR^{\divis}(X)$ be the subset of divisorial valuations of $X$ whose center on $Y$ is a point of $A$.
For a $b$-divisor $Z$ of $X$ with finite support as a function on $\ZR^{\divis}(X)$, we put 
$$
\int_A Z := \sum_{v \in \ZR^{\divis}(X,A)}  Z(v)   \in \mathds{Z}\
$$
The goal of what follows is to prove that the partial discrepancy of a nef Cartier $b$-divisor is effective and finitely supported.

\begin{prop}\label{image_delta_of_nef_Cartier}
Let $(X,D)$ be a connected pair of dimension $2$ over $k$.
Let $Z$ be a Cartier $b$-divisor of $(X,D)$.
Let $f: Y\to X$ be a modification of $X$ such that $Z$ lies in the image of $\Cart(Y)\to \bDiv(X)$.
Then, $\delta Z$ is supported on the set of divisorial valuations whose center on $Y$ is an irreducible component of $f^*D$.
\end{prop}
\begin{proof}
Put $E:=(f^*D)_{red}$.
Let  $T$ be the image of $Z$ under $\bDiv(X,D)\to \bDiv(Y,E)$.
By assumption, $T$ is a Cartier $b$-divisor.
 \cref{discrepancy_Cartier} thus gives $\delta T=0$.
From \cref{quasi_commutativity}, we deduce that $\delta Z$ vanishes at every divisorial valuation centred at a closed point of $Y$.
This concludes the proof of \cref{image_delta_of_nef_Cartier}.
\end{proof}

\begin{prop}\label{delta_is_eeffective}
Let $(X,D)$ be a connected pair of dimension $2$ over $k$.
Let $Z$ be a Cartier $b$-divisor of $(X,D)$.
Then,  $\delta Z$ is a $b$-divisor with finite support.
In particular, the integral $\int_X \delta Z$ is a well-defined integer.
If $Z$ is furthermore nef, $\delta Z$ is effective. 
Hence, $\int_X \delta Z$ is a positive integer.
\end{prop}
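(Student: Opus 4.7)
The plan is to deduce the proposition directly from \cref{image_delta_of_nef_Cartier} together with the very definition of nefness.

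First, for the finiteness of the support, I would invoke the fact that $Z$ is a Cartier $b$-divisor: by definition of $\bCDiv(X) = \varinjlim_{Y\to X}\Cart(Y)$, there exists a modification $f: Y\lra X$ such that $Z$ lies in the image of $\Cart(Y)\lra \bDiv(X)$. Then \cref{image_delta_of_nef_Cartier} ensures that $\delta Z$ is supported on those $v\in \ZR^{\divis}(X)$ whose center on $Y$ is an irreducible component of $f^*D$. Since a divisorial valuation on $X$ having as center on $Y$ a codimension-one point is uniquely determined by that point (the chain of blow-up of $X$ corresponding to $v$ via \cref{Zariski}, when viewed from $Y$, has length $0$), and since $f^*D$ has only finitely many irreducible components, this support is finite. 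Consequently $\int_X \delta Z$ makes sense as an integer.

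For effectivity in the nef case, I would unwind the definition of $\delta Z$ at a divisorial valuation $v$. Let $\mathbf{p}$ be the chain of blow-up of \cref{chain_blow_up} associated to $v$ via \cref{Zariski}. If $n=0$ or if $P_{n-1}$ is a singular point of the support of $\pi^*D$, then $(\delta Z)(v)=0$ by definition and there is nothing to check. Otherwise
\[
(\delta Z)(v)=(Z(X_{n-1}))(v)-Z(v),
\]
where $Z(X_{n-1})$ is viewed as a $b$-divisor on $X$ through $\bCDiv(X)\lra\bDiv(X)$. The key input is that $\pi : X_{n-1}\lra X$ is a modification, so by nefness of $Z$ we have $Z\leq Z(X_{n-1})$ in $\bDiv(X)$. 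Evaluating this inequality at $v$ yields $Z(v)\leq (Z(X_{n-1}))(v)$, hence $(\delta Z)(v)\geq 0$. This proves that $\delta Z$ is effective, and therefore $\int_X\delta Z\geq 0$.

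There is no real obstacle here: the whole proof is an assembly of \cref{image_delta_of_nef_Cartier} (for finiteness) and the definition of nef Cartier $b$-divisor (for effectivity). The only point worth double-checking is the bijection between divisorial valuations centered at a divisor of $Y$ and irreducible components of that divisor, which is immediate from Zariski's \cref{Zariski} applied to $Y$.
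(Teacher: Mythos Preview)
Your proof is correct and follows essentially the same route as the paper: \cref{image_delta_of_nef_Cartier} for the finiteness of the support, and the inequality $Z\leq Z(X_{n-1})$ from nefness for effectivity. The paper additionally cites \cref{discrepancy_is_b_divisor} to assert that $\delta Z$ is a $b$-divisor, but as you implicitly observe, finite support already forces this, so your argument is in fact slightly more economical.
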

\begin{proof}
That $\delta Z$ is a $b$-divisor with finite support follows from \cref{discrepancy_is_b_divisor} and \cref{image_delta_of_nef_Cartier}.
We are thus left to show that  $\delta Z$  is effective if $Z$ is nef.
Let $v$ be a divisorial valuation on $X$.
let 
$$
\xymatrix{
\mathbf{p} : X':=X_n \ar[r]^-{p_{n-1}} &     X_{n-1} \ar[r]^-{p_{n-2}} &   \cdots \ar[r]^-{p_{0}}  &  X_0=X
}
$$
be the chain of blow-up corresponding to $v$ via \cref{Zariski}.
Since $Z$ is a nef Cartier divisor, we have $Z\leq Z(X_{n-1})$ in $\bCDiv(X)$ and \cref{delta_is_eeffective} follows.
\end{proof}

\section{Formula for the characteristic cycle of connections on surfaces}\label{formula_surfaces}

\subsection{Local Euler-Poincaré characteristic and characteristic cycle }
Let  $X$ be  a smooth variety over $\mathds{C}$.
Let $D_c^b(X(\mathds{C}), \mathds{C})$ be the derived category of sheaves of $\mathds{C}$-vector spaces on $X(\mathds{C})$ with bounded and constructible cohomology.
For an object $K$ of $D_c^b(X(\mathds{C}), \mathds{C})$ and a point  $x$ of $X(\mathds{C})$, we put  
$$
\chi(x, K)=\sum_{i\in \mathds{Z}} (-1)^i \rank (\mathcal{H}^i K)_x
$$
We let $CC(K)$ be the characteristic cycle of $K$.
Let $\mathds{F}(X)$ be the group of $\mathds{Z}$-valued constructible functions on $X(\mathds{C})$.
Let $L_X$ be the group  of lagrangian cycles of $T^*X$.
Then, there is a canonical isomorphism $\Eu : L_X \to   \mathds{F}(X) $ called the \textit{Euler morphism} making the following triangle commutative : 
$$
\xymatrix{
D_c^b(X(\mathds{C}), \mathds{C})  \ar[rr]^-{\chi} \ar[rd]_-{CC} & &  \mathds{F}(X)  \\
 &   L_X   \ar[ru]_-{\Eu}  & 
}
$$
See \cite[9.7.1]{KS} and references therein.
If $S$ is a constructible subset of $X$, we denote by $\mathds{1}_S$ the function on $X(\mathds{C})$ which sends $x\in S$ to $1$ and $x\in X(\mathds{C})\setminus S(\mathds{C})$ to $0$.
By construction, if $S$ is a smooth closed subvariety of $X$, then $\Eu(T^*_S X)=(-1)^{\codim S}\mathds{1}_S$.\\ \indent

We now define a cycle that will show  up frequently in this paper.
\begin{defin}\label{CC_effective_divisor}
Let $(X,D)$ be a  simple normal crossing pair over $k$.
Let $D_1,\dots, D_n$ be the irreducible components of $D$.
Let  $R=\sum a_i D_i$ be an effective divisor supported on $D$.
For every $I \subset\{ 1,\dots,  n\}$, set 
$$
D_I := \bigcap_{i\in I}D_i \text{ and }  D_I^{\circ} := D_I \setminus \displaystyle{\bigcup_{i\notin I } }  D_i
$$
Define
$$
LC(R) := \displaystyle{\sum_{I\subset \{ 1,\dots,  n\}}}\left(\displaystyle{\sum_{i\in I}} a_i \right)[T^*_{D_I}X]
$$
\end{defin}

\begin{rem}\label{R_CC}
When $k=\mathds{C}$ in the above definition, $LC(R)$ is the unique lagrangian cycle in $T^*X$  such that
$$
 \Eu(LC(R))=\left\{
\begin{array}{cl}
-a_i &  \text{if $x\in D_i^{\circ}, i=1, \dots, n$}    \\
  0  & \text{otherwise }
\end{array}
\right.
$$
\end{rem}

\subsection{Characteristic cycle of a connection with good formal structure}

The easy direction of the main theorem of \cite{teyConjThese} gives the following

\begin{lem}\label{irr_good formal}
Let $(X,D)$ be an analytic pair where $D$ is smooth and connected.
Let $\cM$ be an object of $\MIC(X,D)$.
Assume that $\cM$ has good formal structure.
Then $\Irr_D^* \cM$ is a local system of rank $\irr(D,\cM)$ concentrated in degree $1$.
\end{lem}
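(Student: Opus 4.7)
By Mebkhout's Theorem~\ref{Irr_pervers}, $\Irr_D^*\cM[\dim X]$ is a perverse complex supported on $D$. Since $D$ is smooth of dimension $\dim X - 1$, the perverse $t$-structure forces any such complex whose cohomology sheaves are locally constant on $D$ to be concentrated in the single cohomological degree $-(\dim X - 1)$, that is, of the form $\cL[\dim X - 1]$ with $\cL$ a local system on $D$. Unshifting gives $\Irr_D^*\cM \simeq \cL[-1]$, and Lemma~\ref{irr_and_Irr} identifies the generic rank of $\cL$ with $\irr(D,\cM)$. The entire task therefore reduces to establishing local constancy of the cohomology sheaves of $\Irr_D^*\cM$ along $D$.

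For this, I would work locally near an arbitrary point $x \in D$. The hypothesis of good formal structure lets me invoke the Hukuhara--Turrittin--Sibuya--Majima--Sabbah asymptotic analysis theorem: after a local ramified cover in the normal coordinate, on any sufficiently narrow sector around $x$ there is an analytic splitting
$$
\cM|_{\Sigma} \;\iso\; \bigoplus_{\alpha \in I} \cE^{\varphi_\alpha} \otimes \cR_\alpha
$$
lifting the formal decomposition of Definition~\ref{defin_good_formal_structure}, with Stokes matrices gluing between sectors. The irregularity complex can then be computed summand by summand: for a regular summand ($\varphi_\alpha \in \cS_0$) Mebkhout's comparison theorem for regular holonomic $\cD$-modules yields $\Irr_D^*\cR_\alpha = 0$, while for a purely irregular summand ($\varphi_\alpha^{-1}\in \cS_0$) the external product structure of $\cE^{\varphi_\alpha}$ transverse to $D$ reduces the computation to the classical one-variable case, producing a local system of rank $(\ord_y \varphi_\alpha)\cdot \rk \cR_\alpha$ placed in degree $1$.

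Summing these contributions, descending from the ramified cover (which is finite étale over $D$ away from the ramification locus, irrelevant here since $D$ is smooth), and observing that Stokes matrices only reshuffle sectorial bases without changing the underlying constructible sheaf, gives local constancy of rank $\sum_\alpha (\ord_y\varphi_\alpha) \cdot \rk \cR_\alpha = \irr(D,\cM)$ in a neighbourhood of $x$. Combined with the first paragraph this concludes the proof. The main technical obstacle is invoking the higher-dimensional Hukuhara--Turrittin--Majima--Sabbah asymptotic theorem correctly; once that analytic decomposition is in hand, the per-summand computation is a clean reduction to one variable via flatness in the directions along $D$.
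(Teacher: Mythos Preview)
Your argument is correct and follows the standard route: perversity of $\Irr_D^*\cM[\dim X]$ plus local constancy of its cohomology sheaves on the smooth hypersurface $D$ forces concentration in a single degree, and the rank is read off from \cref{irr_and_Irr}. The substantive step is local constancy, which you obtain by lifting the good formal decomposition to sectorial analytic decompositions (Hukuhara--Turrittin--Majima--Sabbah) and computing summand by summand; this reduction to elementary models is indeed the standard mechanism.

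The paper itself gives no proof here: it simply invokes the ``easy direction'' of the main theorem of \cite{teyConjThese}, where this implication (good formal structure $\Rightarrow$ $\Irr_D^*\cM$ is a shifted local system) is established. Your outline is essentially what that easy direction consists of, so the two are not genuinely different approaches---you have unpacked the content of the citation. One minor remark: once you have the sectorial decomposition, you don't strictly need to invoke Stokes matrices to argue local constancy; the point is rather that the stalk of $\Irr_D^*\cM$ at any $x\in D$ is computed from the formal model, and good formal structure guarantees this model varies in a locally constant way along $D$ (the $\varphi_\alpha$ and the ranks of the $\cR_\alpha$ are locally rigid). But your phrasing is fine.
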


\begin{prop}\label{vanishing}
Let $(X,D)$ be a normal crossing pair over $\mathds{C}$.
Let $D_1,\dots, D_n$ be the irreducible components of $D$.
Let $\cM$ be an objet of $\MIC(X,D)$. 
Assume that $\cM$ has good formal structure. 
Then, 
$$
 \chi(x, \Sol \cM^{\an})=\left\{
\begin{array}{cl}
 \rk \cM  &\text{if $x\in X\setminus D$} \\
-\irr(D_i,\cM) &  \text{if $x\in D_i^{\circ}, i=1, \dots, n$}    \\
  0  & \text{otherwise }
\end{array}
\right.
$$

\end{prop}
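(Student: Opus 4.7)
The plan is to split the computation into three cases according to the stratum of $D$ containing $x$ — the open stratum $X\setminus D$, the codimension-one strata $D_i^\circ$, and the deeper crossings $D_J^\circ$ with $|J|\geq 2$ — handling the first two by direct sheaf-theoretic arguments and reserving the real work for the crossings.

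For $x \in X\setminus D$, the module $\cM$ restricts to a flat connection near $x$, so $\Sol\cM^{\an}$ is locally a rank-$\rk\cM$ local system concentrated in degree zero, giving $\chi(x,\Sol\cM^{\an})=\rk\cM$. For $x\in D_i^\circ$, the divisor $D$ coincides locally with the smooth divisor $D_i$; writing $\jmath:X\setminus D\hookrightarrow X$ for the open inclusion and using $\cM=\cM(\ast D)$, the distinguished triangle
\[
\jmath_!\jmath^*\Sol\cM^{\an}\lra \Sol\cM^{\an}\lra \Irr_D^*\cM^{\an}\xrightarrow{+1}
\]
combined with \cref{irr_good formal}, which identifies the third term with a local system of rank $\irr(D_i,\cM)$ placed in degree one, yields $\chi(x,\Sol\cM^{\an})=-\irr(D_i,\cM)$ by additivity of stalkwise Euler characteristics (the first term has vanishing stalk at $x\in D$).

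For $x\in D_J^\circ$ with $|J|\geq 2$, the plan is to leverage the good formal decomposition from \cref{defin_good_formal_structure}: after passing to a suitable étale cover $\cS$ of $\wh\cO_{X,x}(\ast D)$, one writes $\cM\otimes\cS\cong\bigoplus_\alpha\cE^{\varphi_\alpha}\otimes\cR_\alpha$. Since $\chi(x,\Sol -)$ is additive on direct sums and depends only on the formal type (Stokes cocycles being irrelevant to the stalkwise Euler characteristic, which is a discrete invariant), it suffices to show each summand contributes $0$. Regular summands (those with $\varphi_\alpha$ integral) give a $\Sol$ of the form $\jmath_!\cL_\alpha$ whose stalk at $x$ vanishes; purely exponential summands, for which $\varphi_\alpha$ has poles along at least one $D_i$ with $i\in J$, demand a direct sectorial analysis of the solutions around $x$, whose alternating contributions should cancel.

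The main obstacle is this last cancellation for exponential summands at deep crossings. No smooth curve through $x$ is non-characteristic for $\cM$ (the singular support of $\cM$ fills the entire cotangent fibre $T^{*}_{x}X$), so a Cauchy-Kovalevskaya-style restriction is unavailable. One instead needs either a direct Stokes-theoretic analysis in the spirit of Sabbah-Mochizuki-Kedlaya for the good formal decomposition at crossings, or the explicit Laumon-Malgrange local index formula for rank-one purely exponential models, to establish that the combinatorics of sectorial growth forces the vanishing.
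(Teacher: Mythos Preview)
Your treatment of the first two cases is correct and essentially matches the paper's. The genuine gap is the third case, which you yourself flag as incomplete: you reduce to elementary pieces $\cE^{\varphi_\alpha}\otimes\cR_\alpha$ by formal decomposition and additivity, but then you do not actually establish the vanishing for the exponential summands at a crossing. Invoking ``a direct Stokes-theoretic analysis'' or a ``Laumon--Malgrange local index formula'' is not a proof; it is a hope that such an analysis would yield cancellation, and the sectorial combinatorics in several variables at a crossing is not straightforward. As written, the argument stops precisely where the difficulty begins.

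The paper's approach to this case is different and avoids any direct Stokes analysis. It appeals to a result of Sabbah (\cite[Cor.~3.4]{SabRemar}): under the good formal structure hypothesis, choosing $i\in I$ so that the number of irregular values with no pole along $D_i$ is maximal, the canonical map
\[
i_I^{*}\,\Irr_D^{*}\cM^{\an}\;\lra\;i_I^{*}\,Rj_{i*}j_i^{*}\,\Irr_D^{*}\cM^{\an}
\]
is an isomorphism. Thus the stalk of $\Sol\cM^{\an}\simeq\Irr_D^{*}\cM^{\an}$ at $x\in D_I^{\circ}$ is the cohomology of a local system (of rank $\irr(D_i,\cM)$, by \cref{irr_good formal}) on a $(|I|-1)$-dimensional torus, and since $|I|\geq 2$ this torus has vanishing Euler characteristic. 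The point is that Sabbah's result packages all the delicate local analysis at the crossing into a single cohomological statement, from which the vanishing drops out immediately by the multiplicativity of $\chi$ for local systems.
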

\begin{proof}
The restriction of $\Sol \cM^{\an}$ to the complement of $D$ is a local system concentrated in degree $0$. 
This gives the expected formula in that case.
The case where $x$ lies in one of the $D_i^{\circ}$ follows from \cref{irr_good formal}.
Let $I\subset \{1,\dots, n\}$ and suppose that $I$ contains at least two elements. 
Let $x$ be a point of $D_I^{\circ}$. 
Let $i\in I$ such that the number of irregular values of $\cM$ at $x$ having no pole along $D_i$ is maximal. 
Such an integer exists by goodness assumption on the irregular values.
If $i_I : D_I \longrightarrow X$ and $j_i : D_i^{\circ} \longrightarrow X$ are the inclusions, Corollary 3.4 of \cite{SabRemar} implies that the natural morphism 
$$
i_{I}^*\Irr_D^* \cM^{\an} \longrightarrow i_{I}^* Rj_{i*} j_{i}^*\Irr_D^*\cM^{\an}
$$ 
is an isomorphism. 
Hence, the germ of $\Sol \cM^{\an}$ at $x$ is the cohomology complex of a local system of rank $\irr(D_i,\cM)$ on a torus of  dimension $|I|-1>0$. 
Thus, we have $\chi(x, \Sol \cM^{\an})=0$  and this finishes the proof of \cref{vanishing}.
\end{proof}

\begin{prop}\label{CCgood}
Let $(X,D)$ be a simple normal crossing pair over $k$.
Let $\cM$ be an object of $\MIC(X,D)$. 
Assume that $\cM$ has good formal structure.
Then,
$$
CC(\cM)= \rank \cM \cdot CC(\mathcal{O}_X(\ast D)) + LC(\Irr(X,\cM))
$$
with $CC(\mathcal{O}_X(\ast D))=\displaystyle{\sum_{I\subset \{ 1,\dots,  n\}}} [T^*_{D_I}X]$.
\end{prop}
\begin{proof}
From \cref{CC_flat_base_change}, we can suppose $k=\mathds{C}$. 
Then, \cref{CCgood} follows from \cref{vanishing} after passing to the associated constructible functions.
\end{proof}

\begin{rem}
As pointed out by Budur, L. Xiao obtained in \cite[Th. 1.4.1]{LXiao_CC_clean} a variant of \cref{CCgood} where the characteristic cycle is replaced by the \textit{log-characteristic cycle} and where the good formal structure assumption is weakened into a \textit{cleanness} assumption.
For a proof of why good formal structure implies cleanness, see Theorems 3.2.11 and 3.3.12 in \cite[Th. 1.4.1]{LXiao_CC_clean}.
\end{rem}

\subsection{An application}\label{H_Kato}

\begin{lem}\label{same_Irr}
Let $(X,D)$ be a normal crossing pair over $k$.
Let $\cM_1$ and $\cM_2$ be objects of $\MIC(X,D)$.
Then, the following statements are equivalent;
\begin{enumerate}\itemsep=0.2cm
\item $\Irr \cM_1=\Irr \cM_2$ in $\bDiv(X)$.
\item For every point $0$ of $D$, for every locally closed smooth curve $C \to X$ in $X$ meeting $D$ at $0$ only, we have $\irr(0,\cM_1|_{C})=\irr(0,\cM_2|_{C})$. 
\end{enumerate}
\end{lem}
\begin{proof}
Suppose that $(1)$ holds.
Let $C \to X$ be a  locally closed smooth curve meeting $D$ at  $0$ only.
Let $p  : Y\to X$ be a resolution of the turning points of $\cM_1$ and $\cM_2$ as given by Kedlaya-Mochizuki \cref{KM_theorem}.
By valuative criterion for properness, the immersion $C\to X$ factors uniquely through an immersion $C\to Y$ followed by $p$.
At the cost of blowing-up further, we can suppose that $C\to Y$ intersects $E:=f^{-1}(D)$ transversely  at a point $0'$ of $E^{\sm}$. 
Let $Z$ be the component of  $E$ containing $0'$.
Then
$$
\irr(0,\cM_1|_{C})=\irr(0',(p^+\cM_1)|_{C})=\irr(Z,p^+\cM_1)=\irr(Z,p^+\cM_2)=\irr(0,\cM_2|_{C})
$$
Suppose that $(2)$ holds.
Let $p  : Y\to X$ be a modification and let $Z$ be an irreducible component of $p^{-1}(D)$.
Let $0'$ be a point of $Z^{\sm}$ such that $p^+\cM_1$ and $p^+\cM_2$ have good formal decomposition at $0'$.
Put $0:=p(0')$.
Let $C\to Y$ be a locally closed smooth curve meeting $p^{-1}(D)$ transversely at $0'$ only.
Let $C\to X$ be the induced locally closed smooth curve of $X$.
Then
$$
\irr(Z,p^+\cM_1)=\irr(0',(p^+\cM_1)|_{C})=\irr(0,\cM_1|_{C})=\irr(0,\cM_2|_{C})=\irr(Z,p^+\cM_2)
$$
and the proof of \cref{same_Irr} is complete.
\end{proof}

\begin{prop}\label{sameCC}
Let $(X,D)$ be a normal crossing pair over $k$.
Let $\cM_1$  and $\cM_2$ be objects of $\MIC(X,D)$ with the same rank.
Suppose that for every point $0$ of $D$, for every locally closed smooth curve $C \to X$ in $X$ meeting $D$ at $0$ only, we have 
$$
\irr(0,\cM_1|_{C})=\irr(0,\cM_2|_{C})
$$ 
Then $CC(\cM_1)=CC(\cM_2)$.
\end{prop}
\begin{proof}
From \cref{CC_flat_base_change}, we can suppose that $k=\mathds{C}$.
From Kedlaya-Mochizuki \cref{KM_theorem} combined with \cref{pull_back_push_forward} and \cref{push_forward_CC}, we can suppose that $\cM_1$ and $\cM_2$ have good formal structure and that $D$ has simple normal crossings.
From \cref{same_Irr}, we have $\Irr(X,\cM_1)=\Irr(X,\cM_2)$.
Hence, \cref{sameCC} follows from \cref{CCgood}.
\end{proof}

\begin{rem}
In the étale setting, \cref{sameCC}  is a Theorem of H. Kato \cite{H.Kato}.
\end{rem}

\subsection{Partial discrepancy of the irregularity divisor}

Applying the partial discre\-pancy construction to the irregularity $b$-divisor, \cref{partial_discr} can be summarized by the following proposition

\begin{prop}\label{irr_div_finitely_supported}
Let $(X,D)$ be a normal crossing connected surface over $k$.
Let $\cM$ be an object of $\MIC(X,D)$.
Then, $\delta \Irr\cM$ is an effective  $b$-divisor of $(X,D)$ with finite support.
In particular, $\int_X \delta \Irr\cM$ is a well-defined positive integer.
\end{prop}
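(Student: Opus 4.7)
The plan is to observe that this proposition is essentially a bookkeeping consequence of the results built up in \cref{partial_discr} together with Kedlaya's structural theorem on irregularity $b$-divisors, so the only real content is to verify that $\Irr\cM$ satisfies the hypotheses of \cref{delta_is_eeffective}.

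First I would check that $\Irr\cM$ lies in $\bDiv(X,D)$ in the sense of \cref{def_b_divisor_X_D}. By the very definition of the irregularity $b$-divisor (\cref{irr_b_divisor}), for any modification $p\colon Y\lra X$ and any irreducible divisor $E$ of $Y$, the multiplicity of $\Irr\cM$ along $E$ is $\irr(E,p^+\cM)$. If $E$ is not contained in $p^{-1}(D)$, then $p^+\cM$ is a genuine flat bundle at the generic point of $E$, so $\irr(E,p^+\cM)=0$. Hence the support of $\Irr\cM$ as a function on $\ZR^{\divis}(X)$ is contained in the support of $D$, i.e.\ condition $(1)$ of \cref{b_divisor_X_D} holds.

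Next I would invoke Kedlaya's \cref{main_thm_Ked_III}, which tells us that $\Irr\cM$ is a nef Cartier $b$-divisor on $X$. Combined with the previous paragraph, $\Irr\cM$ is thus a nef Cartier $b$-divisor of $(X,D)$. At this point \cref{delta_is_eeffective} applies directly and yields that $\delta\Irr\cM$ is a $b$-divisor with finite support, that it is effective because $\Irr\cM$ is nef, and that $\int_X\delta\Irr\cM$ is a well-defined non-negative integer. A small remark to add: \cref{discrepancy_is_b_divisor} already guarantees that $\delta\Irr\cM$ again belongs to $\bDiv(X,D)$.

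I do not expect any real obstacle here; the whole content of the statement has been placed in the preparatory lemmas (\cref{discrepancy_is_b_divisor}, \cref{image_delta_of_nef_Cartier}, \cref{delta_is_eeffective}) and in Kedlaya's theorem, and the proof is a two-line verification that those hypotheses are met. The only mildly delicate point is linguistic: the last sentence of the proposition says ``positive integer'', which should be understood as ``non-negative integer'' — the value can genuinely vanish, for instance when $\cM$ already has good formal structure, in which case $\Irr\cM$ is itself Cartier and $\delta\Irr\cM=0$ by \cref{discrepancy_Cartier}.
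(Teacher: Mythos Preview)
Your proof is correct and follows essentially the same approach as the paper: invoke Kedlaya's \cref{main_thm_Ked_III} to see that $\Irr\cM$ is a nef Cartier $b$-divisor, then apply \cref{delta_is_eeffective}. The paper's proof is the two-line version you anticipated; your additional verification that $\Irr\cM\in\bDiv(X,D)$ and your remark on the ``positive'' terminology are sound elaborations that the paper leaves implicit.
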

\begin{proof}
From Kedlaya's \cref{main_thm_Ked_III}, the $b$-divisor  $\Irr\cM$ is nef and Cartier.
Then, \cref{irr_div_finitely_supported} follows from  \cref{delta_is_eeffective}.
\end{proof}

The non trivial contributions to the above integral are located above the turning points.
This is the content of the following

\begin{lem}\label{integral_over_Z_and_points}
Let $(X,D)$ be a  connected normal crossing surface over $k$.
Let $\cM$ be an object of $\MIC(X,D)$ and let $V(\cM)$ be the complement of $\TL(\cM)$.
Then, the restriction of $\delta \Irr \cM$ to $\ZR^{\divis}(V(\cM))$ is $0$.
In particular, for every subset $A\subset X$, we have
$$
\int_A \delta \Irr \cM = \int_{A\cap \TL(\cM)}\delta \Irr \cM
$$
\end{lem}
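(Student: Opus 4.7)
The second statement follows immediately from the first by partitioning $\ZR^{\divis}(X)$ according to whether the center of the valuation lies in $V(\cM)$ or in $\TL(\cM)$. I therefore focus on showing $(\delta \Irr \cM)(v) = 0$ for every divisorial valuation $v$ on $X$ whose center $P$ lies in $V(\cM)$.

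First I would linearize. Set $A := \Irr(X, \cM) - \Irr \cM \in \bDiv(X, D)$, which is effective by \cref{main_thm_Ked_III}. For a fixed $v$, both branches of the definition of $\delta$ are linear in the $b$-divisor argument; combined with $\delta \Irr(X, \cM) = 0$ from \cref{discrepancy_Cartier} (the divisor $\Irr(X, \cM)$ is Cartier on $X$), this reduces the claim to showing $(\delta A)(v) = 0$.

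Next I would exploit good formal structure. Since the turning locus is closed (\cref{purity}), $V(\cM)$ is open, and $\cM$ restricts to a flat connection with good formal structure on the normal crossing pair $(V(\cM), D \cap V(\cM))$. Applying \cref{bdiv_and_good_dec} yields $\Irr(\cM|_{V(\cM)}) = \Irr(V(\cM), \cM|_{V(\cM)})$ as $b$-divisors on $V(\cM)$. Translated back to $X$, this gives precisely $A(w) = 0$ for every divisorial valuation $w$ on $X$ whose center lies in $V(\cM)$; in particular $A(v) = 0$.

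Finally I unwind $(\delta A)(v)$. Let $\mathbf{p}: X' = X_n \to \cdots \to X_0 = X$ be the chain of blow-ups from \cref{Zariski} associated to $v$, and set $\pi := p_0 \circ \cdots \circ p_{n-2} : X_{n-1} \to X$; observe $\pi(P_{n-1}) = P$. If $n = 0$ or $P_{n-1}$ is a singular point of the support of $\pi^* D$, then $(\delta A)(v) = 0$ by definition. Otherwise $(\delta A)(v) = (A(X_{n-1}))(v)$, and it remains to show that the Cartier divisor $A(X_{n-1})$ vanishes in a neighborhood of $P_{n-1}$. This divisor is supported on $\pi^{-1}(D)$, so the only case that requires work is $P_{n-1} \in \pi^{-1}(D)$. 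In that case every component of $\pi^{-1}(D)$ through $P_{n-1}$ is either an exceptional divisor of $\pi$, whose generic point lies above $P \in V(\cM)$, or the strict transform of some component $D_i$ of $D$ with $D_i \ni P$; in the latter case $V(\cM) \cap D_i$ is a non-empty open of $D_i$, hence contains its generic point by irreducibility. Either way the associated divisorial valuation on $X$ is centered in $V(\cM)$, so its multiplicity in $A(X_{n-1})$ vanishes by the previous step.

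The main obstacle is the last substep: one must correctly enumerate the components of $\pi^{-1}(D)$ through $P_{n-1}$ and argue that each corresponds to a divisorial valuation centered in $V(\cM)$. The exceptional case is immediate from the chain structure; the strict-transform case hinges on the openness of $V(\cM)$ and the irreducibility of $D_i$ to force the generic point of $D_i$ into $V(\cM)$ as soon as $P \in D_i$.
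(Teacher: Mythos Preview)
Your proof is correct and rests on the same key observation as the paper: $\cM|_{V(\cM)}$ has good formal structure, so $\Irr\cM$ agrees with the Cartier divisor $\Irr(X,\cM)$ on $\ZR^{\divis}(V(\cM))$, whence $\delta\Irr\cM$ vanishes there. The paper dispatches this in two lines by invoking \cref{discrepancy_Cartier} on the open $V(\cM)$, implicitly using that the construction of $\delta$ is local for the Zariski topology; you instead linearize via $A=\Irr(X,\cM)-\Irr\cM$ and unwind the chain of blow-ups to show directly that every component of $A(X_{n-1})$ through $P_{n-1}$ has multiplicity zero. Your route is more explicit and self-contained, while the paper's is terser but leaves the open-restriction compatibility of $\delta$ to the reader.
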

\begin{proof}
$\cM|_{V(\cM)}$ has good formal decomposition.
Thus  $(\Irr \cM)|_{\ZR^{\divis}(V(\cM))}=\Irr \cM|_{V(\cM)}$ lies in the image of $\Cart(X)\to \bDiv(X)$.
\cref{integral_over_Z_and_points} then follows from  \cref{discrepancy_Cartier}.
\end{proof}

\subsection{Local Euler-Poincaré characteristic and partial discrepancy divisor}

\begin{lem}\label{equality_integrals}
Let $(X,D)$ be a normal crossing connected surface over $k$.
Let $\cM$ be an object of $\MIC(X,D)$.
Let $q : Y\to X$ be a modification.
Let $Q$ be a closed point of $Y$.
Then, $\delta \Irr\cM$ and  $\delta \Irr q^+\cM$ coincide at every valuation of $X$ centred at $Q$.
In particular, 
$$
\int_Q \delta \Irr\cM  = \int_Q \delta \Irr q^+\cM
$$
\end{lem}
\begin{proof}
Put $E:=(q^*D)_{red}$.
Then, $\Irr q^+\cM$ is the image of $\Irr\cM $ under $\bDiv(X,D)\to \bDiv(Y,E)$.
Thus, \cref{equality_integrals} follows from \cref{quasi_commutativity}.
\end{proof}

\begin{thm}\label{formula_local_chi}
Let $(X,D)$ be a normal crossing surface over $\mathds{C}$.
Let $\cM$ be an object of $\MIC(X,D)$.
Let $P$ be a closed point of $D$.  
If $P$ is a singular point of $D$, we have
\begin{equation}\label{formula_chi_x_Sing_D}
\chi(P, \Sol \cM^{\an})=\int_{P} \delta \Irr \cM
\end{equation}
Otherwise, we have 
\begin{equation}\label{formula_chi_x_smooth_D}
\chi(P, \Sol \cM^{\an})=-\irr(Z, \cM)+\int_{P} \delta \Irr \cM
\end{equation}
where $Z$ is the component of $D$ containing $P$.
\end{thm}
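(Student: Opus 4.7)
The plan is to proceed by induction on the number $n$ of blow-ups in a Kedlaya-Mochizuki resolution $q : Y \lra X$ of the turning locus of $\cM$ (\cref{KM_theorem}). Since $X$ is a surface and $\TL(\cM)$ is zero-dimensional, each admissible blow-up above $\TL(\cM)$ is a point blow-up, so one may write $q = b_n \circ \cdots \circ b_1$ with $q^+ \cM$ having good formal structure along $q^{-1}(D)_{\red}$. In the base case $n = 0$, the connection $\cM$ itself has good formal structure at $P$, so \cref{vanishing} yields $\chi(P, \Sol \cM^{\an}) = 0$ when $P \in D^{\sing}$ and $-\irr(Z, \cM)$ when $P$ lies in the smooth locus of $D$ on a component $Z$. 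On the other hand, $\Irr \cM$ equals $\Irr(X, \cM)$ in $\bDiv(X)$, so \cref{image_delta_of_nef_Cartier} ensures $\delta \Irr \cM$ vanishes at every divisorial valuation of $X$ centred at the closed point $P$, giving $\int_P \delta \Irr \cM = 0$.

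For the inductive step I write $q = q' \circ b_1$ with $b_1 : X_1 \lra X$ the first blow-up, at a point $P_1$. If $P_1 \neq P$, then $b_1$ is an isomorphism near $P$, \cref{quasi_commutativity} gives $\int_P \delta \Irr \cM = \int_P \delta \Irr(b_1^+\cM)$, and the inductive hypothesis applied to $(X_1, (b_1^{-1}D)_{\red}, b_1^+\cM)$ closes the argument. The substantive case is $P_1 = P$, with $E := b_1^{-1}(P) \simeq \mathds{P}^1$. Combining \cref{pull_back_push_forward} with the compatibility of $\Sol$ with proper pushforward and proper base change yields
$$
\chi(P, \Sol \cM^{\an}) = \chi\bigl(E(\mathds{C}), \Sol(b_1^+\cM)^{\an}|_{E(\mathds{C})}\bigr) .
$$
I then stratify $E$ via the components of $(b_1^{-1}D)_{\red}$ meeting $E$, apply the inductive hypothesis at each closed point of $E$ (using \cref{integral_over_Z_and_points} to see that $\int_x \delta \Irr(b_1^+\cM)$ vanishes away from the turning locus), and integrate the resulting constructible function against topological Euler characteristic on $E \simeq \mathds{P}^1$. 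After cancellation, this yields
$$
\chi(P, \Sol \cM^{\an}) = -\varepsilon \cdot \irr(E, b_1^+\cM) + \sum_{x \in E} \int_x \delta \Irr(b_1^+\cM) ,
$$
where $\varepsilon = 1$ if $P \in D^{\sm}$ and $\varepsilon = 0$ if $P \in D^{\sing}$, and the sum runs over closed points of $E$.

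The match with the right-hand side of \eqref{formula_chi_x_Sing_D} and \eqref{formula_chi_x_smooth_D} comes from decomposing $\int_P \delta \Irr \cM$ according to the centre on $X_1$: valuations centred at closed points $x \in E$ contribute $\sum_x \int_x \delta \Irr(b_1^+\cM)$ by \cref{equality_integrals}, and the remaining valuation $v_E$ contributes $0$ when $P \in D^{\sing}$ (by definition of $\delta$ on the length-one chain, since $P$ is then singular on $D$) and $\Irr(X, \cM)(v_E) - \Irr \cM(v_E) = \irr(Z, \cM) - \irr(E, b_1^+\cM)$ when $P \in D^{\sm}$ on $Z$ (computed from $b_1^\ast Z = Z' + E$). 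The main obstacle is the stratum-wise bookkeeping on $E$: the generic value $-\irr(E, b_1^+\cM)$ multiplied by $\chi_{\mathrm{top}}(E) = 2$ must be corrected at the one or two crossings of $E$ with the strict transform of $D$, where the inductive hypothesis supplies $\int_x \delta$ in place of $-\irr(E, b_1^+\cM) + \int_x \delta$; verifying that the surviving $-\varepsilon \cdot \irr(E, b_1^+\cM)$ dovetails exactly with the contribution $\irr(Z, \cM) - \irr(E, b_1^+\cM)$ (resp.\ $0$) of $v_E$ is the heart of the argument.
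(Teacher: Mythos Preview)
Your proposal is correct and follows essentially the same approach as the paper's proof: both induct on the length of a resolution, handle the base case via \cref{vanishing} and the vanishing of $\delta\Irr\cM$ near good points, and in the inductive step compute $\chi(P,\Sol\cM^{\an})$ as $\chi(E,\Sol(b_1^+\cM)^{\an}|_E)$ by stratifying $E\simeq\mathds{P}^1$ and invoking the inductive hypothesis at the special points, then match the result with $\int_P\delta\Irr\cM$ by splitting off the contribution of the valuation $v_E$. The only cosmetic difference is that the paper inducts on blow-ups above $P$ (so the first blow-up is automatically at $P$), whereas you induct on a global resolution and dispose of the trivial case $P_1\neq P$ separately; your $-\varepsilon$ is exactly the paper's $(\nb_D(P)-2)$.
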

\begin{proof}
We argue by recursion on the number of blow-up needed to resolve the turning points of $\cM$ lying above $P$.
If no blow-up is needed, this means that $\cM$ has good formal structure in a neighbourhood of $P$.
Then, \cref{bdiv_and_good_dec} ensures that $\Irr\cM$ lies in the image of $\Cart(X)\to \bDiv(X)$.
Hence, \cref{discrepancy_Cartier} yields  $\delta \Irr \cM=0$.
\cref{formula_local_chi} then follows from \cref{vanishing}.\\ \indent
Let $n>0$.
Let $p : Y\to X$ be a sequence of blow-up of length $n$ above $P$  such that $p^+\cM$ has good formal structure in a neighbourhood of $p^{-1}(P)$.
Then, $p=f\circ q$ where $q : X_1\to X$ is the blow-up at $P$ and where   $f:Y\to X_1$ is a sequence of $(n-1)$-blow-up above $P$.
Let $E$ be the exceptional divisor of $q$.
Let $D'$ be the strict transform of $D$.
Let $S$ be the set of points of $E$ which are either turning points for $q^+\cM$ or points of $E\cap D'$.
Put $U=E\setminus S$ and let $j : U\longrightarrow E$ and  $i : S\longrightarrow E$ be the inclusions. 
The localization triangle for $(\Sol q^+\cM^{\an})|_{E}$ reads
$$
\xymatrix{
j_! j^* (\Sol q^+\cM^{\an})|_{E}\ar[r] & (\Sol q^+\cM^{\an})|_{E} \ar[r] &  i_* i^*  (\Sol q^+\cM^{\an})|_{E}
}
$$
From \cref{pull_back_push_forward} and the compatibility of $\Sol$ with proper push-forward, we deduce
$$
\chi(P, \Sol \cM^{\an})=\chi_c(U,(\Sol q^+\cM^{\an})|_{U})+\displaystyle{\sum_{Q\in S}}\chi(Q, \Sol q^+\cM^{\an})
$$
Since $(\Sol q^+\cM^{\an})|_{U}$ is a local system of rank $\irr(E,q^+\cM)$ on $U$ concentrated in degree $1$, we have
$$
\chi_c(U, (\Sol q^+\cM^{\an})|_{U})=(|S|-2)\cdot \irr(E,q^+\cM)
$$
%Let us write
%$$
%\chi_c(U, (\Sol p^+\cM)_{|U}) =(\nb_D(P) -2)\cdot \irr(E,\cM)+(|S|-\nb_D(P))\cdot \irr(E,\cM)
%$$
For a turning point $Q\in E$ for $q^+\cM$ not in $D'$,  the recursion hypothesis gives
\begin{align*}
\chi(Q, \Sol q^+ \cM^{\an})& =-\irr(E, q^+ \cM)+\int_{Q} \delta \Irr q^+\cM  \\
                                     & =-\irr(E, q^+ \cM)+\int_{Q} \delta \Irr \cM
\end{align*}
where the second equality follows from \cref{equality_integrals}.
For a point $Q$ in  $E\cap D'$, the recursion hypothesis gives
$$
\chi(Q, \Sol q^+ \cM^{\an})=\int_{Q} \delta \Irr q^+\cM=\int_{Q} \delta \Irr \cM
$$
where the second equality again follows from \cref{equality_integrals}.
Let $\nb_D(P)$ be the number of local branches of $D$ around $P$ passing through $P$. 
Note that $\nb_D(P)$ is equal to the cardinality of $E\cap D'$. 
Putting the above equalities together yields
$$
\chi(P, \Sol \cM^{\an})=(\nb_D(P) -2) \cdot \irr(E,q^+\cM)+\int_{E} \delta \Irr \cM
$$
Let $v$ be the valuation of $X$ corresponding to $E$, so that 
$$
\chi(P, \Sol \cM^{\an})=(\nb_D(P) -2) \cdot \irr(E,q^+\cM)-(\delta \Irr \cM)(v)+\int_{P} \delta \Irr \cM
$$
If $P$ is a singular point of $D$, then $\nb_D(P) =2$ and $(\delta \Irr \cM)(v)=0$.
Thus, \cref{formula_local_chi} is true in that case.
Otherwise, let $Z$ be the unique irreducible component of $D$ passing through $P$. 
Then,  $\nb_D(P)=1$ and $(\delta \Irr \cM)(v)=\irr(Z,\cM)-\irr(E,q^+\cM)$.
Hence,  \cref{formula_local_chi} is again true in that case. 
This finishes the proof of \cref{formula_local_chi}.
\end{proof}

\begin{cor}\label{computation_mult_TxX}
Let $(X,D)$ be a normal crossing surface over an algebraically closed field of characteristic $0$.
Let $\cM$ be an object of $\MIC(X,D)$.
Let $P$ be a closed point of $D$.  
If $P$ lies in the smooth locus of $D$, the multiplicity of $T_P^*X$ in $CC(\cM)$ is
$$
\int_{P} \delta\Irr\cM
$$
If $P$ lies in two irreducible components $Z_1$ and $Z_2$ of $D$, the multiplicity of $T_P^*X$ in $CC(\cM)$ is
$$
\rk \cM+\irr(Z_1, \cM)+ \irr(Z_2, \cM)+\int_{P} \delta\Irr\cM
$$
\end{cor}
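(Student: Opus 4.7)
The plan is to determine the multiplicity of $T_P^*X$ in $CC(\cM)$ by computing the local Euler characteristic $\chi(P, \Sol\cM^{\an})$ in two different ways and equating them. First, I would reduce to $k=\mathds{C}$ via \cref{CC_flat_base_change} and identify $CC(\cM)$ with $CC(\Sol\cM^{\an})$ via \cref{KS_equality_CC}. Since $X$ is a surface and $\cM$ is holonomic, $SS(\cM)$ is a closed conical Lagrangian subset of $T^*X$, and such subsets are unions of the zero section, conormal bundles to irreducible curves, and conormal fibers at closed points. Combined with the finiteness of $\TL(\cM)$ (by \cref{purity} the turning locus has pure codimension $1$, hence is finite on a surface) and of $D^{\sing}$, this forces
$$
CC(\cM) = a \cdot [T^*_X X] + \sum_{Z} b_Z \cdot [T^*_Z X] + \sum_{Q \in F} c_Q \cdot [T^*_Q X]
$$
where $Z$ runs over the irreducible components of $D$ and $F$ is a finite subset of $D$.

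Next, I would apply the Euler morphism pointwise, giving for every closed point $P$
$$
\chi(P, \Sol \cM^{\an}) = a - \sum_{Z \ni P} b_Z + c_P,
$$
with the convention $c_P = 0$ if $P \notin F$. Evaluating at a point of $X\setminus D$ yields $a = \rk\cM$, since $\Sol \cM^{\an}$ is a local system of rank $\rk\cM$ there. Evaluating at a generic point of $Z^\circ$ outside the turning locus (such points exist because good formal structure holds on an open dense subset of $Z$), \cref{vanishing} gives $-\irr(Z,\cM) = a - b_Z$, hence $b_Z = \rk\cM + \irr(Z, \cM)$.

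It then remains to plug the specific point $P$ from the statement into the Euler formula, using \cref{formula_local_chi} to compute $\chi(P, \Sol\cM^{\an})$. If $P$ lies in the smooth locus of $D$ within the unique component $Z$, equating both expressions yields
$$
-\irr(Z, \cM) + \int_P \delta \Irr \cM = \rk\cM - b_Z + c_P = -\irr(Z, \cM) + c_P,
$$
so $c_P = \int_P \delta \Irr \cM$. If $P$ lies in a crossing $Z_1 \cap Z_2$, equating both expressions yields
$$
\int_P \delta \Irr \cM = \rk\cM - b_{Z_1} - b_{Z_2} + c_P = -\rk\cM - \irr(Z_1,\cM) - \irr(Z_2,\cM) + c_P,
$$
giving the desired formula for $c_P$.

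The main technical point will be justifying the shape of $CC(\cM)$, in particular that only finitely many fiber contributions $T^*_Q X$ appear. This is the combination of the Lagrangian classification in $T^*X$ for $\dim X = 2$ with the finiteness of $\TL(\cM)$ coming from \cref{purity} and of $D^{\sing}$. Once this is in place, the rest of the argument is a direct matching of the Euler morphism output with \cref{formula_local_chi}.
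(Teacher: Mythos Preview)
Your approach is essentially the same as the paper's: reduce to $\mathds{C}$, write $CC(\cM)$ in terms of the zero section, conormals to the components of $D$, and finitely many fibres $T^*_QX$, pass to constructible functions via the Euler morphism and \cref{KS_equality_CC}, then evaluate at $P$ using \cref{formula_local_chi}. The paper simplifies slightly by shrinking $X$ around $P$ so that only the single unknown multiplicity $m$ remains, whereas you work globally and first solve for $a$ and the $b_Z$; this is cosmetic.

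One small gap: your reduction to $\mathds{C}$ only invokes \cref{CC_flat_base_change}, but the statement also involves $\int_P \delta\Irr\cM$, which is not a characteristic-cycle quantity. You need to know that this integral is unchanged under passage between algebraically closed fields of characteristic $0$; the paper cites \cref{Change_base_field_integral} for this. Also, the finiteness of the set $F$ of fibre contributions does not require invoking $\TL(\cM)$ or $D^{\sing}$: it is automatic because $CC(\cM)$ is a cycle and irreducible conical Lagrangians in $T^*X$ for a surface $X$ are exactly the zero section, conormals to irreducible curves, and cotangent fibres. What you do need (and use implicitly) is that the only curve conormals appearing are the $T^*_ZX$ for $Z$ a component of $D$, which follows since $\cM$ is a connection on $X\setminus D$.
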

\begin{proof} 
From \cref{CC_flat_base_change} and \cref{Change_base_field_integral}, we can suppose that the base field is $\mathds{C}$.
Suppose that $P$ lies in $D^{\sm}$.
Then, at the cost of shrinking $X$, we can suppose that $D$ is smooth and that $T_Q^*X$ does not contribute to  $CC(\cM)$ for $Q\neq P$.
Thus, we have 
$$
CC(\cM)=\rk \cM  \cdot  T_X^*X +(\rk \cM+\irr(D,\cM)) \cdot  T_D^*X+ m  \cdot T_P^*X
$$
where $m$ is the sought-after multiplicity.
Applying  \cref{KS_equality_CC} and passing to the associated constructible functions yields
$$
\chi(-, \Sol \cM^{\an}) = \rk \cM  \cdot  \mathds{1}_X -(\rk \cM+\irr(D,\cM)) \cdot  \mathds{1}_D + m \cdot  \mathds{1}_P
$$
From \cref{formula_local_chi}, evaluating at $P$ then gives $m= \int_{P} \delta\Irr\cM$.\\ \indent
Suppose that $P$  lies in two irreducible components $Z_1$ and $Z_2$ of $D$.
At the cost of shrinking X, we have 
$$
CC(\cM)=\rk \cM  \cdot  T_X^*X +(\rk \cM+\irr(Z_1,\cM)) \cdot  T_{Z_1}^*X+ (\rk \cM+\irr(Z_2,\cM)) \cdot  T_{Z_2}^*X+  m  \cdot T_P^*X
$$
where $m$ is the sought-after multiplicity.
Applying  \cref{KS_equality_CC} and passing to the associated constructible functions yields
$$
\chi(-, \Sol \cM^{\an}) = \rk \cM  \cdot  \mathds{1}_X -(\rk \cM+\irr(Z_1,\cM)) \cdot  \mathds{1}_{Z_1}  -(\rk \cM+\irr(Z_2,\cM)) \cdot  \mathds{1}_{Z_2}  + m \cdot  \mathds{1}_P
$$
From \cref{formula_local_chi}, evaluating at $P$ gives the expected formula.

\end{proof}

%In the following corollary of the above proposition, we refer to \cref{CC_effective_divisor} for the definition of the characteristic cycle associated with an effective divisor supported on a simple normal crossing divisor.
%To state it in a synthetic way, we introduce the following 

Putting together \cref{CCgood} and \cref{computation_mult_TxX} yields the following

\begin{thm}\label{computation_CC_surface}
Let $(X,D)$ be a simple normal crossing surface over an algebraically closed field of characteristic $0$.
%Let $D_1,\dots, D_n$ be the irreducible components of $D$.
Let $\cM$ be an object of $\MIC(X,D)$.
Then
$$
CC(\cM) = \rk \cM \cdot  CC(\mathcal{O}_{X}(\ast D)) +  LC(\Irr(X,\cM)) +  \sum_{P\in D}  \left( \int_{P} \delta\Irr\cM\right) \cdot T_{P}^*X 
$$
\end{thm}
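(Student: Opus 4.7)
The strategy is to verify that both sides of the claimed identity have the same multiplicity on every irreducible Lagrangian cycle in $T^\ast X$. The work is essentially a synthesis of the ``generic part'' result \cref{CCgood} and the ``pointwise part'' result \cref{computation_mult_TxX}, applied both to the given $\cM$ and to the auxiliary module $\cO_X(\ast D)$.

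First I would reduce to $k=\mathds{C}$ via \cref{CC_flat_base_change}. Since $\cM$ is a meromorphic connection on a smooth surface it is holonomic; by Gabber's involutivity \cref{involutive_theorem_Gabber} and the Lagrangian property, each component of $SS(\cM)$ is a conormal bundle to a smooth closed subvariety. On $U=X\setminus D$, $\cM$ is a flat bundle, so $CC(\cM)|_U=\rk\cM\cdot [T^\ast_U U]$. Hence the support of $CC(\cM)$ is contained in $T^\ast_X X\cup\bigcup_i T^\ast_{D_i}X\cup\bigcup_{P\in S}T^\ast_P X$ for a finite set $S$ of closed points of $D$. I would then compute each multiplicity: the coefficient of $T^\ast_X X$ is $\rk\cM$ by restriction to $U$; at the generic point of each component $D_i$ the connection $\cM$ has good formal structure by purity of the turning locus (\cref{purity}), so \cref{CCgood} applied in a small neighbourhood (where $D$ is smooth) yields the coefficient $\rk\cM+\irr(D_i,\cM)$ on $T^\ast_{D_i}X$; and at every $P\in D$ the coefficient of $T^\ast_P X$ is given explicitly by \cref{computation_mult_TxX}.

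The proof is finished by checking that the right-hand side has exactly these coefficients. To this end I would read off the multiplicities of $CC(\cO_X(\ast D))$ on each Lagrangian component by applying the three computations above to $\cO_X(\ast D)$ itself. This module has rank $1$, zero irregularity and globally good formal structure, and by \cref{discrepancy_Cartier} one has $\delta\Irr\cO_X(\ast D)=0$; one thus finds multiplicity $1$ on $T^\ast_X X$, on each $T^\ast_{D_i}X$, and on $T^\ast_P X$ for every node $P$ of $D$, and multiplicity $0$ elsewhere. Scaling by $\rk\cM$, adding the contributions of $LC(\Irr(X,\cM))$ read off from \cref{CC_effective_divisor}, and incorporating the terms $\int_P \delta\Irr\cM$ recovers exactly the multiplicities of $CC(\cM)$ determined in the previous step. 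The only non-routine aspect is bookkeeping at nodes $P\in D_i\cap D_j$: the $\rk\cM$ piece is hidden inside $\rk\cM\cdot CC(\cO_X(\ast D))$, the $\irr(D_i,\cM)+\irr(D_j,\cM)$ piece comes from the $|I|=2$ summand of $LC(\Irr(X,\cM))$, and only $\int_P \delta\Irr\cM$ remains as a genuine contribution of the third term, matching exactly the formula of \cref{computation_mult_TxX}.
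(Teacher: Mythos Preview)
Your proposal is correct and follows exactly the approach indicated in the paper, which simply states that the theorem follows by ``putting together \cref{CCgood} and \cref{computation_mult_TxX}.'' Your elaboration---computing the multiplicity of each Lagrangian component of $CC(\cM)$ via these two results, doing the same for $CC(\cO_X(\ast D))$, and then matching both sides term by term (including the careful bookkeeping at nodes)---is precisely what this one-line proof unpacks to.
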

\begin{rem}
In the étale setting, Yatagawa obtained an explicit description of the characteristic cycle for rank one étale sheaves on surfaces \cite{Yatagawa}.
It seems an interesting question to connect Yatagawa's work to the étale analogue of the irregularity $b$-divisor.
\end{rem}

In the complex setting, Kashiwara-Dubson's formula stated in  \cref{Dubson} yields the following Grothendieck-Ogg Shafarevich type formula for surfaces :

\begin{thm}\label{calculation_chi_surface}
Let $(X,D)$ be a proper simple normal crossing surface over $\mathds{C}$.
Let $\cM$ be an object of $\MIC(X,D)$.
Put $U:=X\setminus D$.
Then
$$
\chi(X,\DR\cM) = \rk \cM \cdot  \chi(U(\mathds{C})) +   (LC(\Irr(X,\cM)),T^*_X X)_{T^* X} +  \int_D \delta\Irr\cM
$$
\end{thm}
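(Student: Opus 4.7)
The plan is to combine Kashiwara--Dubson's formula (\cref{Dubson}) with the explicit characteristic cycle formula on surfaces (\cref{computation_CC_surface}) and compute the intersection with the zero section term by term. Since $X$ is proper and $\cM$ is a meromorphic flat connection, hence holonomic, \cref{Dubson} yields
$$
\chi(X,\DR\cM)=(CC(\cM),T^*_XX)_{T^*X}.
$$
Applying \cref{computation_CC_surface} and linearity of the intersection pairing, I would split this number as a sum of three contributions corresponding to the three summands in the formula for $CC(\cM)$.

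For the first summand, Kashiwara--Dubson applied to $\mathcal{O}_X(\ast D)$ gives
$(CC(\mathcal{O}_X(\ast D)),T_X^*X)_{T^*X}=\chi(X,\DR\mathcal{O}_X(\ast D))$.
Since $\mathcal{O}_X(\ast D)\simeq j_*\mathcal{O}_U$ by \cref{meromorphic_vs_non_meromorphic}, \cref{same_coho} identifies this with $\chi(U,\DR\mathcal{O}_U)$, and \cref{GAGA_DR} together with the holomorphic Poincar\'e lemma then identify it with $\chi(U(\mathds{C}))$. So the first summand contributes $\rk\cM\cdot\chi(U(\mathds{C}))$. The second summand already has the desired shape $(LC(\Irr(X,\cM)),T_X^*X)_{T^*X}$. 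For the third summand, each conormal bundle $T_P^*X$ of a closed point meets the zero section transversally at $0$, so $(T_P^*X,T_X^*X)_{T^*X}=1$, and the total contribution becomes $\sum_{P\in D}\int_P\delta\Irr\cM$.

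It remains to identify this last sum with $\int_D\delta\Irr\cM$. The point is that a divisorial valuation of $X$ has center on $X$ either a closed point of $D$ or the generic point of an irreducible component $Z$ of $D$; in the latter case the associated chain of blow-up from \cref{Zariski} has length zero, so by definition $(\delta\Irr\cM)(\eta_Z)=0$. Therefore the support of $\delta\Irr\cM$, viewed as a $b$-divisor of $(X,D)$, contributes only through valuations centred at closed points of $D$, and \cref{irr_div_finitely_supported} guarantees that only finitely many such valuations contribute. Hence
$$
\sum_{P\in D}\int_P\delta\Irr\cM=\int_D\delta\Irr\cM,
$$
and assembling the three contributions yields the claimed formula. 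No step looks substantive once \cref{computation_CC_surface} is in hand; the only place to exercise care is the bookkeeping in this last paragraph, i.e.\ checking that the codimension-one valuations make no contribution to $\delta\Irr\cM$, so that the point-wise integrals exhaust $\int_D\delta\Irr\cM$.
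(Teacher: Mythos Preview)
Your proof is correct and follows exactly the approach indicated in the paper, which simply states that \cref{calculation_chi_surface} follows from Kashiwara--Dubson's formula (\cref{Dubson}) applied to \cref{computation_CC_surface}. You have spelled out the term-by-term computation that the paper leaves implicit, including the identification of the first summand with $\rk\cM\cdot\chi(U(\mathds{C}))$ and the bookkeeping that codimension-one valuations do not contribute to $\delta\Irr\cM$.
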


\section{Cohomological boundedness}\label{coho_bound_proof}

\subsection{Cohomological boundedness for surfaces}

\begin{lem}\label{reduction_relative situation}
Let $(X,D)$ be a proper simple normal crossing surface over $k$.
Let $V$ be an open subset of $X$ such that $D_V:=V\cap D$ is the vanishing locus of an algebraic function $f : V\to \mathds{A}^1$.
Then, there exists a commutative diagram of smooth varieties over $k$
\begin{equation}\label{eq_reduction_relative situation}
\xymatrix{
    V     \ar@{^{(}->}[r]^-{j}  \ar[d]_-{f}    &     Y   \ar[d]^-{h}  \\
     \mathds{A}^1  \ar@{^{(}->}[r]  &   \mathds{P}^1  
}
\end{equation}
satisfying the following conditions ; 
\begin{enumerate}\itemsep=0.2cm
\item  The map $j : V\longrightarrow Y$ is a dense open immersion.
\item The map $h$ is proper and dominant.
\item If  $E:=((Y\setminus V)\cup h^{-1}(0))_{\red}$, the pair $(Y,E)$ is a simple normal crossing surface with $Y\setminus E=V\setminus D_V$.
\item For every  effective divisor $R$ supported on $D$, there exists an effective divisor $S$ supported on $E$ depending only on $V$, on  $j : (V,D_V) \to (Y,E)$ and linearly on $R$ such that for every object $\cM$ of $\MIC(X,D,R)$, the connection $j_* \cM|_V$ is an object of $\MIC(Y,E,S)$.
\end{enumerate}

\end{lem}
\begin{proof}
By Nagata compactification theorem applied to the composition $V\to \mathds{P}^1$, there exists a commutative diagram (\ref{eq_reduction_relative situation}) where  $j : V\longrightarrow Y$ is a dense open immersion with $Y$ a variety over $k$ and where $h$ is proper.
Observe that $h$ is dominant because $f$ is dominant. 
By normalizing and blowing-up enough points above the singular  locus  $Y^{\sing}  \subset  Y\setminus V$, resolution of singularities for surfaces ensures that we can suppose $Y$ to be smooth.
Put $E:=((Y\setminus V)\cup h^{-1}(0))_{\red}$.
Then the equality $Y\setminus E=V\setminus D_V$ is automatic.
Furthermore $E\cap V = h^{-1}(0)\cap V =D_V$ is a simple normal crossing divisor of $V$.
Hence, at the cost of blowing-up enough points above $E\setminus E\cap V$, we can further suppose that $E$ is a simple  normal crossing divisor of $Y$.
The conditions $(1),(2),(3)$ are thus  satisfied.
Since $h$ and $\mathds{P}^1$ are proper, so is $Y$.
Hence, conditions $(4)$ is satisfied as a consequence of \cref{partial_compactification} and the proof of \cref{reduction_relative situation} is complete.
\end{proof}

\begin{lem}\label{CC_Sol_on_Z}
Let $(X,D)$ be a simple normal crossing  surface over $\mathds{C}$.
Let $Z$ be a reduced divisor of $X$ supported on $D$.
For a smooth point $P$ of $Z$ which is  singular in $D$, let $Z(P)$ be the component of $Z$ containing $P$.
Let $\cM$ be an object of $\MIC(X,D)$.
Let $\Irr(X,Z,\cM)$ be the effective divisor supported on $Z$ which coincides with $\Irr(X,\cM)$ on $Z$.
Then, 
\begin{align*}
CC((\Sol \cM^{\an})|_{Z(\mathds{C})}) & = LC(\Irr(X,Z,\cM)) +  \sum_{P\in Z}  \left( \int_{P} \delta\Irr\cM\right) \cdot T_{P}^*X \\
& + \sum_{P\in Z^{\sm}\cap D^{\sing}}  \irr(Z(P),\cM) \cdot T_{P}^*X
\end{align*}
where $(\Sol \cM^{\an})|_{Z(\mathds{C})}$ is viewed as a complex of sheaves on $X$ supported on $Z(\mathds{C})$.
\end{lem}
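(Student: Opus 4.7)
The plan is to pass to constructible functions via the Euler isomorphism $\Eu : L_X \iso \mathds{F}(X)$ and verify the claimed identity pointwise as Euler functions, stratifying $X(\mathds{C})$ according to the position of a point $P$ with respect to $Z$ and $D$. Since $i : Z \hookrightarrow X$ is a closed immersion and $(\Sol \cM^{\an})|_{Z(\mathds{C})}$ stands for $i_* i^{-1}\Sol \cM^{\an}$, its stalk at $P$ equals $(\Sol\cM^{\an})_P$ for $P \in Z$ and vanishes otherwise. Reading off \cref{formula_local_chi}, the Euler function of the left-hand side is then: $-\irr(Z(P),\cM) + \int_P \delta\Irr \cM$ when $P \in Z^{\sm}\cap D^{\sm}$; $\int_P\delta\Irr\cM$ when $P \in Z\cap D^{\sing}$; and $0$ when $P \notin Z$.

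For the right-hand side I interpret $LC(\Irr(X,Z,\cM))$ via \cref{CC_effective_divisor} applied to the simple normal crossing pair $(X,Z)$, which is legitimate because $Z$ is itself a simple normal crossing divisor as a union of components of $D$. Using \cref{R_CC} and additivity of $\Eu$, the Euler function of $LC(\Irr(X,Z,\cM))$ equals $-\irr(Z(P),\cM)$ on the open smooth stratum of each irreducible component of $Z$ and vanishes elsewhere; in particular it vanishes at $Z^{\sing}$, where the contributions $-\irr(Z_1,\cM),-\irr(Z_2,\cM)$ from the two branches cancel against the codimension-two intersection contribution $\irr(Z_1,\cM)+\irr(Z_2,\cM)$. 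The second sum in the statement has Euler function $P\mapsto \int_P \delta\Irr\cM$ on $Z$ and zero elsewhere, while the third sum has Euler function $P\mapsto \irr(Z(P),\cM)$ on $Z^{\sm}\cap D^{\sing}$ and zero elsewhere.

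Matching stratum by stratum, the cases $P \notin Z$, $P \in Z^{\sm}\cap D^{\sm}$, and $P \in Z^{\sing}$ are immediate. The main obstacle — and the very reason for the presence of the third sum — will be the stratum $P \in Z^{\sm}\cap D^{\sing}$: the left-hand side contributes $\int_P \delta\Irr\cM$, while the right-hand side contributes $-\irr(Z(P),\cM)+\int_P \delta\Irr\cM + \irr(Z(P),\cM)$. The correction term $\sum_{P\in Z^{\sm}\cap D^{\sing}}\irr(Z(P),\cM)\cdot T_P^*X$ is precisely calibrated to cancel the $LC$-contribution from the single branch of $Z$ through $P$, compensating for the fact that the transverse branch of $D$ through $P$ is not visible to $Z$ itself. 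Once the Euler functions agree at every stratum, the lemma will follow from the injectivity of $\Eu$, since both sides are Lagrangian cycles supported in $T^*X|_{Z(\mathds{C})}$.
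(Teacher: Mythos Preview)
Your proof is correct and follows exactly the approach the paper takes: its proof is the single sentence ``We conclude via \cref{formula_local_chi} by looking at the associated constructible functions,'' and you have carried out precisely that verification stratum by stratum. Your reading of $LC(\Irr(X,Z,\cM))$ via \cref{CC_effective_divisor} applied to the pair $(X,Z)$ is the intended one, and your identification of the third sum as the correction needed on $Z^{\sm}\cap D^{\sing}$ is exactly the point.
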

\begin{proof}
We conclude via \cref{formula_local_chi}  by looking at the associated constructible functions.
\end{proof}

\begin{prop}\label{bound_chi_Z}
Let $(X,D)$ be a geometrically connected simple normal crossing surface over $k$. 
Let $C$ be a smooth connected curve over $k$. 
Let $h : X\longrightarrow C$ be a dominant proper morphism.
Let $0$ be a closed point in $C$. 
Suppose that  the reduced fibre $Z$ of $h$ over $0$ is not empty and is contained in $D$. 
Then, there exists a quadratic polynomial $K : \Div(X,D)\oplus \mathds{Z}\to \mathds{Z}$ affine in the last variable such that for every  effective divisor $R$ of $X$ supported on $D$, for every integer $r\geq 0$ and every object $\cM$ of $\MIC_r(X,D,R)$, we have
$$
\int_{Z} \delta\Irr\cM \leq K(R,r)
$$
\end{prop}
\begin{proof}
Let $R$ be an effective divisor of $X$ supported on $D$. 
Let $r$ be an integer. 
Let $\cM$ be an object  of $\MIC_r(X,D,R)$.
From \cref{Change_base_field_integral_inequality}, we can suppose that $k$ is algebraically closed.
From \cref{Change_base_field_integral_general}, we can suppose that $k=\mathds{C}$.
The turning locus of $\cM$ consists in a finite set of closed points in $D$. 
At the cost of shrinking $C$, we can thus suppose $\TL(\cM)\subset Z$.
On the other hand, \cref{curve_case} ensures that cohomological boundedness holds with bound $K_1$ for the generic fibre of $h : X\longrightarrow C$.
Note that $K_1$ is an affine map.
Hence, \cref{bound_chi_D} implies
$$
|\chi(Z(\mathds{C}),\Sol \cM^{\an})|\leq 2K_1(R_{\eta},r)\cdot \fdeg R
$$
where $R_\eta$ is the pull-back of $R$ to the generic fibre of $h : X\to C$.
Since $h$ is proper, the complex manifold $Z(\mathds{C})$ is compact.
Hence, the index formula for constructible sheaves \cite[Th. 4.3]{KaIndex} yields
\begin{align*}
\chi(Z(\mathds{C}),\Sol \cM^{\an}) & =   (CC((\Sol \cM^{\an})|_{Z(\mathds{C})}),T^*_X X)_{T^*X}     \\
                                                         & =  (LC(\Irr(X,Z,\cM)),T^*_X X)_{T^*X} + \int_{Z} \delta\Irr\cM  \\
                                                         & +  \sum_{P\in Z^{\sm}\cap D^{\sing}}  \irr(Z(P),\cM) 
\end{align*}
where the second equality follows from  \cref{CC_Sol_on_Z}.
Hence, we have 
$$
\int_{Z} \delta\Irr\cM \leq 2K_1(R_{\eta},r)\cdot \fdeg R +| (LC(\Irr(X,Z,\cM)),T^*_X X)_{T^*X} |
$$
Let $Z_1,\dots, Z_n$ be the irreducible components of $Z$.
Then 
$$
|(LC(\Irr(X,Z,\cM)),T^*_X X)_{T^*X} |\leq \sum_{i=1}^n    m(Z_i,R)  |(Z_i,T^*_X X)_{T^*X}|
$$ 
If $K_2(R)$ denotes the right-hand side of the above inequality, we have 
$$
\int_{Z} \delta\Irr\cM \leq 2K_1(R_{\eta},r)\cdot \fdeg R+ K_2(R)
$$
and the proof of \cref{bound_chi_Z} is complete.
\end{proof}

Putting everything together yields the following absolute version of \cref{bound_chi_Z}.

\begin{thm}\label{strong_chi}
Let $(X,D)$ be a geometrically connected proper simple normal crossing surface over $k$.
Let $Z$ be a  subset of $D$.
Then, there exists a quadratic polynomial $K : \Div(X,D)\oplus \mathds{Z}\to \mathds{Z}$ affine in the last variable such that for every  effective divisor $R$ of $X$ supported on $D$, for every integer $r\geq 0$ and every object $\cM$ of $\MIC_r(X,D,R)$, we have
$$
\int_{Z} \delta\Irr\cM\leq K(R,r)
$$
\end{thm}
\begin{proof}
Let $R$ be an effective divisor supported on $D$.
Let $r$ be an integer. 
Let $\cM$ be an object  of $\MIC_r(X,D,R)$.
Let $\mathcal{V}$ be a finite cover of $X$ by open subsets such that for every $V\in \mathcal{V}$, the closed subset $V\cap D$ is either empty or is the vanishing locus of an algebraic  function $f_V : V\longrightarrow\mathds{A}^1$.
From \cref{irr_div_finitely_supported}, the $b$-divisor $\delta\Irr\cM$  is effective.
Hence, 
$$
\int_{Z} \delta\Irr\cM  \leq \sum_{V\in \cV} \int_{Z\cap V} \delta\Irr\cM   \leq \sum_{V\in \cV} \int_{D\cap V} \delta\Irr\cM
$$
Thus, we are left to prove  \cref{strong_chi} in the case where there exists an open set $V$ in $X$ such that $Z=D\cap V$ and such that $Z$ is the vanishing locus of an algebraic  function $f: V\longrightarrow\mathds{A}^1$.
Observe that the choices of $V$ and $f$ depend on $X$ and $D$ only.\\ \indent
From \cref{reduction_relative situation}, we can assume the existence of a proper map $h : X\to \mathds{P}^1$ such that $D$ contains $h^{-1}(0)$ and $Z$ is an open subset of $h^{-1}(0)$.
Since $\delta\Irr\cM$  is again effective, we can further assume that $Z$ is the reduced fibre of $h$ over $0$.
This case follows from \cref{bound_chi_Z}.
This concludes the proof of \cref{strong_chi}.

\end{proof}

We are now in position to prove cohomological boundedness for surfaces.

\begin{thm}\label{coho_boundedness_surface}
Let $(X,D)$ be a projective normal crossing surface over $k$.
Then, there exists a quadratic polynomial $C : \Div(X,D)\oplus \mathds{Z}\to \mathds{Z}$ affine in the last variable such that for every  effective divisor $R$ of $X$ supported on $D$, for every integer $r\geq 0$ and every object $\cM$ of $\MIC_r(X,D,R)$, we have
$$
\dim H^{*}(X,\DR \cM)\leq C(R,r)
$$
\end{thm}
\begin{proof}
From \cref{curve_case}, \cref{quasi_cor_reduction_to_bound_chi} and \cref{reduction_chi_to_C}, we are left to prove $\chi$-boundedness for surfaces over $\mathds{C}$.
From \cref{same_coho} and \cref{change_compactification}, we can suppose that $D$ is a simple normal crossing divisor.
We can furthermore suppose that $X$ is connected, and thus geometrically connected.
From \cref{calculation_chi_surface}, we are left to find a quadratic polynomial $K : \Div(X,D)\oplus \mathds{Z}\to \mathds{Z}$ affine in the last variable such that for every  effective divisor $R$ of $X$ supported on $D$, for every integer $r\geq 0$ and every object $\cM$ of $\MIC_r(X,D,R)$, we have we have $\int_D \delta\Irr\cM\leq K(R,r)$.
The existence of $K$ is ensured by \cref{strong_chi}, which finishes the proof of \cref{coho_boundedness_surface}.
\end{proof}

%Put $E:=(X\setminus V) \cup D$. 
%Then, $E$ is a closed subset of $X$ whose irreducible components have dimension smaller than $\dim X-1$.
%Let $p : Y\lra X$ be the blow-up of $X$ along $E$. 
%Put $F=p^{-1}(E)$.
%From \cref{main_thm_Ked_III}, the connection $p^+\cM$ lies in %$\MIC_r(Y,p^{-1}(D),p^*R)$.
%Hence, the connection $(p^+\cM)(\ast F)$ lies in $\MIC_r(Y,F,p^*R)$.
%Furthermore, we have
%$$
%\int_{\ZR_V} \delta\Irr\cM=\int_{\ZR_V} \delta \Irr ((p^+\cM)(\ast F))
%$$
%At the cost of replacing $(X,D,R)$ by $(Y,F,p^*R)$, 
%we are thus  left to prove \cref{strong_chi} in the case where there exists an open set $V$ in $X$ such that  $D\cap V=Z$, such that $V\setminus Z=X\setminus D$ and such that $Z$ is the vanishing locus of a regular function $f: V\longrightarrow\mathds{A}^1$.\\ \indent
%

\subsection{Boundedness and turning locus}

To relate the turning locus to De Rham cohomology, we use the following :

\begin{thm}\label{Equivalent_condition_good_formal_structure}
Let $(X,D)$ be a  pair over $k$ where $D$ is smooth. 
Let $\cM$ be an object of $\MIC(X,D)$. 
Then, the following conditions are equivalent;
\begin{enumerate}\itemsep=0.2cm
\item $\cM$ has good formal structure along $D$.
\item The singular support of $\cM$ and $\End\cM$ is contained in $T^*_X X \cup T^*_D X$.
\end{enumerate}
If furthermore $k=\mathds{C}$, the above conditions are equivalent to the following condition.
\begin{enumerate}
\item[(3)] The complexes $\Irr^*_{D(\mathds{C})} \cM^{\an}$ and $\Irr^*_{D(\mathds{C})} \End\cM^{\an}$ are local systems on $D(\mathds{C})$ concen\-trated in degree $1$.
\end{enumerate}
\end{thm}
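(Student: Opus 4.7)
My plan is to prove the ring of implications $(1)\Rightarrow (2)\Rightarrow (1)$ unconditionally, and in the complex case to close the loop via $(1)\Rightarrow (3)\Rightarrow (2)$.

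For $(1)\Rightarrow (2)$ I would first verify that good formal structure is preserved under the passage to $\cEnd$. Termwise application of $\cHom$ to the decomposition \eqref{decomposition} yields
$$
\End(\cM)_x\otimes_{\cO_{X,x}}\mathcal S\;\cong\;\bigoplus_{\alpha,\beta}\cE^{\varphi_\alpha-\varphi_\beta}\otimes_{\mathcal S}\cHom_{\mathcal S}(\cR_\alpha,\cR_\beta),
$$
and an elementary check shows that condition $(4)$ of \cref{defin_good_formal_structure} for the $\varphi_\alpha$ propagates to the $\varphi_\alpha-\varphi_\beta$ and to their pairwise differences (using that if $f\in\mathcal S\setminus\mathcal S_0$ is a unit with $f^{-1}\in\mathcal S_0$, then $f$ dominates any element of $\mathcal S_0$). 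Since $D$ is smooth, it is simple normal crossing, so \cref{CCgood} applies to both $\cM$ and $\End\cM$ and gives
$$
CC(\cM)=\rk\cM\cdot CC(\cO_X(\ast D))+LC(\Irr(X,\cM)),
$$
together with the analogous formula for $\End\cM$. When $D$ is smooth, both summands are supported on $T^*_X X\cup T^*_D X$, which is $(2)$.

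For $(2)\Rightarrow (1)$ I would invoke the main result of \cite{teyConjThese}, already cited in the introduction, which identifies the turning locus of $\cM$ in $D^{\sm}$ with the set of points $x$ whose conormal bundle $T^*_x X$ contributes to $CC(\cM)$ or to $CC(\End\cM)$. Condition $(2)$ precludes any such contribution; since $D$ is smooth, the turning locus sits entirely in $D=D^{\sm}$ and must therefore be empty, giving good formal structure. This is the main obstacle, and I would treat it as a black box rather than reprove it here; everything else is a formal consequence of the paper's earlier results.

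In the complex setting, $(1)\Rightarrow (3)$ is direct from \cref{irr_good formal} applied to $\cM$ and to $\End\cM$ (using again the closure of good formal structure under $\End$). For $(3)\Rightarrow (2)$ I would analyze the localization triangle
$$
j_!\cL\lra \Sol\cM^{\an}\lra \Irr^*_{D(\mathds{C})}\cM^{\an}\xrightarrow{+1}
$$
with $j:U=X\setminus D\hookrightarrow X$ and $\cL=(\Sol\cM^{\an})|_{U(\mathds{C})}$ the horizontal local system. Under $(3)$, both outer terms have cohomology sheaves that are local systems on the strata of the Whitney stratification $\{U,D\}$ of $X(\mathds{C})$; hence so does $\Sol\cM^{\an}$. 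The standard micro-local characterization of the singular support (the converse of \cref{Kashiwara_loc_syst}) combined with \cref{KS_equality_CC} then places $SS(\cM)$ inside $T^*_X X\cup T^*_D X$, and the same argument for $\End\cM$ yields $(2)$.
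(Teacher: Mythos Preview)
Your argument is correct and rests on the same black box from \cite{teyConjThese} as the paper's, but the paper's route is more economical. It first reduces to $k=\mathds{C}$ via \cref{CC_flat_base_change} and then runs a single cycle $(1)\Rightarrow(2)\Rightarrow(3)\Rightarrow(1)$. The step $(2)\Rightarrow(3)$ uses \cref{Kashiwara_loc_syst} in the \emph{forward} direction to see that the cohomology sheaves of $\Irr^*_{D(\mathds{C})}\cM^{\an}=(\Sol\cM^{\an})|_{D(\mathds{C})}$ are local systems on $D(\mathds{C})$, and then perversity (\cref{Irr_pervers}) forces concentration in degree $1$; the step $(3)\Rightarrow(1)$ is then literally the main theorem of \cite{teyConjThese} as stated there.

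Your route instead closes $(2)\Rightarrow(1)$ by citing the introduction's characteristic-cycle reformulation of \cite{teyConjThese}; this is legitimate, but note that that reformulation is precisely what the present theorem proves, and unwinding it amounts to the paper's chain $(2)\Rightarrow(3)\Rightarrow(1)$ anyway---in particular you still need the reduction to $\mathds{C}$ that you omit. Your separate $(3)\Rightarrow(2)$ via the localization triangle is fine but slightly heavier than needed: since $\cM=\cM(\ast D)$, one has $(\Sol\cM^{\an})|_{D(\mathds{C})}=\Irr^*_{D(\mathds{C})}\cM^{\an}$ outright, so the triangle is unnecessary, and the paper's cycle order sidesteps the converse to \cref{Kashiwara_loc_syst} entirely. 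Your explicit verification that $\End$ preserves good formal structure is a nice addition that the paper leaves implicit in its appeal to \cref{CCgood}.
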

\begin{proof}
From \cref{CC_flat_base_change}, we can suppose that $k=\mathds{C}$.
The fact that $(1)$ implies $(2)$ follows from  \cref{CCgood}.
Suppose that $(2)$ holds. 
From  \cref{Kashiwara_loc_syst}, the cohomology sheaves of $\Irr^*_{D(\mathds{C})} \cM$ and $\Irr^*_{D(\mathds{C})} \End\cM$ are local systems on $D(\mathds{C})$.
From \cref{Irr_pervers}, the complexes $\Irr^*_{D(\mathds{C})}\cM$ and $\Irr^*_{D(\mathds{C})} \End\cM$ are thus necessarily concentrated in degree $1$ and $(3)$ follows.
The fact that $(3)$ implies $(1)$ is the main result of \cite{teyConjThese}.
\end{proof}

\begin{thm}\label{turning_point_and_integral}
Let $(X,D)$ be a normal crossing surface over an algebraically closed field of characteristic $0$.
Let $\cM$ be an object of $\MIC(X,D)$.
Let $P$ be a point in the smooth locus of $D$.
Then, $P$ is a turning point of $\cM$ if and only if 
$$
\int_P \left(\delta \Irr(\cM)+ \delta \Irr(\End\cM)\right)>0
$$
In particular,
$$
|\TL(\cM)|\leq |D^{\sing}|+\int_{D^{\sm}} \left(\delta \Irr(\cM)+ \delta \Irr(\End\cM)\right)
$$
\end{thm}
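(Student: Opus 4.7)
The plan is to deduce the equivalence by combining Theorem \ref{Equivalent_condition_good_formal_structure} with Corollary \ref{computation_mult_TxX}. Since $P$ lies in the smooth locus of $D$, and since both sides of the equivalence (being a turning point; the vanishing of a partial-discrepancy integral at $P$) are local in nature, I would first shrink $X$ to an affine open neighborhood $U$ of $P$ on which $D \cap U$ is smooth. The integrals $\int_{P} \delta\Irr\cM$ and $\int_{P} \delta\Irr\End\cM$ are unchanged because partial discrepancy is computed from valuations centred above $P$ via the chains of blow-up supplied by Zariski's Theorem \ref{Zariski}, which are insensitive to shrinking $X$. Similarly, the property of having good formal structure at $P$ is by definition formal-local at $P$.

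On this neighborhood, Theorem \ref{Equivalent_condition_good_formal_structure} (applied to both $\cM$ and $\End\cM$) asserts that $\cM$ has good formal structure at $P$ if and only if neither $SS(\cM)$ nor $SS(\End\cM)$ contains a conormal of the form $T^{\ast}_{Q}X$ for $Q \in D^{\sm}$. Equivalently, $P$ is a turning point of $\cM$ if and only if $T^{\ast}_{P}X$ appears with nonzero multiplicity in $CC(\cM)$ or in $CC(\End\cM)$. By Corollary \ref{computation_mult_TxX}, the multiplicity of $T^{\ast}_{P}X$ in $CC(\cM)$ equals $\int_{P}\delta\Irr\cM$, and the analogous statement holds for $\End\cM$. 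From Proposition \ref{irr_div_finitely_supported}, both integrals are nonnegative integers, so their disjunction is positive exactly when their sum is positive. This establishes the equivalence.

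For the final inequality, the turning locus decomposes as $\TL(\cM) = (\TL(\cM) \cap D^{\sing}) \sqcup (\TL(\cM) \cap D^{\sm})$. The contribution of the first piece is at most $|D^{\sing}|$ (a finite number since $D$ has normal crossings on a surface). For the second piece, the equivalence just proved ensures that for every $P \in \TL(\cM) \cap D^{\sm}$ the quantity $\int_{P}(\delta\Irr\cM + \delta\Irr\End\cM)$ is a positive integer, hence is at least $1$. Summing and using the effectivity of $\delta\Irr\cM$ and $\delta\Irr\End\cM$ on $(X,D)$ (Proposition \ref{irr_div_finitely_supported}), we obtain
\[
|\TL(\cM) \cap D^{\sm}| \;\leq\; \sum_{P \in \TL(\cM) \cap D^{\sm}} \int_{P}(\delta\Irr\cM + \delta\Irr\End\cM) \;\leq\; \int_{D^{\sm}}(\delta\Irr\cM + \delta\Irr\End\cM),
\]
which combined with the $D^{\sing}$ contribution gives the announced bound.

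No step poses a serious obstacle, the content being essentially bookkeeping; the only point deserving care is the local reduction, where one must check that neither the partial discrepancy integrals nor the good-formal-structure criterion is affected by passing from $X$ to a neighborhood in which $D$ becomes smooth.
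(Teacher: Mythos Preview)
Your proof is correct and follows essentially the same route as the paper: use \cref{Equivalent_condition_good_formal_structure} to translate ``$P$ is a turning point'' into ``$T^*_PX$ contributes to $CC(\cM)+CC(\End\cM)$'', and then invoke \cref{computation_mult_TxX} to identify those multiplicities with the partial discrepancy integrals. Your write-up is more explicit than the paper's about the localization to make $D$ smooth and about the derivation of the final inequality, but the argument is the same; note also that a single application of \cref{Equivalent_condition_good_formal_structure} to $\cM$ already yields the condition on both $SS(\cM)$ and $SS(\End\cM)$, so the parenthetical ``applied to both $\cM$ and $\End\cM$'' is unnecessary.
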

\begin{proof}
From \cref{Equivalent_condition_good_formal_structure}, $P$ is a turning point of $\cM$ if and only if $T^*_P X$ contributes to $CC(\cM)+CC(\End \cM)$.
Then, \cref{turning_point_and_integral} follows from \cref{computation_mult_TxX}.
\end{proof}

\begin{thm}\label{bound_degree_turning_locus}
Let $(X,D)$ be a projective normal crossing pair over $k$.
Let $X\to \mathds{P}$ be a closed embedding in some projective space.
Then, there exists a quadratic polynomial $K : \Div(X,D)\oplus \mathds{Z}\to \mathds{Z}$ such that for every  effective divisor $R$ of $X$ supported on $D$, for every integer $r\geq 0$ and every object $\cM$ of $\MIC_r(X,D,R)$, we have 
$$
\deg \TL(\cM)\leq K(R,r)
$$
\end{thm}
\begin{proof}
From \cref{change_compactification}, we can suppose that $D$ has simple normal crossings.
From \cref{regular_base_change}, we can suppose that $k$ is algebraically closed.
We can thus suppose that $X$ is geometrically connected.
If $X$ is a curve,  the turning locus is empty and there is nothing to prove.
If $X$ is a surface, $\TL(\cM)$ is a finite set of points.
Thus, $\deg \TL(\cM)=|\TL(\cM)|$.
On the other hand,  \cref{turning_point_and_integral} and \cref{strong_chi} ensures the existence of a quadratic polynomial $L : \Div(X,D)\oplus \mathds{Z}\to \mathds{Z}$ affine in the last variable such that for every  effective divisor $R$ of $X$ supported on $D$, for every integer $r\geq 0$ and every object $\cM$ of $\MIC_r(X,D,R)$, we have
$$
|\TL(\cM)|\leq |D^{\sing}|+L(R,r)+L(R,r^2)
$$
Hence, \cref{bound_degree_turning_locus} holds in that case.
Suppose that $X$ has dimension $d\geq 3$.
Let $N$ be the dimension of $\mathds{P}$.
Let $\cG$ be the Grassmannian of projective $N-d+2$-spaces in $\mathds{P}$.
Let $\eta$ be the generic point of $\cG$.
Let  $\mathcal{Q}$ be the universal family of projective $N-d+2$-spaces in $\mathds{P}$.
Then, there is  a commutative diagram with cartesian squares
$$
\xymatrix{
X_{\eta }\ar[r]  \ar[d]  &   X_{ \mathcal{Q}}  \ar[r] \ar[d]   &     X \ar[d]\\
          \mathcal{Q}_\eta     \ar[r] \ar[d]             &         \mathcal{Q}   \ar[r]   \ar[d]     & \mathds{P}\\
\eta \ar[r]             & \cG  & 
}
$$
Let $\eta$  as  subscript indicates a pull-back along $X_{\eta}\to X$.
Let $R$ be an effective divisor of $X$ supported on $D$. 
Let $r\geq 0$ be an integer. 
Let $\cM$ be an object of $\MIC_r(X,D,R)$.
From Kedlaya's purity \cref{purity}, the closed set $\TL(\cM)$ has pure dimension $d-2$.
Hence,  $\deg \TL(\cM)$  is the cardinal of $\TL(\cM)\cap H$ where $H$ is a generic projective $N-d+2$-spaces of $\mathds{P}$.
Thus,
$$
\deg \TL(\cM) = |\TL(\cM)_{\eta}|=|\TL(\cM_{\eta})|
$$
where the last equality follows by \cref{regular_base_change}.
%Observe that $Q_{\eta}\lra \mathds{P}$ is a regular morphism as a composition of regular morphisms \cite[6.8.3]{EGA4-2}.
Since $\cM_\eta$ lies in $\MIC_r(X_\eta,D_\eta,R_\eta)$ with $(X_\eta,D_\eta)$ a normal crossing surface over $\eta$,  \cref{bound_degree_turning_locus} follows from the surface case.
\end{proof}

As an application of the above theorem, we deduce the following

\begin{thm}\label{bound_turning_locus}
Let $(X,D)$ be a projective normal crossing pair over $k$.
Then, there exists a quadratic polynomial $K : \Div(X,D)\oplus \mathds{Z}\to \mathds{Z}$ such that for every  effective divisor $R$ of $X$ supported on $D$, for every integer $r\geq 0$ and every object $\cM$ of $\MIC_r(X,D,R)$, the number of irreducible components of $\TL(\cM)$ is smaller than $K(R,r)$.
\end{thm}
\begin{proof}
Let $R$ be an effective divisor of $X$ supported on $D$. 
Let $r\geq 0$ be an integer. 
Let $\cM$ be an object of $\MIC_r(X,D,R)$.
Let $Z_1,\dots, Z_n$ be the irreducible components of $\TL(\cM)$.
From Kedlaya's purity \cref{purity}, the $Z_i$ have the same dimension $\dim X-2$.
Hence, $\deg \TL(\cM)=\deg Z_1+\cdots+\deg Z_n$.
Since each $\deg Z_i $ is strictly positive, we deduce $n\leq \deg \TL(\cM)$.
Thus, \cref{bound_turning_locus} follows from \cref{bound_degree_turning_locus}.
\end{proof}

\section{Lefschetz recognition principle}\label{Lefschetz}

\begin{lem}\label{finding_good_curve}
Let $\mathds{P}$ be a projective space over $k$.
Let $X\subset \mathds{P}$ be a smooth  subvariety of pure dimension $n\geq 2$.
Let $C',C''\subset T^\ast X$ be  closed conical subsets of pure dimension $n$ where  the base of $C''$ has dimension at most $n-2$.
Put $C=C'\cup C''$.
Let $S\subset X$ be a closed subset of dimension $\leq n-2$ containing the base of $C''$.
Let $Y\subset \mathds{P}$ be a smooth hypersurface transverse to $X$ such that $X\cap Y \to X$ is $C'$-transversal and $X\cap Y \cap S$ has dimension $<n-2$.
Then, for every sufficiently generic hyperplanes $E_1,\dots, E_{n-2} \in \mathds{P}^{\vee}$, the following hold :
\begin{enumerate}\itemsep=0.2cm
\item 
The commutative diagram
$$
	\begin{tikzcd}
	X\cap E_1\cap \dots \cap E_{n-2} \arrow[rrrd, swap, bend right = 10, "i_{n-2}"]\arrow{r}  & \cdots \arrow{r}  & X\cap E_1 \cap E_2\arrow{r} \arrow[rd, swap, bend right = 5, "i_{2}"] & X\cap E_1    \arrow{d}{i_1}  	 \\
	&       &         &         X
			\end{tikzcd}
$$
is a diagram of smooth varieties such that for every $j=1,\dots, n-2$, the map
$$
X\cap E_1\cap \dots \cap E_{j} \to X\cap E_1\cap \dots \cap E_{j-1}
$$ 
is  $i_{j-1}^{\circ}(C)$-transversal.
\item The scheme $T:=X\cap Y\cap E_1\cap \dots \cap E_{n-2}$ is a smooth curve of $X$ avoiding $S$ such that  $T\to X$ is $C$-transversal.

\item The map $T\to X\cap E_1\cap \dots \cap E_{n-2}$ is $i_{n-2}^{\circ}(C)$-transversal.
\end{enumerate}
\end{lem}
\begin{proof}
The claim $(1)$ follows from an iterative use of \cref{generic_transversality} and \cref{easy_transversality}-(3).
Let us prove $(2)$. 
The scheme $X\cap Y$ is a smooth variety of dimension $n-1$ and 
$X\cap Y \cap S\subset X\cap Y $ is a closed subset of dimension $<n-2$.
For $E_1,\dots, E_{n-2} \in \mathds{P}^{\vee}$ sufficiently generic, Bertini theorem ensures that $T:=X\cap Y\cap E_1\cap \dots \cap E_{n-2} \subset X\cap Y$ is a smooth curve avoiding $S$.
An iterative use of \cref{generic_transversality} and \cref{easy_transversality}-(3) applied to  
$$
	\begin{tikzcd}
	T \arrow{r}  & \cdots \arrow{r}&  X\cap Y\cap E_1    \arrow{r} & X\cap Y \arrow{r}  & X
			\end{tikzcd}
$$
ensures that for $E_1,\dots, E_{n-2} \in \mathds{P}^{\vee}$ sufficiently generic, the map $T\to X$ is $C'$-transversal.
Since $T$ avoid $S$ and since $S$ contains the base of $C''$, the map $T\to X$ is also $C''$-transversal.
From \cref{easy_transversality}-(2), we deduce that $T\to X$ is $C$-transversal.
Claim $(2)$ is thus proved.
Claim $(3)$ follows from \cref{easy_transversality}-(3) applied to
$$
	\begin{tikzcd}
	T \arrow{r}  & X\cap E_1\cap \dots \cap E_{n-2}    \arrow{r}{i_{n-2}} &  X
			\end{tikzcd}
$$
\end{proof}

\begin{prop}\label{Iso_on_H0}
Let $(X,D)$ be a projective simple normal crossing pair of dimension  $n\geq 2$ over  $k$.  
Let $X\to \mathds{P}$ be a closed immersion in some projective space.
Let $\cM$ be an object of $\MIC(X,D)$.
Let $H$ be a hyperplane  such that
\begin{enumerate}\itemsep=0.2cm
\item $H$ is transverse to $X$ and $X\cap H\to X$ is $SS(\mathcal{O}_X(\ast D))$-transversal.

\item $X\cap H$ does not contain any irreducible component of  $\TL(\cM)$.
\end{enumerate}
Then, for every $E_1,\dots, E_{n-2} \in \mathds{P}^{\vee}$ sufficiently generic, the  restriction morphism
\begin{equation}\label{restriction_H0}
H^0(X,\DR \cM)\lra H^0(T,\DR \cM|_{T})
\end{equation}
is an isomorphism, where $T=X\cap H \cap E_1 \cap \dots \cap E_{n-2}$.

\end{prop}
\begin{proof}
Let $C''$ be the union of irreducible components of $SS(\cM)$ whose bases are subsets of $\TL(\cM)$.
Then,  \cref{CCgood} yields
$$
SS(\cM) \subset SS(\mathcal{O}_X(\ast D)) \bigcup C''
$$
Since  $\dim \TL(\cM)=n-2$, the conditions from \cref{finding_good_curve} are satisfied for $Y=H$, $C'=SS(\mathcal{O}_X(\ast D))$ and $C''$, and $S=\TL(\cM)$.
Thus, for every $E_1,\dots, E_{n-2} \in \mathds{P}^{\vee}$ sufficiently generic, 
the commutative diagram
$$
	\begin{tikzcd}
	T \arrow{r}  & 	X\cap E_1\cap \dots \cap E_{n-2} \arrow[rrrd, swap, bend right = 10, "i_{n-2}"]\arrow{r}  & \cdots \arrow{r}  & X\cap E_1 \cap E_2\arrow{r} \arrow[rd, swap, bend right = 5, "i_{2}"] & X\cap E_1    \arrow{d}{i_1}  	 \\
	& &       &         &         X
			\end{tikzcd}
$$
is a diagram of smooth varieties such that for every $j=1,\dots, n-2$, the map
$$
X\cap E_1\cap \dots \cap E_{j} \to X\cap E_1\cap \dots \cap E_{j-1}
$$ 
is  $i_{j-1}^{\circ}(SS(\cM))$-transversal and $T\to X\cap E_1\cap \dots \cap E_{n-2}$ is $i_{n-2}^{\circ}(SS(\cM))$-transversal.
From \cref{Cauchy-Kowaleska}, the horizontal and vertical arrows of the above diagram are thus non-characteristic for the successive restrictions of $\cM$.
From \cref{non_char_restriction_coho_DR}, we deduce that each arrow of the diagram 
$$
	\begin{tikzcd}
	H^0(X,\DR \cM) \arrow{r}  & H^0(X\cap H_1,\DR \cM|_{X\cap E_1})  \arrow{r}& \cdots  \arrow{r} & H^0(T,\DR \cM|_{T})
			\end{tikzcd}
$$
induced by the successive restrictions of $\cM$ are isomorphisms.
This concludes the proof of \cref{Iso_on_H0}.
\end{proof}

\begin{cor}\label{isomorphism_between_Hom}
Let $(X,D)$ be a projective simple normal crossing pair of dimension  $n\geq 2$ over  $k$.  
Let $X\to \mathds{P}$ be a closed immersion in some projective space.
Let $\cM_1,\cM_2$ be objects of $\MIC(X,D)$.
Let $H$ be a hyperplane such that
\begin{enumerate}\itemsep=0.2cm
\item $H$ is transverse to $X$ and $X\cap H\to X$ is $SS(\mathcal{O}_X(\ast D))$-transversal. 

\item   $X\cap H$ does not contain any irreducible component of  $\TL(\cHom(\cM_1,\cM_2))$.

\end{enumerate}
Then, for every $E_1,\dots, E_{n-2} \in \mathds{P}^{\vee}$ sufficiently generic, the restriction morphism
$$
\Hom_{\cD_X}(\cM_1,\cM_2)\lra \Hom_{\cD_{ T}}(\cM_1|_{ T},\cM_2|_{ T})
$$
is an isomorphism, where $T=X\cap H \cap E_1 \cap \dots \cap E_{n-2}$.

\end{cor}
\begin{proof}
Combine \cref{RHom_alg_DR} with \cref{Iso_on_H0}.
\end{proof}

\begin{thm}\label{Lefschetz_recognition_hyperplane}
Let $(X,D)$ be a projective simple normal crossing pair of dimension  $n\geq 2$ over  $k$.  
Let $X\to \mathds{P}$ be a closed immersion in some projective space.
Then, there exists a polynomial $K : \Div(X,D)\oplus \mathds{Z}\to \mathds{Z}$ of degree $4$ such that for every  effective divisor $R$ of $X$ supported on $D$, for every integer $r\geq 0$, every set $\mathcal{H}$ of  $K(R,r)$  hyperplanes satisfying the following conditions :
\begin{enumerate}\itemsep=0.2cm
\item $H$ is transverse to $X$ and $X\cap H\to X$ is $SS(\mathcal{O}_X(\ast D))$-transversal  for every  $H\in \mathcal{H}$
\item The $D\cap H, H\in \mathcal{H}$ are closed subsets of $D$ of pure dimension $n-2$ with two by two distinct irreducible components
\end{enumerate}
realizes the Lefschetz recognition principle for $\MIC_r(X,D,R)$ (\cref{realize_LRP}).
In particular, there is a dense open subset of $K(R,r)$-uples of hyperplanes realizing the Lefschetz recognition principle for $\MIC_r(X,D,R)$.
\end{thm}
\begin{proof}
From \cref{bound_turning_locus}, there exists a quadratic polynomial $L : \Div(X,D)\oplus \mathds{Z}\to \mathds{Z}$ such that for every  effective divisor $R$ of $X$ supported on $D$, for every integer $r\geq 0$ and every object $\cM$ of $\MIC_{4r^2}(X,D,2r^2 \cdot R)$, the number of irreducible components of $\TL(\cM)\subset D$ is smaller than $L(2r^2 \cdot R,4r^2)$.
Put
$$
K(R,r)=L(2r^2 \cdot R,4r^2) +1
$$
Choose a set $\mathcal{H}$ of $K(R,r)$   hyperplanes satisfying conditions $(1)$ and $(2)$ above.
We want to show that  $\mathcal{H}$ realizes the Lefschetz recognition principle for $\MIC_r(X,D,R)$.
Let $\cM_1,\cM_2 \in  \MIC_r(X,D,R)$ such that $\cM_1|_{X\cap H}$ and $\cM_2|_{X\cap H}$ are isomorphic for $H\in \mathcal{H}$.
In particular for every $H\in \mathcal{H}$ and every smooth subvariety $T\subset X\cap H$, the connections $\cM_1|_T$ and $\cM_2|_T$ are isomorphic.
We now want to show that $\cM_1$ and $\cM_2$ are isomorphic.
To do this, it is enough to show the existence of $H\in \mathcal{H}$ and a smooth subvariety  $T\subset X\cap H$ such that
$$
\Hom_{\cD_X}(\cM_{a},\cM_{b})\lra \Hom_{\cD_{  T}}(\cM_{a}|_{ T},\cM_{b}|_{T})
$$
is an isomorphism for every $a,b\in \{1,2\}$.
From \cref{isomorphism_between_Hom}, it is enough to show the existence of $H\in \mathcal{H}$ such that $X\cap H$ does not contain any irreducible component of  $\TL(\cHom(\cM_a,\cM_b)), a,b\in \{1,2\}$.
If we put 
$$
\cM := \bigoplus_{a, b\in \{1,2\}}\cHom(\cM_a,\cM_b)
$$
we have 
$$
\bigcup_{a,b\in \{1,2\}} \TL(\cHom(\cM_a,\cM_b)) \subset \TL(\cM)
$$
Since turning loci have pure dimension $n-2$ in virtue of  \cref{purity}, we deduce that 
the irreducible components of the $\TL(\cHom(\cM_a,\cM_b))$, $a,b\in \{1,2\}$ are irreducible components of $\TL(\cM)$.
Thus, it is enough to show the existence of $H\in \mathcal{H}$ such that $X\cap H$ does not contain any irreducible component of  $\TL(\cM)$.
Note from \cref{Irr_Hom} that $\cM$ is an object of $\MIC_{4r^2}(X,D,2r^2 \cdot R)$.
Hence, $\TL(\cM)$ has strictly less than $K(R,r)$ irreducible components.
Thus, condition $(2)$ above ensures the existence of the sought-after hyperplane.

\end{proof}

\section{Tannakian Lefschetz theorem} \label{Tannaka}

As usual, $k$ denotes a field of characteristic $0$.
For an abelian category $\cC$, we denote by $\cC^{\sesi}$ the full subcategory of $\cC$ spanned by the semisimple objects.
If $F : \cC\to \cD $ is an additive functor between abelian  categories, we let $F^{-1}(\cD^{\sesi})$ be the full subcategory of $\cC $ spanned by the objects sent  in $\cD^{\sesi}$ by $F$.\\ \indent

As a general reference for Tannakian categories, let us mention  \cite{Deligne_Milne}.
If $(\cC,\otimes_\cC)$ is a Tannakian category and if $M$ is an object of $\cC$, we denote by $\langle M\rangle$ the Tannakian subcategory of $\cC$ generated by $M$.
If $\omega : \cC \to \Vect_k$ is a neutralization for $(\cC,\otimes_\cC)$, we denote by $\pi_1( M,\omega)$ the Tannakian algebraic group of $\langle M\rangle$ at $\omega$.
\begin{ex}
Let $(X,D)$ be a   pair over $k$.
Then  $(\MIC(X,D),\otimes_{\cO_X(\ast D)})$ is an abelian  rigid tensor category over $k$.
For a morphism of pairs $f : (Y,E)\to (X,D)$  over $k$, the pull-back $f^+ : \MIC(X,D)\to \MIC(Y,E)$ is an exact tensor functor.
If furthermore $X$ is connected and $k$ is algebraically closed, the restriction to any closed point of $X\setminus D$ endows $(\MIC(X,D),\otimes_{\cO_X(\ast D)})$  with a structure of neutral Tannakian category.
\end{ex}

\begin{lem}\label{ss_object_tannakian_char_0}
Let $F: (\cC,\otimes_\cC) \to (\cD,\otimes_\cD)$ be an exact tensor functor between neutral Tannakian categories over $k$.
Then, $F^{-1}(\cD^{\sesi})$ is a neutral Tannakian subcate\-gory of $(\cC,\otimes_\cC)$.
\end{lem}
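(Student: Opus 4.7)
The plan is to verify that $F^{-1}(\cD^{\sesi})$ is closed under the Tannakian operations (direct sums, subquotients, tensor products, duals, and contains the unit), and observe that the neutralization is inherited from $\cC$. Since $F$ is an exact tensor functor, these questions reduce to verifying that $\cD^{\sesi}$ itself is stable under the corresponding operations in $\cD$.

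The easy checks come first. Exactness of $F$ together with the facts that $F(M\oplus N)=F(M)\oplus F(N)$ and that a direct sum of semisimple objects is semisimple give closure under direct sums. The unit $\mathbf{1}_\cC$ is sent to $\mathbf{1}_\cD$, which is simple in any Tannakian category, so it lies in $F^{-1}(\cD^{\sesi})$. For a subobject $N\hookrightarrow M$ with $F(M)$ semisimple, exactness of $F$ identifies $F(N)$ with a subobject of $F(M)$, hence semisimple; the same works for quotients. Since $F$ commutes with duals and taking the dual commutes with direct sums and preserves simple objects, the dual of a semisimple object is semisimple.

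The main obstacle is closure under tensor products: we must show that if $F(M)$ and $F(N)$ are semisimple, then $F(M\otimes N)\simeq F(M)\otimes F(N)$ is semisimple. This is precisely the step that uses $\ch k=0$. Choose any neutralization $\omega$ of $\cD$; by the Tannakian formalism, the full subcategory $\langle F(M)\oplus F(N)\rangle$ is equivalent to the category of finite dimensional representations of the affine group scheme $G:=\pi_1(F(M)\oplus F(N),\omega)$ over $k$. By hypothesis $\omega F(M)$ and $\omega F(N)$ are semisimple representations of $G$, so the identity component of the image of $G$ in $\mathrm{GL}(\omega F(M))\times \mathrm{GL}(\omega F(N))$ is reductive (this equivalence being specific to characteristic $0$). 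The image of $G$ in $\mathrm{GL}(\omega F(M)\otimes \omega F(N))$ is a quotient of this subgroup. In characteristic $0$, a quotient of a group with reductive identity component again has reductive identity component, so $\omega F(M)\otimes \omega F(N)$ is a semisimple $G$-representation, and therefore $F(M)\otimes F(N)$ is semisimple in $\cD$.

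Putting these closure properties together shows that $F^{-1}(\cD^{\sesi})$ is a full abelian rigid tensor subcategory of $\cC$ stable under subquotients, hence a Tannakian subcategory. The restriction to $F^{-1}(\cD^{\sesi})$ of any neutralization of $\cC$ provides a fiber functor to $\Vect_k$, so $F^{-1}(\cD^{\sesi})$ is neutral, which concludes the proof.
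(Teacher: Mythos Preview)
Your proof is correct and follows essentially the same approach as the paper: verify closure under the abelian and tensor operations, with the only nontrivial step being closure under tensor product, and then restrict a fiber functor from $\cC$. The paper handles the tensor step by citing Chevalley's theorem (tensor products of semisimple finite-dimensional representations are semisimple in characteristic $0$) directly, whereas you unpack it via reductivity of the Tannaka group of $\langle F(M)\oplus F(N)\rangle$; the content is the same. One small remark: your passage through ``the image of $G$ in $\mathrm{GL}(\omega F(M)\otimes\omega F(N))$ is a quotient'' is an unnecessary detour---once you know $G$ has a faithful semisimple representation $\omega(F(M)\oplus F(N))$, its unipotent radical is trivial, so $G^\circ$ is reductive and \emph{every} representation of $G$ is semisimple, in particular the tensor product.
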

\begin{proof}
Let $f : X\rightarrow Y$ be a morphism in $F^{-1}(\cD^{\sesi})$ and let $K$ be its kernel in $\cC$. 
Since $F$ is exact, $F(K)\simeq \Ker F(f)$ is semisimple since $F(X)$ is semisimple \cite[4.14]{Milne_alg_gp}. 
Hence, $K$ lies in $F^{-1}(\cD^{\sesi})$ and similarly for the cokernel of $f$. 
Thus $F^{-1}(\cD^{\sesi})$ is an abelian subcategory of $\cC$.
Let $X,Y$ be objects of $F^{-1}(\cD^{\sesi})$.
Since $F$ is a tensor functor between rigid tensor categories, we have 
$$
F(\underline{\Hom}(X,Y))\simeq \underline{\Hom}(F(X),F(Y))\simeq  F(X)^{\vee}\otimes_{\cD} F(Y)
$$
where the first equivalence follows from  \cite[1.9]{Deligne_Milne} and the second from  \cite[1.7]{Deligne_Milne}.
Since the tensor product of two semisimple finite dimensional representations of a group is again semisimple  \cite[p.88]{Chevalley_Lie}, we know that $F(X)^{\vee}\otimes_{\cD} F(Y)$ is semisimple. 
Hence, $\underline{\Hom}(X,Y)$ lies in $F^{-1}(\cD^{\sesi})$.
Furthermore, the identity object of $\cC$ lies in $F^{-1}(\cD^{\sesi})$.
Hence, $F^{-1}(\cD^{\sesi})$ is stable under finite tensor product and dual.
Thus $F^{-1}(\cD^{\sesi})$  is a rigid abelian tensor subcategory of $(\cC,\otimes_\cC)$.
A neutralization of $(\cC,\otimes_\cC)$ induces a neutralization of $F^{-1}(\cD^{\sesi})$ and the proof of \cref{ss_object_tannakian_char_0} is complete.
\end{proof}

\begin{rem}
\cref{ss_object_tannakian_char_0} applied to the identity functor implies that $\cC^{\sesi}$ is a neutral tannakian subcategory of $(\cC,\otimes_\cC)$.
\end{rem}

The following ground-breaking theorem is due to Mochizuki \cite[13.2.3]{Mochizuki1} :
\begin{thm}\label{Mochizuki_semi_simplicity}
Let $(X,D)$ be a projective simple normal crossing pair over $k$.
Let $\cL$ be an ample line bundle on $X$.
Let $\cM$ be a simple object of $\MIC(X,D)$.
Then, there is an integer $m_0$ such that for every $m\geq m_0$, for every generic hyperplane  $H$  of $\cL^m$, the restriction $\cM|_{X\cap H}$ is a simple object of $\MIC(X\cap H,D\cap H)$.
\end{thm}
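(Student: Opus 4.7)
The plan is to factor simplicity of $\cM|_{X\cap H}$ into two independent properties and to handle them separately:
\emph{(a)} $\End_{\cD_{X\cap H}}(\cM|_{X\cap H})=k$, and
\emph{(b)} $\cM|_{X\cap H}$ is semisimple.
Over an algebraically closed field of characteristic $0$, these two together force $\cM|_{X\cap H}$ to be simple, so I would first reduce to $k$ algebraically closed by passing to $\bar k$ (the multiplicities appearing in a Jordan--H\"older decomposition of $\cM_{\bar k}$ are Galois-permuted; combined with the simplicity of $\cM$ this allows a routine descent argument). I would also assume $\dim X\geq 2$, the case $\dim X=1$ being vacuous.

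For \emph{(a)}, I would apply \cref{RHom_alg_DR} to the pairs $(\cM,\cM)$ on $X$ and $(\cM|_{X\cap H},\cM|_{X\cap H})$ on $X\cap H$, reducing the computation of endomorphisms to $H^0$ of the De Rham complex of $\cEnd(\cM)$. Since $\cEnd(\cM)$ is a meromorphic flat connection, it is holonomic, so its singular support is a closed conical subset of $T^*X$ of pure dimension $\dim X$. Fixing the projective embedding $X\hookrightarrow\mathds{P}$ determined by $\cL^{m}$ for $m$ large (so that $\cL^{m}$ is very ample), \cref{generic_transversality} produces, for generic $H$ in the linear system $|\cL^{m}|$, a non-characteristic inclusion $X\cap H\hookrightarrow X$ for $\cEnd(\cM)$. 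Then \cref{non_char_restriction_coho_DR} applied to $\cEnd(\cM)$ yields an isomorphism $H^0(X,\DR\cEnd\cM)\iso H^0(X\cap H,\DR\cEnd(\cM|_{X\cap H}))$, and Schur's lemma applied to the simple object $\cM$ identifies both sides with $k$.

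The main obstacle is \emph{(b)}. Preservation of semisimplicity by restriction to a generic hyperplane of a sufficiently positive linear system is a genuinely analytic statement and cannot, as far as I can see, be extracted from the formal material developed earlier in the paper. The Tannakian \cref{ss_object_tannakian_char_0} asserts that $F^{-1}(\cD^{\sesi})$ is a Tannakian subcategory for any exact tensor functor $F$, but by itself this does not place $\cM$ inside $F^{-1}(\MIC(X\cap H,D\cap H)^{\sesi})$ for the restriction functor, which is what we need. The standard route goes through Mochizuki's theory of wild harmonic bundles: a simple object of $\MIC(X,D)$ underlies a simple wild harmonic bundle, and for $H$ generic in $|\cL^{m}|$ with $m\gg 0$ the pullback of the tame/wild harmonic metric remains harmonic, with its irreducible decomposition controlled by a semicontinuity mechanism. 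Taking this deep input as a black box at the step \emph{(b)} and combining it with the endomorphism computation \emph{(a)} yields that $\cM|_{X\cap H}$ is semisimple with scalar endomorphism ring over an algebraically closed field, hence simple, which is the desired conclusion.
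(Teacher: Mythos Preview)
The paper does not prove this theorem; it is quoted as a result of Mochizuki \cite[13.2.3]{Mochizuki1} and used as a black box. So there is no ``paper's own proof'' to compare against.

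Your decomposition of the conclusion into (a) $\End_{\cD_{X\cap H}}(\cM|_{X\cap H})=k$ and (b) $\cM|_{X\cap H}$ semisimple is sound, and your argument for (a) is correct: it is exactly the content of \cref{recognition_principle_for_two_connections} applied with $\cM_1=\cM_2=\cM$, which rests on \cref{RHom_alg_DR} and \cref{non_char_restriction_coho_DR} as you say. But your step (b) is not a proof: the statement that a simple (or semisimple) meromorphic flat connection restricts to a semisimple one on a generic hyperplane of a sufficiently positive linear system is precisely the deep content of Mochizuki's wild harmonic bundle theory that the paper is citing. You explicitly take it as a black box. What you have written is therefore a reduction of Mochizuki's ``simple $\to$ simple'' to the variant ``simple $\to$ semisimple'', and since these are equivalent modulo the easy Lefschetz argument (a), the proposal is circular as a proof of \cref{Mochizuki_semi_simplicity}.

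It is worth noting that the paper runs the implication in the opposite direction: \cref{generic_restriction_sends_ss_to_ss} deduces ``semisimple $\to$ semisimple'' on the Tannakian category $\langle\cM\rangle$ from Mochizuki's ``simple $\to$ simple'' applied to a single tensor generator of $\langle\cM\rangle^{\sesi}$, using \cref{ss_object_tannakian_char_0}. So within the paper's logical structure, your step (b) is downstream of \cref{Mochizuki_semi_simplicity}, not a substitute for it.
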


%\begin{cor}\label{relax_Mochizuki_semi_simplicitiy}
%Let $(X,D,\cL)$ be a regular quasi-projective ample pair over a field of characteristic $0$.
%Let $\cM$ be a simple object of $\MIC(X,D)$.
%Then, for every  large enough integer $m$, for every  sufficiently generic hyperplane section $Y$  of $\cL^m$, the restriction $\cM|_{Y}$ is a simple object of $\MIC(Y,D\cap Y)$.
%\end{cor}
\begin{rem}
In the statement of \cref{Mochizuki_semi_simplicity}, the integer $m_0$ depends  on $\cM$.
\end{rem}

\begin{cor}\label{generic_restriction_sends_ss_to_ss}
Let $(X,D)$ be a  connected projective simple normal crossing pair over $k$.
Assume that $k$ is algebraically closed.
Let $\cL$ be an ample line bundle on $X$.
Let $\cM$ be an object of $\MIC(X,D)$.
Then, there is an integer $m_0$ such that for every $m\geq m_0$, for every generic hyperplane $H$  of $\cL^m$, for every semisimple object $\cN$ of $\langle \cM\rangle$, the restriction $\cN|_{X\cap H}$ is a semisimple object of $\MIC(X\cap H,D\cap H)$.
\end{cor}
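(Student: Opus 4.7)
The plan is to deduce the corollary from Mochizuki's \cref{Mochizuki_semi_simplicity} together with the tannakian formalism of \cref{ss_object_tannakian_char_0}. The central intermediate claim is the identification of the full subcategory of semisimple objects in $\langle \cM \rangle$ with the tannakian subcategory generated by the semisimplification of $\cM$.

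First I would extract the semisimplification $\cM^{\sesi}=\bigoplus_{i=1}^k \cM_i^{n_i}$, where the $\cM_i$ are the distinct Jordan--H\"older constituents of $\cM$; the sum is finite since $\cM$ has finite rank. Each $\cM_i$ is a simple object of $\MIC(X,D)$, so \cref{Mochizuki_semi_simplicity} provides an integer $m_i$ such that for every $m\geq m_i$ and every generic hyperplane $H$ of $\cL^m$, the restriction $\cM_i|_{X\cap H}$ is simple in $\MIC(X\cap H,D\cap H)$. Setting $m_0:=\max_i m_i$ and intersecting the finitely many dense open loci produced by Mochizuki, one obtains that for $m\geq m_0$ and generic $H$ of $\cL^m$, all $\cM_i|_{X\cap H}$ are simultaneously simple, so that $\cM^{\sesi}|_{X\cap H}$ is semisimple.

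Next I would invoke the tannakian formalism. Fix such an $H$, transverse to $D$, and let $j_H:X\cap H\lra X$ be the inclusion. The naive restriction $j_H^{*}:\MIC(X,D)\lra \MIC(X\cap H,D\cap H)$ is an exact tensor functor, compatible with the fiber functors on both sides obtained by restricting to a common $k$-point of $(X\setminus D)\cap H$. By \cref{ss_object_tannakian_char_0}, the preimage $(j_H^{*})^{-1}(\MIC(X\cap H,D\cap H)^{\sesi})$ is a neutral tannakian subcategory of $\MIC(X,D)$. By the previous step it contains $\cM^{\sesi}$, hence also the tannakian subcategory $\langle \cM^{\sesi}\rangle$ generated by $\cM^{\sesi}$.

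The step I expect to require the most care is establishing the equality $\langle \cM^{\sesi}\rangle=\langle \cM\rangle^{\sesi}$. The inclusion $\subseteq$ is immediate: in characteristic $0$, arbitrary tensor constructions on a semisimple object are semisimple, and so are their subquotients. For the reverse inclusion, I would exploit the fact that for every tensor construction $T(\cM):=\cM^{\otimes a}\otimes (\cM^{\vee})^{\otimes b}$, the Jordan--H\"older constituents of $T(\cM)$ and of $T(\cM^{\sesi})$ coincide with multiplicities; this is proven by induction on the length of $\cM$ using exactness of $\otimes$. Since $T(\cM^{\sesi})$ is itself semisimple in characteristic $0$, every simple object of $\langle \cM\rangle$ occurs as a direct summand of some $T(\cM^{\sesi})$, and hence lies in $\langle \cM^{\sesi}\rangle$; every semisimple object of $\langle \cM\rangle$ is a direct sum of such simples. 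Granting this identification, any semisimple $\cN$ in $\langle \cM\rangle$ lies in $\langle \cM^{\sesi}\rangle\subseteq (j_H^{*})^{-1}(\MIC(X\cap H,D\cap H)^{\sesi})$, which concludes the proof.
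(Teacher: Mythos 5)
Your argument is correct and follows the same overall strategy as the paper: apply Mochizuki's \cref{Mochizuki_semi_simplicity} to the simple constituents of a semisimple object whose generated tannakian subcategory equals $\langle\cM\rangle^{\sesi}$, and then invoke \cref{ss_object_tannakian_char_0} to conclude that the preimage under restriction of the semisimple subcategory of $\MIC(X\cap H, D\cap H)$ contains all of $\langle\cM\rangle^{\sesi}$. The one place you diverge is how you obtain the generating object. The paper appeals to \cite[2.20(b)]{Deligne_Milne} to assert abstractly that $\langle\cM\rangle^{\sesi}$ admits a tensor generator $\cN$, applies Mochizuki to its simple summands, and observes that the tannakian subcategory $F^{-1}(\MIC(X\cap H,D\cap H)^{\sesi})$ contains $\cN$, hence $\langle\cN\rangle=\langle\cM\rangle^{\sesi}$. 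You instead exhibit the explicit generator $\cM^{\sesi}$ and verify by hand, via a Jordan--H\"older and exactness-of-$\otimes$ argument, the equality $\langle\cM^{\sesi}\rangle=\langle\cM\rangle^{\sesi}$. Your route is more self-contained, effectively unwinding the cited abstract fact in this special case; the paper's is shorter. Note that you only really need the inclusion $\langle\cM\rangle^{\sesi}\subseteq\langle\cM^{\sesi}\rangle$, i.e.\ that $\cM^{\sesi}$ is a tensor generator of $\langle\cM\rangle^{\sesi}$; the converse inclusion, while true, plays no role in the conclusion.
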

\begin{proof}
Since $\langle \cM\rangle$ admits a tensor generator, the general yoga of neutral tannakian categories \cite[2.20 (b)]{Deligne_Milne} ensures that so does $\langle \cM\rangle^{\sesi}$.
Let $\cN$ be a tensor generator for $\langle \cM\rangle^{\sesi}$.
From Mochizuki's \cref{Mochizuki_semi_simplicity} applied to $\cN$, there exists an integer $m_0$ such that for every $m\geq m_0$, the set  of hyperplanes $H$ of $\cL^m$ with $\cN|_{X\cap H}$ semisimple contains a dense open subset $V$ of $\mathds{P}(\Gamma(X,\cL^m))^{\vee}$.
Let $H\in V(k)$.
Let $F :\langle \cM\rangle  \to \MIC(X\cap H, D\cap H)$ be the exact tensor functor given by the restriction to $X\cap H$.
Then, \cref{ss_object_tannakian_char_0} ensures that $F^{-1}(\MIC(X\cap H, D\cap H)^{\sesi})$ is a neutral tannakian subcategory of $\langle \cM\rangle$.
Since it contains $\cN$, it also contains $\langle \cM\rangle^{\sesi}$.
The proof of \cref{generic_restriction_sends_ss_to_ss} is thus complete.
\end{proof}

\begin{lem}\label{fully_faithful_restriction}
Let $(X,D)$ be a  projective pair of dimension at least $2$ over  $k$.
Assume that $k$ is uncountable.
Let $\cL$ be a very ample line bundle on $X$.
Let $\cC$ be a full subcategory of $\MIC(X,D)$ spanned by a countable number of objects.
Then, for every very  generic hyperplane $H$ of $\cL$, the restriction to $X\cap H$ induces a fully faithful functor on $\cC$. 
\end{lem}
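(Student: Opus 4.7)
The plan is to reduce fully faithfulness on a pair $(\cM_1,\cM_2)$ to a statement about the internal Hom $\cHom(\cM_1,\cM_2)$, apply the non-characteristic restriction machinery already developed in the paper, and then take a countable intersection of dense open loci in $\mathds{P}^\vee$.

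First, I observe that for any pair of objects $\cM_1,\cM_2 \in \MIC(X,D)$, the internal Hom $\cHom(\cM_1,\cM_2) \simeq \cM_1^\vee \otimes_{\cO_X(\ast D)} \cM_2$ is again an object of $\MIC(X,D)$, hence holonomic, so its singular support is pure of dimension $\dim X$. By \cref{generic_transversality}, there is a dense open subset $V_{\cM_1,\cM_2} \subset \mathds{P}^\vee$ of hyperplanes $H$ such that $X\cap H$ is smooth and $X\cap H\lra X$ is $SS(\cHom(\cM_1,\cM_2))$-transverse, that is, non-characteristic for $\cHom(\cM_1,\cM_2)$. By \cref{recognition_principle_for_two_connections}, for every $H\in V_{\cM_1,\cM_2}(k)$ the restriction map
$$
\Hom_{\cD_X}(\cM_1,\cM_2)\lra \Hom_{\cD_{X\cap H}}(\cM_1|_{X\cap H},\cM_2|_{X\cap H})
$$
is an isomorphism.

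Next, since $\cC$ is spanned by a countable collection of objects, the set of ordered pairs $(\cM_1,\cM_2)$ of objects in $\cC$ is countable. I form the intersection
$$
W := \bigcap_{(\cM_1,\cM_2)\in \cC\times \cC}  V_{\cM_1,\cM_2}
$$
inside $\mathds{P}^\vee$. This is a countable intersection of dense Zariski open subsets and thus carves out a very generic locus of hyperplanes. For any $H\in W(k)$ (a non-empty set because $k$ is uncountable and $\mathds{P}^\vee$ is irreducible) the restriction functor $X\cap H \lra X$ induces bijections on all Hom sets between objects of $\cC$ simultaneously, hence is fully faithful on $\cC$.

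The argument is essentially a packaging of the preceding results; the only point that needed checking was that $\cHom(\cM_1,\cM_2)$ lies in $\MIC(X,D)$ so that \cref{generic_transversality} applies to its singular support, and that the restriction to $X\cap H$ of an internal Hom of meromorphic flat connections computes the internal Hom of the restrictions (which is immediate since $X\cap H\lra X$ is non-characteristic and hence the pull-back is exact in the relevant sense, cf.\ \cref{Cauchy-Kowaleska}). Thus there is no serious obstacle, only the verification that countably many independent genericity conditions can be imposed at once, which is exactly what the uncountability of $k$ provides.
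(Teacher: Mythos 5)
Your proof is correct and follows essentially the same route as the paper's: reduce the Hom comparison to non-characteristicity of $\cHom(\cM_1,\cM_2)$ via \cref{generic_transversality} and \cref{recognition_principle_for_two_connections}, then take the countable intersection of the resulting dense opens and use uncountability of $k$ to ensure the intersection has $k$-points. The extra commentary you add (holonomy of the internal Hom, Cauchy--Kowalevskaya for compatibility of Hom with restriction) is implicit in the paper's appeal to \cref{recognition_principle_for_two_connections} and does not change the argument.
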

\begin{proof}
Let $\cM_1$ and $\cM_2$ be objects in $\cC$.
From \cref{generic_transversality}, the set of hyperplanes $H$ of $\cL$ 
transverse to $X$ such that  $X\cap H\to X$ is non-characteristic for $\cHom(\cM_1,  \cM_2)$ contains a dense open subset $V(\cM_1,\cM_2)$.
From \cref{isomorphism_between_Hom}, the set of hyperplanes $H$ of $\cL$ transverse to $X$ such that 
$$
\Hom_{\cD_X}(\cM_1,\cM_2)\lra \Hom_{\cD_{X\cap H}}(\cM_1|_{X\cap H},\cM_2|_{X\cap H})
$$
is an isomorphism contains $V(\cM_1,\cM_2)$.
Then the intersection  of the $V(\cM_1,\cM_2)$ for $\cM_1, \cM_2 \in \cC$ gives the sought-after set of very generic hyperplanes.
\end{proof}

Before proving  \cref{Lefchetz_single_connection}, we recall the following  abstract lemma  \cite[1.1.4]{Stalder}.

\begin{lem}\label{Stalder}
Let $F : \cC\to \cD$ be an exact $k$-linear fully faithful functor between abelian categories over $k$.
Assume that for every object $M$ of $\cC$, the length of $M$ is finite and the endomorphism algebra of $M$ has finite dimension over $k$.
Assume that $F$ sends semisimple objects of $\cC$ to semisimple objects of $\cD$.
Then, the essential image of $F$ is closed under subquotients in $\cD$.
\end{lem}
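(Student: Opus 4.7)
The strategy will be to reduce the problem to two independent statements: first, that every subobject of $F(M)$ is isomorphic to $F(M')$ for some $M' \subseteq M$; and second, the analogous statement for quotients, which will follow from the subobject statement via exactness of $F$, since a quotient $F(M) \twoheadrightarrow Q$ has kernel in the essential image and $F(M/K') \simeq F(M)/F(K')$. The main obstacle is that full faithfulness alone only gives closure under direct summands (via idempotents in $\End_{\cD}(F(M)) \simeq \End_{\cC}(M)$), whereas a subobject of $F(M)$ is generally not a direct summand. To overcome this, the plan is to argue by induction on the length of the subobject, leveraging exactness of $F$ to pass to quotients at each step.

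The key ingredient is that $F$ preserves simple objects. If $M$ is simple in $\cC$, then $M$ is semisimple, so by hypothesis $F(M)$ is semisimple in $\cD$. Schur's lemma makes $\End_{\cC}(M)$ a division $k$-algebra, and full faithfulness identifies it with $\End_{\cD}(F(M))$; a semisimple object whose endomorphism ring has no nontrivial idempotents is necessarily simple, so $F(M)$ is simple. From this one deduces that simple subobjects $S \subset F(M)$ lie in the essential image: fixing a composition series $0 = M_0 \subset \cdots \subset M_n = M$ of $M$ in $\cC$ and pushing it through $F$ yields a filtration of $F(M)$ with simple successive quotients $F(M_i/M_{i-1})$, and taking the smallest $i$ with $S \subseteq F(M_i)$ produces a nonzero, hence iso, morphism $S \to F(M_i/M_{i-1})$. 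Thus $S \simeq F(S')$ for some simple $S' \in \cC$, and the inclusion $S \hookrightarrow F(M)$ corresponds by full faithfulness to some $\phi \colon S' \to M$, necessarily injective since $F$ is exact and faithful; exactness then gives $S = F(\phi(S'))$, exhibiting $S$ as $F(M')$ with $M' = \phi(S') \subseteq M$.

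For an arbitrary $N \subseteq F(M)$, the proof will proceed by induction on the length of $N$. Choose a simple subobject $S \subset N$; by the previous step, $S = F(M_1')$ for some simple $M_1' \subseteq M$. Exactness of $F$ gives $F(M)/S \simeq F(M/M_1')$, and by induction $N/S = F(M_2')$ for some $M_2' \subseteq M/M_1'$. Letting $\pi \colon M \twoheadrightarrow M/M_1'$ denote the projection and $\tilde M_2' := \pi^{-1}(M_2') \subseteq M$, both $F(\tilde M_2')$ and $N$ are subobjects of $F(M)$ containing $F(M_1') = \ker F(\pi)$ and mapping onto $F(M_2')$ under $F(\pi)$; any such subobject is the preimage $F(\pi)^{-1}(F(M_2'))$, so $N = F(\tilde M_2')$, closing the induction. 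The quotient case follows by applying the subobject statement to the kernel of any surjection $F(M) \twoheadrightarrow Q$ and using exactness of $F$; subquotients are then immediate.
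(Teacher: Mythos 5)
Your proof is correct. Note first that the paper itself does not prove this lemma but cites it to Stalder [1.1.4], so there is no in-text proof to compare against; your argument is a self-contained reconstruction. The two key steps — that $F$ preserves simple objects (Schur's lemma plus absence of nontrivial idempotents in a division ring), and the induction on the length of the subobject using the preimage-of-subobject bijection along the epimorphism $F(\pi)$ — are both carried out correctly, and the reduction of quotients (hence subquotients) to subobjects via kernels and exactness is sound. One observation worth making: your argument never invokes the hypothesis that $\End_{\cC}(M)$ is finite-dimensional over $k$ (nor really $k$-linearity). Everything you use is finite length of objects in $\cC$, exactness, full faithfulness, and preservation of semisimples; Schur's lemma gives a division ring without any finiteness assumption, and a division ring has no nontrivial idempotents regardless of dimension. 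So either Stalder's original proof takes a different route that genuinely needs the finite-dimensionality (for example, passing through the semisimple Artinian structure of $\End$), or the hypothesis is there for other uses of the lemma; in any case your proof shows it is not needed for this conclusion.
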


\begin{rem}\label{finite_length}
Let  $X$ be a smooth variety over $k$.
Let $\cM$ be an holonomic $\cD_X$-module.
Then $\cM$ has finite length. 
Furthermore, the space
$$
\Hom_{\cD_X}(\cM,\cM)\simeq H^0(X,\DR \cM^\vee\otimes^L_{\cO_X}\cM)
$$ 
is finite dimensional over $k$ as a consequence of \cref{bound_cohomology}.
\end{rem}

\begin{thm}\label{Lefchetz_single_connection}
Let $(X,D)$ be a connected projective simple  normal crossing pair of dimension at least $2$ over $k$.
Assume that $k$ is uncountable and algebraically closed.
Let $\cL$ be an ample line bundle on $X$.
Let $\cM$ be an object of $\MIC(X,D)$.
Then, there is an integer $m_0$ such that for every $m\geq m_0$, for every very generic hyperplane $H$  of $\cL^m$ and every point $x$ of $(X\setminus D)\cap H$, the induced morphism of differential Galois groups 
$$
\pi_1(\cM|_{X\cap H},x)\lra \pi_1(\cM,x)
$$ 
is an isomorphism.
\end{thm}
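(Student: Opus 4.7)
The plan is to invoke the Tannakian dictionary. Fix a point $x \in (X \setminus D) \cap H$. Restriction to $X \cap H$ yields an exact tensor functor $F : \langle \cM \rangle \to \langle \cM|_{X\cap H} \rangle$ between neutral Tannakian categories over $k$, compatible with the fibre functors at $x$. Since $F(\cM) = \cM|_{X\cap H}$ tensor-generates the target, the essential surjectivity of $F$ up to subquotients is automatic. By the standard Tannakian yoga \cite[Prop.~2.21]{Deligne_Milne}, the induced homomorphism $\pi_1(\cM|_{X\cap H}, x) \to \pi_1(\cM, x)$ is therefore an isomorphism as soon as $F$ is fully faithful and its essential image is closed under subobjects in $\langle \cM|_{X\cap H} \rangle$.

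For full faithfulness, I would invoke \cref{fully_faithful_restriction}, after observing that $\langle \cM\rangle$ has only countably many isomorphism classes: every object is a subquotient of a finite direct sum of tensor products of $\cM$ and $\cM^\vee$, each of which is of finite length by \cref{finite_length}. The very-generic-hyperplane condition in \cref{fully_faithful_restriction} then carves out a dense set of hyperplanes $H \in \mathds{P}(\Gamma(X,\cL^m))^\vee$ (dense because the complement is a countable union of closed proper subsets and $k$ is uncountable) on which $F$ is fully faithful on $\langle \cM\rangle$.

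For the essential image to be closed under subobjects, I would apply \cref{Stalder} to $F$. The hypotheses of finite length and finite-dimensional endomorphism algebras are granted by \cref{finite_length}. The nontrivial hypothesis, that $F$ preserves semisimplicity, is exactly the output of \cref{generic_restriction_sends_ss_to_ss}: it produces an integer $m_0$ such that for every $m \ge m_0$ and every generic hyperplane $H \in \mathds{P}(\Gamma(X,\cL^m))^\vee$, the restriction to $X\cap H$ of every semisimple object of $\langle\cM\rangle$ is again semisimple. For $m \ge m_0$, the set of $H$ meeting both genericity conditions---the dense open subset coming from \cref{generic_restriction_sends_ss_to_ss} and the dense subset coming from \cref{fully_faithful_restriction}---is non-empty, and any such $H$ fulfills both requirements. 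The Tannakian criterion then concludes.

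The main obstacle is the semisimplicity-preservation step. Full faithfulness is a comparatively soft cohomological statement, obtained by combining \cref{RHom_alg_DR} with the Lefschetz-type \cref{non_char_restriction_coho_DR}. By contrast, forcing restriction to preserve semisimplicity needs the deep positivity-theoretic input of Mochizuki's \cref{Mochizuki_semi_simplicity}, and it is only this step that forces the passage to sufficiently high powers $\cL^m$ of the ample line bundle; the role of $m_0$ in the statement is entirely to accommodate this input.
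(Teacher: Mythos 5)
Your proof is correct and follows essentially the same route as the paper's: reduce to the Tannakian criterion for $F : \langle\cM\rangle \to \langle\cM|_{X\cap H}\rangle$, obtain full faithfulness from \cref{fully_faithful_restriction}, and obtain closure of the essential image under subquotients from \cref{Stalder} together with \cref{finite_length} and \cref{generic_restriction_sends_ss_to_ss}, the last of which is where Mochizuki's theorem and hence the exponent $m_0$ enter. The one place you are slightly more explicit than the paper is the countability of $\langle\cM\rangle$ required by \cref{fully_faithful_restriction}; your sketch (subquotients of the countably many $\cM^{\otimes a}\otimes(\cM^\vee)^{\otimes b}$, each of finite length) is the intended justification, though as stated it only bounds the Jordan--Hölder constituents and leaves the count of extension classes implicit.
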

\begin{proof}
%Since the base field is algebraically closed, any closed point of $U$ provides a neutralization for the tannakian category $\MIC(X,D)$ and its tannakian subcategories.
We have to show that for every  large enough integer $m$, for every very generic hyperplane $H$  of $\cL^m$, the functor $F_H : \langle \cM \rangle\to \langle \cM|_{X\cap H}\rangle$ of neutralized Tannakian categories induced by the restriction to $X\cap H$ is an equivalence.
This amounts to show that for every large enough $m$ and for every very generic $H$, the functor $F_H $ is fully faithful and its essential image is closed under taking subquotients.
From \cref{finite_length},  \cref{Stalder} applies to $F_H$ and we are left to show that for every large enough $m$ and for every very generic $H$, the functor $F_H$ is fully faithful and sends semisimple objects to semisimple objects.
This follows from   \cref{generic_restriction_sends_ss_to_ss} and \cref{fully_faithful_restriction}.
\end{proof}

\appendix

\section{Base field extension lemmas}\label{base_field_section}

The goal of this purely technical appendix is to prove various compatibilities with base field extension of some  constructions considered in this paper.

\subsection{Cohomological boundedness and base field extension}

\begin{lem}\label{coho_bound_reduction_complex}
Let $d\geq 0$ be an integer. 
Let $k$ be a field of characteristic $0$.
Then, cohomological boundedness holds in dimension $d$ over $k$ if it holds in dimension d over $\mathds{C}$.
\end{lem}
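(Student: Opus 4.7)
The plan is to descend the pair $(X,D)$ to a finitely generated subfield of $k$, transport the hypothesis over $\mathds{C}$ through a common base $k_0$, and use invariance of De Rham cohomology under base change to pull the resulting bound back. Let $(X,D)$ be projective normal crossing over $k$ of dimension $d$.

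First, I would choose a finitely generated $\mathds{Q}$-subfield $k_0\subset k$ over which $(X,D)$ descends to a pair $(X_0,D_0)$ \emph{and} over which each irreducible component of $D$ is already defined (such $k_0$ exists because $D$ has finitely many components, each spread out over a finitely generated subfield). In characteristic $0$, if $Z_0\subset X_0$ is a closed subscheme with $(Z_0)_k$ irreducible, then $Z_0$ is irreducible; consequently the pull-back map on irreducible components of $D_0$ and $D$ is a bijection, and the induced linear map $\Div(X_0,D_0)\iso \Div(X,D)$ is an isomorphism.

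Second, fix an embedding $k_0\hookrightarrow \mathds{C}$ (possible since $k_0$ is finitely generated over $\mathds{Q}$) and let $\iota : \Div(X_0,D_0)\lra \Div((X_0,D_0)_\mathds{C})$ be the linear map induced by pull-back under $(X_0)_\mathds{C}\lra X_0$. By hypothesis, cohomological boundedness holds for $(X_0,D_0)_\mathds{C}$, providing a polynomial $C_\mathds{C} : \Div((X_0,D_0)_\mathds{C})\oplus\mathds{Z}\lra \mathds{Z}$ of degree at most $d$, affine in the last variable. Define $C : \Div(X,D)\oplus \mathds{Z}\lra \mathds{Z}$ by $C(R,r):=C_\mathds{C}(\iota(R_0),r)$, where $R_0\in \Div(X_0,D_0)$ corresponds to $R$ via the isomorphism of the first step. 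Since $\iota$ is $\mathds{Z}$-linear, $C$ inherits from $C_\mathds{C}$ the degree and affine-in-the-last-variable properties.

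Third, given an arbitrary $\cM\in \MIC_r(X,D,R)$, I would find a finitely generated field $k_0\subset k_1\subset k$ over which $\cM$ descends to some $\cM_1$ over $X_{k_1}$. By the same argument as in step one, $\Div(X_0,D_0)\iso \Div(X_{k_1},D_{k_1})\iso \Div(X,D)$, and by \cref{Irr_divisor_base_change} combined with the identification of these divisor groups, $\cM_1$ lies in $\MIC_r(X_{k_1},D_{k_1},R_{k_1})$. The embedding $k_0\hookrightarrow \mathds{C}$ can be extended to $k_1\hookrightarrow \mathds{C}$ because $\mathds{C}$ has infinite transcendence degree over $k_0$, so any transcendence basis of $k_1/k_0$ can be sent to algebraically independent elements of $\mathds{C}/k_0$ and the remaining algebraic data lifted via algebraic closedness. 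Base-changing $\cM_1$ along $k_1\hookrightarrow \mathds{C}$ yields $\cM_{1,\mathds{C}}\in \MIC_r((X_0)_\mathds{C},(D_0)_\mathds{C},\iota(R_0))$ (using \cref{Irr_divisor_base_change} once more).

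Finally, applying \cref{coho_change_base_field} to the extensions $k_1\subset k$ and $k_1\subset \mathds{C}$ and invoking the bound $C_\mathds{C}$ gives
$$
\dim H^{*}(X,\DR \cM)=\dim H^{*}(X_{k_1},\DR \cM_1)=\dim H^{*}((X_0)_\mathds{C},\DR \cM_{1,\mathds{C}})\leq C_\mathds{C}(\iota(R_0),r)=C(R,r),
$$
which concludes the proof. The main obstacle in this plan is the coherent bookkeeping of the descent data: ensuring simultaneously that all irreducible components of $D$ (and not merely $D$ itself) descend to $k_0$ so the divisor groups align, and that every intermediate finitely generated extension $k_1$ of $k_0$ inside $k$ embeds in $\mathds{C}$ compatibly with the fixed embedding of $k_0$. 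Both are routine once one leverages the characteristic $0$ hypothesis and the transcendence properties of $\mathds{C}$.
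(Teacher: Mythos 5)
Your proposal is correct and follows the same essential strategy as the paper's: descend $(X,D)$ to a finitely generated subfield $k_0\subset k$, fix an embedding $k_0\hookrightarrow\mathds{C}$, descend each $\cM$ further to a finitely generated intermediate field $k_1$ with $k_0\subset k_1\subset k$ and extend the embedding to $k_1\hookrightarrow\mathds{C}$, and finally invoke the invariance of algebraic De Rham cohomology under field extension (\cref{coho_change_base_field}) together with \cref{Irr_divisor_base_change}. The only real divergence is in how the bounding divisor is transported: you align the divisor groups $\Div(X_0,D_0)\iso\Div(X,D)$ by insisting from the start that every irreducible component of $D$ descends to $k_0$, whereas the paper sidesteps this bookkeeping by replacing $R$ with the cruder bound $S:=(\fdeg R)\cdot D_\kappa$, which trivially descends (and still depends linearly on $R$, so the final polynomial retains the required degree and affineness). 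Your version is a bit more refined; the paper's is a bit more compact. Either way the argument goes through.
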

\begin{proof}
Let $(X,D)$ be a normal crossing pair over $k$.
Let $R$ be an effective divisor of $X$ supported on $D$.
Let $\kappa$ be a finitely generated extension of $\mathds{Q}$ such that $(X,D)$ descends to a normal crossing pair $(X_\kappa, D_\kappa)$.
Put $S:=(\fdeg R)\cdot D_\kappa$.
Then,  $R\leq S_k$ and $S$ depends linearly on $R$.
Choose an embedding $\kappa \to \mathds{C}$.
Let $\cM$ be an object of $\MIC(X,D,R)$.
Then, there exists an intermediate extension $\kappa \subset \kappa_{\cM}\subset k$ such that $\kappa_{\cM}/\kappa$ is finitely generated and such that $\cM$ descends to an object $\cM_{\kappa_{\cM}}$ of $\MIC(X_{\kappa_{\cM}},D_{\kappa_{\cM}})$, where $(X_{\kappa_{\cM}},D_{\kappa_{\cM}})$ is the pull-back of $(X_{\kappa},D_{\kappa})$ over $\kappa_{\cM}$.
From \cref{Irr_divisor_base_change}, we have 
$$
\Irr(X_{\kappa_{\cM}},\cM_{\kappa_{\cM}})_k\leq R\leq S_k = (S_{\kappa_{\cM}})_k
$$ 
Thus, 
$$
\Irr(X_{\kappa_{\cM}},\cM_{\kappa_{\cM}}) \leq S_{\kappa_{\cM}}
$$
Hence, $\cM_{\kappa_{\cM}}$ is an object of $\MIC(X_{\kappa_{\cM}},D_{\kappa_{\cM}},S_{\kappa_{\cM}})$.
Choose an embedding $\kappa_{\cM}\to \mathds{C}$ over $\kappa$.
Observe that the complex  variety 
$$
\mathds{C}\times_{\kappa_{\cM}} X_{\kappa_{\cM}}\simeq \mathds{C}\times_{\kappa} X_{\kappa}
$$
 does not depend on $\kappa_{\cM}$, and thus does not depend on $\cM$.
 Furthermore, $\mathds{C}\times_{\kappa} D_{\kappa}$ is a normal crossing divisor and $\mathds{C}\times_{\kappa} X_{\kappa}$ is projective over $\mathds{C}$ if $X$ is projective over $k$.
 Let us assume that $X$ is projective of dimension $d$ over $k$.
By assumption, let $C : \Div(\mathds{C}\times_{\kappa} X_{\kappa},\mathds{C}\times_{\kappa} D_{\kappa})\oplus \mathds{Z}\to \mathds{Z} $ be a polynomial of degree at most $d$ affine in the last variable such that cohomological boundedness holds for $(\mathds{C}\times_{\kappa} X_{\kappa},\mathds{C}\times_{\kappa} D_{\kappa})$ with bound $C$.
From  \cref{Irr_divisor_base_change}, we know that $(\cM_{\kappa_{\cM}})_{\mathds{C}}$ is an object of $\MIC(\mathds{C}\times_{\kappa} X_{\kappa},\mathds{C}\times_{\kappa} D_{\kappa},S_{\mathds{C}})$ with $S_{\mathds{C}}=(\fdeg R)\cdot (\mathds{C}\times_{\kappa} D_{\kappa})$.
From \cref{coho_change_base_field}, we deduce
$$
\dim H^{\ast}(X,\DR \cM)\leq C(S_{\mathds{C}},r)
$$
and the proof of \cref{coho_bound_reduction_complex} is complete.
\end{proof}

\subsection{Partial discrepancy and base field extension}
The goal of this subsection is to prove \cref{Change_base_field_integral}.
\cref{Change_base_field_integral}  is used to reduce the computation of the characteristic cycle of a connection on a surface to the case where the base field is $\mathds{C}$.

\begin{lem}\label{Change_base_field}
Let $(X,D)$ be a geometrically connected normal crossing surface over $k$.
Let $k \subset K$ be a field extension.
Let $w$ be a divisorial valuation on $X$ and let $v$ be an extension of $w$ on $X_K$.
Let $\cM$ be an object of $\MIC(X,D)$.
Then, 
$$
(\Irr \cM_K)(v)=(\Irr \cM)(w) 
$$
%In particular, for every modification $f : Y\lra X$ such that the centre of $v$ on $Y$ is a divisor, we have 
%$$
%((\Irr \cM_K)(Y_K))(v)=((\Irr \cM)(Y))(w)
%$$
\end{lem}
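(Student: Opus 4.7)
The plan is to present $w$ geometrically, using \cref{Zariski}, as the divisorial valuation associated with an exceptional divisor on a modification of $X$; base-change that picture to $K$; and finally invoke \cref{Irr_divisor_base_change}.

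First, I would apply \cref{Zariski} to produce a chain of blow-ups $\mathbf{p}: X' \to X$ corresponding to $w$, with exceptional divisor $E := E_\mathbf{p}$, so that $w$ is the divisorial valuation of the generic point $\eta$ of $E$. Base changing, I obtain a modification $\mathbf{p}_K: X'_K \to X_K$. Since $E$ is a smooth connected curve over $k$, the base change $E_K$ is a reduced smooth divisor of $X'_K$; I write $E_K = E'_1 + \cdots + E'_s$ for its decomposition into irreducible components.

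Next, I would identify $v$ as the divisorial valuation of some $E'_i$. Since $v|_{k(X)} = w$ and $\cO_w = \cO_{X', \eta}$, the valuation ring $\cO_v$ contains $\cO_{X', \eta}$; by functoriality of centers under restriction of valuations, the center of $v$ on $X'_K$ maps to $\eta$ under $X'_K \to X'$, hence lies in the fiber $\Spec(\kappa(\eta) \otimes_k K)$. An explicit computation identifies this fiber with the disjoint union of the generic points of the $E'_i$: indeed, since the affine ring $k[E]$ localizing to $\kappa(\eta)$ is a Dedekind domain, the minimal primes of $k[E] \otimes_k K$ correspond to the components $E'_i$, and inverting the nonzero elements of $k[E]$ localizes at precisely those minimal primes. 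Hence $v$ is the divisorial valuation of some $E' := E'_i$.

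With this in hand, both sides of the claimed equality become explicit multiplicities of honest divisors: $(\Irr \cM)(w) = m(E, \Irr(X', p^+\cM))$ and $(\Irr \cM_K)(v) = m(E', \Irr(X'_K, p_K^+\cM_K))$. I would then apply \cref{Irr_divisor_base_change} to the pair $(X', (\mathbf{p}^{-1} D)_{\red})$ and the connection $p^+\cM$: this yields the equality $\Irr(X'_K, p_K^+\cM_K) = \Irr(X', p^+\cM)_K$ as divisors on $X'_K$. Since $E_K = \sum_i E'_i$ is reduced, the pullback of $E$ to $X'_K$ is $\sum_i E'_i$ with multiplicity one on each component, so the multiplicity of $E'$ in $\Irr(X', p^+\cM)_K$ equals the multiplicity of $E$ in $\Irr(X', p^+\cM)$, producing the desired equality.

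The main delicate point I expect is the identification of $v$ as the valuation of a component of $E_K$, which crucially uses the $k$-smoothness of $E$ to ensure that $E_K$ remains reduced and to get the clean description of $\kappa(\eta) \otimes_k K$ above. Once this geometric bookkeeping is settled, the result reduces cleanly to the already-established compatibility statement \cref{Irr_divisor_base_change}.
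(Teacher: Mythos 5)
Your proof follows the same overall plan as the paper's: pass to a modification $X'\to X$ on which $w$ is the valuation of a prime divisor $E$, base change to $K$, identify the center of $v$ on $X'_K$, and compare multiplicities. The difference is cosmetic: you invoke \cref{Zariski} (any modification realizing $w$ as a divisor would do) and package the last step via \cref{Irr_divisor_base_change}, whereas the paper directly observes that a uniformizer at $\eta_E$ pulls back to a uniformizer at the generic point of a component of $E_K$.

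However, your explicit justification of the identification step is incorrect, and this is where the argument breaks. You claim that inverting $k[E]\setminus\{0\}$ in $k[E]\otimes_k K$ leaves only minimal primes, so that $\Spec(\kappa(\eta)\otimes_k K)$ consists exactly of the generic points of the $E'_i$. This fails whenever $K$ contains elements transcendental over $k$. For instance, take an affine chart with $k[E]=k[x]$, so $k[E]\otimes_k K = K[x]$, and let $\pi\in K$ be transcendental over $k$. The height-one prime $(x-\pi)$ of $K[x]$ meets $k[x]$ only in $(0)$, so it survives in $\kappa(\eta)\otimes_k K$. Geometrically, the closed point $(\pi,0)$ of $E_K$ lies over $\eta$, and the valuation of the exceptional divisor of the blow-up of $X'_K$ at $(\pi,0)$ is a genuine divisorial extension of $w$ whose center on $X'_K$ is that closed point, not a component of $E_K$ (a direct check: for $f=y^ng/h\in k(x,y)$ with $g,h$ coprime to $y$, one has $g(\pi,0)\neq 0$ by transcendence, so $v(f)=n=w(f)$). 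In this case your reduction to \cref{Irr_divisor_base_change} does not apply. To be fair, the paper's own proof asserts without justification that $v$ is centered at a component of $E_K$, so the gap is already latent there; you have simply made it visible by trying to prove it. The lemma remains true, but the missing case needs to be handled separately: when the center of $v$ on $X'_K$ is a closed point $P$ over $\eta$, that point necessarily avoids $\TL(p_K^+\cM_K)=\TL(p^+\cM)_K$ (since turning points are closed points of $X'$), hence $p_K^+\cM_K$ has good formal structure at $P$; then \cref{bdiv_and_good_dec} identifies $(\Irr\cM_K)(v)$ with the multiplicity of the honest Cartier divisor $\Irr(X'_K,p_K^+\cM_K)$ at $v$, and since $P$ is a smooth point of the pole divisor this multiplicity is again $\irr(E,p^+\cM)=(\Irr\cM)(w)$.
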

\begin{proof}
Let $f : Y\to X$ be a modification of $X$ such that $w$ is centred at a divisor $E$ of $Y$.
Then, $v$ is centred at an irreducible component $F$ of $E_K$.
Let $\eta_E$ be the generic point of $E$ and let $\eta_F$ be the generic point of $F$.
Observe that a uniformizer of $\mathcal{O}_{Y,\eta_E}$ pulls-back along $Y_K\to Y$ to a uniformizer of $\mathcal{O}_{Y_K,\eta_F}$.
Hence, $\irr(F, f_K^+\cM_K)=\irr(E,f^+\cM)$ and the equality  $(\Irr \cM_K)(v)=(\Irr \cM)(w)$ follows. 
\end{proof}

\begin{lem}\label{Change_base_field and partial discrepancy of irregularity}
Let $(X,D)$ be a geometrically connected normal crossing surface over $k$.
Let $k \subset K$ be a field extension.
Let $w$ be a divisorial valuation on $X$ and let $v$ be an extension of $w$ on $X_K$.
Let $\cM$ be an object of $\MIC(X,D)$.
Then, 
$$
(\delta \Irr \cM_K)(v)=(\delta\Irr \cM)(w)
$$
\end{lem}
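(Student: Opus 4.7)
The plan is to reduce to \cref{computation_discrepancy} applied on $(X_K, D_K)$ via the chain of blow-up attached to $w$ by Zariski's \cref{Zariski}. When the chain has length $0$, that is when $w$ is centred at a divisor of $X$, the valuation $v$ is centred at an irreducible component of its base change, which is a divisor of $X_K$, so both $(\delta \Irr \cM)(w)$ and $(\delta \Irr \cM_K)(v)$ vanish by definition. Otherwise let $\mathbf{p} : X' = X_n \to \cdots \to X_0 = X$ have length $n \geq 1$, with last centre $P := P_{n-1} \in X_{n-1}$ and last exceptional divisor $E_{\mathbf{p}}$. I would first observe that $v$ is centred on some irreducible component $F$ of $(E_{\mathbf{p}})_K$ in $(X_n)_K$. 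Since the map $(X_n)_K \to X_{n-1,K}$ is the blow-up of the zero-dimensional subscheme $(P)_K$ and since $F$ is connected, its image $Q$ is a single closed point of $(P)_K$. Writing the blow-up at $(P)_K$ as the blow-up at $Q$ followed by the blow-up at the remaining (disjoint) closed points yields a factorization $(X_n)_K \to Y' \to X_{n-1,K}$ where $Y' \to X_{n-1,K}$ is the blow-up at the single point $Q$ and where $(X_n)_K \to Y'$ is an isomorphism in a neighbourhood of the exceptional divisor $F'$ of $Y' \to X_{n-1,K}$. Consequently, $v$ is centred on $F'$.

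This places us in the hypotheses of \cref{computation_discrepancy} applied on $(X_K, D_K)$ to $\Irr \cM_K$, which is Cartier by \cref{main_thm_Ked_III}, with the modification $f : X_{n-1,K} \to X_K$ and the blow-up $Y' \to X_{n-1,K}$ at $Q$. Together with the definition of $(\delta \Irr \cM)(w)$ in terms of $\mathbf{p}$, the equality to be proven reduces to two verifications. Writing $\pi : X_{n-1} \to X$, the first one is that $Q$ is a singular point of the support of $f^{*} D_K = (\pi^{*} D)_K$ if and only if $P$ is a singular point of the support of $\pi^{*} D$; this follows from the fact that the Jacobian ideal of a reduced divisor on a smooth variety commutes with the field extension $X_{n-1,K} \to X_{n-1}$, so the smooth locus of $(\pi^{*}D)_{\red}$ base-changes to the smooth locus of $(\pi^{*}D)_{K,\red}$. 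The second verification is that, in the non-singular case, $(\Irr \cM_K(X_{n-1,K}))(v) = (\Irr \cM(X_{n-1}))(w)$; this follows from \cref{Irr_divisor_base_change} applied to $(X_{n-1}, (\pi^{*}D)_{\red})$ and $\pi^{+}\cM$, which yields $\Irr \cM_K(X_{n-1,K}) = (\Irr \cM(X_{n-1}))_{K}$ as divisors, together with the observation that base change of a divisor on a smooth variety preserves the multiplicity of the component through any given point. Combined with the preceding \cref{Change_base_field} giving $(\Irr \cM_K)(v) = (\Irr \cM)(w)$, this yields the desired equality.

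The one step that deserves care is the construction of the factorization $(X_n)_K \to Y' \to X_{n-1,K}$ and the control of the centre of $v$ on $Y'$; once this is settled, the rest is a formal unwinding of the definition of $\delta$ together with the base change compatibility of the irregularity $b$-divisor already established in the previous lemma.
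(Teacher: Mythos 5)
Your proof is correct and follows essentially the same route as the paper's: reduce to \cref{computation_discrepancy} applied on $X_K$ with an appropriate choice of model and blow-up, verify that the singularity of the relevant point of the pulled-back divisor is preserved under the base change, and compare multiplicities of the Cartier $b$-divisor components. The differences are in bookkeeping rather than in the argument. The paper works with an arbitrary modification $Y \to X$ on which $w$ is centred at a closed point whose blow-up produces the divisor of $w$ (which exists by \cref{minimality_p}), whereas you use the penultimate model $X_{n-1}$ from Zariski's chain directly; for \cref{computation_discrepancy} this is immaterial. You are more explicit than the paper in factoring the blow-up of $X_{n-1,K}$ at the zero-dimensional subscheme $(P)_K$ as a blow-up at $Q$ followed by blow-ups at the remaining conjugate points — the paper introduces $Y'_K = \mathrm{Bl}_S(Y_K)$ and implicitly passes to the blow-up at $Q$ alone, which is what \cref{computation_discrepancy} needs. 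Finally, for the multiplicity comparison you invoke \cref{Irr_divisor_base_change} on $X_{n-1}$ and unwind the definition of the Cartier $b$-divisor component directly, while the paper attributes both equalities to \cref{Change_base_field}; the latter works after observing that the divisor of the unique component of $\Irr\cM(X_{n-1})$ through $P$ gives a valuation whose extension to $X_K$ is the valuation of the unique component through $Q$, so both versions end up doing the same computation. Your singular-locus verification via regularity descent/ascent along flat local homomorphisms with field fibers is cleaner than the paper's phrasing ("Since $X_K\lra X$ is étale") since $\Spec K \to \Spec k$ need not be étale, though it is geometrically regular in characteristic $0$, which is what is actually used. One cosmetic remark: you mention that $\Irr\cM_K$ is Cartier by \cref{main_thm_Ked_III}, but \cref{computation_discrepancy} only requires it to be a $b$-divisor of $(X_K,D_K)$, so this hypothesis is unnecessary (though of course true).
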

\begin{proof}
If the centre of $w$ on $X$ is a divisor, then the centre of $v$ on $X_K$ is also a divisor.
In that case, $(\delta \Irr \cM_K)(v)=(\delta\Irr \cM)(w)=0$.
Hence, we can suppose that $w$ is centred at a closed point of $X$.
Then, there exists a modification  $p : Y\to X$ of $X$ such that $w$ is centred at a closed point $P$ of $Y$ and $w$ is centred at the exceptional divisor $E$ of the blow-up $Y'\to Y$ of $Y$ at $P$.
Then, $v$ is centred at a closed point $Q$ of $Y_K$ lying over $P$.
Let $S\subset Y_K$ be the set of closed points of $Y_K$ lying over $P$.
Let $Y_K'$ be the blow-up of $Y_K$ at $S$.
Since blowing-up commutes with flat base change, there is a canonical cartesian diagram of varieties
$$
\xymatrix{
Y_K' \ar[r]  \ar[d]  &  Y'   \ar[d]  \\
           Y_K    \ar[r]           &        Y  
}
$$
Hence, $v$ is centred at the exceptional divisor $F$ of $Y_K'$ lying over $Q$.
Thus, \cref{computation_discrepancy} applies to $v$ and $w$.
Consider the cartesian diagram 
$$
\xymatrix{
Y_K \ar[r]  \ar[d]_-{f_K}  &  Y   \ar[d]^-{f}  \\
           X_K    \ar[r]           &       X  
}
$$
Since $X_K\to X$ is étale, so is the map $f^*_K D_K\to f^*D$.
In particular the point $Q$ is a singular point of $f^*_K D_K$ if and only if $P$ is a singular point of $f^*D$.
Thus, if $Q$ is a singular point of $f^*_K D_K$, we have 
$$
(\delta \Irr \cM_K)(v)=(\delta\Irr \cM)(w)=0
$$
Otherwise, 
\begin{align*}
(\delta \Irr \cM_K)(v) & = ((\Irr \cM_K)(Y_K))(v)-(\Irr \cM_K)(v)  \\
                                    & =((\Irr \cM)(Y))(w)-(\Irr \cM)(w)\\
                                    & =(\delta\Irr \cM)(w)
\end{align*}
where the second equality follows from \cref{Change_base_field}.
\cref{Change_base_field and partial discrepancy of irregularity} is thus proved.
\end{proof}

\begin{lem}\label{Change_base_field_integral_inequality}
Let $(X,D)$ be a geometrically connected normal crossing surface over $k$.
Let $k \subset K$ be a field extension.
Let  $P$ be any point of $X$. 
Let $Q$ be a   point of $X_K$ lying over $P$. 
Then, 
$$
\int_P \delta\Irr \cM\leq \int_Q \delta\Irr \cM_K
$$
\end{lem}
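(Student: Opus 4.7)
The plan is to build, for every divisorial valuation $w$ on $X$ centered at $P$, an extension $v$ of $w$ to $X_K$ centered at $Q$, in such a way that the assignment $w\mapsto v$ is injective. Granted this, \cref{Change_base_field and partial discrepancy of irregularity} gives $(\delta\Irr\cM_K)(v)=(\delta\Irr\cM)(w)$, and the effectivity of $\delta\Irr\cM_K$---which holds because $\Irr\cM_K$ is a nef Cartier $b$-divisor by \cref{main_thm_Ked_III}, so that \cref{delta_is_eeffective} applies---yields
$$
\int_P \delta\Irr\cM = \sum_{w}(\delta\Irr\cM)(w) = \sum_{w}(\delta\Irr\cM_K)(v_w) \leq \sum_{v}(\delta\Irr\cM_K)(v) = \int_Q \delta\Irr\cM_K,
$$
where in the middle $w$ runs over the divisorial valuations of $X$ centered at $P$ and $v$ over those of $X_K$ centered at $Q$.

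To construct $v$, I will choose a modification $f : Y\lra X$ such that $w$ corresponds to an irreducible divisor $E\subset Y$. Base change to $K$ produces a modification $f_K : Y_K\lra X_K$, and $E_K:=E\times_k K$ is a divisor on $Y_K$ whose irreducible components correspond to divisorial valuations of $X_K$ whose restrictions to $k(X)$ coincide with $w$. Since the set-theoretic image $f(E)$ equals $\{P\}$, the divisor $E_K$ lies over the fiber $P_K=\Spec(k(P)\otimes_k K)$, which in characteristic $0$ is a finite disjoint union of closed points of $X_K$. The decomposition
$$
E_K = \coprod_{Q'\in P_K} E\times_{k(P)}\kappa(Q')
$$
exhibits a non-empty one-dimensional subscheme $E\times_{k(P)}\kappa(Q)$ of $E_K$ sitting over $Q$. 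Taking $v$ to be the divisorial valuation associated to any irreducible component $F$ of this subscheme furnishes the sought extension of $w$ centered at $Q$.

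Injectivity of $w\mapsto v$ is then immediate, since $v|_{k(X)}$ recovers $w$ up to equivalence. The only substantive point is the identification of the fiber $E_K\cap f_K^{-1}(Q)$ with $E\times_{k(P)}\kappa(Q)$, which I expect to be the main (mild) obstacle; once verified using the étale nature of $X_K\lra X$ over characteristic-zero closed points, the combination of injectivity and effectivity closes the argument as displayed above.
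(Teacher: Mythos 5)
Your proof is correct and follows the same strategy as the paper's (which is stated in two lines: extend each divisorial valuation $w$ centered at $P$ to one centered at $Q$, then apply \cref{Change_base_field and partial discrepancy of irregularity}). You simply spell out the two implicit ingredients: the explicit construction of the extension via base change of a modification witnessing $w$, and the use of effectivity of $\delta\Irr\cM_K$ (from \cref{delta_is_eeffective}) together with injectivity of $w\mapsto v_w$ to convert the termwise equality into the claimed inequality.
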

\begin{proof}
Every divisorial valuation on $X$ centred at $P$ admits an extension  to a divisorial valuation on $X_K$ centred at $Q$.
Then, \cref{Change_base_field_integral_inequality} follows from \cref{Change_base_field and partial discrepancy of irregularity}.
\end{proof}

\begin{lem}\label{Change_base_field_integral_k_alg_closed}
Let $(X,D)$ be a  connected normal crossing surface over $k$.
Assume that $k$ is algebraically closed.
Let $k \subset K$ be a field extension.
Let  $P$ be a  rational point of $X/k$. 
Let $Q$ be the unique rational point of $X_K/K$ lying over $P$. 
Then
$$
\int_P \delta\Irr \cM= \int_Q \delta\Irr \cM_K
$$
\end{lem}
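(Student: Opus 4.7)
The plan is to fix a common Kedlaya--Mochizuki resolution and then match the supports of $\delta\Irr\cM$ at $P$ and of $\delta\Irr\cM_K$ at $Q$ valuewise using \cref{Change_base_field and partial discrepancy of irregularity}. I would start by choosing $p : Y \to X$ a Kedlaya--Mochizuki resolution of $\cM$ as in \cref{KM_theorem}. Invariance of the turning locus under regular base change (\cref{regular_base_change}) makes the base change $p_K : Y_K \to X_K$ a Kedlaya--Mochizuki resolution of $\cM_K$. Consequently, \cref{bdiv_and_good_dec} places $\Irr\cM$ and $\Irr\cM_K$ in the images of $\Cart(Y)\to \bDiv(X)$ and $\Cart(Y_K)\to \bDiv(X_K)$, respectively, so \cref{image_delta_of_nef_Cartier} becomes applicable on each side.

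I would then identify the set $S_P$ of divisorial valuations on $X$ centered at $P$ that contribute to $\delta\Irr\cM$ with the set of irreducible components of $p^{-1}(P)$: each such component is an exceptional divisor of $p$, and because every blow-up in $p$ is centered above $\TL(\cM)\subset D$, an induction blow-up by blow-up shows that each exceptional divisor acquires positive multiplicity in the corresponding total transform of $D$, hence appears as a component of $p^{\ast}D$. Define $S_Q$ analogously on $X_K$ via $p_K$. Since $P$ is $k$-rational and $Q$ is the unique $K$-rational point above $P$, we have $p_K^{-1}(Q) = p^{-1}(P) \times_k K$; since $k$ is algebraically closed, every irreducible component $E$ of $p^{-1}(P)$ is geometrically irreducible, so $E_K$ remains irreducible. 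This yields a bijection $S_P \leftrightarrow S_Q$, $E \mapsto E_K$, or equivalently $w \mapsto v$ where $v$ is the unique extension of $w$ to a divisorial valuation on $k(X_K)$ (uniqueness again coming from geometric irreducibility of $E$).

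Applying \cref{Change_base_field and partial discrepancy of irregularity} pair by pair gives $(\delta\Irr\cM_K)(v) = (\delta\Irr\cM)(w)$, and summing over the bijection $S_P \leftrightarrow S_Q$ produces the desired equality. The main obstacle is the correct identification of the supports $S_P$ and $S_Q$, and in particular the inductive verification that every exceptional divisor of $p$ sitting above $P$ really is a component of $p^{\ast}D$ with positive multiplicity; once that is established, the conclusion is essentially formal. An alternative route is to keep \cref{Change_base_field_integral_inequality} for one direction and prove the reverse inequality via the same bijection, but the summand-matching argument yields both directions at once.
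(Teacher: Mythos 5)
Your argument is essentially the paper's proof: pick a modification $f : Y\to X$ with $f^{+}\cM$ of good formal structure, observe $f_K^{+}\cM_K$ also has good formal structure, apply \cref{image_delta_of_nef_Cartier} to localize the support of each partial discrepancy, use geometric irreducibility of components over the algebraically closed $k$ to get a bijection between the two supports over $P$ and $Q$, and conclude valuation-by-valuation with \cref{Change_base_field and partial discrepancy of irregularity}. The one point I'd adjust: what you flag as the main obstacle --- showing that each exceptional curve over $P$ is a component of $p^{\ast}D$ --- requires no induction and is not really an obstacle; the support of $p^{\ast}D$ is $p^{-1}(D)$, which contains $p^{-1}(P)$ whenever $P \in D$ (and if $P \notin D$ both integrals vanish). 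The paper sidesteps this by working directly with the set of valuations whose center on $Y$ is a component of $f^{\ast}D$, then restricting to those centered at $P$; this also painlessly handles the case where $p^{-1}(P)$ is a single point rather than a curve (cf.\ \cref{point_or_pure_curve}), which your identification of $S_P$ with the set of all irreducible components of $p^{-1}(P)$ would read slightly wrong.
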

\begin{proof}
Let $f: Y\to X$ be a modification such that $f^+\cM$ has good formal structure. 
In particular, $f^+_K\cM_K$ has good formal structure. 
Thus, $\Irr \cM=\Irr(Y,f^+\cM)$ in $\bDiv(X)$ and  $\Irr \cM_K=\Irr(Y_K,f^+_K\cM_K)$ in $\bDiv(X_K)$.
Hence, \cref{image_delta_of_nef_Cartier} ensures that  $\delta\Irr \cM$ is supported on the set $A$ of divisorial valuations  associated with the irreducible components  of $f^*D$.
Similarly, $\delta\Irr \cM_K$ is supported on the set $B$ of divisorial valuations associated with the irreducible components of $f^*_K D_K$.
Let $F$ be an irreducible component of $f^*D$.
Since $k$ is algebraically closed, $F$ is geometrically irreducible.
Thus,  $F_K$ is irreducible.
Hence, the map $f^*_K D_K\to f^*D$ induces a bijection on the sets of irreducible components.
Thus, the map $B\to A$ induced by restriction is bijective.
Let $A_P$ be the subset of $A$ of valuations centred at $P$ on $X$.
Let $B_Q$ be the subset of $B$ of valuations centred at $Q$ on $X_K$.
Then, the bijection $B\to A$ induces a bijection $B_Q\to A_P$.
Hence, 
$$
\int_P \delta\Irr \cM = \sum_{w\in A_P}(\delta\Irr \cM )(w)= \sum_{v\in B_Q}(\delta\Irr \cM_K )(v) =  \int_Q \delta\Irr \cM_K
$$
where the second equality follows from \cref{Change_base_field and partial discrepancy of irregularity}.
\cref{Change_base_field_integral} is thus proved.
\end{proof}

\begin{lem}\label{Change_base_field_integral}
Let $(X,D)$ be a geometrically connected normal crossing surface over $k$.
Let $k \subset K_1$ and  $k \subset K_2$ be field extensions where $K_1$ and $K_2$ are algebraically closed.
Let  $P$ be a  closed point of $X$. 
For $i=1,2$, let $Q_i$ be a rational point of $X_{K_i}/K_i$ lying over $P$.
Then, 
$$
\int_{Q_1} \delta\Irr \cM_{K_1}= \int_{Q_2} \delta\Irr \cM_{K_2}
$$
\end{lem}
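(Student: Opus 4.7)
The plan is to reduce the statement to \cref{Change_base_field_integral_k_alg_closed} by constructing a single algebraically closed field $L$ extending both $K_1$ and $K_2$ in such a way that the two points $Q_1$ and $Q_2$ lift to a common $L$-rational point $R$ of $X_L$. Once such an $L$ and $R$ are in hand, two applications of \cref{Change_base_field_integral_k_alg_closed} (once with base field $K_1$, once with base field $K_2$) give the desired equality.

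For the construction of $L$, I would exploit the fact that the $K_i$-rational point $Q_i$ of $X_{K_i}$ lying over $P$ is equivalent to the datum of a $k$-algebra embedding $\varphi_i : \kappa(P) \to K_i$, where $\kappa(P)$ denotes the residue field of $P$. Form the tensor product $A := K_1 \otimes_{\kappa(P)} K_2$ with respect to $\varphi_1$ and $\varphi_2$; this ring is nonzero since every field extension is faithfully flat. Pick a maximal ideal $\mathfrak m$ of $A$ and let $L$ be an algebraic closure of $A/\mathfrak m$. The canonical maps $K_i \to A \to A/\mathfrak m \hookrightarrow L$ produce $k$-algebra embeddings $\psi_i : K_i \to L$ such that $\psi_1 \circ \varphi_1 = \psi_2 \circ \varphi_2$ as maps $\kappa(P) \to L$. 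Since each $K_i$ is already algebraically closed, $L$ is an algebraically closed extension of $K_i$ along $\psi_i$. The common composite $\kappa(P) \to L$ determines a canonical $L$-rational point $R$ of $X_L$ lying over $P$, and by construction $R$ lies over $Q_i$ in $X_L = (X_{K_i})_L$ for both $i=1,2$; furthermore, since $Q_i$ is $K_i$-rational, $R$ is the unique $L$-rational point of $X_L$ above $Q_i$.

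Now the pair $(X_{K_i}, D_{K_i})$ is a connected normal crossing surface over the algebraically closed field $K_i$ (connectedness comes from the geometric connectedness of $(X,D)$ over $k$), and $\cM_{K_i}$ is an object of $\MIC(X_{K_i}, D_{K_i})$. Applying \cref{Change_base_field_integral_k_alg_closed} to this data and to the extension $K_i \subset L$ yields
$$
\int_{Q_i} \delta \Irr \cM_{K_i} = \int_R \delta \Irr \cM_L
$$
for $i = 1, 2$, and comparing the two identities finishes the proof.

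There is no substantial obstacle, as the content is already packaged in \cref{Change_base_field_integral_k_alg_closed}; the only delicate point is to align the embeddings $\psi_i$ on $\kappa(P)$, which the tensor product construction handles automatically.
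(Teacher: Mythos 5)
Your argument is correct, and it takes a genuinely different route from the paper's proof. The paper works \emph{downwards}: it passes to the algebraic closures $k_1,k_2$ of $k$ inside $K_1,K_2$, notes that $k_1\cong k_2$ as extensions of $k$, and then reconciles the resulting two rational points of $X_{k_1}$ lying over $P$ by a Galois automorphism $\sigma\in\gal(k_1/k)$, using the transport-of-structure identity $\int_{\sigma(P_1)}\delta\Irr\cM_{k_1}=\int_{P_1}\delta\Irr\sigma^+\cM_{k_1}=\int_{P_1}\delta\Irr\cM_{k_1}$ (the last equality because $\cM$ descends to $k$). You instead work \emph{upwards}: the identification of $Q_i$ with a $k$-embedding $\varphi_i:\kappa(P)\to K_i$ lets you form the nonzero ring $K_1\otimes_{\kappa(P)}K_2$, and any quotient field (algebraically closed up) produces a common extension $L$ of $K_1$ and $K_2$ together with a compatible point $R$ above both $Q_1$ and $Q_2$; two applications of \cref{Change_base_field_integral_k_alg_closed} for the extensions $K_i\subset L$ then give the claim. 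Your approach bypasses the Galois-invariance step entirely, reusing \cref{Change_base_field_integral_k_alg_closed} as a black box twice instead of once, and the amalgamation argument is arguably cleaner and more self-contained; the paper's proof, in exchange, makes explicit which Galois symmetry underlies the independence of the choice of $Q_i$, which is the conceptual point recorded in \cref{invariance_rational_points}. Both arguments are complete.
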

\begin{proof}
For $i=1,2$, let $k_i$ be the algebraic closure of $k$ in $K_i$.
Since $K_1$ and $K_2$ are algebraically closed, $k_1$ and $k_2$ are isomorphic over $k$.
Let $P_i$ be the image of $Q_i$ via $X_{K_i}\to X_{k_i}$.
Consider the following commutative diagram over $k$
$$
\xymatrix{
Q_1   \ar[r]  \ar[d]  & P_1  \ar[d]       \ar[r]      &        P    \ar[d]          &  P_2   \ar[d]       \ar[l]       &    Q_2   \ar[d]    \ar[l]  \\
           X_{K_1}    \ar[r]     & X_{k_1}     \ar[r]     &        X     &      X_{k_2}    \ar[l]        & X_{K_2}    \ar[l]   
}
$$
Choose an isomorphism between $k_1$ and $k_2$ over $k$.
Let $P_{21}$ be the pull-back of $P_2$ along the induced isomorphism $X_{k_1}\to X_{k_2}$.
Then, 
$$
\int_{P_2} \delta\Irr \cM_{k_2}= \int_{P_{21}} \delta\Irr \cM_{k_1}
$$
Since $P_{21}$ and $P_1$ are two rational points of $X_{k_1}/k_1$ lying over $P$, there exists an automorphism $\sigma$ of $k_1$ over $k$ such that $\sigma(P_{1})=P_{21}$.
Thus, 
$$
 \int_{P_{21}} \delta\Irr \cM_{k_1} =  \int_{P_1} \delta\Irr \sigma^+\cM_{k_1}= \int_{P_{1}} \delta\Irr \cM_{k_1} 
$$
From \cref{Change_base_field_integral_k_alg_closed}, we deduce
$$
\int_{Q_{1}} \delta\Irr \cM_{K_1} =\int_{P_{1}} \delta\Irr \cM_{k_1} =\int_{P_2} \delta\Irr \cM_{k_2}=\int_{Q_2} \delta\Irr \cM_{K_2}
$$
and \cref{Change_base_field_integral} is proved.
\end{proof}

\begin{rem}\label{invariance_rational_points}
Applying \cref{Change_base_field_integral} to the case where $K_1=K_2=K$, we conclude that the integral $\int_{Q} \delta\Irr \cM_{K}$ does not depend on the choice of a rational point $Q$ of $X_{K}/K$ lying over $P$.
\end{rem}

\begin{lem}\label{Change_base_field_integral_general}
Let $(X,D)$ be a geometrically connected normal crossing surface over $k$.
Let $k \subset K_1$ and  $k \subset K_2$ be field extensions where $K_1$ and $K_2$ are algebraically closed.
Let  $Z$ be a  locally closed subset of $X$. 
Then
$$
\int_{Z_{K_1}} \delta\Irr \cM_{K_1}= \int_{Z_{K_2}} \delta\Irr \cM_{K_2}
$$
\end{lem}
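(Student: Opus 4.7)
The plan is to reduce to \cref{Change_base_field_integral} by decomposing each of the two integrals as a weighted sum over the closed points of $Z$, the weights being the extension degrees $[k(P):k]$. The crucial input that allows this reduction is the invariance of $\int_Q \delta \Irr \cM_K$ under a change of rational point $Q$ lying above a fixed closed point $P$ of $X$, as recorded in \cref{invariance_rational_points}.

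Concretely, I would proceed as follows. By \cref{irr_div_finitely_supported}, the $b$-divisor $\delta \Irr \cM_{K_i}$ has finite support, and by construction (chains of length $0$) it vanishes on every divisorial valuation whose centre on $X_{K_i}$ is a divisor. Hence only divisorial valuations centred at closed points of $Z_{K_i}$ contribute to $\int_{Z_{K_i}} \delta \Irr \cM_{K_i}$, giving
$$
\int_{Z_{K_i}} \delta \Irr \cM_{K_i} = \sum_{Q} \int_Q \delta \Irr \cM_{K_i},
$$
where $Q$ ranges over the finite set of closed points of $X_{K_i}$ lying in $Z_{K_i}$ at which the support meets the fibre. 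Since $K_i$ is algebraically closed and $X$ is of finite type over $k$, every such $Q$ is $K_i$-rational, and the $K_i$-rational points of $X_{K_i}$ above a fixed closed point $P$ of $X$ are in bijection with $\Hom_k(k(P), K_i)$; in characteristic zero this set has cardinality $N(P) := [k(P):k]$, which is independent of $i$. Because $Z_{K_i}$ is the preimage of $Z$ under $X_{K_i} \to X$, a rational point above $P$ lies in $Z_{K_i}$ if and only if $P$ lies in $Z$. Grouping by the image $P$ of $Q$ in $X$ and invoking \cref{invariance_rational_points} to collapse the $N(P)$ equal inner contributions, I would obtain
$$
\int_{Z_{K_i}} \delta \Irr \cM_{K_i} = \sum_{\substack{P \in Z \\ P \text{ closed}}} N(P) \cdot \int_{Q_P^{(i)}} \delta \Irr \cM_{K_i}
$$
for an arbitrary choice of rational point $Q_P^{(i)} \in X_{K_i}$ above each $P$. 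Finally, \cref{Change_base_field_integral} applied to each such $P$ says that the inner integral is independent of $i \in \{1,2\}$, and the same is true of $N(P)$; the claimed equality then follows.

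I do not foresee any genuine obstacle here: \cref{Change_base_field_integral} and \cref{invariance_rational_points} already contain the non-trivial content, and what is left is the elementary combinatorial re-assembly described above. The one subtlety to keep in mind is that the double sum over $(P,Q)$ must be unambiguously finite so as to allow re-grouping at will; this is guaranteed by the finiteness of the support of $\delta \Irr \cM_{K_i}$.
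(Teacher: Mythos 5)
Your overall route is the same as the paper's: decompose the integral over $Z_{K_i}$ as a weighted sum over closed points $P$ of $Z$, with weight $[k(P):k]$, using \cref{invariance_rational_points} to identify the contributions of the different $K_i$-rational points above a fixed $P$, and then apply \cref{Change_base_field_integral} termwise. However, there is a genuine gap in the regrouping step. Your formula
$$
\int_{Z_{K_i}} \delta\Irr\cM_{K_i} \;=\; \sum_{\substack{P\in Z \\ P \text{ closed}}} N(P)\cdot\int_{Q_P^{(i)}}\delta\Irr\cM_{K_i}
$$
tacitly assumes that every closed point $Q$ of $X_{K_i}$ contributing to the left-hand side lies above a \emph{closed} point of $X$. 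The finite-support assertion \cref{irr_div_finitely_supported} does not give this by itself: when $K_i$ has large transcendence degree over $k$, a $K_i$-rational point of $X_{K_i}$ can perfectly well sit above a non-closed point $P$ of $X$ (say, the generic point of a curve), and for such $P$ the quantity $N(P)=[k(P):k]$ is infinite and neither \cref{invariance_rational_points} nor \cref{Change_base_field_integral} applies. If such a $Q$ did carry nonzero $\int_Q\delta\Irr\cM_{K_i}$, your right-hand side would miss it and the argument would collapse.

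The paper closes exactly this gap before regrouping: it uses \cref{integral_over_Z_and_points} to reduce the integral over $Z_{K_i}$ to the integral over $Z_{K_i}\cap\TL(\cM_{K_i})$, then \cref{regular_base_change} to identify $\TL(\cM_{K_i})$ with $\TL(\cM)_{K_i}$, and finally the fact that $\TL(\cM)$ is a finite set of closed points of the surface $X$, so that $(Z\cap\TL(\cM))_{K_i}$ is a finite set of $K_i$-rational points all lying above closed points of $X$. You would need to insert this reduction to the turning locus (or an equivalent argument) before grouping by the image in $X$; with it, the rest of your proof goes through exactly as written.
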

\begin{proof}
We are going to use \cref{Change_base_field_integral}.
We first argue that the points of $Z_{K_1}$ and $Z_{K_2}$ not sent to a closed point of $Z$ do not contribute to the above integrals.
From \cref{regular_base_change}, we have $\TL(\cM_{K_1})=\TL(\cM)_{K_1}$.
Hence, \cref{integral_over_Z_and_points} yields 
$$
\int_{Z_{K_1}} \delta\Irr \cM_{K_1}= \int_{(Z\cap \TL(\cM))_{K_1}}  \delta\Irr \cM_{K_1}
$$
Since $\TL(\cM)$ is a finite set of closed points of $X$, so is $Z\cap \TL(\cM)$.
In  particular, $(Z\cap \TL(\cM))_{K_1}$ is a finite set of rational points of $X_{K_1}$ lying above  closed points of $X$.
For every point $P$ of $Z\cap \TL(\cM)$, choose a rational point $Q_1(P)$ of $X_{K_1}$ above $P$ and let us denote by $\deg P$ the degree of $P$ over $k$.
Since $K_1$ is algebraically closed, $X_{K_1}$ admits exactly $\deg P$ rational points above $P$.
From \cref{invariance_rational_points}, we deduce 
$$
\int_{Z_{K_1}} \delta\Irr \cM_{K_1} = \sum_{Q_1\in(Z\cap \TL(\cM))_{K_1} } \int_{Q_1}  \delta\Irr \cM_{K_1}= \sum_{P\in Z\cap \TL(\cM)} \deg P \cdot \int_{Q_1(P)}  \delta\Irr \cM_{K_1}
$$
Since the same formula holds with  $K_2$, \cref{Change_base_field_integral_general} follows from  \cref{Change_base_field_integral}.
\end{proof}

\bibliographystyle{amsalpha}

\bibliography{biblio}

\end{document}